\newtheorem{Theorem}{Theorem}[section]
\numberwithin{Theorem}{section}
\newtheorem{Proposition}[Theorem]{Proposition}
\newtheorem{Corollary}[Theorem]{Corollary}
\theoremstyle{definition}
\newtheorem{Definition}[Theorem]{Definition}
\newtheorem{Remark}[Theorem]{Remark}
\newtheorem{Example}[Theorem]{Example}
\numberwithin{equation}{section}
\newcommand{\N}{\mathbb{N}}
\newcommand{\Z}{\mathbb{Z}}
\newcommand{\R}{\mathbb{R}}
\newcommand{\C}{\mathbb{C}}
\newcommand\e{\mathrm{e}}
\newcommand\I{\mathrm{i}}
\newcommand\re{\operatorname{Re}}
\newcommand\im{\operatorname{Im}}
\newcommand{\rd}{\mathrm{d}}
\newcommand\dom{\mbox{\rm dom}}
\newcommand\ran{\mbox{\rm ran}}
\newcommand\cS{\mathcal S}
\newcommand\cT{\mathcal T}
\newcommand\cA{\mathcal A}
\newcommand\cB{\mathcal B}
\newcommand\cV{\mathcal V}
\newcommand\lbar\overline
\newcommand\eps\varepsilon
\renewcommand\epsilon\varepsilon
\renewcommand\rho\varrho
\newcommand\al\alpha
\newcommand\lm\lambda
\newcommand\ds\displaystyle
\newcommand\p\partial
\newcommand{\tolong}{\longrightarrow}
\newcommand{\s}{\stackrel{s}{\rightarrow}}
\newcommand{\w}{\stackrel{w}{\rightarrow}}
\newcommand{\dist}{\operatorname{dist}}
\newcommand{\beq}{\begin{equation}}
\newcommand{\eeq}{\end{equation}}
\newcommand{\be}{\begin{equation*}}
\newcommand{\ee}{\end{equation*}}
\newcommand{\bmat}{\begin{pmatrix}}
\newcommand{\emat}{\end{pmatrix}}
\author{Sabine B\"ogli}
\address[S.\ B\"ogli]{
Department of Mathematical Sciences,
Durham University,
Lower Mount\-joy, Stockton Road,
Durham DH1 3LE, UK}
\email{sabine.boegli@durham.ac.uk}
\author{Marco Marletta}
\address[M.\ Marletta]{
School of Mathematics,
Cardiff University,
21–-23 Senghennydd Road, Cardiff CF24 4AG, UK}
\email{MarlettaM@cardiff.ac.uk}
\thanks{
}
\title[Essential numerical ranges for linear operator pencils]{Essential numerical ranges for\\ linear operator pencils}
\begin{document}

\subjclass[2010]{47A12, 47A56, 47A58}

\keywords{Essential numerical range, numerical range, linear operator pencil, eigenvalue approximation, spectral pollution, Dirac operator, Schr\"{o}dinger operator}

\date{\today}

\begin{abstract}
We introduce concepts of essential numerical range for the linear operator pencil $\lm\mapsto A-\lm B$.  In contrast to the operator essential numerical range, the pencil essential numerical ranges are, in general, neither convex nor even connected.  The new concepts allow us to  describe the set of spectral pollution when approximating the operator pencil by projection and truncation methods.
Moreover, by transforming the operator eigenvalue problem $Tx=\lm x$ into the pencil problem $BTx=\lm Bx$ for suitable choices of $B$, we can obtain non-convex spectral enclosures for $T$ and,
in the study of truncation and projection methods, confine spectral pollution to smaller sets than with hitherto known concepts. We apply the results to various block operator matrices. In particular, Theorem \ref{thm.dirac} presents substantial improvements over previously known results for Dirac operators while Theorem \ref{thm.schroedinger} excludes spectral pollution for a class of
non-selfadjoint Schr\"{o}dinger operators which it has not been possible to treat with existing methods.
\end{abstract}

\maketitle

\section{Introduction}
One of the simplest concepts which can be used to obtain an enclosure of the spectrum of a linear operator $T$ in a Hilbert space $H$ is the {\em numerical range}:
\[ W(T) = \{ \langle Tx,x\rangle: \; x\in \dom(T), \; \| x \| = 1\}. \]
Many simple estimates of eigenvalues of differential operators, for instance, involve calculating estimates of the inner products $\langle Tx,x\rangle$,
using partial integration. The main disadvantage of $W(T)$ is its convexity, which means that $W(T)$ cannot reveal the existence of spectral gaps.

If $T$ is bounded and if one wishes to enclose only the essential spectrum of $T$, then the concept of essential numerical range $W_e(T)$ introduced by  Stampfli and Williams 
\cite{Stampfli-Williams} gives a useful refinement; see also \cite{fillmore} for a review. 
The latter gives five equivalent characterisations of the essential numerical range
for a bounded operator. For  closed, unbounded operators, we showed \cite{We} that these concepts are no longer equivalent; we settled on the singular-sequence
definition as the most useful one:
\begin{equation} W_e(T)  = \left\{ \lim_{n\rightarrow\infty}\left\langle Tx_n,x_n\right\rangle: \; x_n\in \dom(T), \; \| x_n \| = 1, \; x_n\stackrel{w}{\to}0 \right\}, \label{eq:We3.1} \end{equation}
and proved that also in the unbounded case
\beq\label{eq.We3} W_e(T)=\bigcap_{K \text{ compact}}\overline{W(T+K)}.\eeq
From~\eqref{eq.We3} it is evident that $W_e(T)$ is a closed and convex set, and we proved that it
 consists precisely of the essential spectrum of $T$  together with all possible spectral pollution which may arise by applying projection methods to 
find the spectrum of $T$ numerically. This generalises a result of Levitin and Shargorodsky \cite{levitin} for the selfadjoint case, because then our essential numerical range 
coincides with the convex hull of their {\em extended essential spectrum}.

In this paper we turn  to linear pencils $\lm \mapsto A-\lm B$, where $A$ and $B$ are operators in $H$ and $\dom(A)\subseteq\dom(B)$. There are obvious
motivations for studying pencils directly since they arise naturally in so many application areas.  However there can also be advantages in considering the reformulation of
operator problems as pencil problems. Given an operator $T$, one may consider a pencil $\lm\mapsto BT-\lm B$, in which $B$ is a suitably chosen bounded operator.
This can be regarded as an abstract generalisation of several different tricks: the multiplier trick developed by Morawetz for scattering problems \cite{Morawetz-1961};
the techniques used in the derivation of many virial theorems (see, e.g., \cite{eastham-kalf}); or the method of Descloux \cite{descloux} which takes scalar products with
respect to different bilinear forms. The success of our approach depends on being
able to replace the numerical range and essential numerical range $W(T)$ and $W_e(T)$, whose convexity may be inconvenient, by suitable  concepts of numerical range  
and essential numerical range for a pencil, whose properties should be systematically studied. Section \ref{sec.def} is devoted to these topics; the reward is reaped in Sections 
\ref{sec.approx} and \ref{sec.op}. We particularly draw the reader's attention to Theorem \ref{thm.intersection}, which shows that the abstract Morawetz trick can, in principle, 
locate the approximate point spectrum exactly; Theorem \ref{thm.schroedinger}, which establishes lack of spectral pollution for a wide class of non-selfadjoint Schr\"{o}dinger 
operators; and Theorem \ref{thm.dirac}, which substantially improves existing results for Dirac operators. 

For the operator pencil $\lm\mapsto A-\lm B$ a numerical range concept, called \emph{root domain}, was defined in \cite[Section~26]{markus} as the set of all $\lm\in\C$ such 
that  $0$ belongs to the usual operator numerical range $W(A-\lm B)$: we expand this slightly to allow all $\lambda$ such that $0\in \overline{W(A-\lm B)}$ and denote this 
set by $W(A,B)$. We also introduce a second concept of pencil numerical range, denoted $w(A,B)$: see Definition \ref{def.W} below. There are two corresponding 
concepts of essential numerical range of the pencil, denoted by $W_e(A,B)$ and $w_e(A,B)$. Our slight modification of the definition of the pencil
numerical range in  \cite[Section~26]{markus} ensures that $W_e(A,B)\subseteq W(A,B)$. 

In Section~\ref{sec.def} we study properties of, and relations between, the numerical ranges $W(A,B)$, $w(A,B)$ and the essential numerical ranges $W_e(A,B)$, $w_e(A,B)$. In the special case that $B$ is uniformly positive, we have $w(A,B)=W(B^{-\frac{1}{2}}AB^{-\frac{1}{2}})$, $w_e(A,B)=W_e(B^{-\frac{1}{2}}AB^{-\frac{1}{2}})$ and hence the sets are convex. In general, however, the pencil notions are not convex (not even connected). We establish perturbation results for $W_e(A,B)$, $w_e(A,B)$ in which we add an operator $K$ to either $A$ or $B$.
Section~\ref{sec.approx} contains spectral convergence results. We approximate both $A$ and $B$ by  
projection or domain truncation methods and confine the possible spurious eigenvalues to $w_e(A,B)$ or $W_e(A,B)$.
We apply our results to an indefinite Sturm-Liouville operator in $L^2(\R)$, previously studied in~\cite{SL1,SL2}.
In the final Section~\ref{sec.op} we transform the operator eigenvalue problem $Tx=\lm x$ into the pencil eigenvalue problem $BTx=\lm Bx$ for an arbitrary bounded operator $B$, i.e.\ we study the linear pencil $\lm\mapsto BT-\lm B$.  Whereas the operator numerical range $W(T)$ is convex, the pencil analogue $W(BT,B)$ need not be convex or even connected. It is this fact which is
responsible for allowing us to get tighter spectral enclosures by taking the intersection of $W(BT,B)$ over suitable $B$, see Theorem \ref{thm.intersection}. 
Analogously, the set of possible spectral pollution is reduced to $W_e(BT,B)$ if we approximate $BT$ and $B$ instead of $T$.
The latter is particularly effective if $T$ is a differential operator and $B$ is (the operator of multiplication with) a bounded and boundedly invertible function; then the multiplication with $B$ commutes with domain truncation.  Another important application is to $2\times 2$ block operator matrices $T$ that we multiply by $2\times 2$ matrices $B$. We compare the resulting spectral enclosures with the quadratic numerical range (see~\cite{tretter}). The theoretical results of this section are applied to Schr\"odinger, Dirac, Stokes-type and Hain-L\"ust-type operators.

We use the following notion and conventions.
The notations $\|\cdot\|$ and $\langle\cdot,\cdot\rangle$ refer to the norm and scalar product of the Hilbert space $H$.
Strong and weak convergence of elements in $H$ is denoted by $x_n\to x$ and $x_n\stackrel{w}{\to}x$, respectively.
The space $L(H)$ contains all bounded linear operators in $H$, and $C(H)$  denotes the space of all closed linear operators in $H$.
Norm and strong operator convergence in $L(H)$ is denoted by $T_n\to T$ and $T_n\s T$, respectively.
An identity operator is denoted by~$I$; scalar multiples $\lm I$ are written as $\lm$.
Analogously, the operator of multiplication with a function $V$ is again $V$.
For two operators $T$, $S$ in $H$ we say that $S$ is $T$-form bounded if the respective quadratic forms are relatively bounded, i.e.\ if there exist $\alpha',\beta'\geq 0$ such that
\beq\label{eq.formbdd}\forall\,x\in\dom(T):\quad |\langle S x,x\rangle|\leq \alpha'\|x\|^2+\beta'|\langle T x,x\rangle|.\eeq
The infimum $\beta$ of all $\beta'\geq 0$ such that there exists $\alpha'\geq 0$ satisfying~\eqref{eq.formbdd} is called the relative form bound.
The domain, range, spectrum, point spectrum, approximate point spectrum and resolvent set
of an operator $T$ are denoted by $\dom(T)$, $\ran(T)$, $\sigma(T)$, $\sigma_p(T)$, $\sigma_{\rm app}(T)$ and $\rho(T)$,
respectively, and the Hilbert space adjoint operator of $T$ is~$T^*$.
For non-selfadjoint operators there exist (at least) five different definitions for the essential spectrum which all coincide in the selfadjoint case; for a discussion see~\cite[Chapter~IX]{edmundsevans}.
Here we use
$$\sigma_e(T):=\left\{\lm\in\C:\,\exists\, (x_n)_{n\in\N}\subset\dom(T) \text{ with }\|x_n\|=1,\,x_n\stackrel{w}{\to}0,\,\|(T-\lm)x_n\|\to 0\right\},$$
which corresponds to $k=2$ in~\cite{edmundsevans}. 
For an introduction to (polynomial) operator pencils we refer to the monograph~\cite{markus}.
For the linear pencil $\lm\mapsto A-\lm B$ the spectrum is $\sigma(A,B):=\{\lm\in\C:\,0\in\sigma(A-\lm B)\}$, and $\sigma_p(A,B)$, $\sigma_{\rm app}(A,B)$, $\sigma_e(A,B)$ and $\rho(A,B)$ are defined analogously. 
Following Kato (see~\cite[Section V.3.10]{kato}), we call a linear operator $T$ in $H$ sectorial if $W(T)\subseteq\{\lm\in\C:\,|\arg(\lm-\gamma)|\leq\theta\}$ with sectoriality semi-angle $\theta\in [0,\pi/2)$ and sectoriality vertex $\gamma\in\R$.
A subspace $\Phi\subset \dom(T)$ is called a core of a closable operator $T$ if $T|_{\Phi}$ is closable with closure $\overline{T}$.
For a subset $\Omega\subset\C$ we denote its interior by ${\rm int}\,\Omega$, its convex hull by ${\rm conv}\,\Omega$, its complex conjugated set by $\Omega^*:=\{\overline{z}:\,z\in\Omega\}$, and the distance of $z\in\C$ to $\Omega$ is ${\rm dist}(z,\Omega):=\inf_{w\in \Omega}|z-w|$. Finally, $B_{r}(\lm):=\{z\in\C:\,|z-\lm|<r\}$ is the open disk of radius $r$ around $\lm\in\C$.

\section{Definitions and properties}\label{sec.def}
In this section we define numerical ranges and essential numerical ranges of the pencil $\lm\mapsto A-\lm B$ in two ways that turn out to be non-equivalent in general. We establish sufficient conditions under which they coincide and study further equivalent characterisations. The section finishes with perturbation results for pencil essential numerical ranges.

\subsection{Basic properties}

Let $A,B$ be linear operators in $H$ with $\dom(A)\subseteq\dom(B)$.
We define two (generally different) numerical ranges of the pencil $\lm\mapsto A-\lm B$.
\begin{Definition}[Numerical ranges for a pencil]\label{def.W}
We define the sets
\begin{align*}
W(A,B)&:=\big\{\lm\in\C:\,0\in \overline{W(A-\lm B)}\big\},\\
w(A,B)&:=\left\{\frac{\langle A x,x\rangle}{\langle B x,x\rangle}:\,x\in\dom(A),\,\langle Bx,x\rangle\neq 0\right\}.
\end{align*}
\end{Definition}

\begin{Remark}\label{rem.num.range}
\begin{enumerate}
\item[\rm i)]
It follows immediately that
\begin{alignat*}{5}
W(zA,B)&=z W(A,B), \quad &w(zA,B)&=z w(A,B), \quad &&z\in\C,\\
W(A,zB)&=\frac{1}{z}W(A,B), \quad &w(A,zB)&=\frac{1}{z}w(A,B), \quad &&z\in\C\backslash\{0\},
\end{alignat*}
and
that, for any $\lm\neq 0$,
 \begin{align*}
\lm\in W(A,B)\quad  & \Longleftrightarrow\quad\lm^{-1}\in W(B|_{\dom(A)},A), \\
\lm\in w(A,B)\quad  & \Longleftrightarrow\quad \lm^{-1}\in w(B|_{\dom(A)},A). 
\end{align*}
Note that $0\in W(A,B)$ if and only if $0\in\overline{W(A)}$, and $0\in w(A,B)$ implies $0\in W(A)$.

\item[\rm ii)]
Clearly, we have the spectral enclosure
$\sigma_{\rm app}(A,B)\subseteq W(A,B)$. 
For an example with $\sigma_{\rm app}(A,B)\not\subseteq w(A,B)$, let $A=B=0$; then $\sigma_{\rm app}(A,B)=\sigma_p(A,B)=\C$ but $w(A,B)=\emptyset$.

\item[\rm iii)]
If $B$ is bounded, then $W(A,B)$ is closed. This is not true in the unbounded case (see Example~\ref{ex.unifposB}).
The set $w(A,B)$ need not be closed even if $B$ is bounded; as an example, let $B=I$, then $w(A,B)=W(A)$ which is not closed in general.
\end{enumerate}
\end{Remark}

\begin{Proposition}\label{prop.num.range}
\begin{enumerate}
\item[{\rm i)}]
We have $$w(A,B)\subseteq W(A,B).$$
Moreover, 
if, in addition, 
$$0\notin \overline{W(A)}\cap \overline{W(B)}\quad\text{or} \quad W(A,B)\neq \C,$$
then $W(A,B)\subseteq \overline{w(A,B)}$.
\item[{\rm ii)}]
If $B$ is uniformly positive, then 
\begin{align*}
w(A,B)&=W\big(B^{-\frac{1}{2}}AB^{-\frac{1}{2}}\big),\\
\overline{W(A,B)}&=\overline{w(A,B)}=\overline{W\big(B^{-\frac{1}{2}}AB^{-\frac{1}{2}}\big)},
\end{align*}
and all sets are convex.
\end{enumerate}
\end{Proposition}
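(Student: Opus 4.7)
The plan is to split the proposition by parts. Part (i) will be handled by unpacking the definitions followed by a sequence-extraction argument; part (ii) by the substitution $y = B^{1/2}x$, which reduces the pencil notions to a classical operator numerical range.

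For part (i), the inclusion $w(A,B) \subseteq W(A,B)$ is immediate: if $\lm = \langle Ax,x\rangle/\langle Bx,x\rangle$ with $x \in \dom(A)$ and $\langle Bx,x\rangle \neq 0$, then $\langle (A-\lm B)x,x\rangle = 0$, placing $0$ in $W(A-\lm B)$ and hence $\lm$ in $W(A,B)$. For the reverse inclusion I would fix $\lm \in W(A,B)$, choose $(x_n) \subset \dom(A)$ with $\|x_n\|=1$ and $\langle Ax_n,x_n\rangle - \lm\langle Bx_n,x_n\rangle \to 0$, and split into two cases. If some subsequence of $(\langle Bx_n,x_n\rangle)$ stays bounded away from $0$, then $\lm_n := \langle Ax_n,x_n\rangle/\langle Bx_n,x_n\rangle$ belongs to $w(A,B)$ and converges to $\lm$ along that subsequence, so $\lm \in \overline{w(A,B)}$. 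Otherwise $\langle Bx_n,x_n\rangle \to 0$, which then forces $\langle Ax_n,x_n\rangle \to 0$ as well; under the first additional hypothesis this puts $0$ simultaneously into $\overline{W(A)}$ and $\overline{W(B)}$, a contradiction, while under the second one obtains $\langle (A-\mu B)x_n,x_n\rangle \to 0$ for every $\mu \in \C$, whence $W(A,B) = \C$, also excluded.

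For part (ii), uniform positivity $B \geq cI$ with $c > 0$ entails that $B$ is selfadjoint, $B^{-1/2} \in L(H)$, and $\dom(A) \subseteq \dom(B) \subseteq \dom(B^{1/2})$. The map $y = B^{1/2}x$ is then a bijection from $\dom(A)$ onto $\dom(B^{-1/2}AB^{-1/2}) = B^{1/2}(\dom(A))$ with $\langle Bx,x\rangle = \|y\|^2$ and $\langle Ax,x\rangle = \langle B^{-1/2}AB^{-1/2}y,y\rangle$; dividing these identities reads off $w(A,B) = W(B^{-1/2}AB^{-1/2})$, which is convex by the Hausdorff--Toeplitz theorem. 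Finally, $B \geq cI$ gives $0 \notin \overline{W(B)}$, so the first hypothesis of part (i) is met and the chain $w(A,B) \subseteq W(A,B) \subseteq \overline{w(A,B)}$ yields the equality of closures. The only real obstacle is the case analysis in part (i), where one must notice that $\langle Bx_n,x_n\rangle \to 0$ together with $\langle (A-\lm B)x_n,x_n\rangle \to 0$ automatically forces $\langle Ax_n,x_n\rangle \to 0$; this single observation powers both contradictions, and the rest is bookkeeping.
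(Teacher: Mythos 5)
Your proposal is correct and follows essentially the same route as the paper's proof: the forward inclusion of part (i) is immediate from the definition, the reverse inclusion uses exactly the same sequence extraction with the same dichotomy on $\langle Bx_n,x_n\rangle$ and the same observation that $\langle Bx_n,x_n\rangle\to 0$ forces $\langle Ax_n,x_n\rangle\to 0$, and part (ii) uses the identical substitution $y=B^{1/2}x$ together with part (i). The only cosmetic difference is that you frame the second case as two separate hypothesis-dependent contradictions while the paper derives both conclusions $0\in\overline{W(A)}\cap\overline{W(B)}$ and $W(A,B)=\C$ simultaneously and then contradicts the hypothesis.
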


\begin{proof}
i)
Let $\lm\in w(A,B)$. By definition, there exists $x\in\dom(A)$ with $\lm=\langle A x,x\rangle/\langle B x,x\rangle$; without loss of generality $\|x\|=1$.
Then $0=\langle (A-\lm B)x,x\rangle \in W(A-\lm B)$ and hence $\lm\in W(A,B)$.

Now assume that there exists $\lm\in W(A,B)\backslash\overline{w(A,B)}$.
Then there is a sequence $(x_n)_{n\in\N}\subset \dom(A)$ with $\|x_n\|=1$ and $\langle (A-\lm B)x_n,x_n\rangle\to 0$.
If there exist $n_0\in\N$ and $c>0$ such that $|\langle B x_n,x_n\rangle|\geq c$ for all $n\geq n_0$, then 
$$\lm=\lim_{n\to\infty}\frac{\langle A x_n,x_n\rangle}{\langle Bx_n,x_n\rangle}\in \overline{w(A,B)},$$
a contradiction. Hence, at least on a subsequence, we have $\langle B x_n,x_n\rangle\to 0$ and thus also $\langle A x_n,x_n\rangle \to 0$. This implies $$0\in \overline{W(A)}\cap \overline{W(B)}, \quad W(A,B)=\C.$$

ii)
The first identity is a direct consequence of
$$\frac{\langle Ax,x\rangle}{\langle Bx,x\rangle}=\frac{\langle B^{-\frac{1}{2}}AB^{-\frac{1}{2}}y,y\rangle}{\|y\|^2}$$
with the one-to-one correspondence $x=B^{-\frac{1}{2}}y$ for $x\in\dom(A)$ and $y\in \dom\big(AB^{-\frac{1}{2}}\big)$.
By the assumed uniform positivity of $B$, we conclude $0\notin\overline{W(B)}$. Now claim~i) implies $\overline{W(A,B)}=\overline{w(A,B)}$.
The convexity of all sets follows from the convexity of $W\big(B^{-\frac{1}{2}}AB^{-\frac{1}{2}}\big)$.
\end{proof}

\begin{Example}\label{ex.unifposB}
In $l^2(\N)$ consider the selfadjoint operators whose representations with respect to the standard orthonormal basis of $l^2(\N)$ are diagonal:
$$A:={\rm diag}(n^2+n:\,n\in\N), \quad B:={\rm diag}(n^2:\,n\in\N).$$
Evidently $B\geq I$. Using
$$B^{-\frac{1}{2}}AB^{-\frac{1}{2}}={\rm diag}\Big(1+\frac{1}{n}:\,n\in\N\Big),$$
Proposition~\ref{prop.num.range}~ii) yields
$$w(A,B)=W\big(B^{-\frac{1}{2}}AB^{-\frac{1}{2}}\big)=(1,2].$$
Since $0\notin \overline{W(A-B)}$, we have $1\notin W(A,B)$ and hence Proposition~\ref{prop.num.range}~i) implies $W(A,B)=w(A,B)$.
Note that the numerical ranges are not closed.
\end{Example}

The following result generalises~\cite[Theorems~26.6,~26.7]{markus} for bounded pencils and also~\cite[Theorem~V.3.2]{kato} for operators (i.e.\ for $B=I$).

\begin{Theorem}\label{thm.dist.pencil}
Let $B\in L(H)$ satisfy $0\notin \overline{W(B)}$. Let $\Omega\subseteq \C\backslash W(A,B)$ be a connected set with $\Omega\cap\rho(A,B)\neq\emptyset$.
Then $\Omega\subseteq \rho(A,B)$, $W(A,B)=\overline{w(A,B)}$ and
\beq\label{eq.resnormpencil}\|(A-\lm B)^{-1}\|\leq \frac{1}{{\rm dist}(0,W(B))\,{\rm dist}(\lm,W(A,B))}, \quad \lm\in\Omega.\eeq
\end{Theorem}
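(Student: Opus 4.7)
The plan has three main ingredients: identify $W(A,B)$ with $\overline{w(A,B)}$, derive a pointwise lower bound for $\|(A-\lambda B)x\|$ on unit vectors $x \in \dom(A)$, and propagate the resolvent set across $\Omega$ by a connectedness argument. The identification is the essential bridge, since the theorem phrases the resolvent estimate in terms of $\dist(\lambda, W(A,B))$ whereas the natural pointwise bound lives on $w(A,B)$.

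For the identification, the hypothesis $\Omega \cap \rho(A,B) \neq \emptyset$ combined with $\Omega \subseteq \C \setminus W(A,B)$ forces $W(A,B) \neq \C$, so Proposition~\ref{prop.num.range}~i) gives $W(A,B) \subseteq \overline{w(A,B)}$. The reverse inclusion comes from Remark~\ref{rem.num.range}~iii), which guarantees $W(A,B)$ is closed because $B \in L(H)$, together with the always-valid $w(A,B) \subseteq W(A,B)$. Hence $W(A,B) = \overline{w(A,B)}$.

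For the pointwise bound, $0 \notin \overline{W(B)}$ implies $|\langle Bx,x\rangle| \geq \dist(0, W(B)) > 0$ for every unit $x \in H$, so $\mu(x) := \langle Ax,x\rangle/\langle Bx,x\rangle$ is a well-defined element of $w(A,B)$ for unit $x \in \dom(A)$. The identity $\langle (A - \lambda B)x,x\rangle = \langle Bx,x\rangle(\mu(x) - \lambda)$ combined with Cauchy--Schwarz yields
\[ \|(A-\lambda B)x\| \geq |\langle (A-\lambda B)x, x \rangle| = |\langle Bx,x\rangle| \cdot |\mu(x) - \lambda| \geq \dist(0, W(B)) \cdot \dist(\lambda, W(A,B)) \]
for every $\lambda \in \C \setminus W(A,B)$, where the last inequality uses both that $\mu(x) \in w(A,B)$ and the identification of the previous paragraph. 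This immediately gives~\eqref{eq.resnormpencil} at any $\lambda \in \rho(A,B) \cap \Omega$.

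To finish, let $G := \Omega \cap \rho(A,B)$, which is nonempty by hypothesis and open in $\Omega$ since $\rho(A,B)$ is open in $\C$. If $\lambda_n \to \lambda$ with $\lambda_n \in G$ and $\lambda \in \Omega$, then $W(A,B)$ is closed and $\lambda \notin W(A,B)$, so $\dist(\lambda_n, W(A,B))$ stays bounded below and the pointwise bound gives a uniform bound on $\|(A - \lambda_n B)^{-1}\|$. Since $B \in L(H)$, the operator $A - \lambda B$ is a norm-small perturbation of $A - \lambda_n B$ for large $n$, and a standard Neumann-series argument places $\lambda$ in $\rho(A,B)$. Thus $G$ is also closed in $\Omega$, and connectedness of $\Omega$ yields $G = \Omega$. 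The main subtlety is that Proposition~\ref{prop.num.range}~i) requires $W(A,B) \neq \C$ before it delivers $W(A,B) = \overline{w(A,B)}$; the hypothesis $\Omega \cap \rho(A,B) \neq \emptyset$ is crucial precisely because it enforces this property, which is what allows the $w(A,B)$-distance produced by the pointwise bound to be promoted to the $W(A,B)$-distance appearing in~\eqref{eq.resnormpencil}.
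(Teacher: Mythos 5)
Your proposal is correct and arrives at all three conclusions, but the way you propagate $\rho(A,B)$ across $\Omega$ is genuinely different from the paper's. The paper invokes Kato's stability result \cite[Theorem~IV.5.17]{kato} for bounded perturbations to conclude that $\lambda\mapsto\mathrm{ind}(A-\lambda B)$ is constant on connected components of $\C\setminus\sigma_{\rm app}(A,B)$; since $\sigma_{\rm app}(A,B)\subseteq W(A,B)$ and one point of $\Omega$ lies in $\rho(A,B)$, the index (and indeed full invertibility) is inherited everywhere on $\Omega$. You instead run a purely elementary open-and-closed argument: $G:=\Omega\cap\rho(A,B)$ is open in $\Omega$ since $\rho(A,B)$ is open (bounded $B$), and it is closed in $\Omega$ because the pointwise resolvent estimate gives a uniform bound on $\|(A-\lambda_n B)^{-1}\|$ along any $G$-sequence converging inside $\Omega$, after which a Neumann-series perturbation by $(\lambda_n-\lambda)B$ forces the limit into $\rho(A,B)$. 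Both routes are sound; yours trades the external citation for a self-contained argument that only needs the estimate you have already established, while the paper's index argument is shorter once the reference is accepted. One small point: you deduce $W(A,B)\neq\C$ from the nonemptiness of $\Omega\cap\rho(A,B)$ to trigger Proposition~\ref{prop.num.range}~i), whereas the paper invokes the simpler alternative condition directly — the hypothesis $0\notin\overline{W(B)}$ already gives $0\notin\overline{W(A)}\cap\overline{W(B)}$, so no appeal to $\Omega$ is needed for the identification step. Your route is not wrong, just slightly more circuitous at that point.
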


\begin{proof}
The assumptions on $B$ imply, by Remark~\ref{rem.num.range}~iii) and Proposition~\ref{prop.num.range}~i), that $W(A,B)=\overline{w(A,B)}$.
Since $B$ is assumed to be bounded, by~\cite[Theorem~IV.5.17]{kato}, we conclude that $\lm\mapsto {\rm ind}(A-\lm B)$ is constant on every connected component of $\C\backslash\sigma_{\rm app}(A,B)$. 
Since $\sigma_{\rm app}(A,B)\subseteq W(A,B)$ and $\Omega\cap\rho(A,B)\neq\emptyset$, we obtain $\Omega\subseteq \rho(A,B)$.
Now let $\lm\in\Omega$. 
Then, for all $x\in\dom(A)$ with $\|x\|=1$,
\begin{align*}
\|(A-\lm B)x\|
&\geq |\langle (A-\lm B)x,x\rangle|= |\langle Bx,x\rangle|\left|\frac{\langle A x,x\rangle}{\langle B x,x\rangle}-\lm\right|\\
&\geq  {\rm dist}(0,W(B)){\rm dist}(\lm,w(A,B)),
\end{align*}
which, together with $\overline{w(A,B)}=W(A,B)$, proves~\eqref{eq.resnormpencil}.
\end{proof}

As for the numerical ranges of the pencil $\lm\mapsto A-\lm B$, we can also define two concepts of essential numerical range. The first of these, 
$W_e(A,B)$ below, involves the operator essential numerical range from equation \eqref{eq:We3.1}; the second, $w_e(A,B)$, is generally not equivalent to the first. We shall study the relationship between
the two in several propositions and examples.
\begin{Definition}[Essential numerical ranges for a pencil]\label{def.We}
We define the sets
\begin{align*}
W_e(A,B)&:=\big\{\lm\in\C:\,0\in W_e(A-\lm B)\big\},\\
w_e(A,B)&:=\left\{\lim_{n\to\infty}\frac{\langle A x_n,x_n\rangle}{\langle B x_n,x_n\rangle}:\,x_n\in\dom(A),\,\langle Bx_n,x_n\rangle\neq 0,\,\,\|x_n\|=1,\,x_n\stackrel{w}{\to}0\right\}.
\end{align*}
\end{Definition}

\begin{Remark}\label{rem.inverse}
\begin{enumerate}
\item[\rm i)]
It follows immediately that
\begin{alignat*}{5}
W_e(zA,B)&=z W_e(A,B), \quad &w_e(zA,B)&=z w_e(A,B), \quad &&z\in\C,\\
W_e(A,zB)&=\frac{1}{z}W_e(A,B), \quad &w_e(A,zB)&=\frac{1}{z}w_e(A,B), \quad &&z\in\C\backslash\{0\},
\end{alignat*}
and
that, for any $\lm\neq 0$,
\begin{align*}
\lm\in W_e(A,B)\quad &\Longleftrightarrow\quad \lm^{-1}\in W_e(B|_{\dom(A)},A),\\
\lm\in w_e(A,B)\quad &\Longleftrightarrow\quad \lm^{-1}\in w_e(B|_{\dom(A)},A).
\end{align*}
Note that $0\in W_e(A,B)$ if and only if $0\in W_e(A)$.

\item[\rm ii)]
Clearly, we have the spectral enclosure
$\sigma_e(A,B)\subseteq W_e(A,B)$. 
For an example with $\sigma_e(A,B)\not\subseteq w_e(A,B)$, let $A=B=0$; then $\sigma_e(A,B)=\C$ but $w_e(A,B)=\emptyset$.

\item[\rm iii)]
By a standard diagonal sequence argument, the set $w_e(A,B)$ is closed. 
If $B$ is bounded, then $W_e(A,B)$ is closed as well. This is not true in the unbounded case (see Example~\ref{ex.notclosed}).

\item[\rm iv)]
For the operator essential numerical range ($B=I$) it was shown in \cite[Corollary 2.5 iv)]{We} that $W_e(A)=\C$ if and only if $W(A)=\C$.
This is no longer true for general $B$. As a first counterexample, consider in $l^2(\N_0)$ the diagonal operators 
$$A:={\rm diag}(n:\,n\in\N_0),\quad B:={\rm diag}(b_n:\,n\in\N_0)$$
 with $b_0=0$ and $(b_n)_{n\in\N}\subset \C$ a bounded sequence. 
Then $W(A,B)=\C$; however $W_e(A,B)=\emptyset$ by Proposition \ref{prop.We.empty} below and the fact that $W_e(A)=\emptyset$,
see Theorem~\ref{Thm2.11}.

As a second counterexample, consider in $l^2(\N)$ the diagonal operators 
$$A_{1,1}:={\rm diag}(n:\,n\in\N),\quad B_{1,1}:={\rm diag}(\I^n:\,n\in\N).$$
Using the uniform positivity of $A_{1,1}$ we see that $\lambda\in w(A_{1,1},B_{1,1})$ if and only if
$\lambda^{-1}\in w(B_{1,1}|_{\dom(A_{1,1})},A_{1,1})$, and 
\[ w(B_{1,1}|_{\dom(A_{1,1})},A_{1,1}) = w(A_{1,1}^{-\frac{1}{2}}B_{1,1}A_{1,1}^{-\frac{1}{2}}) = \mbox{conv}\left(\{ \I^n/n \: | \: n\in {\mathbb N}\}\right).
 \]
In particular, $w(A_{1,1},B_{1,1})$ is of the form ${\mathbb C}\setminus N$ where $N$ is a bounded neighbourhood of zero.
Now choose a $2\times 2$ matrix $A_{2,2}$ such that $w(A_{2,2},I_{2\times 2}) = W(A_{2,2})$ contains $N$, and define
\[ A = \left(\begin{array}{cc} A_{1,1} & 0 \\ 0 & A_{2,2}\end{array}\right), \;\;\; B = \left(\begin{array}{cc}B_{1,1} & 0 \\ 0 & I_{2\times 2}\end{array}\right). \]
Then $w(A,B)=\C$. However $w_e(A,B)=w_e(A_{1,1},B_{1,1})$ by a direct calculation from the definitions. Furthermore, 
Theorem~\ref{Thm2.11} below implies that $W_e(A_{1,1})=\emptyset$; then since $B_{1,1}$ is bounded, Proposition \ref{prop.We.empty}
implies that $w_e(A_{1,1},B_{1,1})=\emptyset$. Hence $w_e(A,B)=\emptyset$.

\end{enumerate}
\end{Remark}

\begin{Proposition}\label{prop.ess.num.range}
\begin{enumerate}
\item[\rm i)] If
\begin{equation}\label{eq.0inWe}
0\notin W_e(A)\cap W_e(B)\quad\text{or} \quad W_e(A,B)\neq \C,
\end{equation}
then $\overline{W_e(A,B)}\subseteq w_e(A,B)$.

\item[\rm ii)]
If $B$ is bounded, then 
\beq \label{eq.Bbdd} w_e(A,B)\subseteq W_e(A,B).\eeq
If, in addition, \eqref{eq.0inWe} holds, then equality prevails in~\eqref{eq.Bbdd} and the sets are closed.
\end{enumerate}
\end{Proposition}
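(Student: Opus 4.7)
The plan is to establish (i) by unpacking the definition of $W_e(A-\lm B)$ via~\eqref{eq:We3.1}, ruling out a degenerate witnessing subsequence using hypothesis~\eqref{eq.0inWe}, and then passing to the closure via the closedness of $w_e(A,B)$; to prove (ii) by exploiting boundedness of $B$ to extract a convergent subsequence of the scalars $\langle Bx_n,x_n\rangle$; and finally to sandwich the two inclusions to deduce the equality and closedness.

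For (i), fix $\lm\in W_e(A,B)$, so by Definition~\ref{def.We} and \eqref{eq:We3.1} there is a weakly null unit sequence $(x_n)\subset\dom(A)$ with $\langle (A-\lm B)x_n,x_n\rangle\to 0$. The key step is to show that, after extracting a subsequence, $|\langle Bx_n,x_n\rangle|\geq c$ for some $c>0$. If this were to fail, a subsequence would satisfy $\langle Bx_n,x_n\rangle\to 0$, which together with $\langle (A-\lm B)x_n,x_n\rangle\to 0$ forces $\langle Ax_n,x_n\rangle\to 0$ as well. But then $0\in W_e(A)\cap W_e(B)$, and moreover $\langle (A-\mu B)x_n,x_n\rangle\to 0$ for every $\mu\in\C$, so $W_e(A,B)=\C$; this simultaneous conclusion contradicts \eqref{eq.0inWe}. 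With $|\langle Bx_n,x_n\rangle|$ bounded below, dividing yields $\langle Ax_n,x_n\rangle/\langle Bx_n,x_n\rangle\to\lm$, whence $\lm\in w_e(A,B)$. For an arbitrary $\lm\in\overline{W_e(A,B)}$ I would approximate by $\lm_m\in W_e(A,B)$, apply the previous step to each $\lm_m$, and invoke closedness of $w_e(A,B)$ from Remark~\ref{rem.inverse}(iii).

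For (ii), take $\lm\in w_e(A,B)$ with witnessing sequence $(x_n)$. Boundedness of $B$ makes $(\langle Bx_n,x_n\rangle)$ a bounded scalar sequence, so along a subsequence it converges to some $\beta\in\C$. If $\beta\neq 0$ then $\langle Ax_n,x_n\rangle\to\lm\beta$; if $\beta=0$, writing $\langle Ax_n,x_n\rangle=\langle Bx_n,x_n\rangle\cdot\langle Ax_n,x_n\rangle/\langle Bx_n,x_n\rangle$ forces $\langle Ax_n,x_n\rangle\to 0$. Either way $\langle (A-\lm B)x_n,x_n\rangle\to 0$, so $\lm\in W_e(A,B)$, giving \eqref{eq.Bbdd}. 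Under the additional hypothesis \eqref{eq.0inWe}, combining with (i) produces the chain $w_e(A,B)\subseteq W_e(A,B)\subseteq\overline{W_e(A,B)}\subseteq w_e(A,B)$, forcing equality throughout and showing both sets coincide with the closed set $w_e(A,B)$ from Remark~\ref{rem.inverse}(iii). The only delicate point is the dichotomy in (i): ruling out simultaneous vanishing of $\langle Ax_n,x_n\rangle$ and $\langle Bx_n,x_n\rangle$ along a subsequence is the unique step that invokes hypothesis \eqref{eq.0inWe}.
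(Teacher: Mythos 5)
Your proposal is correct and follows essentially the same route as the paper. In part (i) the key contradiction---that simultaneous vanishing of $\langle Ax_n,x_n\rangle$ and $\langle Bx_n,x_n\rangle$ along the singular sequence would give both $0\in W_e(A)\cap W_e(B)$ and $W_e(A,B)=\C$, negating~\eqref{eq.0inWe}---is exactly what the paper uses (referring back to the proof of the second part of Proposition~\ref{prop.num.range}~i) with the extra weak-null condition), and the closure step via Remark~\ref{rem.inverse}~iii) is the same. In part (ii) you arrive at the same conclusion by a slightly more roundabout path: you extract a convergent subsequence of the bounded scalars $\langle Bx_n,x_n\rangle$ and split into the cases $\beta\ne 0$, $\beta=0$. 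The paper instead writes $\langle(A-\lm B)x_n,x_n\rangle=\langle Bx_n,x_n\rangle\bigl(\tfrac{\langle Ax_n,x_n\rangle}{\langle Bx_n,x_n\rangle}-\lm\bigr)$ and bounds this directly by $\|B\|\,\bigl|\tfrac{\langle Ax_n,x_n\rangle}{\langle Bx_n,x_n\rangle}-\lm\bigr|\to 0$, which dispenses with the subsequence extraction and case split. Both arguments are valid; the direct estimate is cleaner, and yours still only yields a witnessing subsequence for $W_e(A-\lm B)$, which nonetheless suffices. The final sandwich $w_e(A,B)\subseteq W_e(A,B)\subseteq\overline{W_e(A,B)}\subseteq w_e(A,B)$ is identical to the paper's.
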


\begin{proof}
i)
Assume that~\eqref{eq.0inWe} is violated.
Then the proof of $W_e(A,B)\subseteq w_e(A,B)$ is analogous to the proof of the second part of Proposition~\ref{prop.num.range}~i); the only difference is that here we take the weak convergence $x_n\stackrel{w}{\to}0$ into account.
Now the claim follows from the closedness of $w_e(A,B)$, see Remark~\ref{rem.inverse}~iii).

ii)
Let $\lm\in w_e(A,B)$. By definition, there exists a sequence $(x_n)_{n\in\N}\subset\dom(A)$ with $\|x_n\|=1$, $x_n\stackrel{w}{\to}0$ and 
$$\langle Bx_n,x_n\rangle\neq 0, \quad \frac{\langle A x_n,x_n\rangle}{\langle B x_n,x_n\rangle}\tolong \lm, \quad n\to\infty.$$
Since $B$ is bounded, we obtain
\begin{align*}
|\langle (A-\lm B)x_n,x_n\rangle|
&\leq \left|\frac{\langle A x_n,x_n\rangle}{\langle B x_n,x_n\rangle}-\lm\,\right|\|B\|\tolong 0, \quad n\to\infty.
\end{align*}
Therefore, $0\in W_e(A-\lm B)$ and hence $\lm\in W_e(A,B)$.

The rest of the claim follows from claim~i).
\end{proof}

\begin{Remark}
The inequality~\eqref{eq.Bbdd} may be strict (in which case~\eqref{eq.0inWe} is violated). As an example, let $A=B$ be compact. Then $w_e(A,B)\subseteq \{1\}$ whereas $W_e(A,B)=\C$.
\end{Remark}

Next we illustrate Proposition~\ref{prop.ess.num.range}~i) 
when $W_e(A,B)$ is not closed.

\begin{Example}\label{ex.notclosed}
In $l^2(\N)$ consider the operators 
$$A:={\rm diag}\{(-1)^n n^4+\I n:\,n\in\N\}, \quad B:={\rm diag}\{n^3+\I\,(-1)^n n^2:\,n\in\N\}.$$
Let $\lm\in\R\backslash\{0\}$. Then it may be shown that $0\in W_e(A-\lm B) = {\mathbb C}$
 (see Remark~\ref{rem.inverse}~iv)) 
and hence $\lm\in W_e(A,B)$.
However, $0\notin W_e(A,B)$ since $0\notin W_e(A)=\emptyset$.
One may check that $w_e(A,B)=\R$. By Proposition~\ref{prop.ess.num.range}~i), we obtain $W_e(A,B)\subseteq w_e(A,B)$ and thus $W_e(A,B)=\R\backslash\{0\}$.
 Therefore, $W_e(A,B)$ is neither closed nor convex; it is not even connected.
\end{Example}

In the latter example the operator $B$ was $A$-bounded but not $A$-form bounded.
In the next result we consider form bounded operators.

\begin{Proposition}\label{prop.We.empty}
Assume that $W_e(A)=\emptyset$ and $B$ is $A$-form bounded with relative form bound $\beta$. Then
$$\overline{W_e(A,B)}\subseteq w_e(A,B)\subseteq\{\lm\in\C:\,|\lm|\geq \beta^{-1}\};$$
if $\beta=0$, then
$$W_e(A,B)=w_e(A,B)=\emptyset.$$
\end{Proposition}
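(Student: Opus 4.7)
The plan is to establish the two inclusions separately and then derive the $\beta=0$ consequence.

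The first inclusion $\overline{W_e(A,B)}\subseteq w_e(A,B)$ follows immediately from Proposition~\ref{prop.ess.num.range}~i): since $W_e(A)=\emptyset$, the first alternative in condition~\eqref{eq.0inWe} holds trivially, so the inclusion is granted by that proposition.

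For the second inclusion, I would fix $\lm\in w_e(A,B)$ and pick a witnessing sequence $(x_n)\subset\dom(A)$ with $\|x_n\|=1$, $x_n\stackrel{w}{\to}0$, $\langle Bx_n,x_n\rangle\neq 0$, and $\langle Ax_n,x_n\rangle/\langle Bx_n,x_n\rangle\to\lm$. The crucial preliminary observation is that $|\langle Ax_n,x_n\rangle|\to\infty$: otherwise a bounded subsequence of $(\langle Ax_n,x_n\rangle)$ would, by Bolzano--Weierstrass, admit a convergent sub-subsequence whose limit would lie in $W_e(A)$ by the defining characterisation~\eqref{eq:We3.1}, contradicting $W_e(A)=\emptyset$. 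Next, for any $\beta'>\beta$ choose $\alpha'\geq 0$ satisfying~\eqref{eq.formbdd}; evaluating at $x=x_n$ with $\|x_n\|=1$ yields
$$|\langle Bx_n,x_n\rangle|\leq \alpha' + \beta'|\langle Ax_n,x_n\rangle|.$$
Dividing by $|\langle Ax_n,x_n\rangle|$ (nonzero for large $n$) and letting $n\to\infty$ gives
$$\frac{1}{|\lm|}\;=\;\lim_{n\to\infty}\frac{|\langle Bx_n,x_n\rangle|}{|\langle Ax_n,x_n\rangle|}\;\leq\;\beta',$$
with the convention $1/|\lm|=\infty$ when $\lm=0$ (which therefore forces $\lm\neq 0$ whenever $\beta'<\infty$). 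Since $\beta'>\beta$ is arbitrary, we conclude $|\lm|\geq\beta^{-1}$.

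For the final statement, when $\beta=0$ the set $\{\lm\in\C:|\lm|\geq\beta^{-1}\}$ is empty, so the chain of inclusions forces $W_e(A,B)\subseteq\overline{W_e(A,B)}\subseteq w_e(A,B)=\emptyset$. The only delicate point is the treatment of the case $\lm=0$ in the second inclusion, which is cleanly absorbed by reading $1/|\lm|$ as $\infty$ in the limit inequality; the real engine of the argument is the translation of $W_e(A)=\emptyset$ into the blow-up $|\langle Ax_n,x_n\rangle|\to\infty$, which then meshes with the form bound to produce the lower bound on $|\lm|$.
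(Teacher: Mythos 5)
Your proposal is correct and follows essentially the same approach as the paper: the first inclusion via Proposition~\ref{prop.ess.num.range}~i), then translating $W_e(A)=\emptyset$ into $|\langle Ax_n,x_n\rangle|\to\infty$ and combining this with the form bound to bound $|\lm|$ from below. The only cosmetic differences are that you spell out the Bolzano--Weierstrass step the paper leaves implicit and parametrise by $\beta'>\beta$ rather than $\beta+\eps$.
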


\begin{proof}
The assumption $W_e(A)=\emptyset$ and Proposition~\ref{prop.ess.num.range}~i) imply $\overline{W_e(A,B)}\subseteq w_e(A,B)$.
Let $(x_n)_{n\in\N}\subset\dom(A)$ satisfy $\|x_n\|=1$ and $x_n\stackrel{w}{\to}0$. Fix $\eps>0$.
The relative form boundedness implies the existence of $\alpha_{\eps}\geq 0$ such that
$$|\langle B x_n,x_n\rangle|\leq \alpha_{\eps}+(\beta+\eps)|\langle A x_n,x_n\rangle|, \quad n\in\N.$$
The assumption $W_e(A)=\emptyset$ yields $|\langle A x_nx_n\rangle|\to\infty$.
Therefore, $$\limsup_{n\to\infty}\left|\frac{\langle Bx_n,x_n\rangle}{\langle A x_n,x_n\rangle}\right|\leq \beta+\eps.$$
Since $\eps>0$ was arbitrary, we arrive at $w_e(A,B)\subseteq \{\lm\in\C:\,|\lm|\geq \beta^{-1}\}$.
\end{proof}

\subsection{Equivalent characterisations and perturbation results for operators}
Before proceeding to the study of equivalent characterisations and perturbation result for pencils, for the convenience of the reader we review the corresponding 
properties for operators (the case $B=I$). The material in this section is a summary of results from \cite{We}.
\begin{Theorem}\cite[Theorem 3.1]{We}\label{thmequivdefforWe}
Let $\cV$ be the set of all finite-dimensional subspaces $V\subset H$. Define
   \begin{align*}
    W_{e1}(A)&:=\underset{V \in\cV}{\bigcap}\overline{W(A|_{V^{\perp}\cap\dom(A)})},\\
    W_{e2}(A)&:=\underset{K\in L(H)\atop {\rm rank }\,K<\infty}{\bigcap}\overline{W(A+K)},\\
    W_{e3}(A)&:=\!\underset{K\in L(H)\atop K \text{ compact}}{\bigcap}\!\overline{W(A+K)},\\
    W_{e4}(A)&:=\big\{\lm\in\C:\,\exists\,(e_n)_{n\in\N}\subset\dom(A)\text{ orthonormal with }
		                             \langle Ae_n,e_n\rangle\stackrel{n\to\infty}{\longrightarrow}\lm\big\}. \vspace{1mm}   \end{align*}
Then, in general,
\begin{align}
\label{Weinclgeneral}
&W_{e1}(A)\subseteq W_{e4}(A) \subseteq W_{e2}(A)= W_{e3}(A)=W_{\!e}(A).\\
\intertext{%
If $\,\overline{\dom(A)}=H$, then}
\label{Weincldd}
&W_{e1}(A)\subseteq W_{e4}(A)= W_{e2}(A)= W_{e3}(A)= W_{\!e}(A).
\end{align}%
If $\,\overline{\dom(A)\cap\dom(A^*)}=H$ or if $\,W(A)\neq\C$, then
\begin{equation}
\label{Weinclspecial}
W_{\!e}(A)=W_{ei}(A), \quad i=1,2,3,4.
\end{equation}
\end{Theorem}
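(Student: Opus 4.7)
The plan is to prove the inclusions in \eqref{Weinclgeneral}, \eqref{Weincldd} and \eqref{Weinclspecial} in stages, starting with those that hold without any density assumption and upgrading under the stated hypotheses on $\dom(A)$ or $W(A)$.

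First I would dispatch the unconditional part. The equality $W_e(A) = W_{e3}(A)$ is \eqref{eq.We3}. For $W_{e2}(A) = W_{e3}(A)$, the inclusion $W_{e3} \subseteq W_{e2}$ is trivial, and for the reverse one approximates a compact $K$ in norm by finite-rank $F_n$: a vector $x_n \in \dom(A)$ with $\|x_n\|=1$ and $|\langle(A+F_n)x_n,x_n\rangle - \lm|<1/n$ automatically satisfies $|\langle(A+K)x_n,x_n\rangle - \lm| \leq 1/n + \|F_n-K\|\to 0$, giving $\lm\in\overline{W(A+K)}$. For $W_{e4}(A)\subseteq W_{e2}(A)$, an orthonormal $(e_n)\subset\dom(A)$ with $\langle Ae_n,e_n\rangle\to\lm$ is weakly null, and for every finite-rank $K$ Bessel's inequality forces $\langle Ke_n,e_n\rangle \to 0$. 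For $W_{e1}(A) \subseteq W_{e4}(A)$, one constructs $(e_n)$ inductively: having chosen orthonormal $e_1,\dots,e_n \in \dom(A)$, the defining property of $W_{e1}$ applied to $V = \operatorname{span}\{e_1,\dots,e_n\}$ yields $e_{n+1} \in V^\perp \cap \dom(A)$ of unit norm with $|\langle A e_{n+1}, e_{n+1}\rangle - \lm| < 1/n$.

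To obtain the improved inclusion $W_{e4}(A) = W_{e2}(A)$ in the dense case, I would prove $W_e(A) \subseteq W_{e4}(A)$. Start from a singular sequence $x_n \in \dom(A)$, $\|x_n\|=1$, $x_n \w 0$, $\langle A x_n, x_n\rangle \to \lm$, and build orthonormal $(y_m) \subset \dom(A)$ by an interleaved Gram--Schmidt: having chosen $y_1,\dots,y_{m-1}$ with orthogonal projection $P_{m-1}$ onto their span, pick the index $n(m)$ so large that $\|P_{m-1} x_{n(m)}\|$ is small compared to both $1/m$ and $\bigl(1 + \max_{j<m}\|A y_j\|\bigr)^{-1}$ (possible because $x_n \w 0$), and set $y_m := (I-P_{m-1})x_{n(m)}/\|(I-P_{m-1})x_{n(m)}\|$. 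Since each $y_j \in \dom(A)$, also $P_{m-1} x_{n(m)} \in \dom(A)$ and hence $y_m \in \dom(A)$; expanding $\langle A y_m, y_m\rangle$ and using the control on $A P_{m-1} x_{n(m)}$ shows $\langle A y_m, y_m\rangle \to \lm$. Density of $\dom(A)$ enters only implicitly, to ensure singular sequences exist with unit norm; the main obstacle is the interleaving of indices.

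Finally, under $\overline{\dom(A)\cap\dom(A^*)} = H$ or $W(A)\neq\C$, I would show $W_e(A) \subseteq W_{e1}(A)$, which completes \eqref{Weinclspecial}. For given $\lm\in W_e(A)$ and finite-dimensional $V\subset H$, the goal is to produce $x \in V^\perp \cap \dom(A)$ of unit norm with $\langle A x, x\rangle$ close to $\lm$. The naive attempt -- project a singular sequence $x_n$ against $V$ and renormalise -- fails because $P_V x_n$ need not lie in $\dom(A)$. Under the first hypothesis, a basis of $V$ may be replaced (up to arbitrarily small error) by vectors in the dense set $\dom(A) \cap \dom(A^*)$, so that the corresponding projection respects $\dom(A)$ and the form-correction terms $\langle A P_V x_n, x_n\rangle$, $\langle A x_n, P_V x_n\rangle$, $\langle A P_V x_n, P_V x_n\rangle$ all vanish in the limit. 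Under the second hypothesis, Kato's theorem gives $\mu\notin\overline{W(A)}$ with $(A-\mu)^{-1}\in L(H)$, and one can transfer the problem to the bounded operator $A(A-\mu)^{-1}$, whose essential numerical range behaves well under finite-rank orthogonal projection. I expect the form-regularity book-keeping in both cases to be the main technical obstacle, precisely because it addresses the tension between exact orthogonality and unboundedness of $A$ that is responsible for the hypothesis-free inclusions being strict in general.
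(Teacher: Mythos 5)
Your structure for the unconditional inclusions is sound: the arguments for $W_{e2}=W_{e3}$ (approximate compact $K$ by finite-rank in norm), for $W_{e4}\subseteq W_{e2}$ (Bessel kills $\langle Ke_n,e_n\rangle$ for finite-rank $K$), and for $W_{e1}\subseteq W_{e4}$ (inductive selection in successive orthocomplements) are all correct. However, two issues prevent this from being a proof.

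First, a minor but real one: you dispose of $W_e(A)=W_{e3}(A)$ by citing \eqref{eq.We3}, but \eqref{eq.We3} \emph{is} the assertion $W_e=W_{e3}$; the inclusion $W_e\subseteq W_{e3}$ is elementary (a compact $K$ applied to a weakly null normalised sequence gives $\langle Kx_n,x_n\rangle\to 0$), but the reverse direction $W_{e3}\subseteq W_e$ --- extracting a genuine singular sequence from the bare knowledge that $\lm$ survives every compact perturbation --- is the substantive half and needs an argument.

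Second, and more seriously, the Gram--Schmidt step for $W_e\subseteq W_{e4}$ under $\overline{\dom(A)}=H$ has a gap in exactly the place you flag as the heart of the matter. Writing $z=x_{n(m)}$, $P=P_{m-1}$, $w=(I-P)z$, you need $\langle Aw,w\rangle=\langle Az,z\rangle-\langle Az,Pz\rangle-\langle APz,z\rangle+\langle APz,Pz\rangle$ to be close to $\lm$. Your choice of $n(m)$ controls $\|Pz\|$ and hence $\|APz\|$ (since $A$ restricted to the span of $y_1,\dots,y_{m-1}$ is bounded), so the last two terms are fine. But the middle term $\langle Az,Pz\rangle$ involves $Az=Ax_{n(m)}$, on which you have no bound whatsoever: a singular sequence gives $\langle Ax_n,x_n\rangle\to\lm$ but says nothing about $\|Ax_n\|$, which can diverge arbitrarily fast. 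Making $\|Pz\|$ small by increasing $n(m)$ may simultaneously make $\|Az\|$ enormous, so $|\langle Az,Pz\rangle|\leq\|Az\|\|Pz\|$ is not controlled, nor is $\sum_{j<m}|\langle Az,y_j\rangle|\,|\langle z,y_j\rangle|$, because $\langle Ax_n,y_j\rangle$ need not tend to $0$ (you would want $y_j\in\dom(A^*)$ for that). This is precisely the obstruction that, in your final paragraph, you correctly identify as the reason the unconditional $W_{e1}$-inclusion fails --- but it already defeats the interleaved Gram--Schmidt under the weaker hypothesis $\overline{\dom(A)}=H$. The correct argument in the dense case must take a different route, e.g.\ exploiting the identification $W_e=W_{e2}$ and choosing the finite-rank perturbation $K$ adaptively at each inductive step so as to defeat the troublesome cross term, rather than manipulating the singular sequence alone.
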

We remark in particular the equivalence $W_{e2}(A)=W_{e3}(A)=W_{e}(A)$ in all cases. The fact that the other inclusions may be strict is shown
by examples in \cite{We}.

For the case of selfadjoint operators, $W_e$ can be found from the extended essential spectrum of Levitin and Shargorodsky.
\begin{Theorem} \cite[Theorem 3.8]{We}\label{Thm2.11} 
If $A=A^*$ is bounded define $\widehat{\sigma}_e(A) = \sigma_e(A)$; 
otherwise if $A=A^*$ is
unbounded let $\widehat{\sigma}_e(A)$ be $\sigma_e(A)$ with $+\infty$ and/or $-\infty$ added if $A$ is unbounded from above and/or from below. 
 Then
\[ W_e(A) = \mbox{\rm conv}(\widehat{\sigma}_e(A))\setminus \{-\infty,+\infty\}. \]
\end{Theorem}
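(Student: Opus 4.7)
The plan is to combine the singular-sequence characterisation \eqref{eq:We3.1} with the spectral resolution $A=\int_\R t\,\rd E(t)$, exploiting the fact from Theorem~\ref{thmequivdefforWe} (\cite{We}) that $W_e(A)$ is always closed and convex. Since $A=A^*$ forces $\langle Ax,x\rangle\in\R$, we have $W_e(A)\subseteq\R$, so $W_e(A)$ is a (possibly empty or unbounded) closed interval, and the theorem reduces to identifying its endpoints with $\inf\widehat\sigma_e(A)$ and $\sup\widehat\sigma_e(A)$.

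For the inclusion $\operatorname{conv}(\widehat\sigma_e(A))\setminus\{\pm\infty\}\subseteq W_e(A)$, each $\lambda\in\sigma_e(A)$ admits a Weyl sequence and so lies in $W_e(A)$; by convexity, $\operatorname{conv}(\sigma_e(A))\subseteq W_e(A)$. To extend this to the infinite endpoints of $\widehat\sigma_e(A)$, say $A$ is unbounded above, I would first argue that $E((n,\infty))$ has infinite rank for every $n$: if it were finite-rank, the restriction of $A$ to its range would be bounded and the complement would be bounded by $n$, forcing $A$ to be bounded above. Hence there exists an orthonormal sequence $(u_k)$ with $u_k\in\ran(E((k,\infty)))$, so that $u_k\stackrel{w}{\to}0$ and $\alpha_k:=\langle Au_k,u_k\rangle\geq k$. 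Given $\mu\in\sigma_e(A)$ (when non-empty), pick a Weyl sequence $(v_k)$ for $\mu$ supported in $E((\mu-1/k,\mu+1/k))$; for large $k$ the supports are disjoint, so $u_k\perp v_k$ and the cross term $\langle Av_k,u_k\rangle$ vanishes via the spectral calculus. Setting $z_k=a_k v_k + b_k u_k$ with $|a_k|^2+|b_k|^2=1$ and $|b_k|^2(\alpha_k-\langle Av_k,v_k\rangle)\to\lambda-\mu$ (forcing $b_k\to 0$) gives a singular sequence with $\langle Az_k,z_k\rangle\to\lambda$ for any prescribed $\lambda\in(\mu,\infty)$, so $(\mu,\infty)\subseteq W_e(A)$. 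When $\sigma_e(A)=\emptyset$ and $A$ is unbounded on both sides, an analogous mixture of orthonormal vectors drawn from $E((k,\infty))$ and $E((-\infty,-k))$ realises every real $\lambda$; the remaining cases ($\sigma_e(A)=\emptyset$ with $A$ bounded on one side) yield $W_e(A)=\emptyset$, in accord with the claim.

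For the reverse inclusion $W_e(A)\subseteq\operatorname{conv}(\widehat\sigma_e(A))$, I would suppose $\lambda>\sup\widehat\sigma_e(A)$; this forces $+\infty\notin\widehat\sigma_e(A)$, i.e.\ $A$ is bounded above, with $\lambda>\sup\sigma_e(A)=:M^+$ (setting $\sup\emptyset=-\infty$). Choose $\mu\in(M^+,\lambda)$: the projection $Q:=E((\mu,\infty))$ is finite-rank because $\sigma(A)\cap(\mu,\infty)$ is bounded (as $A$ is bounded above) and disjoint from $\sigma_e(A)$, hence has no accumulation point and consists of finitely many eigenvalues of finite multiplicity. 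For any $x_n\in\dom(A)$ with $\|x_n\|=1$, $x_n\stackrel{w}{\to}0$, we have $Qx_n\to 0$ strongly, and the spectral splitting
\[
\langle Ax_n,x_n\rangle=\langle A(I-Q)x_n,(I-Q)x_n\rangle+\langle AQx_n,Qx_n\rangle\leq\mu\|(I-Q)x_n\|^2+o(1)
\]
yields $\limsup\langle Ax_n,x_n\rangle\leq\mu<\lambda$, so $\lambda\notin W_e(A)$. The symmetric argument treats $\lambda<\inf\widehat\sigma_e(A)$.

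The main obstacle is the bookkeeping across the sub-cases (bounded or unbounded above/below, $\sigma_e(A)$ empty or not), ensuring the conventions $\sup\emptyset=-\infty$ and $\pm\infty\in\widehat\sigma_e(A)$ align with the constructive cases of the $\supseteq$ direction and the spectral-truncation cases of the $\subseteq$ direction. The technical heart is the spectral-projection truncation, which converts ``bounded essential spectrum above'' into ``finite-rank spectral projection'', paired with the two-scale mixing of Weyl-type sequences in orthogonal spectral subspaces to capture the open endpoints contributed by $\pm\infty$.
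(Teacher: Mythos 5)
Your argument is correct in substance and is the natural spectral-theoretic proof; note that the paper does not include a proof of this statement — it is quoted verbatim as \cite[Theorem~3.8]{We} — so there is nothing in the present manuscript to compare against, but the approach you sketch (reduce to identifying the endpoints of the closed real interval $W_e(A)$ using $W_e=W_{e3}$ for convexity/closedness, then mix orthogonal Weyl-type sequences built from spectral projections, and for the reverse inclusion truncate by a finite-rank spectral projection) is exactly what one expects for the selfadjoint case and is consistent with the reference.

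Two small points you should tighten when writing this out. First, when you take ``an orthonormal sequence $(u_k)$ with $u_k\in\ran(E((k,\infty)))$'' and set $\alpha_k=\langle Au_k,u_k\rangle$, you must ensure $u_k\in\dom(A)$: an arbitrary vector in $\ran(E((k,\infty)))$ need not be in the domain, and the form value could be $+\infty$. This is repaired by choosing $u_k\in\ran(E((k,M_k)))$ for suitable finite $M_k$ (such a nonzero subspace exists because $E((k,\infty))=\sup_M E((k,M))$ has infinite rank), which keeps $u_k$ in $\dom(A)$ with $\alpha_k\ge k$ finite and preserves the orthogonality to $v_k$ for large $k$. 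Second, in the reverse-inclusion step, to conclude that $Q=E((\mu,\infty))$ has finite rank it is cleaner to take $\mu'\in(M^+,\mu)$ and observe that $\sigma(A)\cap[\mu',\infty)$ is a compact set of isolated eigenvalues of finite multiplicity (any accumulation point would lie in $\sigma_e(A)$, contradicting $\mu'>M^+$), hence finite; your phrasing ``disjoint from $\sigma_e(A)$, hence has no accumulation point'' is morally this but glosses the need to include the endpoint in the compactness argument. With these repairs the case bookkeeping you outline closes correctly.
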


The final part of our review consists of perturbation results which we shall need later. In general, the essential numerical range is not invariant under
perturbations which are only relatively compact, but not compact. The following results give additional hypotheses under which relative compactness
is sufficient for invariance.
\begin{Theorem}\cite[Theorem 4.5]{We}\label{thmABUV}
Let $T=A+\I B$ and $S=U+\I V$ with symmetric operators $A$,~$B$ and $U$, $V$ in $H$ such that
\emph{one} of the following holds:
\begin{enumerate}
 \item[\rm (i)] \hspace{0.5mm}$A$ is selfadjoint and semibounded, $U$, $V$ are $A$-compact, or
 \item[\rm (ii)] $B$ is selfadjoint and semibounded, $U$, $V$ are $B$-compact, or
 \item[\rm (iii)] $A$, $B$ are selfadjoint and semibounded, $U$ is $A$-compact and $V$ is $B$-compact.
\end{enumerate}
\noindent Then $W_{\!e}(T)=W_e(T+S)$.
\end{Theorem}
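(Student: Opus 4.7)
The plan is to work directly from the singular-sequence characterisation \eqref{eq:We3.1}. Since $W_e(T)$ and $W_e(T+S)$ are obtained from limits of $\langle Tx_n,x_n\rangle$ and $\langle (T+S)x_n,x_n\rangle$ along the \emph{same} class of sequences (namely $x_n\in\dom(T)=\dom(T+S)$, $\|x_n\|=1$, $x_n\w 0$; the domains coincide because $A$-compactness gives $\dom(A)\subseteq\dom(U)\cap\dom(V)$), the theorem reduces to showing that $\langle Sx_n,x_n\rangle\tolong 0$ along every such sequence for which either $\langle Tx_n,x_n\rangle$ or $\langle (T+S)x_n,x_n\rangle$ converges.

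Next I extract boundedness of the form of $A$ (and, in case (iii), of $B$). Splitting into real and imaginary parts, the convergence of the numerical-range value bounds $\langle Ax_n,x_n\rangle$ (and $\langle Bx_n,x_n\rangle$ in case (iii)). If one only knows boundedness of $\langle(A+U)x_n,x_n\rangle$, one absorbs the $U$-contribution by the infinitesimal $A$-form bound for $U$ (a consequence, for symmetric $A$-compact $U$ and selfadjoint semibounded $A$, of the fact that the $A$-relative bound of $U$ is $0$). After a harmless shift assume $A\geq 1$ (and $B\geq 1$ in case (iii)); then the above gives $\|A^{1/2}x_n\|$ bounded.

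The heart of the argument is the following claim, which I would state and prove as a lemma: if $U$ is symmetric and $A$-compact with $A\geq 1$ selfadjoint, then $\langle Uy_n,y_n\rangle\to 0$ for every $y_n\in\dom(A)$ with $\|y_n\|+\|A^{1/2}y_n\|$ bounded and $y_n\w 0$. To prove it I split with the spectral projection $E_N=\chi_{[0,N]}(A)$: set $p_n^N=E_Ny_n$, $q_n^N=(I-E_N)y_n$. Then $\|q_n^N\|\leq N^{-1/2}\|A^{1/2}q_n^N\|\leq CN^{-1/2}$, while $p_n^N\in\dom(A)$ with $\|Ap_n^N\|\leq N$ and $p_n^N\w 0$ (for fixed $N$) as $n\to\infty$. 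Using that $(A+\mathrm i)p_n^N\w 0$ in $H$ and that $U(A+\mathrm i)^{-1}$ is compact, one obtains $\|Up_n^N\|\to 0$. Expanding
\begin{equation*}
\langle Uy_n,y_n\rangle=\langle Up_n^N,p_n^N\rangle+2\re\langle Up_n^N,q_n^N\rangle+\langle Uq_n^N,q_n^N\rangle,
\end{equation*}
the first two summands vanish as $n\to\infty$ by Cauchy--Schwarz and $\|Up_n^N\|\to 0$. The third is controlled using the infinitesimal operator-relative bound of $U$: for any $\eps>0$, $\|Uq_n^N\|\leq\eps\|Aq_n^N\|+C_\eps\|q_n^N\|$, and a $A$-form version of this estimate (obtained by interpolation between $\|U(A+1)^{-1}\|$ and $U$ as a form on $\dom(A^{1/2})$, i.e.\ a Heinz-type inequality for the symmetric $U$) yields $|\langle Uq_n^N,q_n^N\rangle|\leq \eps\,C+C_\eps CN^{-1}$. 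Letting $n\to\infty$, then $N\to\infty$, then $\eps\to 0$ proves the lemma.

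Applying the lemma to both $U$ and $V$ in case (i), and to $U$ (with respect to $A$) and $V$ (with respect to $B$) in case (iii), gives $\langle Sx_n,x_n\rangle\to 0$ along every admissible singular sequence, hence $W_e(T)=W_e(T+S)$. Case (ii) follows from case (i) by noting that $W_e(T)$ is unchanged under multiplication by $-\mathrm i$, which swaps $A\leftrightarrow B$ and $U\leftrightarrow V$. The main obstacle is the final step of the lemma: passing from the operator-relative infinitesimal bound of $U$ to a \emph{form}-relative infinitesimal bound on the high-frequency piece $q_n^N$, which is where the symmetry of $U$ (combined with the selfadjointness and semiboundedness of $A$) is essential and where a Heinz-type interpolation argument is the key technical input.
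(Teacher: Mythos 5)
This result is cited in the paper as \cite[Theorem~4.5]{We}; no proof appears here, so there is no in‑paper proof to compare against. That said, your argument is correct, and it appears to follow essentially the same route as the original: the paper later says (in the proof of Theorem~\ref{thm.pert.rel.comp}) that the cited proof shows precisely that in each case $S$ is $T$-form bounded with relative form bound $0$ and that $\langle Sx_n,x_n\rangle\to0$ whenever $(x_n)$ is a normalised weak-null sequence with $|\langle Tx_n,x_n\rangle|$ bounded, which is exactly the reduction you carry out. Your identification of the single nontrivial technical input is also correct: passing from an $A$-operator bound $0$ for the symmetric $U$ to an $A$-form bound $0$ is a genuine Heinz/Stein-interpolation step (one shows $\|(A+t)^{-1/2}U(A+t)^{-1/2}\|\le\|U(A+t)^{-1}\|$, letting $t\to\infty$), and without it the high-frequency piece $q_n^N=(I-E_N)y_n$ cannot be controlled, since $\|Aq_n^N\|$ need not be bounded. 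Two small points of hygiene: in case (ii) the rotation that reduces to (i) introduces a sign ($-\mathrm i T=B-\mathrm i A$, $-\mathrm i S=V-\mathrm i U$), so $A',B',U',V'$ acquire minus signs, which is harmless since symmetry and $A$-compactness are sign-invariant; and when inverting the form bound to pass from boundedness of $\langle(A+U)x_n,x_n\rangle$ to boundedness of $\langle Ax_n,x_n\rangle$ one should pick $\eps<1$ in $|\langle Ux,x\rangle|\le\eps\langle Ax,x\rangle+C_\eps\|x\|^2$, as you implicitly do.
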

\begin{Theorem}\cite[Theorem 4.7]{We}\label{thmSqrt}
Let $T=A+\I B$ with uniformly positive $A$ and symmetric $B$ and let $A^{-1/2}S$ be $A^{1/2}$-compact, i.e.\ $A^{-1/2}SA^{-1/2}$ is compact. Then $W_{\!e}(T)=W_e(T+S)$.
In particular, if $S$ is $A$-compact and $\dom(A)\!\subset\!\dom(S) \cap \dom(S^*)$, then $W_{\!e}(T)=W_e(T+S)$.
\end{Theorem}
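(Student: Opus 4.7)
The plan is to exploit the identity
\[
\langle Sx,x\rangle = \langle A^{-1/2}SA^{-1/2}\, y, y\rangle, \qquad y := A^{1/2}x, \quad x\in\dom(A),
\]
together with compactness of $K := A^{-1/2}SA^{-1/2}$, so that singular sequences for $T$ and for $T+S$ can be compared by controlling the quantity $\langle Ax_n,x_n\rangle = \|A^{1/2}x_n\|^2$.

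A preliminary observation I will need is that if $(x_n)\subset\dom(A)$ is a unit sequence with $x_n\w 0$ and $(A^{1/2}x_n)$ is bounded, then $A^{1/2}x_n\w 0$ as well. Indeed, $\langle A^{1/2}x_n,w\rangle = \langle x_n,A^{1/2}w\rangle\to 0$ for each $w\in\dom(A^{1/2})$; the latter subspace is dense because $A$ is selfadjoint and uniformly positive, and boundedness of $(A^{1/2}x_n)$ extends the convergence to all of $H$.

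For $W_e(T)\subseteq W_e(T+S)$, take a singular sequence $(x_n)$ for $T$ at $\lm$. Since $B$ is symmetric and $A$ selfadjoint, $\re\langle Tx_n,x_n\rangle = \langle Ax_n,x_n\rangle\to \re\lm$, so $\|A^{1/2}x_n\|^2$ is bounded; the preliminary observation gives $A^{1/2}x_n\w 0$, and compactness of $K$ yields $\langle Sx_n,x_n\rangle=\langle KA^{1/2}x_n,A^{1/2}x_n\rangle\to 0$, hence $\langle(T+S)x_n,x_n\rangle\to\lm$. Note that $x_n\in\dom(T+S)$ since $\dom(A)\subseteq\dom(S)$ is required for $A^{-1/2}SA^{-1/2}$ to be defined at all.

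The reverse inclusion $W_e(T+S)\subseteq W_e(T)$ is the main obstacle: along a singular sequence $(x_n)$ for $T+S$ we do not know a priori that $\langle Ax_n,x_n\rangle$ stays bounded. I would argue by contradiction. If $\langle Ax_n,x_n\rangle\to\infty$ along a subsequence, set $z_n := A^{1/2}x_n/\|A^{1/2}x_n\|$; the density argument applied to the bounded unit sequence $(z_n)$ still yields $z_n\w 0$, so $\langle Kz_n,z_n\rangle\to 0$ by compactness, i.e.\
\[
\frac{\langle Sx_n,x_n\rangle}{\langle Ax_n,x_n\rangle}=\langle Kz_n,z_n\rangle\longrightarrow 0.
\]
Consequently $\re\langle(T+S)x_n,x_n\rangle = \langle Ax_n,x_n\rangle(1+o(1))\to+\infty$, contradicting convergence to $\re\lm$. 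Thus $\langle Ax_n,x_n\rangle$ is bounded, and the first-direction argument reverses to give $\langle Sx_n,x_n\rangle\to 0$ and $\lm\in W_e(T)$.

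For the ``in particular'' statement I must check that $A$-compactness of $S$ together with $\dom(A)\subseteq\dom(S)\cap\dom(S^*)$ forces $A^{-1/2}SA^{-1/2}$ to extend to a compact operator on $H$. Both $SA^{-1}$ and $A^{-1}S^*$ are compact; a form argument realises $(x,y)\mapsto\langle Sx,y\rangle$ as a bounded sesquilinear form on $\dom(A^{1/2})\times\dom(A^{1/2})$ (using $A$-boundedness of $S$ and $S^*$), and its Riesz representative is the desired extension of $A^{-1/2}SA^{-1/2}$; compactness is then inherited from $SA^{-1}$ by an interpolation argument between the two compact endpoint operators.
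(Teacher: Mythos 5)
Your argument for the main statement (given that $K:=A^{-1/2}SA^{-1/2}$ is compact) is correct, and it is essentially the mechanism that this paper itself re-uses when it invokes the theorem: the identity $\langle Sx,x\rangle=\langle K A^{1/2}x,A^{1/2}x\rangle$, control of $\|A^{1/2}x_n\|^2=\langle Ax_n,x_n\rangle$ via the real part of $\langle Tx_n,x_n\rangle$, the weak-convergence lemma for $A^{1/2}x_n$, and the contradiction argument excluding $\langle Ax_n,x_n\rangle\to\infty$ in the reverse inclusion. (Note the paper only cites this result; it does not reproduce the proof, but in the proof of Theorem \ref{thm.pert.rel.comp} it explicitly extracts exactly this intermediate fact from the cited proof.)

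The ``in particular'' part, however, is genuinely incomplete. The assertion that $A$-boundedness of $S$ and $S^*$ makes $(x,y)\mapsto\langle Sx,y\rangle$ a bounded form on $\dom(A^{1/2})\times\dom(A^{1/2})$ does not follow as stated. What $A$-boundedness of $S$ and $S^*$ (after the closed graph theorem) gives you directly are the two lopsided bounds
\[
|\langle Sx,y\rangle|\le \|SA^{-1}\|\,\|Ax\|\,\|y\|,\qquad
|\langle Sx,y\rangle|=|\langle x,S^*y\rangle|\le \|S^*A^{-1}\|\,\|x\|\,\|Ay\|,
\]
and neither of them, nor their na\"ive combination (Cauchy--Schwarz gives only $|\langle Sx,y\rangle|^2\le C\|Ax\|\,\|Ay\|\,\|x\|\,\|y\|$, and $\|Ax\|\,\|x\|\ge\|A^{1/2}x\|^2$ goes the wrong way), yields $|\langle Sx,y\rangle|\le C\|A^{1/2}x\|\|A^{1/2}y\|$. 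You need a genuine interpolation step here --- a Heinz-type inequality, or a Hadamard/Stein three-lines argument applied to $z\mapsto\langle A^{-z}SA^{-1+z}u,v\rangle$ --- to pass from the endpoint bounds to the symmetric midpoint bound. Similarly, ``compactness is then inherited from $SA^{-1}$ by an interpolation argument'' is doing all the work in one clause: one has to, for instance, show $\|A^{-1/2}SA^{-1/2}(I-P_n)\|\le \|SA^{-1}(I-P_n)\|^{1/2}\|A^{-1}S\|^{1/2}\to 0$ for spectral projections $P_n$ of $A$ (using that $\|SA^{-1}(I-P_n)\|=\|(I-P_n)(SA^{-1})^*\|\to 0$ because $(SA^{-1})^*$ is compact), and then approximate by the compact operators $A^{-1/2}SA^{-1/2}P_n=A^{-1/2}(SA^{-1})(A^{1/2}P_n)$. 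None of this is automatic, and the domain hypothesis $\dom(A)\subset\dom(S^*)$ enters essentially (without it $A^{-1}S^*$ need not even be bounded, and there are weighted-shift examples where $SA^{-1}$ is compact but $A^{-1/2}SA^{-1/2}$ is not). So the strategy you outline is viable, but as written the ``in particular'' step is a gap rather than a proof.
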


\subsection{Equivalent characterisations for pencils}

\begin{Theorem}\label{thm.proj}
Let $\cV$ be the set of all finite-dimensional subspaces $V\subset H$.
Assume that $A$ is densely defined
and, for every $\lm\in\C$, 
$$\overline{\dom(A-\lm B)\cap\dom((A-\lm B)^*)} = H \quad \text{or} \quad W(A-\lm B)\neq \C.$$
Then 
\begin{align*}
W_e(A,B)
&=\underset{V\in\mathcal V}{\bigcap}W(A|_{V^{\perp}\cap\dom(A)},B|_{V^{\perp}\cap\dom(A)}).
\end{align*}
\end{Theorem}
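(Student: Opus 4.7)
The plan is to reduce the statement to the operator-level characterisation $W_e(T)=W_{e1}(T)$ from Theorem \ref{thmequivdefforWe} by setting $T=A-\lm B$ for each fixed $\lm\in\C$ and then unwinding the definitions on both sides.

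First I would verify that the hypotheses of Theorem \ref{thmequivdefforWe}~\eqref{Weinclspecial} are in place for the operator $T_\lm := A-\lm B$, for every $\lm\in\C$. Since $\dom(A)\subseteq \dom(B)$ we have $\dom(T_\lm)=\dom(A)$, so the density of $\dom(A)$ gives density of $\dom(T_\lm)$ and the adjoint $T_\lm^*$ is well defined. The dichotomy assumed in the statement of Theorem~\ref{thm.proj}, namely $\overline{\dom(T_\lm)\cap\dom(T_\lm^*)}=H$ or $W(T_\lm)\neq\C$, is exactly the hypothesis \eqref{Weinclspecial} requires to conclude
\[
W_e(T_\lm)\;=\;W_{e1}(T_\lm)\;=\;\bigcap_{V\in\cV}\overline{W\bigl(T_\lm|_{V^{\perp}\cap\dom(T_\lm)}\bigr)}.
\]

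Next I would translate both sides in terms of the pencil. By Definition \ref{def.We}, $\lm\in W_e(A,B)$ if and only if $0\in W_e(T_\lm)$, and by the above display this happens if and only if, for every finite-dimensional $V\subset H$,
\[
0\in\overline{W\bigl((A-\lm B)|_{V^{\perp}\cap\dom(A)}\bigr)}.
\]
Since the restrictions $A|_{V^{\perp}\cap\dom(A)}$ and $B|_{V^{\perp}\cap\dom(A)}$ share the common domain $V^{\perp}\cap\dom(A)$, one has $(A-\lm B)|_{V^{\perp}\cap\dom(A)}=A|_{V^{\perp}\cap\dom(A)}-\lm\, B|_{V^{\perp}\cap\dom(A)}$; by Definition \ref{def.W} the displayed condition is exactly $\lm\in W\bigl(A|_{V^{\perp}\cap\dom(A)},\,B|_{V^{\perp}\cap\dom(A)}\bigr)$. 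Intersecting over $V\in\cV$ yields the desired equality.

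I do not anticipate any substantial obstacle: the argument is essentially a rewriting that leverages the pre-existing operator result. The only point to be careful about is the domain bookkeeping when moving from $T_\lm|_{V^{\perp}\cap\dom(T_\lm)}$ to the pair $(A|_{V^{\perp}\cap\dom(A)},B|_{V^{\perp}\cap\dom(A)})$ — this is where the inclusion $\dom(A)\subseteq\dom(B)$ is used to ensure that the restricted pencil and the restricted operator agree and have the same numerical range. Once this identification is made, the dichotomy hypothesis guarantees the applicability of \eqref{Weinclspecial} uniformly in $\lm$, and the proof is complete.
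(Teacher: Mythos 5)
Your proof is correct and follows essentially the same route as the paper: fix $\lambda$, apply the $W_e=W_{e1}$ equivalence from Theorem \ref{thmequivdefforWe} to $T_\lambda=A-\lambda B$ under the stated dichotomy hypothesis, and then interchange the quantifier over $\lambda$ with the intersection over $V$. The domain bookkeeping you flag is indeed the only point requiring care, and you handle it correctly.
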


\begin{proof}
Let $\lm\in\C$.
By Theorem \ref{thmequivdefforWe} for operators we have, 
under the hypothesis $\overline{\dom(A-\lm B)\cap\dom((A-\lm B)^*)} = H$ or $W(A-\lm B)\neq\C$,
$$W_e(A-\lm B)=W_{e1}(A-\lm B)=\underset{V\in\mathcal V}{\bigcap}\overline{W\big((A-\lm B)|_{V^{\perp}\cap\dom(A)}\big)}.$$
Therefore,
\be
\begin{aligned}
W_e(A,B)
&=\bigg\{\lm\in\C:\,0\in \underset{V\in\mathcal V}{\bigcap}\overline{W\big((A-\lm B)|_{V^{\perp}\cap\dom(A)}\big)}\bigg\}\\
&= \underset{V\in\mathcal V}{\bigcap}\Big\{\lm\in\C:\,0\in \overline{W\big((A-\lm B)|_{V^{\perp}\cap\dom(A)}\big)}\Big\}\\
&=\underset{V\in\mathcal V}{\bigcap}W(A|_{V^{\perp}\cap\dom(A)},B|_{V^{\perp}\cap\dom(A)}).\qedhere
\end{aligned}
\ee
\end{proof}
\smallskip

\begin{Theorem}\label{thm.WeAB}
We have
$$
W_e(A,B)= \underset{K \text{ compact}}{\bigcap} W(A+K,B)=\underset{K\in L(H)\atop {\rm rank}\,K<\infty}{\bigcap} W(A+K,B).
$$
\end{Theorem}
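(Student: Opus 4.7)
The plan is to reduce the pencil statement to the corresponding operator statement, fibrewise in $\lm$. For each fixed $\lm\in\C$, by the very definition of $W_e(A,B)$ we have
\[ \lm\in W_e(A,B) \iff 0\in W_e(A-\lm B). \]
Thus the whole equality amounts to the claim that, for every $\lm$,
\[ 0\in W_e(A-\lm B) \iff \forall K\in\mathcal K:\ 0\in \overline{W((A+K)-\lm B)}, \]
where $\mathcal K$ is either the class of compact or the class of finite-rank bounded operators in $H$. Since any such $K$ is bounded, $\dom(A+K)=\dom(A)\subseteq\dom(B)$, so $W(A+K,B)$ is well-defined, and the right-hand condition on $\lm$ is literally $\lm\in\bigcap_K W(A+K,B)$.

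The key input is Theorem~\ref{thmequivdefforWe}, which gives unconditionally (note: no density or $W\neq\C$ hypothesis is needed for the $W_{e2},W_{e3},W_e$ equivalences) that for every closed operator $T$,
\[ W_e(T)=W_{e2}(T)=W_{e3}(T)=\!\!\bigcap_{K\text{ compact}}\!\!\overline{W(T+K)}=\!\!\bigcap_{K\text{ finite rank}}\!\!\overline{W(T+K)}. \]
Applying this with $T=A-\lm B$, I obtain
\[ 0\in W_e(A-\lm B) \iff 0\in \overline{W((A-\lm B)+K)}\ \text{for every }K\in\mathcal K. \]
Rewriting $(A-\lm B)+K=(A+K)-\lm B$ (which is valid because $K$ is everywhere defined and bounded, so it only perturbs $A$ and not the $B$-part), the right-hand side is exactly $\lm\in W(A+K,B)$ for every $K\in\mathcal K$.

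I would then simply swap the quantifier with the intersection to conclude
\[ W_e(A,B)=\bigcap_{K\in\mathcal K}W(A+K,B), \]
for both choices of $\mathcal K$. There is no real obstacle beyond making sure to invoke the operator-theoretic equivalence in its unconditional form (the $W_{e2}=W_{e3}=W_e$ part of Theorem~\ref{thmequivdefforWe}) rather than the conditional $W_{e1}$/$W_{e4}$ versions, and observing that a bounded perturbation $K$ of $A$ does not alter $\dom(A)$, so the pencil $(A+K,B)$ satisfies the standing domain hypothesis. The argument is entirely a translation of the operator case; in particular, no extra assumption like $0\notin W_e(A)\cap W_e(B)$ or $W_e(A,B)\neq\C$ is needed, because the definition of $W_e(A,B)$ via $0\in W_e(A-\lm B)$ makes the reduction exact.
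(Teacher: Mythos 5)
Your argument is correct and is essentially identical to the paper's proof: both fix $\lm$, apply the unconditional equalities $W_e(T)=W_{e2}(T)=W_{e3}(T)$ from Theorem~\ref{thmequivdefforWe} to $T=A-\lm B$, rewrite $(A-\lm B)+K=(A+K)-\lm B$, and swap the intersection with the set-builder over $\lm$. Your explicit remark that the $W_{e2}=W_{e3}=W_e$ equivalences hold without any density or $W\neq\C$ hypotheses (so no condition like $0\notin W_e(A)\cap W_e(B)$ is needed) is exactly the point the paper's proof relies on implicitly.
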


\begin{proof}
Let $\lm\in\C$.
By Theorem \ref{thmequivdefforWe} we obtain
$$
W_e(A-\lm B)= W_{e3}(A-\lm B)=\underset{K \text{ compact}}{\bigcap}\overline{W(A-\lm B+K)}.
$$
The latter implies
\begin{align*}
W_e(A,B)
&= \bigg\{\lm\in\C:\,0\in \underset{K \text{ compact}}{\bigcap}\overline{W(A+K-\lm B)}\bigg\}
=\underset{K \text{ compact}}{\bigcap} W(A+K,B).
\end{align*}
The claim for finite rank operators is obtained analogously using  
\be W_{e3}(A-\lm B)=W_{e2}(A-\lm B)=\underset{K\in L(H)\atop \text{rank } K<\infty}{\bigcap}\overline{W(A-\lm B+K)}.\qedhere\ee
\end{proof}

\begin{Remark}
\begin{enumerate}
\item[\rm i)]
From Theorem~\ref{thm.WeAB} it follows immediately that for every compact or finite rank $K\in L(H)$, we have $W_e(A+K,B)=W_e(A,B)$. 

\item[\rm ii)]
In general, $w_e(A,B)$ is not even contained in the intersection of $\overline{w(A+K,B)}$ over all compact or finite rank operators $K\in L(H)$.
As an example, let $A=B$ be compact but not of finite rank. Then 
$$w_e(A,B)=\{1\}, \quad  \underset{K \text{ compact}}{\bigcap} \overline{w(A+K,B)}=\emptyset.$$
Note that in this example $w_e(A,B)$ is not invariant under compact perturbations. In fact, for $K=\alpha A$ with $\alpha\in\C\backslash\{0\}$, we have $$w_e(A+K,B)=(1+\alpha)w_e(A,B)=\{1+\alpha\}\neq w_e(A,B).$$
\end{enumerate}
\end{Remark}

\begin{Theorem}\label{thm.We.we}
\begin{enumerate}
\item[\rm i)]
Let $0\notin W_e(B)$. Then
\beq\label{eq.comppert}
\begin{aligned}
w_e(A,B)
&\subseteq \underset{K \text{ compact}}{\bigcap} \overline{w(A+K,B)}\subseteq \underset{K\in L(H)\atop \text{rank } K<\infty}{\bigcap}\overline{w(A+K,B)}\\
&\subseteq \underset{K\in L(H)\atop \text{rank } K<\infty}{\bigcap}\overline{W(A+K,B)}
=\underset{K \text{ compact}}{\bigcap} \overline{W(A+K,B)}
\end{aligned}
\eeq
and
\beq\label{eq.closint}
\overline{W_e(A,B)}\subseteq w_e(A,B).\eeq

\item[\rm ii)]
Assume that $B$ is uniformly positive.
Then 
\beq\label{eq.sqrt}
w_e(A,B)=W_e\big(B^{-\frac 1 2}AB^{-\frac 1 2}\big),
\eeq
 the sets are closed and convex and they coincide with the four intersections in~\eqref{eq.comppert}.
\end{enumerate}
\end{Theorem}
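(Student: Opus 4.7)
The plan is to prove~(i) by walking the chain in~\eqref{eq.comppert} from left to right and to deduce~\eqref{eq.closint} from Proposition~\ref{prop.ess.num.range}~i); part~(ii) will then follow by reducing to the operator case through the substitution $y=B^{1/2}x$. For the first inclusion $w_e(A,B)\subseteq\bigcap_{K\text{ compact}}\overline{w(A+K,B)}$ I would take $\lm\in w_e(A,B)$ with witness sequence $(x_n)$, $\|x_n\|=1$, $x_n\w 0$, $\langle Ax_n,x_n\rangle/\langle Bx_n,x_n\rangle\to\lm$. The key observation, and the place where the hypothesis $0\notin W_e(B)$ is crucial, is that this hypothesis forces $\liminf_n|\langle Bx_n,x_n\rangle|>0$: otherwise a subsequence would witness $0\in W_e(B)$. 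For any compact $K$ the weak null convergence implies $Kx_n\to 0$ strongly, so $\langle Kx_n,x_n\rangle\to 0$, and dividing by the bounded-below denominator yields $\langle(A+K)x_n,x_n\rangle/\langle Bx_n,x_n\rangle\to\lm$, placing $\lm$ in $\overline{w(A+K,B)}$. The next two inclusions in~\eqref{eq.comppert} are immediate: the finite-rank operators form a subset of the compact operators (so the finite-rank intersection is the larger one), and $w(A+K,B)\subseteq W(A+K,B)$ by Proposition~\ref{prop.num.range}~i).

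For the final equality in~\eqref{eq.comppert} only the direction $\bigcap_{K\text{ finite rank}}\overline{W(A+K,B)}\subseteq\bigcap_{K\text{ compact}}\overline{W(A+K,B)}$ requires argument. Given $\lm$ in the finite-rank intersection and a compact $K$, I would approximate $K$ in norm by finite-rank $K_n$, extract $\mu_n\in W(A+K_n,B)$ with $\mu_n\to\lm$ and unit vectors $x_n\in\dom(A)$ with $|\langle(A+K_n-\mu_n B)x_n,x_n\rangle|<1/n$, and then combine a triangle-inequality estimate with $\|K-K_n\|\to 0$ (using a diagonal extraction) to obtain $0\in\overline{W(A+K-\lm B)}$, that is $\lm\in\overline{W(A+K,B)}$. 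The remaining statement~\eqref{eq.closint} is precisely Proposition~\ref{prop.ess.num.range}~i) applied under the assumption~\eqref{eq.0inWe}, which is satisfied because $0\notin W_e(B)$.

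For part~(ii), uniform positivity of $B$ furnishes a bounded inverse square root $B^{-1/2}$, and the identity already used in the proof of Proposition~\ref{prop.num.range}~ii),
$$\frac{\langle Ax,x\rangle}{\langle Bx,x\rangle}=\frac{\langle B^{-1/2}AB^{-1/2}y,y\rangle}{\|y\|^2},\qquad y=B^{1/2}x,$$
transfers witness sequences for $w_e(A,B)$ to those for $W_e(B^{-1/2}AB^{-1/2})$ after normalisation: both $B^{1/2}$ and its inverse are bounded, so weak null convergence and norms are preserved up to controlled constants, while the denominator $\langle Bx,x\rangle$ is uniformly positive. This yields~\eqref{eq.sqrt}, and the right-hand side is closed and convex by the standard operator essential numerical range theory. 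Moreover $B$ is bounded, so each $W(A+K,B)$ is closed (Remark~\ref{rem.num.range}~iii)), and Theorem~\ref{thm.WeAB} gives $\bigcap_K\overline{W(A+K,B)}=W_e(A,B)$ over either index class; Proposition~\ref{prop.ess.num.range} (under $0\notin W_e(B)$) then yields $W_e(A,B)=w_e(A,B)$, and the central intersections over $\overline{w(A+K,B)}$ are sandwiched by part~(i), so all four sets coincide with $w_e(A,B)$. The main obstacle is the delicate first inclusion of part~(i), which genuinely needs $0\notin W_e(B)$ to prevent denominator collapse, together with the closure-equality step where extra care is required when $B$ is unbounded, because the norm-density approximation of compact operators by finite-rank operators must cope with $|\langle Bx_n,x_n\rangle|$ possibly being large.
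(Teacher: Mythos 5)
Your part~(i) follows the paper's proof closely: the hypothesis $0\notin W_e(B)$ forces $\liminf_n|\langle Bx_n,x_n\rangle|>0$, so $\langle Kx_n,x_n\rangle/\langle Bx_n,x_n\rangle\to 0$ for compact $K$; the intermediate inclusions are finite-rank $\subset$ compact and Proposition~\ref{prop.num.range}~i); and~\eqref{eq.closint} is Proposition~\ref{prop.ess.num.range}~i). Your treatment of the closing equality in~\eqref{eq.comppert} is more explicit than the paper's one-line density appeal, and you are right to flag that $|\langle Bx_n,x_n\rangle|$ being uncontrolled when $B$ is unbounded is a delicate point.

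Part~(ii), however, rests on a false premise: you twice assert that $B$, and hence $B^{1/2}$, is bounded, but uniform positivity gives only the lower bound $B\geq c>0$; $B$ may well be unbounded (as in Example~\ref{ex.unifposB}). This invalidates your appeal to Remark~\ref{rem.num.range}~iii) and Proposition~\ref{prop.ess.num.range}~ii) to identify $\bigcap_{K}\overline{W(A+K,B)}$ with $W_e(A,B)=w_e(A,B)$, and it also breaks half of your substitution argument. The forward direction $w_e(A,B)\subseteq W_e(B^{-\frac12}AB^{-\frac12})$ does go through, since $B^{-\frac12}$ is bounded, $B^{\frac12}$ is densely defined, and $\|y_n\|^2=\langle Bx_n,x_n\rangle\geq c$ ensures the normalised $y_n$ converge weakly to~$0$. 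But for the converse, given $y_n\stackrel{w}{\to}0$ with $\|y_n\|=1$ and $\langle B^{-\frac12}AB^{-\frac12}y_n,y_n\rangle\to\lambda$, the vectors $x_n=B^{-\frac12}y_n$ may tend to $0$ in norm (for instance when $B^{-\frac12}$ is compact), and then $x_n/\|x_n\|$ need not converge weakly to~$0$, so no witness sequence for $w_e(A,B)$ is produced. The paper circumvents both difficulties: it first proves the auxiliary identity $\bigcap_{K\text{ compact}}\overline{W(B^{-\frac12}(A+K)B^{-\frac12})}=W_e(B^{-\frac12}AB^{-\frac12})$ by a finite-rank projection/cutoff argument --- nontrivial precisely because $K\mapsto B^{-\frac12}KB^{-\frac12}$ is not onto the compact operators when $B^{\frac12}$ is unbounded --- and then proves $W_e(B^{-\frac12}AB^{-\frac12})\subseteq w_e(A,B)$ by a case split on whether $W_e(B^{-\frac12}AB^{-\frac12})=\C$ and, otherwise, an inductive construction (via the $W_{e1}$ characterisation in Theorem~\ref{thmequivdefforWe}) of an orthonormal sequence $(e_n)$ with $e_n\perp\operatorname{span}\{B^{-\frac12}e_m:\,m<n\}$, which is exactly what manufactures the weak null convergence that the naive change of variable cannot supply.
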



\begin{proof}
i)
To prove the first inclusion in~\eqref{eq.comppert}, let $\lm\in w_e(A,B)$. 
Then there exists a sequence $(x_n)_{n\in\N}\subset\dom(A)$ with $\langle B x_n,x_n\rangle\neq 0$, $\|x_n\|=1$, $x_n\stackrel{w}{\to} 0$ and 
$$\frac{\langle A x_n,x_n\rangle}{\langle B x_n,x_n\rangle} \tolong \lm, \quad n\to\infty.$$
Let $K$ be compact. Then $\|x_n\|=1$ and $x_n\stackrel{w}{\to}0$ imply $\langle K x_n,x_n\rangle\to 0$.
By the assumption $0\notin W_e(B)$, there exist $c>0$ and $n_0\in\N$ such that $|\langle B x_n,x_n\rangle|\geq c$ for all $n\geq n_0$.
Hence 
$$\frac{\langle K x_n,x_n\rangle}{\langle B x_n,x_n\rangle}\tolong 0, \quad \frac{\langle (A+K)x_n,x_n\rangle}{\langle B x_n,x_n\rangle}\tolong \lm, \quad n\to\infty,$$
and therefore $\lm\in w_e(A+K,B)\subseteq \overline{w(A+K,B)}$. 

The second inclusion is evident since every bounded finite rank operator is compact.

Proposition~\ref{prop.num.range}~i) implies the third inclusion.

The equality  in~\eqref{eq.comppert} follow since every bounded finite rank operator is compact, and every compact operator is the norm limit of bounded finite rank operators.

The inclusion in~\eqref{eq.closint} follows from the assumption $0\notin W_e(B)$ and Proposition~\ref{prop.ess.num.range}~i).

ii)
First we prove
\beq\label{eq.sqrtBK}\underset{K \text{ compact}}{\bigcap}\overline{W\big(B^{-\frac{1}{2}}(A+K)B^{-\frac{1}{2}}\big)}= W_e\big(B^{-\frac{1}{2}}AB^{-\frac{1}{2}}\big);\eeq
then  the closed and convex set $W_e\big(B^{-\frac{1}{2}}AB^{-\frac{1}{2}}\big)$ coincides with the  four intersections in~\eqref{eq.comppert}
 since Proposition~\ref{prop.num.range}~ii) implies 
$$\overline{W(A+K,B)}=\overline{w(A+K,B)}=\overline{W\big(B^{-\frac{1}{2}}(A+K)B^{-\frac{1}{2}}\big)}.$$
In view of (\ref{eq.comppert}) this establishes the inclusion
\begin{equation}
w_e(A,B)\subseteq W_e(B^{-\frac{1}{2}}AB^{-\frac{1}{2}}). \label{ima1}
\end{equation}
%
%
%

To prove \eqref{eq.sqrtBK}, note first that by Theorem \ref{thmequivdefforWe},
\begin{equation}\label{eq.We.rank}
W_e\big(B^{-\frac{1}{2}}AB^{-\frac{1}{2}}\big)=W_{e2}\big(B^{-\frac{1}{2}}AB^{-\frac{1}{2}}\big)
=\underset{M\in L(H)\atop \text{rank }M<\infty}{\bigcap}\overline{W\big(B^{-\frac{1}{2}}AB^{-\frac{1}{2}}+M\big)}.
\end{equation}
Take some 
\begin{equation}\label{eq.lm.We.rank}
\lm\in \underset{K \text{ compact}}{\bigcap}\overline{W\big(B^{-\frac{1}{2}}(A+K)B^{-\frac{1}{2}}\big)}
\end{equation}
and let $M\in L(H)$ have finite rank. Recall that then also $M^*\in L(H)$ has finite rank.
We show that, for an arbitrary $\eps>0$, there exists $x_{\eps}\in\dom\big(AB^{-\frac{1}{2}}\big)$ with $\|x_{\eps}\|=1$
such that
\begin{equation}\label{eq.toshow.eps}
\big|\big\langle \big(B^{-\frac{1}{2}}AB^{-\frac{1}{2}}+M\big) x_{\eps},x_{\eps}\big\rangle-\lm\, \big|<\eps;
\end{equation}
then it is easy to see that $\lm$ belongs to the set in~\eqref{eq.We.rank}.

Since $B$ is selfadjoint, it is densely defined, and hence so is the operator $B^{\frac{1}{2}}$. 
Therefore there exists a sequence $(P_n)_{n\in\N}\subset L(H)$ of orthogonal projections of finite rank with $\ran(P_n)\subset\dom\big(B^{\frac{1}{2}}\big)$ and $P_n\stackrel{s}{\to} I$.
Since strong and uniform convergence coincide on a finite-dimensional space, there exists $n_{\eps}\in\N$ such that
\begin{equation}\label{eq.proj.eps}
\big\|(P_{n_{\eps}}-I)|_{\ran(M)}\big\|<\frac{\eps}{4\|M\|}, \quad \big\|(P_{n_{\eps}}-I)|_{\ran(M^*)}\big\|<\frac{\eps}{4\|M^*\|}.
\end{equation}
Define 
$$K_{\eps}:=B^{\frac{1}{2}}P_{n_{\eps}}M P_{n_{\eps}} B^{\frac{1}{2}}, \quad \dom(K_{\eps}):=\dom\big(B^{\frac{1}{2}}\big).$$
The operator is bounded since ${\rm rank}\,P_{n_{\eps}}<\infty$ implies that $B^{\frac{1}{2}}P_{n_{\eps}}$ and $P_{n_{\eps}} B^{\frac{1}{2}}\subseteq (B^{\frac{1}{2}}P_{n_{\eps}})^*$ are bounded.
Since $K_{\eps}$ is densely defined and of finite rank, it is closable and $\overline{K_{\eps}}\in L(H)$ is compact.
By~\eqref{eq.lm.We.rank}, there exists  $x_{\eps}\in\dom\big(AB^{-\frac{1}{2}}\big)$ with $\|x_{\eps}\|=1$
such that
$$\big|\big\langle \big(B^{-\frac{1}{2}}AB^{-\frac{1}{2}}+P_{n_{\eps}}MP_{n_{\eps}}\big) x_{\eps},x_{\eps}\big\rangle-\lm\, \big|=\big|\big\langle B^{-\frac{1}{2}}(A+\overline{K_{\eps}})B^{-\frac{1}{2}} x_{\eps},x_{\eps}\big\rangle-\lm\, \big|<\frac{\eps}{2}.$$
Now~\eqref{eq.toshow.eps} follows from the latter and because~\eqref{eq.proj.eps} implies
\begin{align*}
&\big|\langle P_{n_{\eps}}M P_{n_{\eps}} x_{\eps},x_{\eps}\rangle - \langle M x_{\eps},x_{\eps}\rangle \big|\\
&\leq 
\big|\langle \left(P_{n_{\eps}}-I)M+ P_{n_{\eps}}M(P_{n_{\eps}}-I)\right) x_{\eps},x_{\eps}\rangle \big|
\leq \|(P_{n_{\eps}}-I)M\|+ \|(M(P_{n_{\eps}}-I))^*\|\\
&\leq
\big\|(P_{n_{\eps}}-I)|_{\ran(M)}\big\| \|M\|+\big\|(P_{n_{\eps}}-I)|_{\ran(M^*)}\big\| \|M^*\|<\frac{\eps}{2}.
\end{align*}

The reverse inclusion (and thus equality in~\eqref{eq.sqrtBK}) follows since, by Theorem~\ref{thmequivdefforWe},
$$W_e\big(B^{-\frac{1}{2}}AB^{-\frac{1}{2}}\big)=W_{e3}\big(B^{-\frac{1}{2}}AB^{-\frac{1}{2}}\big)
=\underset{M \text{ compact}}{\bigcap}\overline{W\big(B^{-\frac{1}{2}}AB^{-\frac{1}{2}}+M\big)}$$
and $B^{-\frac{1}{2}}KB^{-\frac{1}{2}}$ is compact for every compact $K$.

%
%
We have already seen the inclusion $w_e(A,B)\subseteq W_e(B^{-\frac{1}{2}}AB^{-\frac{1}{2}})$ in (\ref{ima1}).
The reverse inclusion (and thus equality in~\eqref{eq.sqrt}) is shown in two steps. 

In the first step we assume
${\rm int}\,W_e\big(B^{-\frac{1}{2}}AB^{-\frac{1}{2}}\big)= \mathbb C$, so for every compact operator $K$ we have 
$W_e(B^{-\frac{1}{2}}(A+K)B^{-\frac{1}{2}})=\mathbb C$.
Hence $\overline{W(B^{-\frac{1}{2}}(A+K)B^{-\frac{1}{2}})}=\mathbb C$, so by Proposition   \ref{prop.num.range} ii) it follows that $\overline{w(A+K,B)}=\mathbb C$.
Thus, by convexity of $w(A+K,B)$, which is a consequence of the uniform positivity of $B$, we also have $w(A+K,B)=\mathbb C$. Proposition \ref{prop.num.range} i)
yields $W(A+K,B)=\mathbb C$ for all compact $K$, and hence 
\[ \underset{K \text{ compact}}{\bigcap} W(A+K,B) = \mathbb C.\]
By Theorem \ref{thm.WeAB}, it follows that $W_e(A,B)=\mathbb C$ and by (\ref{eq.closint}), we have $w_e(A,B)=\mathbb C$.


In the second step we assume that $W_e\big(B^{-\frac{1}{2}}AB^{-\frac{1}{2}}\big)$ is not equal to $\C$, and hence by Remark \ref{rem.inverse} iv),
$W\big(B^{-\frac{1}{2}}AB^{-\frac{1}{2}}\big) \neq \C$. This
allows us to invoke the last part of Theorem \ref{thmequivdefforWe} to assert that $W_{e}$ coincides with $W_{e1}$. Thus, letting
$\cV$ denote the set of all finite-dimensional subspaces $V\subset H$,
\begin{equation}\label{eq.char.proj2}
\begin{aligned}
W_e\big(B^{-\frac{1}{2}}AB^{-\frac{1}{2}}\big)
&=W_{e1}\big(B^{-\frac{1}{2}}AB^{-\frac{1}{2}}\big)\\
&=\bigcap_{V\in\mathcal V}\overline{W\Big(B^{-\frac{1}{2}}AB^{-\frac{1}{2}}\big|_{V^{\perp}\cap\dom\big(AB^{-\frac{1}{2}}\big)}\Big)}.
\end{aligned}
\end{equation}
Let $\lm\in W_e\big(B^{-\frac{1}{2}}AB^{-\frac{1}{2}}\big)$.
Let $e_0\in\dom(A)$ be arbitrary with $\|e_0\|=1$.
We inductively construct an orthonormal sequence $(e_n)_{n\in\N}\subset\dom(A)$ such that, for all $n\in\N$,
\begin{equation}\label{eq.lm.approx}
\left|\frac{\langle A e_n,e_n\rangle}{\langle B e_n,e_n\rangle}-\lm\right|<\frac{1}{n};
\end{equation}
then $\lm\in w_e(A,B)$.

Let $n_0\in\N$. Assume that we have constructed orthonormal elements $e_0,\dots,e_{n_0-1}\in\dom(A)$ such that~\eqref{eq.lm.approx} is satisfied for $n=1,\dots,n_0-1$.
Let 
\begin{equation}\label{eq.orth.proj}
V_{n_0}:={\rm span}\,\big\{B^{-\frac{1}{2}}e_n:\,n=0,\dots,n_0-1\big\}.
\end{equation}
Since $\lm$ belongs to the set on the right hand side of~\eqref{eq.char.proj2}, there exists $x_{n_0}\in V_{n_0}^{\perp}\cap\dom\big(AB^{-\frac{1}{2}}\big)$
with $\|x_{n_0}\|=1$  such that 
\begin{equation}\label{eq.x0}
\big|\langle B^{-\frac{1}{2}}AB^{-\frac{1}{2}}x_{n_0},x_{n_0}\rangle-\lm\big|<\frac{1}{n_0}.
\end{equation}
Define
$$e_{n_0}:=\frac{B^{-\frac{1}{2}}x_{n_0}}{\big\|B^{-\frac{1}{2}}x_{n_0}\big\|}.$$
Obviously we have $\|e_{n_0}\|=1$.
Moreover, since 
$$\langle B^{-\frac{1}{2}}AB^{-\frac{1}{2}}x_{n_0},x_{n_0}\rangle=\frac{\langle A e_{n_0},e_{n_0}\rangle}{\langle B e_{n_0},e_{n_0}\rangle},$$ 
the inequality~\eqref{eq.x0} immediately implies~\eqref{eq.lm.approx} for $n=n_0$.
Finally,  $x_{n_0}\in V_{n_0}^{\perp}$ yields
\be\langle e_{n_0},e_n\rangle=\Big\langle \frac{B^{-\frac{1}{2}}x_{n_0}}{\big\|B^{-\frac{1}{2}}x_{n_0}\big\|}, e_n\Big\rangle
=\frac{\langle x_{n_0},B^{-\frac{1}{2}}e_n\rangle}{\big\|B^{-\frac{1}{2}}x_{n_0}\big\|}=0, \quad  n=0,\dots,n_0-1.
\qedhere\ee
\end{proof}

In the following example the characterisations coincide and are closed but not necessarily convex sets.

\begin{Example}
Let $H_1,H_2$ be infinite-dimensional Hilbert spaces.
For $c\in \C$ define
$$A:={\rm diag}(I,I), \quad B:={\rm diag}(I,c)\quad \text{in}\quad H_1\oplus H_2.$$
The operator $B$ is normal; it is selfadjoint if and only if $c\in\R$.

Let  $c\in\C\backslash (-\infty,0]$, then $0\notin W_e(B)$.
It is easy to see that $0\notin W_e(A,B)\cup w_e(A,B)$ and $W_e(B,A)=w_e(B,A)={\rm conv}\,\{1,c\}$.
Then Remark~\ref{rem.inverse}~i) implies
$$W_e(A,B)=\big\{\lm^{-1}:\,\lm\in W_e(B,A)\}=\big\{\lm^{-1}:\,\lm\in {\rm conv}\,\{1,c\}\big\}=w_e(A,B).$$
Clearly these sets are convex if and only if $c>0$, in which case $B$ is selfadjoint.
\end{Example}

\begin{Corollary}\label{prop.selfadj}
Let $A$ be selfadjoint and $B$ be uniformly positive.
Then at least one of the following holds:
\begin{enumerate}
\item[\rm (i)]
The different concepts of essential numerical range coincide,
$$\overline{W_e(A,B)}=w_e(A,B)=W_e\big(B^{-\frac{1}{2}}AB^{-\frac{1}{2}}\big);$$

\item[\rm (ii)]
the operator $B^{-\frac{1}{2}}AB^{-\frac{1}{2}}$ is bounded and 
$$w_e(A,B)=W_e\big(B^{-\frac{1}{2}}AB^{-\frac{1}{2}}\big)=\sigma_e\big(B^{-\frac{1}{2}}AB^{-\frac{1}{2}}\big)$$
consists of exactly one point.
\end{enumerate}
\end{Corollary}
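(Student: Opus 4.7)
The plan is to reduce the statement to a case analysis on $W_e(T)$, where $T := B^{-\frac{1}{2}}AB^{-\frac{1}{2}}$. Since $A$ is selfadjoint and $B^{-\frac{1}{2}}$ is bounded selfadjoint (by uniform positivity of $B$), $T$ is selfadjoint. Theorem~\ref{thm.We.we}(ii) gives $w_e(A,B) = W_e(T)$, and Theorem~\ref{Thm2.11} identifies this as $\mathrm{conv}(\widehat\sigma_e(T)) \setminus \{-\infty,+\infty\}$, a closed convex subset of $\mathbb{R}$. Since $0 \notin W_e(B)$, Proposition~\ref{prop.ess.num.range}(i) yields $\overline{W_e(A,B)} \subseteq w_e(A,B) = W_e(T)$; only the reverse inclusion, in a suitable form, remains.

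I would then split into three subcases. If $W_e(T) = \emptyset$, then $\overline{W_e(A,B)} = \emptyset$ and alternative~(i) holds trivially. If $W_e(T) = \{\mu\}$ is a single point, then $T$ must be bounded, for otherwise $\pm\infty$ would belong to $\widehat\sigma_e(T)$ and the convex hull would be a nondegenerate interval; hence $\widehat\sigma_e(T) = \sigma_e(T) = \{\mu\}$ and alternative~(ii) holds. In the remaining case $W_e(T)$ is a nondegenerate interval in $\mathbb{R}$, and I would prove $\mathrm{int}_{\mathbb R}(W_e(T)) \subseteq W_e(A,B)$; taking closures then gives $W_e(T) \subseteq \overline{W_e(A,B)}$, so that $\overline{W_e(A,B)} = W_e(T) = w_e(A,B)$ and alternative~(i) holds.

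For this main case, fix $\lambda \in \mathrm{int}_{\mathbb R}(W_e(T))$ and choose $\mu_-, \mu_+ \in \widehat\sigma_e(T)$ with $\mu_- < \lambda < \mu_+$. For each $\mu \in \{\mu_-,\mu_+\}$ I would use spectral projections of $T$ on a small neighbourhood of $\mu$ (or on $[n,+\infty)$ respectively $(-\infty,-n]$ when $\mu = \pm\infty$) to produce an orthonormal weakly null sequence $(y_n^\mu)$ with $\langle (T-\lambda) y_n^\mu, y_n^\mu\rangle \to \mu - \lambda$ (interpreted as $\pm\infty$ in the respective infinite cases). Setting $x_n := B^{-\frac{1}{2}} y_n^\mu$, the identity
\[
\langle (A - \lambda B) x_n, x_n\rangle = \langle (T-\lambda) y_n^\mu, y_n^\mu\rangle
\]
together with $\|x_n\|^2 = \langle B^{-1} y_n^\mu, y_n^\mu\rangle \leq \|B^{-1}\|$ allows me to pass to a subsequence on which either (a) $\|x_n\| \to a > 0$, whence $\widetilde x_n := x_n/\|x_n\|$ is a weakly null unit sequence with $\langle (A - \lambda B) \widetilde x_n, \widetilde x_n\rangle \to (\mu-\lambda)/a^2$, a finite element of $W_e(A - \lambda B)$ sharing the sign of $\mu-\lambda$; or (b) $\|x_n\| \to 0$ (possible only if $B$ is unbounded), in which case the normalised quotient diverges to $\pm\infty$ with the sign of $\mu-\lambda$, forcing $A - \lambda B$ to be unbounded in that direction and hence $\pm\infty \in \widehat\sigma_e(A - \lambda B)$. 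In either alternative $\widehat\sigma_e(A - \lambda B)$ receives an element of the same sign as $\mu-\lambda$.

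Applying this to both $\mu_-$ and $\mu_+$ places elements $\leq 0$ and $\geq 0$ in $\widehat\sigma_e(A - \lambda B)$, so by Theorem~\ref{Thm2.11} applied to the selfadjoint operator $A - \lambda B$ one has $0 \in W_e(A - \lambda B)$, giving $\lambda \in W_e(A,B)$. The main obstacle is the dichotomy (a)/(b) when $B$ is unbounded: alternative~(b) prevents a weakly null unit singular sequence for $A-\lambda B$ from being constructed directly via $\widetilde x_n$, but it still forces $A-\lambda B$ to be unbounded in the relevant direction, so the essential-spectrum sign information that drives the argument is preserved regardless.
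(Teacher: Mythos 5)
Your three-case split (empty, singleton, nondegenerate interval) matches the paper's, and the first two cases are handled essentially as in the paper. For the nondegenerate-interval case you take a genuinely different route: constructing explicit singular sequences for the pencil out of Weyl-type sequences for $T:=B^{-\frac{1}{2}}AB^{-\frac{1}{2}}$, rather than following the paper, which never touches singular sequences but instead combines the compact-perturbation characterisation $W_e(A,B)=\bigcap_{K \text{ compact}} W(A+K,B)$ (Theorem~\ref{thm.WeAB}) with the convexity of all sets involved, via Proposition~\ref{prop.num.range}~ii) and Theorem~\ref{thm.We.we}.

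However, your argument has a genuine gap: the final appeal to Theorem~\ref{Thm2.11} for the operator $A-\lambda B$. Theorem~\ref{Thm2.11} requires a \emph{selfadjoint} operator, and under the hypotheses of the corollary $A-\lambda B$ need not be selfadjoint for real $\lambda$. The standing assumption $\dom(A)\subseteq\dom(B)$ together with closedness of $A$ only gives that $B$ is $A$-bounded; unless the relative bound is $0$ (the situation in Remark~\ref{rem.approx}~ii), which invokes the Hess--Kato theorem exactly for this), $A-\lambda B$ is merely symmetric on $\dom(A)$ and $(A-\lambda B)^*$ may be strictly larger. Concretely, this matters in your sub-case (b): there $\widetilde{x}_n=x_n/\|x_n\|$ need not be weakly null (only $x_n\stackrel{w}{\to}0$ is known, and $\|x_n\|\to 0$ can absorb the weak decay, leaving $\widetilde{x}_n$ with a nonzero weak limit along a subsequence), so what you actually obtain is that the \emph{numerical} range $W(A-\lambda B)$ is unbounded, not that $W_e(A-\lambda B)$ or $\widehat{\sigma}_e(A-\lambda B)$ receives $\pm\infty$; the step that upgrades the former to the latter is precisely Theorem~\ref{Thm2.11} and needs selfadjointness. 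Your sub-case in which both $\mu_\pm$ fall into alternative~(a) closes harmlessly by the convexity of $W_e(A-\lambda B)$ alone, without Theorem~\ref{Thm2.11}, but the argument as written does not isolate that case and collapse in the others remains a live possibility when $B$ is unbounded. The paper's compact-perturbation argument sidesteps this entirely, since it never refers to $\sigma_e$, $\widehat{\sigma}_e$ or the selfadjointness of $A-\lambda B$.
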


\begin{proof}
The identity $w_e(A,B)=W_e\big(B^{-\frac{1}{2}}AB^{-\frac{1}{2}}\big)$ follows from Theorem~\ref{thm.We.we}~ii).

If ${\rm int}\,w_e(A,B)\neq\emptyset$ (the interior with respect to the topology in $\R$), then we use the inclusion
\[ w_e(A,B) = w_e(A+K,B) \subseteq \overline{w(A+K,B)} = \overline{W(A+K,B)}, \]
for every compact operator $K$, together with the convexity of all the sets appearing, to deduce that
$ {\rm int}\, w_e(A,B) \subseteq W(A+K,B) $ for all compact $K$, whence
\[ {\rm int}\, w_e(A,B) \subseteq \underset{K {\rm compact}}\bigcap W(A+K,B) = W_{e}(A,B). \]
It follows, again by convexity, that $w_e(A,B) = \overline{W_e(A,B)}$.

Now assume that ${\rm int}\, w_e(A,B)=\emptyset$.
If even $w_e(A,B)=\emptyset$,
then $W_e(A,B)=\emptyset$ by Theorem~\ref{thm.We.we}~i), so (i) is satisfied.
The remaining possibility is that  $w_e(A,B)\neq\emptyset$ but
${\rm int}\,w_e(A,B)=\emptyset$.
Then there exists $\lm\in\R$ such that
$$W_e\big(B^{-\frac{1}{2}}AB^{-\frac{1}{2}}\big)= w_e(A,B)=\{\lm\}.$$
By Theorem \ref{Thm2.11}, we have
$$ W_e\big(B^{-\frac{1}{2}}AB^{-\frac{1}{2}}\big)={\rm conv}\Big(\widehat{\sigma}_e\big(B^{-\frac{1}{2}}AB^{-\frac{1}{2}}\big)\Big)\backslash\{\pm\infty\}.$$
The latter can consist of exactly one point $\lm$ only if $\sigma_e\big(B^{-\frac{1}{2}}AB^{-\frac{1}{2}}\big)=\{\lm\}$
and $B^{-\frac{1}{2}}AB^{-\frac{1}{2}}$ is a bounded operator; hence (ii) is satisfied.
\end{proof}

The following example illustrates case (ii) in Corollary~\ref{prop.selfadj}. 

\begin{Example}\label{ex.caseb}
Consider the operators $A$ and $B$ of Example~\ref{ex.unifposB}.
The operator
$$B^{-\frac{1}{2}}AB^{-\frac{1}{2}}={\rm diag}\Big(1+\frac{1}{n}:\,n\in\N\Big)$$
is bounded and selfadjoint with
$$w_e(A,B)=W_e\big(B^{-\frac{1}{2}}AB^{-\frac{1}{2}}\big)=\sigma_e\big(B^{-\frac{1}{2}}AB^{-\frac{1}{2}}\big)=\{1\}.$$
Therefore we are in case (ii) of Corollary~\ref{prop.selfadj}.
One may verify that $W_e(A,B)=\emptyset$ and hence (i) is not satisfied.
\end{Example} 

Now we construct a non-selfadjoint example (but still with a uniformly positive~$B$  in order that Theorem~\ref{thm.We.we}~ii) is applicable) for which we have $$\overline{W_e(A,B)}\subsetneqq w_e(A,B)=W_e\big(B^{-\frac{1}{2}}AB^{-\frac{1}{2}}\big).$$

\begin{Example}\label{ex.line}
In $l^2(\N)$ consider the diagonal operators
$$A:={\rm diag}\big((-1)^n n^3+\I n:\,n\in\N\big), \quad B:={\rm diag}(n^2:\,n\in\N);$$
again we identify the operators with their matrix representations.
We have $B\geq I$.
Consider
$$B^{-\frac{1}{2}}AB^{-\frac{1}{2}}={\rm diag}\Big((-1)^n n+\frac{\I}{n}:\,n\in\N\Big)=T+ K$$
with
$$T:={\rm diag}\big((-1)^n n :\,n\in\N\big), \quad K:={\rm diag}\Big(\frac{\I}{n}:\,n\in\N\Big).$$
We obtain by Theorem~\ref{thm.We.we}~ii) and using that $K$ is compact,
$$w_e(A,B)=W_e\big(B^{-\frac{1}{2}}AB^{-\frac{1}{2}}\big)=W_e(T+ K)=W_e(T)=\R.$$
However, one may check that $W_e(A,B)=\emptyset$.
\end{Example}

\subsection{Perturbation results}

In the first result we assume that one of the operators $A$, $B$ has empty essential spectrum.

\begin{Theorem}\label{thm.pert.empty}
Assume that one of the following holds:
\begin{enumerate}
\item[\rm(a)]
$W_e(A)=\emptyset$ and $K$ is $A$-form bounded with relative form bound $0$;
\item[\rm(b)]
$W_e(B)=\emptyset$ and $K$ is $B$-form bounded with relative form bound $0$.
\end{enumerate}
Then
$$w_e(A+K,B)=w_e(A,B), \quad w_e(A,B+K)=w_e(A,B).$$
\end{Theorem}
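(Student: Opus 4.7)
The plan is to reduce both identities to a single key estimate: for every unit sequence $(x_n)\subset\dom(A)$ with $x_n\stackrel{w}{\to}0$ that witnesses a point in any of $w_e(A,B)$, $w_e(A+K,B)$ or $w_e(A,B+K)$, the ratio $\langle Kx_n,x_n\rangle/\langle Bx_n,x_n\rangle$ tends to $0$. Once this is established, replacing $A$ by $A+K$ adds a vanishing additive term to the Rayleigh quotient $\langle Ax_n,x_n\rangle/\langle Bx_n,x_n\rangle$, while replacing $B$ by $B+K$ multiplies it by a factor $\langle(B+K)x_n,x_n\rangle/\langle Bx_n,x_n\rangle\to 1$, so the limits (and the eventual non-vanishing of the denominators) transfer between all three sets.

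For case (a), I first observe that $W_e(A)=\emptyset$ forces $|\langle Ax_n,x_n\rangle|\to\infty$ for every unit sequence $x_n\stackrel{w}{\to}0$ in $\dom(A)$: a bounded subsequence of $\langle Ax_n,x_n\rangle$ would admit a further convergent subsubsequence, contradicting $W_e(A)=\emptyset$. Using the form inequality $|\langle Kx_n,x_n\rangle|\leq\alpha_\epsilon+\epsilon|\langle Ax_n,x_n\rangle|$ available for arbitrarily small $\epsilon>0$, one deduces $|\langle Kx_n,x_n\rangle|/|\langle Ax_n,x_n\rangle|\to 0$. Given $\lm\in w_e(A,B)$ with witnessing sequence $(x_n)$, I argue that $|\langle Bx_n,x_n\rangle|\to\infty$ in both subcases: if $\lm\neq 0$, then $|\langle Bx_n,x_n\rangle|\sim|\langle Ax_n,x_n\rangle|/|\lm|\to\infty$; if $\lm=0$, then the unboundedness of $|\langle Ax_n,x_n\rangle|$ combined with $\langle Ax_n,x_n\rangle/\langle Bx_n,x_n\rangle\to 0$ forces $|\langle Bx_n,x_n\rangle|$ to grow even faster. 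The bound
\[
\frac{|\langle Kx_n,x_n\rangle|}{|\langle Bx_n,x_n\rangle|}\leq\frac{\alpha_\epsilon}{|\langle Bx_n,x_n\rangle|}+\epsilon\,\frac{|\langle Ax_n,x_n\rangle|}{|\langle Bx_n,x_n\rangle|}
\]
then has $\limsup\leq\epsilon|\lm|$, and letting $\epsilon\to 0$ supplies the key estimate. For the reverse inclusions I use that $(x_n)$ witnessing $\lm\in w_e(A+K,B)$ or $\lm\in w_e(A,B+K)$ still lies in $\dom(A)$, so $|\langle Ax_n,x_n\rangle|\to\infty$ holds and the inequality $|\langle(A+K)x_n,x_n\rangle|\geq(1-\epsilon)|\langle Ax_n,x_n\rangle|-\alpha_\epsilon$ lets me rerun the same subcase analysis to conclude $|\langle Bx_n,x_n\rangle|\to\infty$, hence the key estimate again and the desired limit.

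Case (b) is more straightforward: since $(x_n)\subset\dom(A)\subseteq\dom(B)$ and $W_e(B)=\emptyset$, the same reasoning directly yields $|\langle Bx_n,x_n\rangle|\to\infty$ and, via $B$-form boundedness with bound $0$, $|\langle Kx_n,x_n\rangle|/|\langle Bx_n,x_n\rangle|\to 0$ for \emph{every} weakly null unit sequence in $\dom(A)$, with no case-split on $\lm$. The two identities then follow by the same transfer computation as in case (a). The main technical subtlety I expect to encounter is confined to the $\lm=0$ branch of case (a), where I must verify that $|\langle Bx_n,x_n\rangle|\to\infty$, not merely that it stays bounded away from zero, so that the constant term $\alpha_\epsilon/|\langle Bx_n,x_n\rangle|$ in the key estimate actually vanishes; everything else is bookkeeping.
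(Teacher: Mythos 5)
Your proof is correct, and its core mechanism -- reduce everything to showing $\langle Kx_n,x_n\rangle/\langle Bx_n,x_n\rangle\to 0$ for the witnessing sequences, then transfer the Rayleigh--quotient limits by adding a vanishing term or multiplying by a factor $\to 1$ -- is exactly the one the paper uses. Where you diverge is in the execution, and the paper's route is leaner in two places. First, in case (a) you establish the key ratio estimate by proving $|\langle Bx_n,x_n\rangle|\to\infty$, which forces the $\lambda=0$ vs.\ $\lambda\neq 0$ split you flagged as the main subtlety; the paper sidesteps this entirely by factoring
\[
\frac{\langle Kx_n,x_n\rangle}{\langle Bx_n,x_n\rangle}=\frac{\langle Ax_n,x_n\rangle}{\langle Bx_n,x_n\rangle}\cdot\frac{\langle Kx_n,x_n\rangle}{\langle Ax_n,x_n\rangle}\longrightarrow \lambda\cdot 0=0,
\]
which is uniform in $\lambda$ and never needs the divergence of $\langle Bx_n,x_n\rangle$ or the constant term $\alpha_\epsilon/|\langle Bx_n,x_n\rangle|$. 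Second, for the reverse inclusions you propose to rerun the subcase analysis using lower bounds such as $|\langle(A+K)x_n,x_n\rangle|\geq(1-\epsilon)|\langle Ax_n,x_n\rangle|-\alpha_\epsilon$; the paper instead records once that under (a) or (b) one also has $W_e(A+K)=\emptyset$ (resp.\ $W_e(B+K)=\emptyset$) and that $-K$ is $(A+K)$-form (resp.\ $(B+K)$-form) bounded with relative bound $0$, and then simply invokes the forward inclusion with $A'=A+K$, $K'=-K$ (resp.\ $B'=B+K$, $K'=-K$). Both your shortcuts would save you the $\epsilon$-juggling and the case split, but the argument as you wrote it does go through.
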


\begin{proof}
Throughout this proof we use the fact that if $T$ is a linear operator with $W_e(T)=\emptyset$ and $K$ is $T$-form bounded with relative form bound $0$, then every sequence $(x_n)_{n\in\N}\subset\dom(T)$ with $\|x_n\|=1$ and $x_n\stackrel{w}{\to}0$ satisfies $|\langle Tx_n,x_n\rangle|\to\infty$ and $|\langle Kx_n,x_n\rangle|/|\langle Tx_n,x_n\rangle|\to 0$; the latter follows from Proposition~\ref{prop.We.empty} and its proof. In particular, we obtain $W_e(T+K)=\emptyset$. Further note that $-K$ is $(T+K)$-form bounded with relative form bound $0$.

We show
\begin{equation}\label{eq.pert}
w_e(A,B)\subseteq w_e(A+K,B), \quad w_e(A,B)\subseteq w_e(A,B+K);
\end{equation}
then the reverse inclusions follow from applying~\eqref{eq.pert} to $A'=A+K$, $K'=-K$ and $B'=B+K$, $K'=-K$, respectively.

Let $\lm\in w_e(A,B)$. There exists $(x_n)_{n\in\N}\subset\dom(A)$ with $\|x_n\|=1$ and $x_n\stackrel{w}{\to}0$
such that 
$$\langle B x_n,x_n\rangle\neq 0, \quad \frac{\langle A x_n,x_n\rangle}{\langle B x_n,x_n\rangle}\tolong \lm, \quad n\to\infty.$$
Note that the assumption (a) implies $|\langle K x_n,x_n\rangle/\langle A x_n,x_n\rangle |\to 0$, and the assumption (b) yields $|\langle K x_n,x_n\rangle/\langle B x_n,x_n\rangle |\to 0$.
Hence, in both cases the difference
$$\left|\frac{\langle A x_n,x_n\rangle}{\langle B x_n,x_n\rangle}-\frac{\langle (A+K) x_n,x_n\rangle}{\langle B x_n,x_n\rangle}\right|
=\left|\frac{\langle K x_n,x_n\rangle}{\langle B x_n,x_n\rangle}\right|
=\left|\frac{\langle A x_n,x_n\rangle}{\langle B x_n,x_n\rangle}\right| \left|\frac{\langle K x_n,x_n\rangle}{\langle A x_n,x_n\rangle}\right|$$
converges to $0$, so
$\langle (A+K) x_n,x_n\rangle/\langle B x_n,x_n\rangle\to \lm\in w_e(A+K,B)$.
In addition, in both cases the difference
\begin{align*}
\left|\frac{\langle A x_n,x_n\rangle}{\langle B x_n,x_n\rangle}-\frac{\langle A x_n,x_n\rangle}{\langle (B+K) x_n,x_n\rangle}\right|
&=\frac{\left|\frac{\langle A x_n,x_n\rangle}{\langle B x_n,x_n\rangle}\right| }
{\left|1+\frac{\langle B x_n,x_n\rangle}{\langle A x_n,x_n\rangle}\,\frac{\langle A x_n,x_n\rangle}{\langle K x_n,x_n\rangle}\right| }\\
&=\frac{\left|\frac{\langle A x_n,x_n\rangle}{\langle B x_n,x_n\rangle}\right| \left|\frac{\langle K x_n,x_n\rangle}{\langle B x_n,x_n\rangle}\right| }
{\left|1+\frac{\langle K x_n,x_n\rangle}{\langle B x_n,x_n\rangle}\right| }
\end{align*}
converges to $0$. This yields $\lm\in w_e(A,B+K)$.
\end{proof}

\begin{Remark}
Under the assumptions of Theorem~\ref{thm.pert.empty},
we cannot conclude
\begin{equation}\label{eq.We.not.equal}
\overline{W_e(A+K,B)}=\overline{W_e(A,B)}, \quad \overline{W_e(A,B+K)}=\overline{W_e(A,B)}.
\end{equation}
As a counterexample, consider the operators $A$ and~$B$ from Example~\ref{ex.line} and define
$K:={\rm diag}(\I\,n:\,n\in\N).$
Note that $W_e(B)=\emptyset$ and $K$ is $B$-form bounded with relative form bound $0$.
One may verify that $1\in W_e(A,B+K)=\R$.
However, as pointed out in Example~\ref{ex.line}, we have $W_e(A,B)=\emptyset$, hence the second identity in \eqref{eq.We.not.equal} is not satisfied. In addition, the first identity is not satisfied since
$$1\in W_e((B+K)|_{\dom(A)},A), \quad  W_e(B|_{\dom(A)},A)=\emptyset.$$
Note that this example also illustrates that $W_e(A,B)$ is not invariant under the relative compactness assumptions of the following Theorem~\ref{thm.pert.rel.comp}.
\end{Remark}

In the next result we do not assume that $W_e(A)=\emptyset$ or $W_e(B)=\emptyset$ but use a relative compactness argument instead.

\begin{Theorem}\label{thm.pert.rel.comp}
Let $T$ and $S$ be linear operators in $H$ with $0\notin W_e(T)$ and such that one of the following holds:
\begin{enumerate}
\item[\rm (a)]
$T=T_1+\I T_2$ and $S=S_1+\I S_2$ with symmetric operators $T_1$,~$T_2$ and $S_1$, $S_2$  such that
\begin{enumerate}
 \item[\rm (i)] \hspace{0.5mm}$T_1$ is selfadjoint and semibounded, $S_1$, $S_2$ are $T_1$-compact, or
 \item[\rm (ii)] $T_2$ is selfadjoint and semibounded, $S_1$, $S_2$ are $T_2$-compact, or
 \item[\rm (iii)] $T_1$, $T_2$ are selfadjoint and semibounded, $S_1$ is $T_1$-compact and $S_2$ is $T_2$-compact;
\end{enumerate}
\item[\rm (b)]
$T=T_1+\I T_2$ with uniformly positive $T_1$ and symmetric $T_2$ such that $T_1^{-1/2}S$ is $T_1^{1/2}$-compact, i.e.\ $T_1^{-1/2}ST_1^{-1/2}$ is compact.
\end{enumerate}
Then, for any $B$ with $\dom(T)\subseteq\dom(B)$, \beq\label{eq.pertA} w_e(T+S,B)=w_e(T,B),\eeq
and, for any $A$ with $\dom(A)\subseteq\dom(T)$,
\beq\label{eq.pertB} w_e(A,T+S)=w_e(A,T).\eeq
\end{Theorem}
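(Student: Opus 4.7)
The proof reduces both equalities \eqref{eq.pertA} and \eqref{eq.pertB} to a single key ratio estimate: for every $(x_n)\subset\dom(T)$ with $\|x_n\|=1$ and $x_n\stackrel{w}{\to}0$,
\[
\frac{\langle S x_n,x_n\rangle}{\langle T x_n,x_n\rangle}\to 0\quad\text{as}\quad n\to\infty,
\]
where the denominator is eventually nonzero. Indeed, Theorems~\ref{thmABUV} and~\ref{thmSqrt} imply under~(a) and~(b) respectively that $W_e(T+S)=W_e(T)$; in particular $0\notin W_e(T)\cup W_e(T+S)$. From the singular-sequence definition of $W_e$, this uniform exclusion forces $\liminf_n|\langle T x_n,x_n\rangle|>0$ and $\liminf_n|\langle (T+S)x_n,x_n\rangle|>0$ for every normalised weakly null sequence $(x_n)$; otherwise a suitable subsequence would place $0$ in the respective essential numerical range.

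To establish the key estimate under hypothesis~(b), set $y_n:=T_1^{1/2}x_n$; uniform positivity of $T_1$ yields $\|y_n\|^2=\langle T_1x_n,x_n\rangle\geq c>0$, so $\tilde y_n:=y_n/\|y_n\|$ is well defined with $\|\tilde y_n\|=1$. The density of $\dom(T_1^{1/2})$ together with $x_n\stackrel{w}{\to}0$ shows $\tilde y_n\stackrel{w}{\to}0$, since $\langle\tilde y_n,z\rangle=\langle x_n,T_1^{1/2}z\rangle/\|y_n\|\to 0$ for every $z\in\dom(T_1^{1/2})$. Compactness of $T_1^{-1/2}ST_1^{-1/2}$ gives $\langle T_1^{-1/2}ST_1^{-1/2}\tilde y_n,\tilde y_n\rangle\to 0$, which is precisely $\langle Sx_n,x_n\rangle/\langle T_1x_n,x_n\rangle\to 0$. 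Since $T_1\geq cI$ and $T_2$ is symmetric, $|\langle Tx_n,x_n\rangle|\geq \langle T_1x_n,x_n\rangle$, so the estimate follows. Under~(a) the same conclusion is obtained by adapting the proof of Theorem~\ref{thmABUV} case by case: after a real scalar shift making the relevant semibounded $T_j$ uniformly positive, the operator-sense $T_j$-compactness of each symmetric $S_i$ can be upgraded, via standard spectral-calculus arguments, to compactness of $T_j^{-1/2}S_iT_j^{-1/2}$, and the preceding argument applies.

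With the ratio estimate in hand, for $\lambda\in w_e(T,B)$ with approximating sequence $(x_n)$ we compute
\[
\frac{\langle(T+S)x_n,x_n\rangle}{\langle Bx_n,x_n\rangle}=\frac{\langle Tx_n,x_n\rangle}{\langle Bx_n,x_n\rangle}\left(1+\frac{\langle Sx_n,x_n\rangle}{\langle Tx_n,x_n\rangle}\right)\to\lambda,
\]
so $\lambda\in w_e(T+S,B)$. The reverse inclusion follows identically, noting that $\langle Sx_n,x_n\rangle/\langle Tx_n,x_n\rangle\to 0$ also implies $\langle Sx_n,x_n\rangle/\langle(T+S)x_n,x_n\rangle\to 0$, so that $T$ and $T+S$ can be swapped. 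The equality \eqref{eq.pertB} is proved by the analogous manipulation
\[
\frac{\langle Ax_n,x_n\rangle}{\langle(T+S)x_n,x_n\rangle}=\frac{\langle Ax_n,x_n\rangle}{\langle Tx_n,x_n\rangle}\cdot\frac{1}{1+\langle Sx_n,x_n\rangle/\langle Tx_n,x_n\rangle}\to\lambda,
\]
together with its reverse.

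The main obstacle will be the upgrade step under hypothesis~(a): translating operator-sense relative compactness of each symmetric $S_i$ with respect to a semibounded $T_j$ into form-compactness of $T_j^{-1/2}S_iT_j^{-1/2}$. This is standard but requires careful spectral-calculus manipulations, and in subcase~(a)(iii) the two components must be treated simultaneously. No essentially new ideas beyond those already used in the proof of Theorem~\ref{thmABUV} in~\cite{We} should be needed.
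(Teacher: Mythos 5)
Your overall strategy matches the paper's: reduce both equalities to the single convergence $\langle Sx_n,x_n\rangle/\langle Tx_n,x_n\rangle\to 0$, use $0\notin W_e(T)=W_e(T+S)$ (from Theorems~\ref{thmABUV},~\ref{thmSqrt}) to keep the denominators bounded away from zero, and obtain the reverse inclusion by swapping $T\leftrightarrow T+S$. The ratio manipulations you carry out to pass between $w_e(T,B)$ and $w_e(T+S,B)$, and between $w_e(A,T)$ and $w_e(A,T+S)$, are the same as in the paper.

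The genuine divergence is in how the ratio convergence is obtained. The paper does \emph{not} assert it unconditionally for every normalised weakly null sequence; instead it splits into two cases. When $(|\langle Tx_n,x_n\rangle|)$ is bounded along a subsequence, it invokes exactly what the proofs of \cite[Theorems~4.5, 4.7]{We} deliver, namely that $\langle Sx_n,x_n\rangle\to 0$ for such sequences; combined with $\liminf|\langle Tx_n,x_n\rangle|>0$ (from $0\notin W_e(T)$) this gives the ratio estimate. When $|\langle Tx_n,x_n\rangle|\to\infty$, it uses only that $S$ is $T$-form bounded with relative form bound $0$ (as in Proposition~\ref{prop.We.empty}). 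You instead prove the unconditional ratio estimate in one shot via form-compactness of $T_1^{-1/2}ST_1^{-1/2}$, which under hypothesis~(b) is exactly the given datum, and your argument there (normalising $\tilde y_n:=T_1^{1/2}x_n/\|T_1^{1/2}x_n\|$, showing $\tilde y_n\stackrel{w}{\to}0$, applying compactness) is correct and in fact cleaner than the paper's two-case analysis, since it handles bounded and unbounded $\langle T_1x_n,x_n\rangle$ simultaneously. Under hypothesis~(a), however, only operator-sense $T_j$-compactness of the symmetric $S_i$ is given, and your plan hinges on the claim that this can be ``upgraded, via standard spectral-calculus arguments, to compactness of $T_j^{-1/2}S_iT_j^{-1/2}$.'' This is not an elementary step: it amounts to showing that $T_j^{1/2}KT_j^{-1/2}$ is compact from $K=\overline{T_j^{-1}S_i}$ compact (together with compactness of its adjoint $S_iT_j^{-1}$), which requires something like a Stein-interpolation argument and careful domain management; boundedness of the form $\langle S_i x,x\rangle/\langle T_jx,x\rangle$ does not by itself deliver compactness. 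The paper deliberately sidesteps this by appealing only to the weaker statement already proved in \cite[Theorem~4.5]{We} (convergence of $\langle Sx_n,x_n\rangle$ along sequences with bounded $\langle Tx_n,x_n\rangle$), which is exactly what the two-case argument needs. So your proposal is sound in spirit and correct in case (b), but under (a) it substitutes a stronger unproved lemma for the paper's more economical case split; that upgrade is the piece that would need to be supplied to make the argument complete, and it is not the routine spectral-calculus fact the proposal suggests.
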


\begin{proof}
First we claim that, in both cases (a) and (b),
$S$ is $T$-form bounded with relative form bound $0$ and whenever $(x_n)_{n\in\N}\subset\dom(T)$ is such that $\|x_n\|=1$, $x_n\stackrel{w}{\to}0$ and $(|\langle Tx_n,x_n\rangle|)_{n\in\N}$ is bounded, then $\langle Sx_n,x_n\rangle\to 0$.
In (a), this follows from the proof of Theorem~\ref{thmABUV}, see \cite[Theorem 4.5]{We}, and in (b) from the proof of Theorem~\ref{thmSqrt}, see \cite[Theorem 4.7]{We}.

We prove the identity~\eqref{eq.pertA}; the proof of~\eqref{eq.pertB} is analogous.
Note that,  in both cases (a) and (b), we have $0\notin W_e(T)= W_e(T+S)$ by Theorems \ref{thmABUV} and \ref{thmSqrt}, and   $-S$ is $(T+S)$-form bounded with relative form bound $0$ and whenever $(x_n)_{n\in\N}\subset\dom(T)=\dom(T+S)$ is such that $\|x_n\|=1$, $x_n\stackrel{w}{\to}0$ and $(|\langle (T+S)x_n,x_n\rangle|)_{n\in\N}$ is bounded, then $\langle -Sx_n,x_n\rangle\to 0$.
Hence it suffices to show the inclusion
$w_e(T,B)\subseteq w_e(T+S,B)$; 
the reverse inclusion follows from repeating the proof for  $T'=T+S$, $S'=-S$.

Let $\lm\in w_e(T,B)$. 
 There exists $(x_n)_{n\in\N}\subset\dom(T)$ with $\|x_n\|=1$ and $x_n\stackrel{w}{\to}0$
such that 
$$\langle B x_n,x_n\rangle\neq 0, \quad \frac{\langle T x_n,x_n\rangle}{\langle B x_n,x_n\rangle}\tolong \lm, \quad n\to\infty.$$
If there exists an infinite subset $I\subseteq\N$ such that $(\langle Tx_n,x_n\rangle)_{n\in I}$ is bounded, then the above argument implies $\langle S x_n,x_n\rangle\to 0$ as $n\in I$, $n\to\infty$. Moreover, by $0\notin W_e(T)$, we have $\liminf_{n\to\infty}|\langle Tx_n,x_n\rangle| >0$, and hence 
$$\frac{\langle Sx_n,x_n\rangle}{\langle Bx_n,x_n\rangle}=\frac{\langle Sx_n,x_n\rangle}{\langle Tx_n,x_n\rangle}\frac{\langle Tx_n,x_n\rangle}{\langle Bx_n,x_n\rangle}\tolong 0, \quad n\in I, \quad n\to\infty.$$ 
So we arrive at $\lm\in w_e(T+S,B)$.

If $|\langle Tx_n,x_n\rangle|\to\infty$, then, similarly as in the proof of Proposition~\ref{prop.We.empty} for $\beta=0$, we obtain $|\langle S x_n,x_n\rangle|/|\langle T x_n,x_n\rangle|\to 0$.
Now $\lm\in w_e(T+S,B)$ follows in an analogous way as in the proof of Theorem~\ref{thm.pert.empty}.
\end{proof}

\section{Spectral approximation and application to indefinite Sturm-Liouville operator}\label{sec.approx}
In this section we study spectral convergence of the approximation of the pencil $\lm\mapsto A-\lm B$ by 
projection or domain truncation methods.
The aim is to prove \emph{spectral exactness} of the approximation by $\lm\mapsto A_n-\lm B_n$, $n\in\N$: every $\lm\in \sigma(A,B)$ is the limit of some $\lm_n\in\sigma(A_n,B_n)$, $n\in\N$, (\emph{spectral inclusion}) and no \emph{spectral pollution} occurs, i.e.\ there is no spurious eigenvalue $\lm\notin\sigma(A,B)$ which is an accumulation point of some $\lm_n\in\sigma(A_n,B_n)$, $n\in\N$.
We prove that spectral pollution is confined to one of the essential numerical ranges.
For selfadjoint $A,B$ with one of them uniformly positive, we prove that all elements of the {\em approximate point spectrum} 
are spectrally included. We apply these results to indefinite Sturm-Liouville operators.

\subsection{Spectral approximation}

First we study the projection method.
We use the following conventions.
For any closed subspace $V\subset H$ we denote by $P_V$ the orthogonal projection in $H$ onto~$V$.
For a linear operator $T$, if $V\subset\dom(T)$ then $T_V:=P_VT|_{V}$ denotes the compression of $T$ to $V$.

\begin{Theorem}\label{thm.pencil.sigma}
Assume that $\overline{\dom(A)}=H$.
 Let $H_n\subset \dom(A)$, $n\in\N$, be finite-dimensional subspaces with $P_{H_n}\s I$ as $n\to\infty$.
Consider the following conditions:
\begin{enumerate}
\item[\rm (1)]
The subspaces are such  that
\begin{equation}\label{eq.gsr.Galerkin1}
\forall\,x\in\dom(A): \quad A_{H_n}P_{H_n}x\tolong Ax, \quad B_{H_n}P_{H_n} x\tolong Bx,\quad n\to\infty.
\end{equation}

\item[\rm (2)] We have $\overline{\dom(A^*)\cap\dom(B^*)}=H$ and the subspaces satisfy $H_n\subset\dom(A^*)\cap\dom(B^*)$ with
\begin{equation}\label{eq.gsr.Galerkin2}
\forall\,x\in\dom(A^*)\cap\dom(B^*): \quad A^*_{H_n}P_{H_n}x\tolong A^* x, \quad B^*_{H_n}P_{H_n} x\tolong B^*x,\quad n\to\infty.
\end{equation}
\end{enumerate}
Depending on which condition holds, we conclude the following:
\begin{enumerate}
\item[\rm i)] Assume that  $0\notin W_e(A)\cap W_e(B)$ or $W_e(A,B)\neq \C$.
If condition {\rm (1)} or {\rm (2)} is satisfied, then  every spurious eigenvalue belongs to $w_e(A,B)\supseteq \overline{W_e(A,B)}$.
If both {\rm (1)} and {\rm (2)} are satisfied, then for every isolated $\lambda\in\sigma(A,B)$ outside $w_e(A,B)$ there exist $\lm_n\in\sigma(A_{H_n},B_{H_n})$, $n\in\N$, such that $\lm_n\to\lm$.
\item[\rm ii)] Assume that $B$ is bounded.
If condition {\rm (1)} or {\rm (2)} is satisfied,  then  every spurious eigenvalue belongs to $W_e(A,B)\supseteq w_e(A,B)$.
If both {\rm (1)} and {\rm (2)} are satisfied, then for every isolated $\lambda\in\sigma(A,B)$ outside $W_e(A,B)$ there exist $\lm_n\in\sigma(A_{H_n},B_{H_n})$, $n\in\N$, such that $\lm_n\to\lm$.
\item[\rm iii)] Assume that $A,B$ are selfadjoint and (at least) one of them is uniformly positive.
If {\rm (1)} is satisfied, then for every
 $\lm\in\sigma_{\rm app}(A,B)$
there exist $\lm_n\in\sigma(A_{H_n},B_{H_n})$, $n\in\N$, such that $\lm_n\to\lm$.
\end{enumerate}
\end{Theorem}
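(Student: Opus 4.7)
The plan is to treat the spurious-eigenvalue confinement in parts (i)--(ii) separately from the spectral-inclusion assertions of parts (i)--(iii). For confinement, suppose $\lambda_n\in\sigma(A_{H_n},B_{H_n})$ with $\lambda_n\to\lambda\notin\sigma(A,B)$. Since $\dim H_n<\infty$, I choose $x_n\in H_n$ with $\|x_n\|=1$ and $P_{H_n}(A-\lambda_n B)x_n=0$, i.e.\ $(A-\lambda_n B)x_n\in H_n^{\perp}$; pairing with $x_n$ yields
\[
\langle A x_n,x_n\rangle=\lambda_n\langle B x_n,x_n\rangle,\qquad \langle (A-\lambda B)x_n,x_n\rangle=(\lambda_n-\lambda)\langle B x_n,x_n\rangle.
\]
After extracting a subsequence, $x_n\stackrel{w}{\to}x\in H$. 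In the case $x=0$, if $|\langle B x_n,x_n\rangle|$ stays bounded below on a further subsequence, the ratio identity places $\lambda$ in $w_e(A,B)$; otherwise, along a subsequence $\langle B x_n,x_n\rangle\to 0$, hence $\langle A x_n,x_n\rangle\to 0$ and $\langle(A-\lambda B)x_n,x_n\rangle\to 0$, giving $\lambda\in W_e(A,B)$. For part (ii) both subcases land in $W_e(A,B)$ by Proposition~\ref{prop.ess.num.range}~ii); for part (i) the standing hypothesis allows Proposition~\ref{prop.ess.num.range}~i) to carry $W_e(A,B)$ into $w_e(A,B)$, so both subcases yield $\lambda\in w_e(A,B)$.

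The hard part is ruling out $x\neq 0$, where I must derive $\lambda\in\sigma(A,B)$ to obtain the needed contradiction. Under condition (2), test $(A-\lambda_n B)x_n\perp H_n$ against $P_{H_n}y$ for $y\in\dom(A^*)\cap\dom(B^*)$; since $x_n\in H_n$, the relation rewrites as
\[
\langle x_n,A^*_{H_n}P_{H_n}y\rangle-\overline{\lambda_n}\langle x_n,B^*_{H_n}P_{H_n}y\rangle=0,
\]
and passing to the limit with the strong convergences in (2) and $x_n\stackrel{w}{\to}x$ gives $\langle x,(A^*-\overline{\lambda}B^*)y\rangle=0$ on a dense subset; standard adjoint duality then produces $x\in\ker(\overline{A-\lambda B})$, placing $\lambda$ in $\sigma(A,B)$. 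Under condition (1) alone I would instead argue by contradiction using Theorem~\ref{thm.dist.pencil}: if $\lambda\notin\sigma(A,B)\cup w_e(A,B)$, the resolvent bound of that theorem together with the strong convergences in (1) yields uniform coercivity $\|(A_{H_n}-\lambda_n B_{H_n})z\|\geq c\|z\|$ on $H_n$ for large $n$, contradicting $\lambda_n\in\sigma(A_{H_n},B_{H_n})$.

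For the spectral inclusion in parts (i) and (ii), at an isolated $\lambda\in\sigma(A,B)$ outside the relevant essential numerical range, Theorem~\ref{thm.dist.pencil} controls $(A-\zeta B)^{-1}$ on a small circle $\Gamma$ around $\lambda$; conditions (1) and (2) together give strong convergence $(A_{H_n}-\zeta B_{H_n})^{-1}P_{H_n}\to(A-\zeta B)^{-1}$ uniformly on $\Gamma$, whence the associated Riesz projections converge in norm to the nonzero spectral projection at $\lambda$, forcing $\sigma(A_{H_n},B_{H_n})$ to meet every neighbourhood of $\lambda$ for large $n$. For part (iii), assume WLOG $B\geq\beta I$ with $\beta>0$. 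Given $\lambda\in\sigma_{\rm app}(A,B)$ and a Weyl sequence $y_k\in\dom(A)$ with $\|y_k\|=1$ and $\|(A-\lambda B)y_k\|\to 0$, condition (1) yields $(A_{H_n}-\lambda B_{H_n})P_{H_n}y_k\to(A-\lambda B)y_k$ as $n\to\infty$, so a diagonal argument produces unit vectors $z_n\in H_n$ with $\|(A_{H_n}-\lambda B_{H_n})z_n\|\to 0$; transferring through the selfadjoint operator $S_n:=B_{H_n}^{-1/2}A_{H_n}B_{H_n}^{-1/2}$ (whose spectrum equals $\sigma(A_{H_n},B_{H_n})$) and using $B_{H_n}\geq\beta I_{H_n}$ gives $\dist(\lambda,\sigma(A_{H_n},B_{H_n}))\leq\beta^{-1}\|(A_{H_n}-\lambda B_{H_n})z_n\|\to 0$, producing $\lambda_n\to\lambda$. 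The main obstacle throughout is the $x\neq 0$ case of confinement: when $A$ is unbounded one cannot pass limits in inner products naively, and it is precisely here that conditions (1) and (2) earn their keep.
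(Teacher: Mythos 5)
Your confinement argument via a right eigenvector $(A_{H_n}-\lambda_n B_{H_n})x_n=0$ reaches the limit $\langle x,(A^*-\overline\lambda B^*)y\rangle=0$ only through the \emph{adjoint} compressions $A^*_{H_n}P_{H_n}y$, $B^*_{H_n}P_{H_n}y$, so it works under condition (2) but not under condition (1). The paper handles (1) by taking the \emph{left} eigenvector, i.e.\ $x_n\in H_n$ with $A_{H_n}^*x_n=\overline{\lambda_n}B_{H_n}^*x_n$ (available since $\sigma(A_{H_n},B_{H_n})=\sigma(A_{H_n}^*,B_{H_n}^*)^*$ in finite dimensions), and testing against $P_{H_n}y$ for $y\in\dom(A)$; this gives $0=\langle x_n,(A_{H_n}-\lambda_n B_{H_n})P_{H_n}y\rangle\to\langle x,(A-\lambda B)y\rangle$ directly from \eqref{eq.gsr.Galerkin1}, whence $x\in\ker((A-\lambda B)^*)$. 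Your proposed fallback for (1) via Theorem~\ref{thm.dist.pencil} does not close this gap: that theorem requires $B\in L(H)$ with $0\notin\overline{W(B)}$, neither of which is assumed in parts (i)--(ii), and more fundamentally a bound on $(A-\mu B)^{-1}$ does not control the truncated resolvents $(A_{H_n}-\mu B_{H_n})^{-1}$ --- spectral pollution is exactly the failure of that implication, so no uniform coercivity on $H_n$ follows from coercivity of the full pencil.

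For spectral inclusion, the claim that conditions (1) and (2) together ``give strong convergence $(A_{H_n}-\zeta B_{H_n})^{-1}P_{H_n}\to(A-\zeta B)^{-1}$ uniformly on $\Gamma$'' is precisely the step that needs a proof. One must first establish $\sup_n\sup_{\zeta\in\Gamma}\|(A_{H_n}-\zeta B_{H_n})^{-1}\|<\infty$; the paper does this by contradiction, extracting $z_n\to z\in\Gamma$ with almost-null unit vectors and reusing the adjoint-testing and singular-sequence arguments to force $z\in\sigma(A,B)$ or $z\in w_e(A,B)$, both impossible on $\Gamma$. (Note also that the Riesz projections converge strongly, not in norm; strong convergence suffices since $P_\Gamma\neq 0$.) Your part (iii) matches the paper's, but ``assume WLOG $B\geq\beta I$'' omits the case where it is $A$ rather than $B$ that is uniformly positive; the paper treats this by interchanging the roles of $A$ and $B$ and replacing $\lambda$ by $\lambda^{-1}$.
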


\begin{proof}
We abbreviate $P_n:=P_{H_n}$, $A_n:=A_{H_n}$ and $B_n:=B_{H_n}$ for $n\in\N$.

i)
First assume that (1) holds.
Assume that there exist $\lm\in\C$, an infinite subset $I\subseteq\N$ and $\lm_n\in\sigma(A_n,B_n)=\sigma(A_n^*,B_n^*)^*$, $x_n\in H_n$, $n\in I$, with $\|x_n\|=1$, $A_n^*x_n=\overline{\lm_n}B_n^*x_n$ and $\lm_n\to\lm\notin\sigma(A,B)$.
First assume that there exists a subsequence on which $x_n\stackrel{w}{\to}x\neq 0$.
Let $y\in\dom(A)$ be arbitrary.
Then the assumption~\eqref{eq.gsr.Galerkin1} and the convergences $\lm_n\to\lm$ and $x_n\stackrel{w}{\to}x$ imply
\begin{align*}
0=&\langle (A_n^*-\overline{\lm_n} B_n^*)x_n,y\rangle
=\langle x_n, (A-\lm B)y\rangle+\langle x_n, (A_n-\lm_n B_n)P_n y-(A-\lm B)y\rangle\\
\tolong &\langle x,(A-\lm B)y\rangle.
\end{align*}
Therefore $y\mapsto \langle (A-\lm B)y,x\rangle =0$ defines a bounded linear functional on $\dom(A)$.
This implies $x\in\dom((A-\lm B)^*)$ and $(A-\lm B)^*x=0$.
Since we assumed that $x\neq 0$, we have $0\in\sigma((A-\lm B)^*)=\sigma(A-\lm B)^*$
and hence $\lm\in\sigma(A,B)$, a contradiction.
Therefore, it follows that $x_n\stackrel{w}{\to}0$.
Since $A_n^*x_n=\overline{\lm_n}B_n^*x_n$, we obtain
$$\langle A x_n,x_n\rangle -\lm_n \langle B x_n,x_n\rangle =\langle x_n, (A_n^*-\overline{\lm_n} B_n^*)x_n\rangle =0, \quad n\in I.$$
If $\langle B x_n,x_n\rangle=0$ for infinitely many $n$, then  $\langle A x_n,x_n\rangle=0$ for these $n$ and hence $W_e(A,B)=\C$ and $0\in W_e(A)\cap W_e(B)$, a contradiction. Hence, without loss of generality, $\langle B x_n,x_n\rangle \neq 0$ for all $n\in I$.
Then $$\frac{\langle A x_n,x_n\rangle}{\langle B x_n,x_n\rangle}=\lm_n\tolong \lm\in w_e(A,B), \quad n\in I, \quad n\to\infty.$$

Now we assume that (2) holds. The proof is very similar but one has to pay attention to the domains of the involved operators.

Assume that there exist $\lm\in\C$, an infinite subset $I\subseteq\N$ and $\lm_n\in\sigma(A_n,B_n)$, $x_n\in H_n$, $n\in I$, with $\|x_n\|=1$, $A_nx_n=\lm_nB_nx_n$ and $\lm_n\to\lm\notin\sigma(A,B)$.
First assume that there exists a subsequence on which $x_n\stackrel{w}{\to}x\neq 0$.
Let $y\in\dom(A^*)\cap\dom(B^*)$ be arbitrary. We use that $A^*-\overline{\lm}B^*\subseteq (A-\lm B)^*$ and $A_n^*-\overline{\lm_n}B_n^*=(A_n-\lm_nB_n)^*$.
Then the assumption~\eqref{eq.gsr.Galerkin2} and the convergences $\lm_n\to\lm$ and $x_n\stackrel{w}{\to}x$ imply
\begin{align*}
0=&\langle (A_n-\lm_n B_n)x_n,y\rangle
=\langle x_n, (A-\lm B)^*y\rangle+\langle x_n, (A_n-\lm_n B_n)^*P_n y-(A-\lm B)^*y\rangle\\
\tolong &\langle x,(A-\lm B)^*y\rangle.
\end{align*}
Therefore $y\mapsto \langle (A-\lm B)^*y,x\rangle =0$ defines a (trivial) bounded linear functional on $\dom(A^*)\cap\dom(B^*)$ and hence on the whole Hilbert space since $\overline{\dom(A^*)\cap\dom(B^*)}=H$. Since $0\notin\sigma(A-\lm B)$, the operator $A-\lm B$ is closed. 
This implies $x\in\dom(A-\lm B)$ and $(A-\lm B)x=0$.
Since we assumed that $x\neq 0$, we have  $\lm\in\sigma(A,B)$, a contradiction.
Therefore, it follows that $x_n\stackrel{w}{\to}0$.
Since $A_nx_n=\overline{\lm_n}B_nx_n$, we obtain
$$\langle A x_n,x_n\rangle -\lm_n \langle B x_n,x_n\rangle  =0, \quad n\in I.$$
In completely the same way as in the previous case we arrive at $\lm\in w_e(A,B)$.
This proves the claim about spectral pollution. Note that $w_e(A,B)\supseteq \overline{W_e(A,B)}$ by 
Proposition~\ref{prop.ess.num.range}~i).

Now we turn to spectral inclusion.
Assume that both (1) and (2) hold and take an  isolated $\lambda\in\sigma(A,B)$ outside $w_e(A,B)$. Then there exists $\eps>0$ such that 
$B_{\eps}(\lm)\cap\sigma(A,B)=\{\lm\}$ and
$B_{\eps}(\lm)\cap w_e(A,B)=\emptyset$.
Choose $\delta\in(0,\eps)$. Assume that there exists an infinite subset $I\subset\N$ with ${\rm dist}(\lm,\sigma(A_n,B_n))\geq\delta$, $n\in I$. 
Define $\Gamma:=\partial B_{{\delta}/2}(\lm)$. Then the corresponding Riesz projections are
\begin{align*}
 P_{\Gamma}:=\frac{1}{2\pi\I}\int_{\Gamma} (A-zB)^{-1}\,\rd z, \quad  P_{\Gamma,n}:=\frac{1}{2\pi\I}\int_{\Gamma} (A_n-zB_n)^{-1}\,\rd z=0, \quad n\in I.
\end{align*}
Let $x\in H$ be arbitrary. For every $n\in I$ define the function $f_n:\Gamma\to [0,\infty)$ by $f_n(z):=\|(A-zB)^{-1}x-(A_n-zB_n)^{-1}P_nx\|$.
Then $$\|P_{\Gamma}x-P_{\Gamma,n}P_nx\|\leq \frac{1}{2\pi}\int_{\Gamma} f_n (z)\,\rd |z|, \quad n\in I.$$

Next we prove
\begin{equation}\label{eq:resunifbdd}
\sup_{z\in\Gamma}\,\sup_{n\in I}\|(A_n-zB_n)^{-1}\|<\infty.
\end{equation}
Assume that the latter is false. Then there exists an infinite subset $I_2\subseteq I$ such that for every $n\in I_2$ there are $z_n\in \Gamma$ and $x_n\in H_n$ with $\|x_n\|=1$, $\|(A_n-z_nB_n)x_n\|<1/n$.
Since $\Gamma$ is compact and $H$ is weakly compact, we can extract another infinite subset $I_3\subseteq I_2$ so that $(z_n)_{n\in I_2}$ converges to some $z\in\Gamma$ and $(x_n)_{n\in I_2}$ converges weakly in $H$ to some $x\in H$.
Let $y\in\dom(A^*)\cap\dom(B^*)$. 
Since $A^*-\overline{z}B^*\subseteq (A-z B)^*$, the assumption \eqref{eq.gsr.Galerkin2} and $z_n\to z$ imply
$\|(A-zB)^*y-(A_n-z_nB_n)^*P_ny\|\to 0$.
Now we estimate
\begin{align*}
|\langle x,(A-zB)^*y\rangle|
&\leq |\langle x_n,(A-zB)^*y\rangle|+|\langle x-x_n,(A-zB)^*y\rangle|,\\
|\langle x_n,(A-zB)^*y\rangle|
&\leq |\langle(A_n-z_nB_n)x_n,P_ny\rangle|+\|x_n\|\|(A-zB)^*y-(A_n-z_nB_n)^*P_ny\|\\
&\leq \|(A_n-z_n B_n)x_n\|\|y\|+\|(A-zB)^*y-(A_n-z_nB_n)^*P_ny\|,
\end{align*}
which implies $\langle x,(A-zB)^*y\rangle=0$ using the convergences above.
Analogously as above for spurious eigenvalues, we arrive at $z\in\sigma(A,B)$ if $x\neq 0$ or, if $x=0$, then
$$z=\lim_{n\in I_2\atop n\to\infty}\frac{ \langle Ax_n,x_n\rangle}{\langle Bx_n,x_n\rangle}\in w_e(A,B),$$
 which are both contradictions. This proves \eqref{eq:resunifbdd}.

Now we show that $f_n(z)\to 0$, $n\to\infty$, for every $z\in\Gamma$.
To this end, let $z\in\Gamma$. 
Define $y:=(A-zB)^{-1}x\in\dom(A)$. Then the assumptions imply $\|(A-zB)y-(A_n-zB_n)P_ny\|\to 0$ as $n\to\infty$.
Hence
\begin{align*}
f_n(z)&=\|y-(A_n-zB_n)^{-1}P_n(A-zB)y\|\\
&\leq \|y-P_ny\|+\|(A_n-zB_n)^{-1}\|\|(A-zB)y-(A_n-zB_n)P_ny\|.
\end{align*}
With \eqref{eq:resunifbdd} we obtain $f_n(z)\to 0$ as $n\to\infty$.

Note that, by \eqref{eq:resunifbdd}, $f_n(z)$ is unformly bounded in $n\in\N$ and $z\in\Gamma$. 
Lebesgue's dominated convergence theorem implies $\|P_{\Gamma}x-P_{\Gamma,n}P_nx\|\to 0$ as $n\to\infty$.
Hence $P_{\Gamma,n}P_n \s P_{\Gamma}, \,n\to\infty,$ and so we obtain $P_{\Gamma}=0$, a contradiction to 
$\lm\in B_{{\delta}/2}(\lm)\cap\sigma(A,B)\neq\emptyset$.
Therefore, there exists $n_{\delta}\in\N$ such that ${\rm dist}(\lm,\sigma(A_n,B_n))<\delta$, $n\geq n_{\delta}$.
Since $\delta$ can be chosen arbitrarily small, we finally obtain ${\rm dist}(\lm,\sigma(A_n,B_n))\to 0$, $n\to\infty$.

ii)
We proceed as in i). The only difference occurs at the point where we have
$$\langle A x_n,x_n\rangle -\lm_n\langle B x_n,x_n\rangle =0, \quad n\in I,\quad \lm_n\tolong\lm, \quad n\to\infty.$$
Since now $B$ is assumed to be bounded, we have
$$\langle (A-\lm B) x_n,x_n\rangle\tolong 0\in W_e(A-\lm B), \quad n\in I,\quad  n\to\infty.$$
Hence $\lm\in W_e(A,B)$.
Note that $W_e(A,B)\supseteq w_e(A,B)$ by Proposition~\ref{prop.ess.num.range}~ii).

iii)
Let $\lm\in\sigma_{\rm app}(A,B)$ and let $\eps>0$ be arbitrary.
Then there exists $x_{\eps}\in\dom(A)$ with $\|x_{\eps}\|=1$ and $$\|(A-\lm B)x_{\eps}\|<\eps.$$
If $\lm\in\sigma(A_n,B_n)$ for all sufficiently large $n\in\N$, the claim follows immediately. Now assume that there exist infinitely many
 $n\in\N$ such that $\lm\in\rho(A_n,B_n)$.

First assume that $B$ is uniformly positive, $B\geq c$ for some $c>0$.
Then $A_n$ is selfadjoint and $B_n\geq c $ for all $n\in\N$.
Note that $\sigma(A_n,B_n)=\sigma\big(B_n^{-\frac{1}{2}}A_nB_n^{-\frac{1}{2}}\big)$.
Since 
\begin{align*}
\|(A_n-\lm B_n)P_nx_{\eps}\|
&\geq \frac{\|P_nx_{\eps}\|}{\|(A_n-\lm B_n)^{-1}\|}
\geq  \frac{\big\|P_nx_{\eps}\big\|}{\big\|B_n^{-\frac{1}{2}}\big\|^2\big\|(B_n^{-\frac{1}{2}}A_nB_n^{-\frac{1}{2}}-\lm)^{-1}\big\|}\\
&\geq c\|P_nx_{\eps}\|\,{\rm dist}\Big(\lm,\sigma\Big(B_n^{-\frac{1}{2}}A_nB_n^{-\frac{1}{2}}\Big)\Big)\\
&=c\|P_nx_{\eps}\|\,{\rm dist}(\lm,\sigma(A_n,B_n)),
\end{align*}
we obtain, using the assumption~\eqref{eq.gsr.Galerkin1} and $P_n\s I$,
$$\limsup_{n\to\infty}{\rm dist}(\lm,\sigma(A_n,B_n))\leq 
\limsup_{n\to\infty} \frac{\|(A_n-\lm B_n)P_nx_{\eps}\|}{c\|P_nx_{\eps}\|}
\leq \frac{\eps}{c}.$$
Since $\eps>0$ can be chosen arbitrarily small, we obtain ${\rm dist}(\lm,\sigma(A_n,B_n))\to 0$ as $n\to\infty$.

Now assume that $A$  is uniformly positive, $A\geq c$ for some $c>0$.
Then $B_n$ is selfadjoint and $A_n\geq c $ for all $n\in\N$.
Since $\lm=0$ is not possible, we obtain $\|(B-\lm^{-1}A)x_\eps\|<\eps|\lm|^{-1}$.
We proceed analogously as in the previous case, with the role of $A$ and $B$ being interchanged and $\lm$ replaced by $\lm^{-1}$,
to arrive at
$$\limsup_{n\to\infty}{\rm dist}(\lm^{-1},\sigma(B_n,A_n))
\leq \limsup_{n\to\infty}\frac{1}{|\lm|}\frac{\|(A_n-\lm B_n)P_nx_\eps\|}{c\|P_nx_\eps\|}\leq\frac{\eps}{|\lm|c}.$$
Hence there exist $\mu_n\in\sigma(B_n,A_n)$, $n\in\N$, with $\mu_n\to\lm^{-1}$. In particular $\mu_n\neq 0$ for all sufficiently large $n$, and therefore $\lm_n:=\mu_n^{-1}\in\sigma(A_n,B_n)$ satisfy $\lm_n\to\lm$.
\end{proof}

\begin{Remark}\label{rem.approx}
\begin{enumerate}
\item[\rm i)]
Assume that there exists $\lm_0\in\rho(A,B)$ with
$\lm_0\in \rho(A_{H_n},B_{H_n})$, $n\in\N$, and $\sup_{n\in\N}\|(A_{H_n}-\lm_0B_{H_n})^{-1}\|<\infty.$
Then the assumption~\eqref{eq.gsr.Galerkin1} implies $(A_{H_n}-\lm_0B_{H_n})^{-1}P_{H_n}\s (A-\lm_0B)^{-1}$ as $n\to\infty$, see~\cite[Theorem~3.1]{Boegli-chap1}.

\item[\rm ii)]
If, in addition to the assumptions of claim~iii), $B$ is $A$-bounded with relative bound~$0$, then $\sigma(A,B)=\sigma_{\rm app}(A,B)$. The latter follows since, by~\cite[Corollary~1]{Hess-Kato}, $(A-\lm B)^*=A-\overline{\lm}B$ for all $\lm\in\C$.

\item[\rm iii)] In claim~iii) it is not enough to assume that  $A$ or $B$ is strictly positive. As a counterexample, let $A=B:={\rm diag}(n^{-1}:\,n\in\N\}$ in $l^2(\N)$. It is easy to see that $\sigma_{\rm app}(A,B)=\C$. However, if we truncate the pencil to $H_n:={\rm span}\{e_j:\,j=1,\dots,n\}$, then $\sigma(A_{H_n},B_{H_n})=\{1\}$ for all $n\in\N$. So every $\lm\in\C\backslash\{1\}$ is not approximated.
\end{enumerate}
\end{Remark}

\begin{Example}\label{ex.op.as.pencil}
In $H:=l^2(\N)\oplus l^2(\N)$ define $T:={\rm diag}(S,S)$ and $J:={\rm diag}(I,-I)$ with $S:={\rm diag}(n:\,n\in\N)$, identified with its matrix representation with respect to the standard orthonormal basis of $l^2(\N)$.
Note that we have the equivalence $\lm\in\sigma(T,J)$ if and only if $1/\lm\in\sigma(J|_{\dom(T)},T)$;
here we use that $\lm=0$ need not be considered since $T\geq I$.

Since $W_e(T)=\emptyset$, Proposition~\ref{prop.We.empty} implies $W_e(T,J)=\emptyset$.
Therefore, by Theorem~\ref{thm.pencil.sigma}~ii) applied to $A=T$, $B=J$, no spurious eigenvalues occur if we use a
projection method of the pencil $\mathcal{L}(\lm):=T-\lm J$.
Together with Theorem~\ref{thm.pencil.sigma}~iii)  we conclude spectral exactness of the projection method.

Note that $J^{-1}=J$, and $JT$ is selfadjoint with $\sigma(JT)=\sigma(T,J)=\Z\backslash\{0\}$. However, by~\cite[Theorem~4.5]{We}, if we apply the projection method to $JT$, spectral pollution can be arranged to occur 
at \emph{any} point in $\R\backslash\sigma(JT)$ since, by Theorem~\ref{Thm2.11}, 
$$W_e(JT)={\rm conv}\,(\widehat{\sigma}_e(JT))\backslash\{\pm\infty\}=\R.$$
\end{Example}

The following theorem shows that arbitrary compact subsets of $W_e(A,B)$ can be filled with spurious eigenvalues.

\begin{Theorem}\label{thmdescloux.pencil}
Assume that the following holds:
  \begin{enumerate}[label=\rm{(\alph{*})}] 
\item $A$ is densely defined, and $0\notin \overline{W(A)}\cap \overline{W(B)}$ or $W(A,B)\neq \mathbb C$;
\item for every $\lm\in\C$, $\overline{\dom(A-\lm B)\cap \dom((A-\lm B)^*)}=H$
or $W(A-\lm B)\neq\C$. 
\end{enumerate}
Let $V_n\subset \dom(A)$, $n\in\N,$ be  finite-dimensional subspaces such that $P_{V_n}\s I$.
Then, for any compact subset $\Omega\subseteq W_e(A,B)$, there exist finite-dimensional subspaces $H_n\subset \dom(A),\,n\in\N,$ with $V_n\subseteq H_n$ satisfying the following properties:
  \begin{enumerate}[label=\rm{\roman{*})}] 
\item every $\lm\in\Omega\backslash\sigma(A,B)$ is a spurious eigenvalue, $$\sup_{\lm\in\Omega}\,{\rm dist}(\lm,\sigma(A_{H_n}, B_{H_n}))\tolong 0, \quad n\to\infty.$$
\item If $\Omega\subset{\rm int}\, W_e(A,B)$ is a finite set, then $$\sigma(A_{H_n},B_{H_n})=\sigma(A_{V_n},B_{V_n})\cup\Omega, \quad n\in\N.$$
\end{enumerate}
\end{Theorem}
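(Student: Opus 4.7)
The plan is to construct $H_n = V_n \oplus \mathrm{span}\{x_1,\dots,x_{K_n}\}$ by choosing unit vectors $x_1,\dots,x_{K_n}\in\dom(A)$ inductively, so that in the ordered basis (basis of $V_n$ followed by $x_1,\dots,x_{K_n}$) the matrix of $A_{H_n}-\lambda B_{H_n}$ is block upper triangular, with leading block equal to the matrix of $A_{V_n}-\lambda B_{V_n}$ and trailing $K_n\times K_n$ block itself upper triangular with diagonal entries $\langle Ax_j,x_j\rangle-\lambda\langle Bx_j,x_j\rangle$. The $x_j$ are tailored so that the root of the $j$th diagonal entry approximates (part~(i)), or exactly equals (part~(ii)), a prescribed point $\mu_j$. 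For~(i), pick a finite $\varepsilon_n$-net $\{\mu_1,\dots,\mu_{K_n}\}\subseteq\Omega$ with $\varepsilon_n\to 0$; for~(ii), set $\{\mu_1,\dots,\mu_K\}=\Omega$.

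At step~$j$ define the finite-dimensional subspace
\[
U_j:=V_n+\mathrm{span}\{x_k:k<j\}+A(V_n)+B(V_n)+\mathrm{span}\{Ax_k,Bx_k:k<j\}\subset H.
\]
Hypotheses~(a) and~(b) are exactly what is needed for Theorem~\ref{thm.proj}, which yields
\[
\mu_j\in W_e(A,B)\subseteq W\bigl(A|_{U_j^\perp\cap\dom(A)},B|_{U_j^\perp\cap\dom(A)}\bigr),
\]
so $0\in\overline{W((A-\mu_j B)|_{U_j^\perp\cap\dom(A)})}$, and we may choose a unit vector $x_j\in U_j^\perp\cap\dom(A)$ with $|\langle(A-\mu_j B)x_j,x_j\rangle|<\eta_n$ for any preassigned $\eta_n>0$. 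The orthogonality $x_i\perp U_i$ built in for all $i$ forces $\langle(A-\lambda B)v,x_i\rangle=0$ whenever $v\in V_n$ and $\langle(A-\lambda B)x_k,x_i\rangle=0$ whenever $k<i$; this gives the block upper triangular structure and the factorisation $\det(A_{H_n}-\lambda B_{H_n})=\det(A_{V_n}-\lambda B_{V_n})\prod_{j=1}^{K_n}(\langle Ax_j,x_j\rangle-\lambda\langle Bx_j,x_j\rangle)$. Provided none of the denominators vanishes,
\[
\sigma(A_{H_n},B_{H_n})=\sigma(A_{V_n},B_{V_n})\cup\bigl\{\lambda_j^\ast:=\langle Ax_j,x_j\rangle/\langle Bx_j,x_j\rangle:j=1,\dots,K_n\bigr\}.
\]

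Hypothesis~(a) is used to rule out vanishing denominators and to control $|\lambda_j^\ast-\mu_j|$. If $0\notin\overline{W(A)}$ (the case $0\notin\overline{W(B)}$ is symmetric), then $|\langle Ax_j,x_j\rangle|\geq c$ for some $c>0$ uniform in $j,n$; moreover $\mu_j\neq 0$, for otherwise Remark~\ref{rem.inverse}~i) would yield $0\in W_e(A)\subseteq\overline{W(A)}$, a contradiction. Then $|\langle(A-\mu_j B)x_j,x_j\rangle|<\eta_n$ forces $|\langle Bx_j,x_j\rangle|\geq(c-\eta_n)/|\mu_j|$ and hence $|\lambda_j^\ast-\mu_j|=O(\eta_n)$. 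In the alternative case $W(A,B)\neq\C$, fix $\lambda_0\notin W(A,B)$; then $|\langle(A-\lambda_0 B)x_j,x_j\rangle|\geq c$, which combined with the small inner-product bound yields $|\langle Bx_j,x_j\rangle|\geq(c-\eta_n)/|\lambda_0-\mu_j|$, again bounded below uniformly. Letting $\eta_n\to 0$ and using the triangle inequality, $\sup_{\lambda\in\Omega}\dist(\lambda,\sigma(A_{H_n},B_{H_n}))\leq\varepsilon_n+O(\eta_n)\to 0$, which proves part~(i).

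For part~(ii) we must upgrade $|\langle(A-\mu_j B)x_j,x_j\rangle|<\eta_n$ to the equality $\langle(A-\mu_j B)x_j,x_j\rangle=0$, which then forces $\lambda_j^\ast=\mu_j$. Since $\mu_j\in\mathrm{int}\,W_e(A,B)$, Theorem~\ref{thm.proj} places an open neighbourhood $N$ of $\mu_j$ inside $W(A|_{U_j^\perp\cap\dom(A)},B|_{U_j^\perp\cap\dom(A)})$, so $0\in\overline{W((A-\mu B)|_{U_j^\perp\cap\dom(A)})}$ for every $\mu\in N$. Applying this at the four perturbed parameters $\mu_j\pm\delta,\mu_j\pm\mathrm{i}\delta$ produces four unit vectors whose images under $x\mapsto\langle(A-\mu_j B)x,x\rangle$ distribute around $0\in\C$ in four separated directions---directions whose spread is controlled because the lower bound on $|\langle Bx,x\rangle|$ coming from hypothesis~(a) also confines $\arg\langle Bx,x\rangle$ to a half-plane on the relevant vectors. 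By Toeplitz--Hausdorff, the numerical range of the compression of $A-\mu_j B$ to the span of these four vectors is convex, hence contains $0$, yielding the required exact $x_j$. The main obstacle is exactly this upgrade from $0\in\overline{W}$ to $0\in W$ in the pencil setting, which is delicate and is precisely where hypothesis~(a) becomes indispensable.
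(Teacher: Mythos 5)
Your proof of part~(i) is correct and follows essentially the same approach as the paper: both construct $H_n$ by adjoining vectors $x_j$ orthogonal to a subspace containing $V_n$, the previously chosen $x_k$, and their images under $A$ and $B$ (forcing the block upper-triangular compression), and both invoke Theorem~\ref{thm.proj}. The only cosmetic difference is that the paper passes to $\lambda\in\overline{w(A|_{U^\perp},B|_{U^\perp})}$ via Proposition~\ref{prop.num.range}~i) and picks $\mu=\langle Ax,x\rangle/\langle Bx,x\rangle$ near $\lambda$ exactly, whereas you pick $x_j$ with $|\langle(A-\mu_j B)x_j,x_j\rangle|<\eta_n$ and then show $\lambda_j^\ast$ is $O(\eta_n)$-close to $\mu_j$; both routes hinge on the same uniform lower bound on $|\langle Bx_j,x_j\rangle|$, which you correctly extract from hypothesis~(a).

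Part~(ii), however, has a genuine gap that you flag but do not resolve. The four-point Toeplitz--Hausdorff scheme does not work: the approximate values $\langle(A-\mu_j B)y_k,y_k\rangle\approx(\nu_k-\mu_j)\langle By_k,y_k\rangle$ can all lie in a single open half-plane through $0$, so their convex hull need not contain $0$. For example, even with $\overline{W(B)}\subset\{z:|\arg z|\leq\pi/3\}$, the choices $\langle By_1,y_1\rangle=\langle By_3,y_3\rangle=\e^{-\I\pi/3}$ and $\langle By_2,y_2\rangle=\langle By_4,y_4\rangle=\e^{\I\pi/3}$ produce the four arguments $-\pi/3$, $-2\pi/3$, $\pi/6$, $-\pi/6$, which span only an arc of length $5\pi/6<\pi$. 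Your reasoning also only addresses the alternative $0\notin\overline{W(B)}$ of hypothesis~(a); the cases $0\notin\overline{W(A)}$ and $W(A,B)\neq\C$ do not confine $\arg\langle Bx,x\rangle$ to a half-plane. The justification behind the paper's ``we can omit the closure'' is instead a supporting-line perturbation on the convex set $C:=W\bigl((A-\lambda B)|_{U^\perp\cap\dom(A)}\bigr)$: if $0\in\overline{C}\setminus C$, then $0\in\partial C$ and $\re(\e^{\I\phi}C)\geq 0$ for some $\phi$. Under $0\notin\overline{W(B)}$ pick $\psi$ with $\re(\e^{\I\psi}W(B))\geq c>0$ and set $\mu:=\lambda-s\,\e^{\I(\psi-\phi)}$ for small $s>0$; then $\re\bigl(\e^{\I\phi}\langle(A-\mu B)x,x\rangle\bigr)\geq sc>0$ for every unit $x\in U^\perp\cap\dom(A)$, so $\mu\notin W(A|_{U^\perp},B|_{U^\perp})$, contradicting $\mu\in B_\eps(\lambda)\subseteq{\rm int}\,W_e(A,B)\subseteq W(A|_{U^\perp},B|_{U^\perp})$. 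The remaining alternatives of~(a) reduce to this one via the inversion $\lambda\leftrightarrow\lambda^{-1}$ (Remark~\ref{rem.inverse}~i)) and by replacing $A$ with $A-\lambda_0 B$ for a fixed $\lambda_0\notin W(A,B)$, respectively.
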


\begin{proof}
First we derive a general argument for an arbitrary $\lm\in\Omega\subseteq W_e(A,B)$; it is the generalisation of~\cite[Lemma 6.6]{We} from operators to pencils. 
Let $V\subset\dom(A)$ be a finite-dimensional subspace and let $\eps>0$.
Define
$$U:={\rm span}\big(V\cup \ran(A|_V)\cup\ran(B|_V)\big).$$
Then ${\rm rank}\, P_U<\infty$.
 By assumption~(b) and Theorem~\ref{thm.proj},
we obtain 
$$\lm\in W\big(A|_{U^{\perp}\cap\dom(A)},B|_{U^{\perp}\cap\dom(A)}\big).$$
By the assumption~(a) and Proposition~\ref{prop.num.range}~i), we conclude 
$$\lm\in \overline{w\big(A|_{U^{\perp}\cap\dom(A)},B|_{U^{\perp}\cap\dom(A)}\big)};$$ 
under the assumptions of claim~ii) we can omit the closure.
Hence there exists $\mu\in B_{\eps}(\lm)$ ($\mu=\lm$ in claim~ii)) and a normalised $x\in U^{\perp}\cap\dom(A)\subseteq V^{\perp}\cap\dom(A)$ such that $ 0 = \langle (A-\mu B)x,x\rangle$; it follows that if $V_x:=V\oplus{\rm span}\{x\}$ then $(A-\mu B)_{V_x}$ admits the triangular representation
$$ (A-\mu B)_{V_x} = \left(\begin{array}{cc} (A-\mu B)_V &  T  \\ 0 & 0 \end{array}\right) $$
and therefore
$\mu\in \sigma(A_{V_x},B_{V_x}).$

Let $n\in\N$. There exists a finite open covering  
$\{D_{k;n}:\,k=1,\dots,N_n\}$ of $\Omega$ by open disks $D_{k;n}:=B_{1/n}(c_{k;n})$ with centres $c_{k;n}$ and equal radius $1/n$.
By applying the above argument inductively $N_n$ times with $\eps=1/n$, we construct orthonormal elements 
$x_{1;n},\dots,x_{N_n;n}\in V_n^{\bot}\cap\dom(A)$  
and points $\mu_{k;n}\in B_{1/n}(c_{k;n})$, $k=1,\ldots,N_n$, such that 
$$H_n:=V_n\oplus {\rm span}\{x_{1;n}\}\oplus\dots\oplus {\rm span}\{x_{N_n;n}\}$$ satisfies 
$$\{\mu_{1;n},\dots,\mu_{N_n;n}\}\subseteq \sigma(A_{H_n},B_{H_n}), \quad k=1,\dots,N_n.$$
By construction of the disks $D_{k;n},\,k=1,\dots,N_n$, we have 
$$\sup_{\lm\in\Omega}\,\dist(\lm,\sigma(A_{H_n},B_{H_n}))\leq \sup_{\lm\in\Omega}\,\min_{k=1, \dots,N_n}\,(|\lm-\mu_{k;n}|)\leq \frac{2}{n}\tolong 0, \quad n\to\infty.$$

In ii), with $\Omega=\{\mu_1,\dots,\mu_N\}$, we apply the general argument inductively $N$ times to construct orthonormal elements $x_{1;n},\dots,x_{N;n}\in V_n^{\perp}\cap\dom(A)$ such that $H_n:=V_n\oplus{\rm span}\{x_{1;n}\}\oplus\dots\oplus{\rm span}\{x_{N;n}\}$ satisfies $\sigma(A_{H_n},B_{H_n})=\sigma(A_{V_n},B_{V_n})\cup\Omega$.

%
\end{proof}

Now we approximate a differential operator pencil via domain truncation. To this end, let $\Omega\subseteq\R^d$ be a domain and let $\Omega_n\subset\Omega$, $n\in\N$, be bounded, nested subdomains that exhaust $\Omega$. We consider two differential expressions $\tau_1$ and $\tau_2$ and associated operators whose actions on
appropriate domains are determined always by these same expressions.
The following spectral convergence results are similar to the ones in \cite[Theorems VIII.23--25]{reedsimon1} for selfadjoint operators, where also a common core assumption as in (a) is used.

\begin{Theorem}\label{thm.trunc}
Let $A$, $B$ be realisations of $\tau_1$, $\tau_2$, respectively, in $L^2(\Omega)$ such that $B$ is $A$-bounded 
and $B^*$ is $A^*$-bounded. For $n\in\N$ let $A_n$, $B_n$ be realisations of $\tau_1$, $\tau_2$, respectively, in $L^2(\Omega_n)$ such that 
$\dom(A_n)\subseteq\dom(B_n)$ and $\dom(A_n^*)\subseteq\dom(B_n^*)$.
Assume that
\begin{enumerate}
\item[\rm(a)] 
there exists a core $\Phi\subseteq\dom(A^*)$ of $A^*$ such that for all $f\in\Phi$ there exists $n_f\in\N$ for which the restriction $f|_{\Omega_n}$ lies in $\dom(A_n^*)$, $n\geq n_f$;
\item[\rm(b)]
the quadratic forms $a$ and $b$ associated with $A$ and $B$ are closable with $\dom(\overline{a})\subseteq \dom(\overline{b})$ and, for each $n\in\mathbb N$ and any $f_n$ in $\dom(A_n)$, the extension by zero of $f_n$ to $L^2(\Omega)$ lies in $\dom(\overline{a})\subseteq \dom(\overline{b})$; denoting this extension also by $f_n$, assume further
that $\langle A_n f_n,f_n \rangle = \overline{a}[f_n]$ and $\langle B_n f_n,f_n\rangle = \overline{b}[f_n]$. 
\item[\rm(c)] The spectra $\sigma(A_n,B_n)$, $\sigma(A_n^*,B_n^*)$ consist entirely of eigenvalues.
\end{enumerate}
Then the following holds:
\begin{enumerate}
\item[\rm i)] Assume that  $0\notin W_e(A)\cap W_e(B)$ or $W_e(A,B)\neq \C$.
Then  every spurious eigenvalue belongs to $w_e(A,B)\supseteq \overline{W_e(A,B)}$.
If, in addition, 
there exists a core $\widetilde\Phi\subseteq\dom(A)$ of $A$ such that for all $f\in\widetilde\Phi$ there exists $n_f\in\N$ for which the restriction $f|_{\Omega_n}$ lies in $\dom(A_n)$, $n\geq n_f$,
 then for every isolated $\lambda\in\sigma(A,B)$ outside $w_e(A,B)$ there exist $\lm_n\in\sigma(A_n,B_n)$, $n\in\N$, such that $\lm_n\to\lm$.
\item[\rm ii)] Assume that $B$ is bounded.
Then  every spurious eigenvalue belongs to $W_e(A,B)\supseteq w_e(A,B)$.
If, in addition, there exists a core $\widetilde\Phi$ as in {\rm i)}, then for every isolated $\lambda\in\sigma(A,B)$ outside $W_e(A,B)$ there exist $\lm_n\in\sigma(A_n,B_n)$, $n\in\N$, such that $\lm_n\to\lm$.
\item[\rm iii)] Assume that $A,B$ are selfadjoint, (at least) one of them is uniformly positive and $A_n,B_n$ are selfadjoint as well.
If  there exists  a core $\widetilde\Phi$ as in {\rm i)}, then for every
 $\lm\in\sigma_{\rm app}(A,B)$
there exist $\lm_n\in\sigma(A_{H_n},B_{H_n})$, $n\in\N$, such that $\lm_n\to\lm$.
\end{enumerate}
\end{Theorem}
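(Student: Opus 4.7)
The plan is to adapt the proof of Theorem~\ref{thm.pencil.sigma} from a projection to a domain-truncation framework. Let $E_n:L^2(\Omega_n)\to L^2(\Omega)$ denote extension by zero and $P_n=E_nE_n^*$ the multiplication operator by the characteristic function of $\Omega_n$; by the nested-exhaustion hypothesis $P_n\s I$ as $n\to\infty$. Hypothesis~(a) plays the role of~\eqref{eq.gsr.Galerkin2}, the optional core $\widetilde\Phi$ plays the role of~\eqref{eq.gsr.Galerkin1}, hypothesis~(b) is the tool for translating between quadratic forms on $\Omega_n$ and forms on $\Omega$ (needed because $E_nx_n$ lies generically only in $\dom(\overline a)$, not in $\dom(A)$), and hypothesis~(c) ensures that $\lambda_n\in\sigma(A_n,B_n)$ actually possesses an eigenvector.

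For the spurious-eigenvalue claims in i) and ii), suppose $\lambda_n\in\sigma(A_n,B_n)$ tends to some $\lambda\notin\sigma(A,B)$. By~(c) pick normalised $x_n\in\dom(A_n)$ with $A_nx_n=\lambda_nB_nx_n$. For $f\in\Phi$ and $n\ge n_f$, hypothesis~(a) yields $f|_{\Omega_n}\in\dom(A_n^*)\subseteq\dom(B_n^*)$, and locality of the formal expressions $\tau_1,\tau_2$ gives $(A_n^*-\overline{\lambda_n}B_n^*)(f|_{\Omega_n})=((A^*-\overline{\lambda_n}B^*)f)|_{\Omega_n}$; pairing with $x_n$ yields $\langle E_nx_n,(A^*-\overline{\lambda_n}B^*)f\rangle_\Omega=0$. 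Extracting a weak limit $E_nx_n\w x$, passing to the limit, and using that $\Phi$ is a core of $A^*$ together with the $A^*$-boundedness of $B^*$, we obtain $\langle x,(A^*-\overline\lambda B^*)f\rangle=0$ for all $f\in\dom(A^*)$; since $\lambda\in\rho(A,B)$ the identity $(A-\lambda B)^*=A^*-\overline\lambda B^*$ (standard under the relative-boundedness assumption) forces $x=0$. Thus $E_nx_n\w 0$, and~(b) gives $\lambda_n=\overline a[E_nx_n]/\overline b[E_nx_n]$. In case ii) with $B$ bounded, this directly yields $\langle (A-\lambda B)E_nx_n,E_nx_n\rangle\to 0$, so $\lambda\in W_e(A,B)$. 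In case i), form-core density allows replacing $E_nx_n$ by a diagonal sequence $y_n\in\dom(A)$ with $\|y_n\|=1$, $y_n\w 0$, and $\langle Ay_n,y_n\rangle/\langle By_n,y_n\rangle\to\lambda$, witnessing $\lambda\in w_e(A,B)$.

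For the spectral-inclusion statements, cases i) and ii) follow the Riesz-projection scheme of Theorem~\ref{thm.pencil.sigma}: for an isolated $\lambda\in\sigma(A,B)$ outside the relevant essential numerical range and $\Gamma$ a small circle around $\lambda$, the uniform bound $\sup_{z\in\Gamma,n\in\N}\|(A_n-zB_n)^{-1}\|<\infty$ is established by contradiction, a failure producing an element of $w_e(A,B)$ on $\Gamma$ via the preceding spurious-eigenvalue analysis; combined with pointwise resolvent convergence (verified first on $\widetilde\Phi$ using locality and then extended by density in the $A$-graph norm), Lebesgue dominated convergence gives strong convergence of the truncated Riesz projections to the non-zero Riesz projection of $A-\lambda B$, forcing eigenvalues $\lambda_n\in\sigma(A_n,B_n)$ with $\lambda_n\to\lambda$. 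For iii), given $\lambda\in\sigma_{\rm app}(A,B)$ and $\eps>0$, one picks an approximate eigenfunction $x_\eps\in\widetilde\Phi$ with $\|x_\eps\|=1$ and $\|(A-\lambda B)x_\eps\|<\eps$; for large $n$, $x_\eps|_{\Omega_n}\in\dom(A_n)$ and by locality $\|(A_n-\lambda B_n)(x_\eps|_{\Omega_n})\|\le\|(A-\lambda B)x_\eps\|<\eps$, so the uniform-positivity bound $\|(A_n-\lambda B_n)u\|\ge c\|u\|\dist(\lambda,\sigma(A_n,B_n))$ on $\dom(A_n)$, derived exactly as in Theorem~\ref{thm.pencil.sigma}~iii), yields $\dist(\lambda,\sigma(A_n,B_n))\to 0$ upon sending first $n\to\infty$ and then $\eps\to 0$.

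The main obstacle is the localisation step that identifies $A_n^*(f|_{\Omega_n})$ with the restriction of $A^*f$, and likewise for the forms via~(b). Abstractly this is what the common-differential-expression setup and hypotheses~(a), (b) encode, but in concrete applications it is where one must verify that the boundary conditions defining $A_n,B_n$ are compatible (after extension by zero) with those of $A,B$, which is the exact analogue of the Galerkin compatibility assumptions in Theorem~\ref{thm.pencil.sigma}. A secondary subtlety is that $w_e(A,B)$ is defined via $\dom(A)$-sequences whereas the natural test sequence $E_nx_n$ belongs only to $\dom(\overline a)$; the reduction via the form core must be carried out so as to preserve both the normalisation and the weak convergence to zero simultaneously.
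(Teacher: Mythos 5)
Your proposal follows essentially the same route as the paper: use hypothesis (a) to show the (extended-by-zero) eigenfunctions of $A_n-\lambda_nB_n$ converge weakly to zero, use hypothesis (b) together with the closed forms to replace those extensions by a bona fide sequence in $\dom(A)$ witnessing $\lambda\in w_e(A,B)$ (or $W_e(A,B)$ when $B$ is bounded), and then repeat the Riesz-projection and uniform-positivity arguments of Theorem~\ref{thm.pencil.sigma} for the inclusion claims. Two small points of imprecision relative to the paper's version: the paper only uses the inclusion $A^*-\overline\lambda B^*\subseteq(A-\lambda B)^*$ together with closedness of $A-\lambda B$, not the equality you invoke (which would need the relative bound to be $<1$); and after passing to the $\dom(A)$-sequence via the form core you still need the dichotomy argument (whether $\langle By_n,y_n\rangle$ is bounded away from zero or not), which is exactly where the assumption ``$0\notin W_e(A)\cap W_e(B)$ or $W_e(A,B)\neq\C$'' is used — you flag the subtlety but do not carry it out. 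For part iii) the paper makes an explicit ``domain monotonicity'' observation (the lower bound for $B_n$ is not worse than that for $B$, by extension-by-zero in (b)), which you use implicitly.
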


\begin{Remark}
Typically, $\tau_1$ will be an elliptic differential operator and $\tau_2$ will be either a multiplication operator or an elliptic operator whose
order is less than that of $\tau_1$. The hypotheses concerning cores and extensions will usually be satisfied if the domains of the $A_n$ and $B_n$ are equipped with 
suitable boundary conditions. For instance, if $\tau_1$ is an operator of order $2\nu$, $\nu\in \mathbb N$, then the traces of functions in $\dom(A_n)$ on the boundary
of $\Omega_n$ should vanish from order $0$ up to order $\nu-1$.

\end{Remark}

\begin{proof}[Proof of Theorem {\rm  \ref{thm.trunc}}]
The proof is similar to the one of Theorem~\ref{thm.pencil.sigma}, with (1) and (2) replaced by the assumptions on the cores $\Phi$ and $\widetilde\Phi$.

i) and ii) Assume that there exist $\lambda\in\mathbb C$ and an infinite index set $I\subseteq\mathbb N$ and $\lambda_n
\in\mathbb C$, $n\in I$, such that  $0$ is an eigenvalue of each $(A_n-\lambda_nB_n)$.  Let $f_n$ be the 
normalised eigenfunctions in $L^2(\Omega_n)$. Since the $\lambda_n$ are supposed
to form a polluting sequence we assume that $\lambda_n \rightarrow\lambda$ where $\lambda\not\in\sigma(A,B)$.
Suppose that on some subsequence, $f_n\w f\neq 0$. Let $g\in \Phi$. We have, for $n\geq n_g$, by assumption~(a), 
\begin{align*}
 0 &= \langle (A_n-\lambda_n B_n)f_n,g\rangle 
= \langle f_n,(A_n^*-\overline{\lambda_n}B_n^*)g|_{\Omega_n}\rangle \\
&= \langle f_n, (\tau_1^*-\overline{\lambda_n} \tau_2^*)g\rangle
 =  \langle f_n,(A^*-\overline{\lambda_n} B^*)g\rangle \\
&= \langle f_n,(A^*-\overline{\lambda}B^*)g\rangle
 + (\lambda-\lambda_n)\langle f_n, B^* g\rangle\\
& \longrightarrow
\langle f ,(A^*-\overline{\lambda} B^*)g\rangle=
 \langle f ,(A-\lambda B)^*g\rangle, 
\end{align*}
in which we have abused notation to use the symbol $f_n$ to mean the extension by zero of $f_n$ to $L^2(\Omega)$ in
the second and third lines, and we used $A^*-\overline{\lambda}B^*\subseteq (A-\lm B)^*$. Thus $g\mapsto  \langle (A-\lambda B)^*g,f\rangle =0 $ is a (trivial) bounded linear functional on $\Phi$ and
 hence on the whole Hilbert space. Since $0\not\in \sigma(A-\lambda B)$, the operator $A-\lambda B$ is closed. Hence $f\in \dom(A-\lambda B)$ and 
 \[ (A-\lambda B)f = 0. \]
 This contradicts the assumption that $\lambda\not\in\sigma(A,B)$, and so $f_n\w 0$. 
 
 We now know that 
 \[ 0 = \langle (A_n-\lambda_nB_n)f_n,f_n\rangle = (\overline{a}-\lambda\overline{b})[f_n].\] 
 By definition of the closed forms $\overline{a}$ and
 $\overline{b}$ there exist functions $h_n\in \dom(A)$, with $\| h_n \| = 1$, $\| h_n - f_n \| \rightarrow 0$, implying $h_n\w 0$, such that
 \[ 0 = \lim_{n\rightarrow\infty}(\overline{a}[h_n] - \lambda_n \overline{b}[h_n]) = \lim_{n\rightarrow\infty}\left\{ \langle Ah_n,h_n\rangle - \lambda_n
  \langle Bh_n,h_n \rangle\right\}. \]
If, on any subsequence, $\langle Bh_n,h_n\rangle$ tends to zero, then so must $\langle Ah_n,h_n\rangle$, and hence $0$ lies both in
$W_e(A)$ and in $W_e(B)$, and $W_e(A,B)=\mathbb C$. Therefore $\langle Bh_n,h_n\rangle$ is bounded away from zero. We can
therefore divide by $\langle Bh_n,h_n\rangle$ and obtain
\[ 0 = \lim_{n\rightarrow\infty}\left\{ \frac{ \langle Ah_n,h_n\rangle}{\langle Bh_n,h_n\rangle} - \lambda_n \right\} \]
and, since $\lambda_n\rightarrow\lambda$, deduce $\lambda\in w_e(A,B)$. This proves the claim about spurious eigenvalues in i).

 If $B$ is bounded then $\langle Bh_n,h_n \rangle$ is bounded, and so 
 \[  \lim_{n\rightarrow\infty}\left\{ \langle Ah_n,h_n\rangle - \lambda  \langle Bh_n,h_n \rangle\right\} = 0, \]
 giving $\lambda \in W_e(A,B)$, which proves the claim about spurious eigenvalues in ii).

Given that the assumption on the core $\widetilde\Phi$ holds, we claim that every isolated $\lm\in\sigma(A,B)$ outside $w_e(A,B)$ (in i)) or $W_e(A,B)$ (in ii)) is the limit of some $\lm_n\in\sigma(A_n,B_n)$, $n\in\N$.
This is proved analogously as in Theorem~\ref{thm.pencil.sigma}; instead of \eqref{eq.gsr.Galerkin1} and \eqref{eq.gsr.Galerkin2}
we apply the assumptions on the cores $\Phi$ and $\widetilde\Phi$, in the same way as above.
 
 Finally, if $B$ is uniformly positive, the proof follows that of part iii) of Theorem~\ref{thm.pencil.sigma} provided we make the important 
 observation that, because of the hypotheses on our domain truncation, the operators $B_n$ have a lower bound which is not less than the lower bound for $B$
 (`domain monotonicity'). The same applies to the case that $A$ is uniformly positive.

\end{proof}

\subsection{Application to indefinite Sturm-Liouville operator}

Indefinite Sturm-Liouville operators were studied both as pencil problem and as selfadjoint operators in Krein spaces, see e.g.\ \cite{SL1,SL2} and the references therein. We establish spectrally exact approximations of the operator pencil, both for projection and interval truncation methods.
For uniformly positve $T$ and interval trunction, spectral exactness was proved in 
\cite{MZ}. Here we give a short and elegant proof using essential numerical ranges, and we extend the result to the projection method (see Theorem \ref{thm.schroed}). In addition, we can prove, for the first time, spectral exactness for interval truncation even if the potential $V$ tends to zero at infinity (see Theorem \ref{thm.schroed2}). 

Let $-\infty<a\leq b<\infty$ and let $J\in L^{\infty}(\R)$ be real-valued with
$$J|_{(-\infty,a)}\equiv -1, \quad J|_{(b,\infty)}\equiv 1.$$
In particular, if $a=b=0$, then $J$ is the sign function. If however $a<b$, then $J$ may have more than one sign change.
With another real-valued potential $V\in L^{\infty}(\R)$, consider the differential expression
$$(\tau f)(x):=-f''(x)+V(x) f(x).$$
In $L^2(\R)$ define the selfadjoint operator
\begin{alignat*}{3}
(Tf)(x)&:=(\tau f)(x), \quad& \dom(T)&:=W^{2,2}(\R),
\end{alignat*}
and $J$ is selfadjoint and bounded as multiplication operator in $L^2(\R)$.

\begin{Proposition}\label{prop.schroed}
\begin{enumerate}
\item[\rm i)] If $\lim_{|x|\to\infty}V(x)=0$, then $W_e(T,J)=W(T,J)=\C$ and $\sigma_e(T,J)=\R$.
\item[\rm ii)] If there exist $m_{-},m_{+}>0$ such that $\lim_{x\to \pm\infty}V(x)= m_{\pm}$, then
\begin{align*}
W_e(T,J)&=w_e(T,J)=(-\infty,-m_{-}]\,\dot\cup\, [m_{+},\infty)=\sigma_e(T,J).
\end{align*}
\end{enumerate}
\end{Proposition}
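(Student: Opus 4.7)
The plan is to compute the essential spectrum of the shifted operator $T-\lambda J$ in order to read off $\sigma_e(T,J)$, and then to analyse its essential numerical range via singular sequences to obtain $W_e(T,J)$ and $w_e(T,J)$.

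First, since $V-\lambda J$ is a bounded multiplication with limits $m_+-\lambda$ at $+\infty$ and $m_-+\lambda$ at $-\infty$ (with $m_\pm=0$ in case~(i)), a standard two-sided decoupling argument for one-dimensional Schr\"odinger operators gives
\[
\sigma_e(T-\lambda J)=\bigl(m_+-\lambda+[0,\infty)\bigr)\cup\bigl(m_-+\lambda+[0,\infty)\bigr).
\]
The point $0$ lies in this union precisely when $\lambda=m_++k$ or $\lambda=-m_--k$ for some $k\geq 0$, which gives $\sigma_e(T,J)=\R$ in case~(i) and $\sigma_e(T,J)=(-\infty,-m_-]\cup[m_+,\infty)$ in case~(ii).

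Next I would establish $W_e(T,J)\supseteq\sigma_e(T,J)$ by explicit singular sequences. Fix $\chi\in C_c^\infty(\R)$ with $\|\chi\|_{L^2}=1$, and for $k\in\R$ and $c_n^\pm\to\pm\infty$ chosen so that the supports of
\[
\psi_n^{\pm,k}(x):=n^{-1/2}\chi\bigl((x-c_n^\pm)/n\bigr)\mathrm{e}^{\mathrm{i}kx}
\]
are disjoint and lie entirely in $\{x:J(x)=\pm 1\}$, one checks by dominated convergence that $\|\psi_n^{\pm,k}\|=1$, $\psi_n^{\pm,k}\stackrel{w}{\to}0$, $\langle J\psi_n^{\pm,k},\psi_n^{\pm,k}\rangle\to\pm 1$, and $\langle T\psi_n^{\pm,k},\psi_n^{\pm,k}\rangle\to k^2+m_\pm$. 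Hence $\langle(T-\lambda J)\psi_n^{+,k},\psi_n^{+,k}\rangle\to k^2+m_+-\lambda$, giving $[m_+,\infty)\subseteq W_e(T,J)$ and symmetrically $(-\infty,-m_-]\subseteq W_e(T,J)$. In case~(i), where $m_\pm=0$, the balanced superposition $\psi_n:=\tfrac{1}{\sqrt 2}(\psi_n^{+,0}+\psi_n^{-,0})$ additionally yields $\langle T\psi_n,\psi_n\rangle\to 0$ and $\langle J\psi_n,\psi_n\rangle\to 0$, so $\langle(T-\lambda J)\psi_n,\psi_n\rangle\to 0$ for \emph{every} $\lambda\in\C$; this proves $W_e(T,J)=\C$, and $W(T,J)=\C$ follows from $W(T,J)\supseteq W_e(T,J)$.

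The hardest step, in case~(ii), is the reverse inclusion $W_e(T,J)\subseteq(-\infty,-m_-]\cup[m_+,\infty)$. For real $\lambda\in(-m_-,m_+)$ the operator $T-\lambda J$ is selfadjoint and bounded below, so Theorem~\ref{Thm2.11} gives $W_e(T-\lambda J)=[\min(m_+-\lambda,m_-+\lambda),\infty)$, which excludes~$0$. For $\lambda=\mu+\mathrm{i}\nu$ with $\nu\neq 0$, suppose for contradiction that $(\psi_n)\subset H^2(\R)$ satisfies $\|\psi_n\|=1$, $\psi_n\stackrel{w}{\to}0$ and $\langle(T-\lambda J)\psi_n,\psi_n\rangle\to 0$; separating real and imaginary parts forces $\langle J\psi_n,\psi_n\rangle\to 0$ and $\langle T\psi_n,\psi_n\rangle\to 0$. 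Picking $R_0$ such that $V(x)\geq c>0$ whenever $|x|\geq R_0$ and setting $\alpha_n^2:=\int_{|x|<R_0}|\psi_n|^2$, the identity $\langle T\psi_n,\psi_n\rangle=\|\psi_n'\|_{L^2}^2+\int V|\psi_n|^2$ combined with positivity of $V$ outside $[-R_0,R_0]$ forces $\|\psi_n'\|_{L^2}^2$ to stay bounded and $\alpha_n^2$ to be bounded below by a positive constant; but bounded $H^1$-norm of $\psi_n|_{[-R_0,R_0]}$ together with $\psi_n\stackrel{w}{\to}0$ in $L^2$ yields $\alpha_n\to 0$ via Rellich's compactness theorem, the desired contradiction. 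Finally, since $J$ is bounded and $W_e(T,J)\neq\C$ in case~(ii), Proposition~\ref{prop.ess.num.range}~ii) gives $w_e(T,J)=W_e(T,J)$. The main obstacle is the complex-$\lambda$ case just described: the convex-hull formula $W_e=\mathrm{conv}(\widehat\sigma_e)$ is no longer available, and one must argue directly via singular sequences, the key tool being positivity of $V$ at infinity combined with Rellich compactness.
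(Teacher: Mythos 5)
Your proposal is correct, and it takes a genuinely different route from the paper's for part~ii). The paper computes $w_e(T,J)$ first by inverting the pencil, applying the square-root formula $w_e(J|_{\dom(T_0)},T_0)=W_e\big(T_0^{-1/2}J T_0^{-1/2}\big)$ from Theorem~\ref{thm.We.we}~ii), reducing to ${\rm conv}\,\sigma_e\big(T_0^{-1/2}JT_0^{-1/2}\big)$ via a cited result on the essential numerical range of a bounded selfadjoint operator, and finally transferring from $T_0$ to $T$ with the perturbation-stability result Theorem~\ref{thm.pert.rel.comp}~(a); the identity $W_e(T,J)=w_e(T,J)$ is then obtained from Proposition~\ref{prop.ess.num.range}~ii). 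You instead attack $W_e(T,J)$ directly: explicit escaping-bump singular sequences give the lower bound $W_e(T,J)\supseteq(-\infty,-m_-]\cup[m_+,\infty)$ (and the balanced superposition handles case~i)); for the upper bound you split into real $\lambda$, where selfadjointness plus Theorem~\ref{Thm2.11} rules out $0\in W_e(T-\lambda J)$, and non-real $\lambda$, where separating real and imaginary parts forces $\langle J\psi_n,\psi_n\rangle\to0$ and $\langle T\psi_n,\psi_n\rangle\to0$, and a Rellich compactness argument exploiting $V\geq c>0$ outside a compact set yields a contradiction. You then obtain $w_e=W_e$ from Proposition~\ref{prop.ess.num.range}~ii) exactly as the paper does. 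Your route is more elementary and self-contained in that it bypasses the square-root machinery, the inversion trick, and the relative-compactness perturbation theorem; the paper's route, while more machinery-heavy, is arguably more robust to changes in the operator (it hands off the hard analytic work to the general theorems of Section~2). Both computations of $\sigma_e(T,J)$ rest on the same decoupling idea (finite-rank resolvent difference), just organised differently: the paper decouples $J_0T_0$ once, while you decouple $T-\lambda J$ for each $\lambda$, which is fine but worth flagging as an appeal to the same standard fact.
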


\begin{proof}
First we calculate $\sigma_e(T,J)$; the essential numerical ranges require separate proofs for i) and ii).

Let $$V_0(x):=\begin{cases}m_{-}, &x\leq 0,\\ m_{+}, &x>0,\end{cases} \quad J_0(x):=\begin{cases}-1, &x\leq 0,\\ 1, &x>0,\end{cases}$$
and define $K:=V-V_0$, $T_0:=T-K$.
Then $T_0$ is selfadjoint with $T_0\geq\min\{m_{-},m_{+}\}$, the operators $K$, $J-J_0$ are $T_0$-compact and $T=T_0+K$.
Hence $$\sigma_e(T,J)=\{\lm\in\C:\,0\in\sigma_e(T-\lm J)=\sigma_e(T_0-\lm J_0)\}=\sigma_e(T_0,J_0)
=\sigma_e(J_0^{-1}T_0).$$
Note that $J_0^{-1}=J_0$.
Let $S_0$ denote the direct sum 
of two Schr\"odinger operators, one on $(-\infty,0]$ with expression ${\rm d}^2/{\rm d}x^2 - V_0(x)$ and Dirichlet condition at $0$
(leading to essential spectrum $(-\infty,-m_-]$), the other with expression $-{\rm d}^2/{\rm d}x^2 +V_0(x)$ on $[0,\infty)$ and Dirichlet condition at $0$ (leading to essential spectrum $[m_+,\infty)$). We observe that for any $\lambda\in\rho(J_0 T_0)$  the difference
$(S_0-\lambda )^{-1} - (J_0 T_0 - \lambda )^{-1}$
has rank at most $2$ (by a variation-of-parameters calculation). Therefore $\sigma_e(J_0T_0)=\sigma_e(S_0)$. 
Since the essential spectrum of a direct sum is the union of  essential spectra of both operators, we obtain $\sigma_e(S_0)=(-\infty,-m_-]\cup [m_+,\infty)$.

This concludes the proof for the essential spectrum and we turn to the (essential) numerical ranges.

i)
It suffices to find a sequence $(f_n)_{n\in\N}\subset\dom(T)$ such that $\|f_n\|=1$, $f_n\stackrel{w}{\to} 0$, $\langle J f_n,f_n\rangle=0$
and $\langle T f_n,f_n\rangle \to 0$; then $W_e(T,J)=\C$ and hence $W(T,J)=\C$.

The assumption $\langle J f_n,f_n\rangle =0$ is satisfied if $f_n\in\dom(T)$ is 
symmetric around $\frac{a+b}{2}$
 with ${\rm supp}f_n\cap(a,b)=\emptyset$, so we restrict our attention to such functions.
Using integration by parts, we obtain
$$\langle T f_n,f_n\rangle =\|f_n'\|^2+\langle V f_n,f_n\rangle.$$ 
Let $\phi\in C_0^{\infty}(\R)$ be an even function that satisfies 
$$\phi(x)\in [0,1], \quad {\rm supp}\, \phi\subset (-2,-1)\cup (1,2),\quad \|\phi\|=1.$$
Define, for $n\in\N$ such that $n^2\geq\frac{b-a}{2}$,
$$f_n(x):=\frac{1}{n}\phi\left(\frac{x-\frac{a+b}{2}}{n^2}\right), \quad x\in\R.$$
Then  $f_n\in C_0^{\infty}(\R)\subset\dom(T)$ is symmetric around $\frac{a+b}{2}$ and satisfies 
\begin{equation}\label{eq.cond.fn}
\begin{aligned}
{\rm supp}\, f_n&\subset \left(\frac{a+b}{2}-2n^2,\frac{a+b}{2}-n^2\right)\cup\left(\frac{a+b}{2}+n^2,\frac{a+b}{2}+2n^2\right)\\
&\subset (-\infty,a)\cup(b,\infty),\\
 \|f_n\|&=1, \quad \|f_n'\|=\frac{\|\phi'\|}{n^2}.
\end{aligned}
\end{equation}
Note that $\|f_n'\|\to 0$. In addition, $\lim_{|x|\to\infty}V(x)=0$ together with the first claim in~\eqref{eq.cond.fn} imply $\langle V f_n,f_n\rangle\to 0$; hence $\langle Tf_n,f_n\rangle\to 0$. Moreover,  the first claim in~\eqref{eq.cond.fn} yields $f_n\stackrel{w}{\to}0$.

ii)
%
Theorem~\ref{thm.We.we}~ii) implies 
$$w_e(J|_{\dom(T_0)},T_0)=W_e\big(T_0^{-\frac{1}{2}}J|_{\dom(T_0)}T_0^{-\frac{1}{2}}\big).$$
Since the set on the right hand side is closed, and the closure of $T_0^{-\frac{1}{2}}J|_{\dom(T_0)}T_0^{-\frac{1}{2}}$
is the selfadjoint bounded operator $T_0^{-\frac{1}{2}}JT_0^{-\frac{1}{2}}$,
we obtain using~\cite[Corollary~5.1]{salinas},
$$w_e(J|_{\dom(T_0)},T_0)=W_e\big(T_0^{-\frac{1}{2}}JT_0^{-\frac{1}{2}}\big)={\rm conv}\,\sigma_e\big(T_0^{-\frac{1}{2}}JT_0^{-\frac{1}{2}}\big)=\Big[-\frac{1}{m_{-}},\frac{1}{m_{+}}\Big].$$
The last equality follows from $\sigma_e\big(T_0^{-\frac{1}{2}}JT_0^{-\frac{1}{2}}\big)=\sigma_e(J|_{\dom(T_0)},T_0)=\sigma_e(T_0,J)^{-1}$.
Now we make use of the equivalence in Remark~\ref{rem.inverse}~i),
\begin{align*}
\lm\in w_e(T_0,J)\quad&\Longleftrightarrow\quad \frac{1}{\lm}\in w_e(J|_{\dom(T_0)},T_0);
\end{align*}
note that $\lm=0$ need not be considered since $T_0\geq\min\{m_{-},m_{1}\}$.
We apply the perturbation result in Theorem~\ref{thm.pert.rel.comp}~(a) to obtain 
$$w_e(T,J)=w_e(T_0,J)=(-\infty,-m_{-}]\,\dot\cup\, [m_{+},\infty).$$
Now we obtain $w_e(T,J)= W_e(T,J)$
 by Proposition~\ref{prop.ess.num.range}~ii) and using $0\notin W_e(T_0)=W_e(T)$ by Theorem~\ref{thmABUV}.
\end{proof}

Under the assumptions of  Proposition~\ref{prop.schroed}~ii),  spectral exactness prevails if we approximate the pencil using projection or domain truncation methods.

\begin{Theorem}\label{thm.schroed}
Assume that there exist $m_{-},m_{+}>0$ such that $\lim_{x\to \pm\infty}V(x)= m_{\pm}$.
\begin{enumerate}
\item[\rm i)] 
Let $H_n\subset W^{2,2}(\R)$, $n\in\N$, be finite-dimensional subspaces with $P_{H_n}\s I$ as $n\to\infty$.
Assume that $$\forall\,f\in W^{2,2}(\R):\quad T_{H_n}P_{H_n}f\tolong Tf, \quad n\to\infty.$$
Then the approximation of the pencil $\lm\mapsto T-\lm J$ by $\lm\mapsto T_{H_n}-\lm J_{H_n}$, $n\in\N$, 
is free of spectral pollution; it is even spectrally exact if $T$ is uniformly positive.

\item[\rm ii)]
Define
\begin{alignat*}{3}
(T_nf)(x)&:=(\tau f)(x), \quad& \dom(T_n)&:=\big\{f\in W^{2,2}(-n,n):\,f(\pm n)=0\big\},\\
(J_nf)(x)&:=J(x)f(x), \quad &\dom(J_n)&:=L^2(-n,n).
\end{alignat*}
Then the approximation of the pencil $\lm\mapsto T-\lm J$ by $\lm\mapsto T_n-\lm J_n$, $n\in\N$, 
is free of spectral pollution; it is even spectrally exact if $T$ is uniformly positive.
\end{enumerate}
\end{Theorem}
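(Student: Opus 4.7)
The plan is to apply the general approximation results, namely Theorem \ref{thm.pencil.sigma} for part (i) and Theorem \ref{thm.trunc} for part (ii), and to use Proposition \ref{prop.schroed} ii) to identify
\[ W_e(T,J) \;=\; (-\infty,-m_-]\cup[m_+,\infty) \;=\; \sigma_e(T,J). \]
The key observation is that $W_e(T,J)\subseteq \sigma_e(T,J)\subseteq \sigma(T,J)$, so any accumulation point of eigenvalues of the truncated pencils that the general theorems confine to $W_e(T,J)$ is automatically a true spectral point, hence not spurious. For spectral exactness, I will also need $\sigma(T,J)=\sigma_{\rm app}(T,J)$; this is immediate from Remark \ref{rem.approx} ii) since $J\in L^\infty(\R)$ is $T$-bounded with relative bound $0$.

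For (i), the hypothesis of the theorem already gives the convergence $T_{H_n}P_{H_n}f\to Tf$ on $\dom(T)$. For $B=J$, boundedness of $J$ together with $P_{H_n}\s I$ yields $J_{H_n}P_{H_n}f=P_{H_n}JP_{H_n}f\to Jf$ via the triangle inequality, so condition (1) of Theorem \ref{thm.pencil.sigma} holds. Since $J$ is bounded, part ii) of that theorem confines any spurious eigenvalue to $W_e(T,J)$, which we have identified with $\sigma_e(T,J)\subseteq \sigma(T,J)$; hence no spectral pollution. When $T$ is in addition uniformly positive, $T$ and $J$ are selfadjoint with $T$ uniformly positive, so part iii) of Theorem \ref{thm.pencil.sigma} gives approximation of every point of $\sigma_{\rm app}(T,J)=\sigma(T,J)$.

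For (ii), I apply Theorem \ref{thm.trunc} with $\tau_1=\tau$, $\tau_2=J$ and $\Omega_n=(-n,n)$. The relative boundedness assumptions are satisfied because $J\in L^\infty(\R)$. I take $\Phi=\widetilde\Phi=C_0^\infty(\R)$, which is a core of $T=T^*$; for $f\in\Phi$ with $\supp f\subset(-n,n)$ the restriction lies trivially in $\dom(T_n)=\dom(T_n^*)$ because $T_n$ is the Dirichlet realisation on $(-n,n)$, yielding condition (a). Condition (b) holds because the closed form of $T$ is $\overline{t}[f]=\int_\R(|f'|^2+V|f|^2)$ with domain $W^{1,2}(\R)$, so the zero extension of any $f_n\in\dom(T_n)\subseteq W_0^{1,2}(-n,n)$ lies in $\dom(\overline{t})$ and preserves the form value; the analogous verification for the multiplication form of $J$ is immediate. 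Condition (c) holds because $T_n$ is a regular Sturm--Liouville operator, hence has compact resolvent, and the pencil resolvent $(T_n-\lm J_n)^{-1}$ then inherits compactness from a bounded perturbation argument, so $\sigma(T_n,J_n)$ (and $\sigma(T_n^*,J_n^*)$ by self-adjointness) consists entirely of eigenvalues.

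With these hypotheses checked, parts ii) and iii) of Theorem \ref{thm.trunc} give exactly the conclusions of (ii) by the same reasoning as in (i): boundedness of $J$ confines spurious eigenvalues to $W_e(T,J)=\sigma_e(T,J)\subseteq\sigma(T,J)$, and uniform positivity of $T$ combined with $\sigma(T,J)=\sigma_{\rm app}(T,J)$ yields spectral exactness. I expect the main technical step to be the verification of the form condition (b) for the domain truncation, i.e.\ checking that zero-extension of Dirichlet eigenfunctions lies in $\dom(\overline{t})$ and preserves the form values; everything else reduces to straightforward bookkeeping with bounded operators and cores.
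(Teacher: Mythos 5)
Your proposal follows the paper's own proof: identify $W_e(T,J)=\sigma_e(T,J)\subseteq\sigma(T,J)$ via Proposition~\ref{prop.schroed}~ii) (so pollution is ruled out), then invoke Theorem~\ref{thm.pencil.sigma}~ii),~iii) with Remark~\ref{rem.approx}~ii) for part~(i), and Theorem~\ref{thm.trunc} with the core $C_0^\infty(\R)$ and zero-extension into the form domain for part~(ii). You merely spell out the verifications of hypotheses (a)--(c) of Theorem~\ref{thm.trunc} that the paper leaves implicit, and those checks are all correct.
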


\begin{proof}
By  Proposition~\ref{prop.schroed}~ii), we have 
$$W_e(T,J)=(-\infty,-m_-]\,\dot\cup\, [m_+,\infty)=\sigma_e(T,J)\subseteq\sigma(T,J).$$
Now claim~i) follows from Theorem~\ref{thm.pencil.sigma}~ii),~iii) and Remark~\ref{rem.approx}~ii).
Analogously, claim~ii) is obtained with Theorem~\ref{thm.trunc} using that $\Phi=C_0^{\infty}(\R)$ is a core of $T=T^*$ and $\dom(T_n)\subset W^{1,2}(\R)$ (by extending every function by zero outside $[-n,n]$) for all $n\in\N$. 
\end{proof}

In the next result we make use of the fact that the domain truncation process commutes with multiplication with a bounded and boundedly invertible function.

\begin{Theorem}\label{thm.schroed2}
Let $a<b$ and let $B_{\varphi}$ be the bounded and continuous function
$$B_{\varphi}(x):=\begin{cases}
\e^{\I\,\varphi}, & x\in (-\infty,a],\\
\e^{\I\,t \varphi}, & x\in (a,b),\,t=\frac{b-x}{b-a},\\
1, & x\in [b,\infty).
\end{cases}$$
Then
$$\underset{\varphi\in(-\pi,0)\cup (0,\pi)}{\bigcap}W_e(B_\varphi T,B_{\varphi}J)\subseteq \R.$$
If $\lim_{|x|\to\infty}V(x)=0$, then the above sets coincide and equal $\sigma_e(T,J)$; 
in this case interval truncation as in Theorem{\rm~\ref{thm.schroed}~ii)} is spectrally exact. 
\end{Theorem}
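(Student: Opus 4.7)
The statement splits into three pieces: (i) the inclusion $\bigcap_{\varphi}W_e(B_\varphi T,B_\varphi J)\subseteq\R$, which is the hard part; (ii) under the hypothesis $V(x)\to 0$ at infinity, upgrading this to equality with $\sigma_e(T,J)=\R$; and (iii) deriving spectral exactness of the interval-truncation scheme from Theorem~\ref{thm.trunc}.

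For (i), I would argue by contraposition: given $\lambda=\mu+\I\nu$ with $\nu>0$ (the case $\nu<0$ being symmetric), I claim that for $\delta>0$ small enough, $\lambda\notin W_e(B_\varphi T,B_\varphi J)$ when $\varphi=-\pi+\delta$. Suppose to the contrary that $(f_n)\subset W^{2,2}(\R)$ is a singular sequence with $\langle B_\varphi(T-\lambda J)f_n,f_n\rangle\to 0$. The central calculation is to integrate $\int B_\varphi(T-\lambda J)f_n\cdot\overline{f_n}\,\rd x$ by parts on the three intervals $(-\infty,a)$, $(a,b)$, $(b,\infty)$; the four boundary contributions at $a$ and $b$ cancel by design, because $B_\varphi$ is the constant $e^{\I\varphi}$ on $(-\infty,a]$ with matching value $B_\varphi(a)=e^{\I\varphi}$, and the constant $1$ on $[b,\infty)$ with matching value $B_\varphi(b)=1$. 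This yields
\[ \langle B_\varphi(T-\lambda J)f_n,f_n\rangle = e^{\I\varphi}(\tilde\alpha_n+\lambda p_n)+(\tilde\beta_n-\lambda q_n)+\gamma_n+R_n,\]
where $\tilde\alpha_n:=\int_{-\infty}^a(|f_n'|^2+V|f_n|^2)\,\rd x$, $\tilde\beta_n$ is the analogous integral on $(b,\infty)$, $p_n=\|f_n\|^2_{L^2(-\infty,a)}$, $q_n=\|f_n\|^2_{L^2(b,\infty)}$, $\gamma_n=\int_a^b B_\varphi|f_n'|^2\,\rd x$, and $R_n$ collects $\int_a^b B_\varphi'f_n'\overline{f_n}\,\rd x$ and $\int_a^b B_\varphi(V-\lambda J)|f_n|^2\,\rd x$. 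After establishing a $W^{1,2}$-bound on $(f_n)$ (the delicate step; see below), Rellich--Kondrachov gives $f_n\to 0$ in $L^2_{\loc}$, so $R_n\to 0$, and on a subsequence one obtains finite limits $p+q=1$, $\alpha,\beta\in\R$, $\gamma\in\C$ satisfying $e^{\I\varphi}(\alpha+\lambda p)+(\beta-\lambda q)+\gamma=0$. Taking the imaginary part and expanding to first order in $\delta$ yields $\im\gamma=\nu+\delta(\alpha+\mu p)+O(\delta^2)$, while $\im B_\varphi(x)=\sin(t(x)\varphi)\leq 0$ on $[a,b]$ for $\varphi\in(-\pi,0)$ (since $t(x)\varphi\in[-\pi+\delta,0]$, a range on which the sine is nonpositive) forces $\im\gamma\leq 0$. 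For $\delta$ sufficiently small this is a contradiction.

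For (ii), Proposition~\ref{prop.schroed}~i) already yields $\sigma_e(T,J)=\R$; since $B_\varphi$ is bounded, a singular sequence $(f_n)$ witnessing $\lambda\in\sigma_e(T,J)$ satisfies $|\langle B_\varphi(T-\lambda J)f_n,f_n\rangle|\leq\|B_\varphi\|_\infty\|(T-\lambda J)f_n\|\to 0$, so $\R\subseteq W_e(B_\varphi T,B_\varphi J)$ for every $\varphi$, and combining with (i) gives equality. For (iii), because $B_\varphi$ is multiplication by a bounded and boundedly invertible function, it commutes with interval truncation and $\sigma(B_\varphi T_n,B_\varphi J_n)=\sigma(T_n,J_n)$; any spurious eigenvalue of the $(T_n,J_n)$-approximation is thus also spurious for the $(B_\varphi T,B_\varphi J)$-approximation, and Theorem~\ref{thm.trunc}~ii) confines it to $W_e(B_\varphi T,B_\varphi J)$. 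Intersecting over $\varphi$ places spurious eigenvalues in $\R\subseteq\sigma(T,J)$, so none exist; spectral inclusion for non-real isolated eigenvalues follows from the other half of Theorem~\ref{thm.trunc}, using e.g.\ $C_0^\infty(\R\setminus\{a,b\})$ as a core.

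The main obstacle is the $W^{1,2}$-bound on $(f_n)$. The imaginary part of $\langle B_\varphi Tf_n,f_n\rangle$ controls $\sin\delta\cdot\|f_n'\|_{L^2(-\infty,a)}^2$ up to an error dominated by $\|f_n'\|_{L^2(a,b)}$, so the estimate closes cleanly provided $\|f_n'\|_{L^2(a,b)}$ is bounded. I would handle the remaining case by splitting: if $\|f_n'\|_{L^2(a,b)}\to\infty$, the dominant real part of $\int B_\varphi|f_n'|^2\,\rd x$ cannot be cancelled within the bounded right-hand side of $\langle B_\varphi(T-\lambda J)f_n,f_n\rangle\to 0$ without forcing $|\gamma_n|$ to diverge at the same rate, which is inconsistent with the subsequential convergence demanded by the limit equation; this should reduce everything to the bounded case.
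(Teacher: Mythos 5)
Your overall structure for the inclusion $\bigcap_\varphi W_e(B_\varphi T,B_\varphi J)\subseteq\R$ matches the paper's in spirit, but it has a genuine gap precisely at the step you flag as delicate, and you also omit an entire ingredient of the spectral-exactness claim.

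\textbf{The $W^{1,2}$-bound does not close by your argument.} Using only the imaginary part of $\langle B_\varphi(T-\lambda J)f_n,f_n\rangle$ gives no information at all about $\|f_n'\|_{L^2(b,\infty)}$, since $\im B_\varphi\equiv 0$ there, and it controls $\int_a^b|\sin(t\varphi)||f_n'|^2$ only away from both endpoints of $(a,b)$ (near $x=b$ one has $t\to 0$ and $\sin(t\varphi)\to 0$, while near $x=a$ one has $t\varphi\to\varphi\approx-\pi$ and $\sin(t\varphi)\to 0$ again). The real part does not rescue you because $\re B_\varphi=\cos(t\varphi)$ changes sign on $(a,b)$ when $\varphi$ is near $-\pi$, so cancellations between the $(-\infty,a)$ and $(a,b)$ contributions are possible, and your ``the dominant real part cannot be cancelled'' step is not justified. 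The paper avoids this entirely by observing that $e^{-\I\varphi/2}B_\varphi(x)=e^{\I(t(x)-1/2)\varphi}$ takes values in a sector of semi-angle $|\varphi|/2<\pi/2$ around the positive real axis, so $f\mapsto e^{-\I\varphi/2}\langle B_\varphi f',f'\rangle$ is a sectorial form with vertex $0$, and $\re(e^{-\I\varphi/2}\langle B_\varphi f',f'\rangle)\geq\cos(|\varphi|/2)\,\|f'\|^2_{L^2(\R)}$ gives a clean $W^{1,2}(\R)$-bound from the assumption that the rotated form values tend to $0$. Once you have that bound, Rellich--Kondrachov on $(a,b)$ applies and the rest of your imaginary-part calculation goes through (noting, as you implicitly must, that $\alpha\geq-v$ uniformly in $\delta$ because $V\geq-v$).

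\textbf{Spectral inclusion for real $\lambda$ is missing.} You establish absence of pollution and inclusion of isolated non-real eigenvalues, but when $V\to 0$ at infinity one has $\sigma(T,J)\supseteq\sigma_e(T,J)=\R$, and every real $\lambda$ is a non-isolated spectral point lying inside $W_e(B_\varphi T,B_\varphi J)$ for every $\varphi$. Theorem~\ref{thm.trunc}~ii) therefore gives no inclusion statement for these $\lambda$, and part iii) is inapplicable because $B_\varphi T$, $B_\varphi J$ are not selfadjoint. The paper handles this with a separate, concrete argument for real $\lambda\neq 0$: it builds principal solutions $u^\pm$ on $(-\infty,b]$ and $[b,\infty)$, uses the Bailey--Everitt--Zettl spectral exactness of regular Sturm--Liouville truncations of the half-line operator $S^+$ on $(b,\infty)$ to produce two Dirichlet eigenvalues $\mu_n^{(1)}<\mu_n^{(2)}$ near $\lambda$, and then applies the intermediate value theorem to the difference of the Titchmarsh--Weyl functions $m_n^+-m_n^-$ (a Nevanlinna function argument) to produce $\lambda_n\in(\mu_n^{(1)},\mu_n^{(2)})$ with a matching eigenfunction of $T_n-\lambda_nJ_n$. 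Without some replacement for this step, ``spectrally exact'' is not proved --- only ``free of spectral pollution and spectrally inclusive off the real line.''

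Your parts (ii) and (iii, pollution) are fine: boundedness of $B_\varphi$ gives $\sigma_e(T,J)\subseteq W_e(B_\varphi T,B_\varphi J)$ directly, and the commutation of multiplication by $B_\varphi$ with interval truncation, combined with $\sigma(T_n,J_n)=\sigma(B_{\varphi;n}T_n,B_{\varphi;n}J_n)$, confines pollution to the intersection, which lies in $\R=\sigma(T,J)$.
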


\begin{proof}
The assumption $V\in L^{\infty}(\R)$ implies the existence of $v>0$ such that $V(x)\geq -v$ for almost every $x\in\R$.
We prove 
$$W_e(B_\varphi T,B_{\varphi}J)\subseteq \begin{cases}
\left\{\lm\in\C:\,\im\,\lm\leq v|\sin\varphi|\right\}, &\varphi\in \left(-\pi,-\frac{2\pi}{3}\right],\\[1mm]
\left\{\lm\in\C:\,\im\,\lm\geq - v\sin\varphi \right\}, &\varphi\in \left[\frac{2\pi}{3},\pi\right).
\end{cases}$$
Let $\varphi\in (-\pi,0)\cup (0,\pi)$, $\lm\in\C$ and $f\in\dom(T)$. 
Then, using integration by parts,
\begin{align*}
\langle (B_{\varphi}T-\lm B_{\varphi}J)f,f\rangle&=
\langle -B_{\varphi}f''+B_{\varphi}(V-\lm J) f,f\rangle \\
&=\langle B_{\varphi} f',f'\rangle + \langle B_{\varphi}' f',f\rangle +\langle B_{\varphi}V f,f\rangle-\lm \langle B_{\varphi}J f,f\rangle.
\end{align*}
The quadratic form $f\mapsto {\rm e}^{-\I\,\varphi/2}\langle B_{\varphi}f',f'\rangle$ is sectorial with sectoriality vertex $0$ and semi-angle $|
\varphi|/2<\pi/2$.
Note that 
\beq\label{eq.bddforms}
|\langle B_{\varphi}V f,f\rangle-\lm \langle B_{\varphi}J f,f\rangle|\leq (\|V\|_{\infty}+|\lm|\|J\|_{\infty}) \|f\|^2, \quad n\in\N.
\eeq
Moreover, since ${\rm supp}\,B'_{\varphi}=[a,b]$, we have, for any $\eps>0$,
\begin{equation}\label{eq.indefangle}
\begin{aligned}
| \langle B_{\varphi}' f',f\rangle|
&= \left|\int_a^b   B_{\varphi}' f'\overline{f}\,\rd x\right|
\leq \|B_{\varphi}'\|_{\infty}\left(\int_a^b |f'|^2\,\rd x\right)^{\frac{1}{2}}\left(\int_a^b |f|^2\,\rd x\right)^{\frac{1}{2}}\\
&\leq  \|B_{\varphi}'\|_{\infty} \left(\eps  \int_a^b |f'|^2\,\rd x +\frac{1}{4\eps}\|f\|^2\right)\\
&\leq  \|B_{\varphi}'\|_{\infty} \left(\frac{\eps}{|\cos(\varphi/2)|}\re\big({\rm e}^{-\I\,\varphi/2}\langle B_{\varphi}f',f'\rangle\big)   +\frac{1}{4\eps}\|f\|^2\right).
\end{aligned}
\end{equation}
Choosing $\eps>0$ sufficiently small, the estimates~\eqref{eq.bddforms},~\eqref{eq.indefangle} and~\cite[Theorem~VI.1.33]{kato} imply that $f\mapsto {\rm e}^{-\I\,\varphi/2}\langle (B_{\varphi}T-\lm B_{\varphi}J)f,f\rangle$ is sectorial.

Now let $(f_n)_{n\in\N}\subset \dom(T)$ with $\|f_n\|=1$, $f_n\stackrel{w}{\to} 0$ and $\langle (B_{\varphi}T-\lm B_{\varphi}J)f_n,f_n\rangle\to 0$.
The above considerations imply that the sequences $(\langle B_{\varphi}f_n',f_n'\rangle)_{n\in\N}$, $(\langle B_{\varphi}'f_n',f_n\rangle)_{n\in\N}$, $(\langle B_{\varphi}V f_n,f_n\rangle)_{n\in\N}$ and $(\langle B_{\varphi}Jf_n,f_n\rangle)_{n\in\N}$ are bounded. By passing to a subsequence, there exist $c_1,c_2,c_3,c_4\in\C$ with $c_1+c_2+c_3-\lm c_4=0$ such that, in the limit $n\to\infty$,
$$\langle B_{\varphi}f_n',f_n'\rangle\tolong c_1, \quad \langle B_{\varphi}'f_n',f_n\rangle\tolong c_2, \quad \langle B_{\varphi}V f_n,f_n\rangle\tolong c_3, \quad \langle B_{\varphi} Jf_n,f_n\rangle\tolong c_4.$$
The boundedness of $(\langle B_{\varphi}f_n',f_n'\rangle)_{n\in\N}$ together with the (quasi-)sectoriality of $B_{\varphi}$ implies that $(\|f_n'\|)_{n\in\N}$ is a bounded sequence.
By the Rellich-Kondrachov theorem and $f_n\stackrel{w}{\to}0$, we obtain $\|f_n|_{[a,b]}\|_{L^2(a,b)}\to 0$.
Now the first line in~\eqref{eq.indefangle} implies that $c_2=0$.
In addition, we obtain
$$\lm=\frac{c_1+c_3}{c_4}=\frac{c_1+z}{w}$$
with
\begin{align*}
z&:=\lim_{n\to\infty}\left({\rm e}^{\I\,\varphi}  \int_{-\infty}^a V|f_n|^2\,\rd x+\int_b^{\infty} V|f_n|^2\,\rd x\right),\\
w&:=\lim_{n\to\infty}\left( -{\rm e}^{\I\,\varphi} \int_{-\infty}^a|f_n|^2\,\rd x+\int_b^{\infty} |f_n|^2\,\rd x\right).
\end{align*}
Note that  $c_1\in {\rm conv}\big([0,\infty)\cup {\rm e}^{\I\,\varphi}\,[0,\infty)\big)$, $z\in {\rm conv}\big([-v,\infty)\cup {\rm e}^{\I\,\varphi}\, [-v,\infty)\big)$ and $w\in {\rm conv}\{-{\rm e}^{\I\,\varphi},1\}$. Thus there exist $s,t\in [0,1]$ and $u_1=-{\rm e}^{\I\,\varphi}v+\alpha_1\in (-{\rm e}^{\I\,\varphi} v +[0,\infty))$, $u_2=-v+{\rm e}^{\I\,\varphi}\alpha_2\in (-v+{\rm e}^{\I\,\varphi}\,[0,\infty))$ such that
$$c_1+z=s u_1+(1-s)u_2, \quad w=-t{\rm e}^{\I\,\varphi} +(1-t).$$
First we assume that $\varphi\in\left[\frac{2\pi}{3},\pi\right)$. Then $\sin\varphi>0$.
We estimate, using $\alpha_1,\alpha_2\geq 0$ and $s,t\in [0,1]$,
\begin{align*}
\im\,\lm
&=\im\,\frac{(c_1+z)\overline{w}}{|w|^2}=\frac{-\re(c_1+z)\im\, w+\im(c_1+z)\re\, w}{|w|^2}\\
&=\frac{(-t +(2t-1)s)v
+s  t\alpha_1+(1-s)(1-t)\alpha_2
}{1-2t(1-t) (1+\cos\varphi)}\,\sin\varphi\\
&\geq \frac{-t +(2t-1)s }{1-2t(1-t) (1+\cos\varphi)}\,v\sin\varphi\\
&\geq \begin{cases}-\frac{1-t }{1-2t(1-t) (1+\cos\varphi)}\,v\sin\varphi, & t\in [0,\frac 1 2],\\ -\frac{t  }{1-2t(1-t) (1+\cos\varphi)}\,v\sin\varphi, &t\in [\frac 1 2,1].\end{cases}
\end{align*}
For a fixed $\varphi\in\left[\frac{2\pi}{3},\pi\right)$, the latter bound is a function of $t$; note that it is symmetric with respect to the point $t=1/2$, so we  consider $t\in [1/2,1]$.
An easy calculation using $1+\cos\varphi\leq \frac{1}{2}$ reveals that the minimum of the function is attained for
$t=1$; we arrive at  $\im\,\lm\geq -v\sin\varphi$.
The bound for $\varphi\in\left(-\pi,-\frac{2\pi}{3}\right]$ is obtained analogously.
Now the intersection of all $W_e(B_\varphi T,B_{\varphi}J)$ is contained in $\R$ because $v\sin\varphi\to 0$ as $\varphi\to \pm \pi$.

If $\lim_{|x|\to\infty}V(x)=0$, then $\sigma_e(T,J)=\R$ by Proposition \ref{prop.schroed} i). Hence domain truncation is free of spectral pollution
 by Theorem~\ref{thm.trunc} ii) and since $\sigma(T,J)=\sigma(B_{\varphi}T,B_{\varphi}J)$ and $\sigma(T_n,J_n)=\sigma(B_{\varphi;n}T_n,B_{\varphi;n}J_n)$, $n\in\N$, where $B_{\varphi;n}:=B_{\varphi}|_{[-n,n]}$ and $J_n:=J|_{[-n,n]}$.
In addition, for every non-real $\lm\in\sigma(T,J)$ there exists $\varphi\in (-\pi,0)\cup(0,\infty)$ so that $\lm\notin W_e(B_{\varphi}T,B_{\varphi}J)$. Hence Theorem~\ref{thm.trunc} ii) implies that $\lm$ is the limit of some $\lm_n\in\sigma(T_n,J_n)=\sigma(B_{\varphi;n}T_n,B_{\varphi;n}J_n)$, $n\in\N$.
For real $\lm\in\sigma(T,J)$, i.e.\ for $\lm\in\sigma_e(T,J)=\R$, we prove spectral inclusion as follows. 

Let $\lm\in (0,\infty)$; the proof is analogous for $\lm\in (-\infty,0)$, and the case $\lambda=0$ follows 
from either of the previous two using a diagonal sequence argument. Define the differential expression
$$\tau:=-\frac{{\rm d}^2}{{\rm d}x^2}+V,$$
which is in limit point case at $\pm\infty$.
Because $\left.J\right|_{(-\infty,a)}=-1$ and $V(x)\to 0$ as $x\to -\infty$, for each $\mu>0$ there exists a unique (up to scalar multiplication) solution of
$$
(\tau-\mu J )u^-(\mu,\cdot)=0
$$
 with $u^-(\mu,\cdot)\in L^2(-\infty,c)$ for some (and hence all) $c\in\R$. Since $V(x)\to 0$ as $x\to -\infty$, this solution has only finitely many zeros in each interval 
 $(-\infty,c)$ and so, in particular, it is the principal solution (see \cite{MZ}) of the differential equation on $(-\infty,b]$.  
 We may assume without loss of generality that $u^{-}(\lambda,b)\neq 0$; if this were not true then we could simply increase the 
 value of $b$, and still have $\left. J\right|_{[b,\infty)} = 1$ but with $u^-(\lambda,b)\neq 0$, and hence $u^-(\mu,b)\neq 0$ for all 
 $\mu$ in a neighbourhood of $\lambda$. 

Consider now the finite-interval approximations $u_n^-$ to $u^-$ defined as solutions of the boundary value problems
\[ (\tau-\mu J )u_n^-(\mu,\cdot) = 0 \;\;\; \mbox{in $(-n,b)$}; \;\;\; u_n^-(\mu,-n) = 0; \;\;\; u_n^-(\mu,b) = u^-(\mu,b). \]
By \cite{MZ} these exist and
\[ \lim_{n\rightarrow\infty} (u_n^-(\mu,b),(u_n^-)'(\mu,b)) = (u^-(\mu,b),(u^-)'(\mu,b)), \]
the limit being locally uniform in $\mu$.

Now consider the  unique solution $u_n^+(\mu,\cdot)$ of the initial value problem
$$(\tau-\mu J )u_n^+(\mu,\cdot)=0; \;\;\; u_n^+(\mu,n)=0; \;\;\; (u_n^+)'(\mu,n)=1;$$
here $'$ denotes differentiation with respect to the second variable. 
Denote by $S^+$ the realisation of $\tau$ in $L^2(b,\infty)$ with Dirichlet boundary condition $f(b)=0$.
Then $S^+$ is selfadjoint with $\sigma_e(S^+)=[0,\infty)$.
If we denote by $S_{n}^+$ the realisation of $\tau$ in $L^2(b,n)$ with the boundary conditions $f(b)=0$, $f(n)=0$, this operator is selfadjoint as well.
Since $\lm\in\sigma_e(S^+)$ and since the spectral approximation of $S^+$ by $S_{n}^+$ is well known to be spectrally exact \cite{Bailey-1993},
for each $\eps>0$ there exists $n_\eps\in\N$ such that, for all $n\geq n_\eps$, there are two Dirichlet eigenvalues in $[\lm-\eps,\lm+\eps]$, i.e.\ there are $\lm-\eps\leq\mu_n^{(1)}<\mu_n^{(2)}\leq \lm+\eps$ with
$$u_n^+(\mu_n^{(1)},b)=0,\quad u_n^+(\mu_n^{(2)},b)=0,\quad u_n^+(\mu,b)\neq 0, \quad \mu\in (\mu_n^{(1)},\mu_n^{(2)}).$$
Thus the Titchmarsh-Weyl function
$$\mu \mapsto m_n^+(\mu):=\frac{(u_n^+)'(\mu,b)}{(u_n^+)(\mu,b)},$$
being Nevanlinna \cite{MR2077204}, is continuous  and strictly increasing on $(\mu_n^{(1)},\mu_n^{(2)})$ with singularities at the endpoints,
and 
\[ \lim_{\mu\searrow \mu_n^{(1)}}m_n^+(\mu) = -\infty, \;\;\; \lim_{\mu\nearrow \mu_n^{(2)}}m_n^+(\mu) = +\infty. \]
Correspondingly, the function 
\[ \mu\mapsto m_n^-(\mu) := \frac{(u_n^-)'(\mu,b)}{(u_n^-)(\mu,b)} \]
is continuous on $[\mu_n^{(1)},\mu_n^{(2)}] \subseteq [\lambda-\epsilon,\lambda+\epsilon]$ for all sufficiently small $\epsilon$, 
since $u_n^-(\mu,b) = u^-(\mu,b)$ and $u^-(\mu,b) \neq 0$ for $\mu$ in a neighbourhood of $\lambda$. Hence by the
intermediate value theorem applied to $m_n^+(\mu)-m_n^-(\mu)$, 
 there exists $\lm_n\in (\mu_n^{(1)},\mu_n^{(2)})$ with 
\begin{equation}\label{eq:robin1}
\frac{(u_n^+)'(\lm_n,b)}{(u_n^+)(\lm_n,b)}=\frac{(u_n^-)'(\lm_n,b)}{(u_n^-)(\lm_n,b)}.
\end{equation}
The function
\[ u_n(x) := \left\{ \begin{array}{ll} u_n^-(\lambda_n,x), & x < b, \\
 u_n^+(\lambda_n,x)\frac{u_n^-(\lambda_n,b)}{u_n^+(\lambda_n,b)}, & x\geq b,
 \end{array}\right. \]
 is therefore an eigenfunction of $T_n-\lm_n J_n$ with eigenvalue $0$. Since $\lambda_n$ is $\epsilon$-close to $\lambda$ and $\epsilon$ can be
 arbitrarily small, we have proved the spectral inclusion.
\end{proof}

\section{Operator spectral problem transformed into pencil problem}\label{sec.op}
As seen in Example~\ref{ex.op.as.pencil}, the set of spectral pollution might be smaller (even empty) when the operator eigenvalue problem $Tx=\lm x$ is transformed into the pencil eigenvalue problem $Ax=\lm Bx$ with $A:=BT$. 
In this section we explore this idea further. In the same way we establish tight enclosures of the spectrum of $T$ by taking the intersection of numerical ranges $W(BT,B)$ for suitable $B$.

\subsection{Abstract results for operators and diagonal $2\times 2$ block operator matrices}

The following result gives a (not necessarily connected) spectral enclosure in terms of numerical ranges.
Note that the sets in~\eqref{eq.W.pencil.spec} coincide if $\sigma(T)=\sigma_{\rm app}(T)$.

\begin{Theorem}\label{thm.intersection}
Let $T\in C(H)$.
\begin{enumerate}
\item[\rm i)]
The approximate point spectrum and spectrum are related to numerical ranges by 
\begin{equation}\label{eq.W.pencil.spec}
\sigma_{\rm app}(T)\subseteq  \underset{B\in L(H)}{\bigcap}W(BT,B)\subseteq \underset{B\in L(H)\atop 0\in\rho(B)}{\bigcap}W(BT,B)\subseteq \sigma(T),
\end{equation}
and
\begin{equation}\label{eq.We.pencil.spec}
\sigma_{e}(T)= \underset{B\in L(H)}{\bigcap}W_e(BT,B).
\end{equation}

\item[\rm ii)]
For $\Lambda\subseteq L(H)\backslash\{0\}$ let  
$$ \Omega\subseteq\C\backslash \underset{B\in \Lambda}{\bigcap}W(BT,B) $$
be a connected set. If $\Omega\cap\rho(T)\neq\emptyset$, then $\Omega\subseteq\rho(T)$ and
\begin{align*}
\|(T-\lm)^{-1}\|
&\leq\inf_{B\in\Lambda}\frac{\|B\|}{{\rm dist}(0,W(B))\,{\rm dist}(\lm,W(BT,B))},
\quad \lm\in\Omega.
\end{align*}
\end{enumerate}
\end{Theorem}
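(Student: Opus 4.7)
For the chain \eqref{eq.W.pencil.spec}, the inclusion $\sigma_{\rm app}(T)\subseteq\bigcap_B W(BT,B)$ is immediate: given an approximate eigensequence $(x_n)\subset\dom(T)$ with $\|x_n\|=1$ and $\|(T-\lm)x_n\|\to 0$, boundedness of $B$ yields $|\langle(BT-\lm B)x_n,x_n\rangle|\le\|B\|\,\|(T-\lm)x_n\|\to 0$, so $0\in\overline{W(BT-\lm B)}$ and $\lm\in W(BT,B)$. The middle inclusion is trivial. The last inclusion I prove by contrapositive: if $\lm\in\rho(T)$, set $B:=(T-\lm)^{-1}\in L(H)$ (bounded and invertible); for every $x\in\dom(T)$ with $\|x\|=1$, $\langle(BT-\lm B)x,x\rangle=\langle B(T-\lm)x,x\rangle=\langle x,x\rangle=1$, so $\overline{W(BT-\lm B)}=\{1\}$ and $\lm\notin W(BT,B)$.

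For the essential-spectrum identity \eqref{eq.We.pencil.spec}, the inclusion $\sigma_e(T)\subseteq\bigcap_B W_e(BT,B)$ is verbatim as above, now using a Weyl singular sequence with the additional property $x_n\stackrel{w}{\to}0$. The reverse inclusion is the main obstacle and requires a parametrix construction. Given $\lm\notin\sigma_e(T)$, the absence of a Weyl singular sequence (together with $T$ being closed) forces $T-\lm$ to be upper semi-Fredholm, so $\ker(T-\lm)$ is finite-dimensional and $\ran(T-\lm)$ is closed. Let $P$ be the orthogonal projection onto $\ker(T-\lm)$ (finite rank, hence compact). By the closed-range theorem, the restriction of $T-\lm$ to $\ker(T-\lm)^\perp\cap\dom(T)$ is a bijection onto $\ran(T-\lm)$ with bounded inverse; extending this inverse by $0$ on $\ran(T-\lm)^\perp$ yields $L\in L(H)$ satisfying $L(T-\lm)=I-P$ on $\dom(T)$. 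For any $(x_n)\subset\dom(T)$ with $\|x_n\|=1$ and $x_n\stackrel{w}{\to}0$, the compactness of $P$ gives $\langle Px_n,x_n\rangle\to 0$, whence $\langle L(T-\lm)x_n,x_n\rangle\to 1\ne 0$. Therefore $0\notin W_e(LT-\lm L)$, i.e.\ $\lm\notin W_e(LT,L)$, so $\lm$ is excluded from the intersection on the right of \eqref{eq.We.pencil.spec}.

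For part ii), the hypothesis that $\Omega$ avoids $\bigcap_{B\in\Lambda}W(BT,B)$, combined with the first inclusion of \eqref{eq.W.pencil.spec}, gives $\Omega\cap\sigma_{\rm app}(T)=\emptyset$. Kato's stability of the semi-Fredholm index along the lines of \cite[Theorem~IV.5.17]{kato} then makes $\lm\mapsto\operatorname{ind}(T-\lm)$ locally constant on the connected set $\Omega$; since $\Omega\cap\rho(T)\ne\emptyset$ the index is $0$ throughout $\Omega$, and combined with injectivity this upgrades to bijectivity, so $\Omega\subseteq\rho(T)$. For the resolvent bound, fix $\lm\in\Omega$ and $B\in\Lambda$ (only $\dist(0,W(B))>0$ is informative, otherwise the bound is vacuous). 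For every $x\in\dom(T)$ with $\|x\|=1$, the ratio $\langle BTx,x\rangle/\langle Bx,x\rangle$ lies in $w(BT,B)\subseteq W(BT,B)$, and combining this with $|\langle Bx,x\rangle|\ge\dist(0,W(B))$ gives
\[
\dist(0,W(B))\,\dist(\lm,W(BT,B))\le\bigl|\langle(BT-\lm B)x,x\rangle\bigr|\le\|B\|\,\|(T-\lm)x\|.
\]
Rearranging yields $\|(T-\lm)x\|\ge\dist(0,W(B))\,\dist(\lm,W(BT,B))/\|B\|$, and taking the infimum over $B\in\Lambda$ produces the stated bound on $\|(T-\lm)^{-1}\|$.
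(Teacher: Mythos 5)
The first inclusion of \eqref{eq.W.pencil.spec} and the inclusion $\sigma_e(T)\subseteq\bigcap_B W_e(BT,B)$ are handled as in the paper. Your parametrix argument for the reverse of \eqref{eq.We.pencil.spec} is correct and genuinely different from the paper's proof: the paper goes through the polar decomposition $T-\lm=U|T-\lm|$, sets $B=U^*$, and descends to $\sigma_e(|T-\lm|^2)$ to manufacture a singular Weyl sequence, whereas you invoke the upper semi-Fredholm characterisation of $\sigma_{e2}$ directly, build the one-sided inverse $L$ with $L(T-\lm)=I-P$, and read off $W_e(LT-\lm L)\subseteq\{1\}$. Your route is shorter and does not need the square $|T-\lm|^2$; the paper's route has the advantage that the same polar-decomposition machinery also handles the non-essential inclusion. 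Part ii) is essentially the paper's argument (Kato's index stability plus the standard numerical-range resolvent estimate).

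However, there is a genuine gap in your proof of the last inclusion of \eqref{eq.W.pencil.spec}. You set $B:=(T-\lm)^{-1}$ and declare it ``bounded and invertible''. It is bounded, but for unbounded $T\in C(H)$ its range is exactly $\dom(T)\subsetneq H$, so $B$ is not surjective and $0\notin\rho(B)$; indeed $(T-\lm)^{-1}$ is boundedly invertible in $L(H)$ if and only if $T$ is bounded. Consequently your choice of $B$ does not lie in the index set $\{B\in L(H):0\in\rho(B)\}$, and your argument only proves the weaker statement $\bigcap_{B\in L(H)}W(BT,B)\subseteq\sigma(T)$. The theorem's real content is the stronger inclusion $\bigcap_{B\in L(H),\,0\in\rho(B)}W(BT,B)\subseteq\sigma(T)$, i.e.\ that one can witness $\lm\notin W(BT,B)$ by a \emph{boundedly invertible} $B$ whenever $\lm\in\rho(T)$. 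To close the gap, take the polar decomposition $T-\lm=U|T-\lm|$; for $\lm\in\rho(T)$ the partial isometry $U$ extends to a unitary, so $B:=U^*$ is unitary, $BT-\lm B=|T-\lm|$ is uniformly positive and hence $0\notin\overline{W(|T-\lm|)}$, giving the required invertible witness. Alternatively, and closer to your parametrix philosophy, you could try to modify your $L$ into an invertible operator (e.g.\ by suitably acting on $\ran(T-\lm)^\perp$), but this takes some care — the paper's polar-decomposition choice is the clean fix.
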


\begin{proof}
i)
To prove the first inclusion in~\eqref{eq.W.pencil.spec}, let $\lm\in\sigma_{\rm app}(T)$. Then there exists a normalised sequence $(x_n)_{n\in\N}\subset\dom(T)$
such that $\|(T-\lm)x_n\|\to 0$. Let $B\in L(H)$. Then $|\langle (BT-\lm B)x_n,x_n\rangle|\leq \|B\| \|(T-\lm)x_n\|\to 0$ and hence $\lm\in W(BT,B)$.

The second inclusion in~\eqref{eq.W.pencil.spec} is trivial.

Now take $\lm\in\C$ such that $\lm\in W(BT,B)$ for all $B\in L(H)$ with $0\in\rho(B)$.
We use the following well-known equivalence, which is a consequence of von Neumann's theorem \cite[Theorem V.3.24]{kato}:
\begin{equation}\label{eq.equiv.spec}
\lm\in\sigma(T)\quad\Longleftrightarrow\quad 0\in\sigma\big(((T-\lm)^*(T-\lm))^{1/2}\big)\cup\sigma\big(((T-\lm)(T-\lm)^*)^{1/2}\big).
\end{equation}
With $|T-\lm|:=((T-\lm)^*(T-\lm))^{1/2 }$ let $T-\lm=U|T-\lm|$ be the polar decomposition of $T-\lm$.
By~\cite[Section VI.2.7]{kato}, the operator $|T-\lm|$ is selfadjoint and non-negative, and $U:\ran(|T-\lm|)\to\ran(T-\lm)$ is isometric.
By continuity, it can be extended to an isometric operator on $\overline{\ran(|T-\lm|)}$, and then further extended to a bounded operator $U\in L(H)$ by setting $Ux:=0$, $x\in\ran(|T-\lm|)^{\perp}$. Then $U^*\in L(H)$ with $U^*Ux=x$ for all $x\in\ran(|T-\lm|)$.
We set $B:=U^*\in L(H)$.
Then 
\beq BT-\lm B=U^*(T-\lm)=U^*U|T-\lm|=|T-\lm|.\label{eq.Uadjoint}\eeq
 Assume that $\lm\in\rho(T)$. Then the equivalence~\eqref{eq.equiv.spec} yields $0\in\rho(|T-\lm|)$. Therefore $\ran(|T-\lm|)=H=\ran(T-\lm)$ and hence $U\in L(H)$ is unitary.
Thus $0\in\rho(B)$.
Since $\lm\in W(BT,B)$ by the choice of $\lm$,~\eqref{eq.Uadjoint} implies that
$$0\in \overline{W(|T-\lm|)}={\rm conv}\,\sigma(|T-\lm|)\subseteq [0,\infty).$$
 Therefore $0\in\sigma(|T-\lm|)$ and hence~\eqref{eq.equiv.spec} implies $\lm\in\sigma(T)$. 

The inclusion $\sigma_{e}(T)\subseteq \underset{B\in L(H)}{\bigcap}W_e(BT,B)$ is shown analogously as the first inclusion in~\eqref{eq.W.pencil.spec}; we use in addition that the sequence $(x_n)_{n\in\N}$ converges weakly to~$0$.

To prove the reverse inclusion (and thus equality in \eqref{eq.We.pencil.spec}), we choose $\lm\in\C$ such that $\lm \in W_e(BT,B)$ for all $B\in L(H)$. We proceed analogously as above (i.e.\ we use the polar decomposition of $T-\lm$ and set $B:=U^*\in L(H)$) to arrive at~\eqref{eq.Uadjoint}.
Since $\lm\in W_e(BT,B)$ by the choice of $\lm$, and using Theorem \ref{Thm2.11}, we obtain 
$$0\in W_e(|T-\lm|)={\rm conv}\,\widehat{\sigma}_e(|T-\lm|)\backslash\{\infty\}\subseteq [0,\infty).$$
 Therefore $0\in\sigma_e(|T-\lm|)$
and hence $0\in \sigma_e(|T-\lm|^2)\subseteq W_e(|T-\lm|^2)$.
This implies the existence of a sequence $(x_n)_{n\in\N}\subset\dom(|T-\lm|^2)\subseteq\dom(|T-\lm|)=\dom(T-\lm)$ with $\|x_n\|=1$, $x_n\stackrel{w}{\to} 0$ and $$\|(T-\lm)x_n\|^2=\langle |T-\lm|^2x_n,x_n\rangle\tolong 0, \quad n\to\infty.$$
Therefore, $\lm\in\sigma_e(T)$.

ii) The claim follows from the first inclusion in~\eqref{eq.W.pencil.spec} and Theorem~\ref{thm.dist.pencil} and its proof;
note that $\|(T-\lm)x\|\geq \|(BT-\lm B)x\| \|B\|^{-1}$ and hence $\|(T-\lm)^{-1}\|\leq \|B\| \|(BT-\lm B)^{-1}\|$ for $\lm\in\Omega$ and $B\in\Lambda$.
\end{proof}

The following result can be used for approximations of selfadjoint operators to remove spurious eigenvalues in gaps of the (essential) spectrum. Note that if $P$ is the spectral projection $\chi_{(-\infty,\gamma]}(T)$ for some $\gamma\in\R$, then $T$ admits a diagonal block operator representation as in Theorem~\ref{thm.2x2} below; however, in general $\chi_{(-\infty,\gamma]}(T)$ is unknown.

\begin{Proposition}\label{prop.decomp}
Let $T\in C(H)$ be selfadjoint. Let $P$ be an orthogonal projection in $H$ with $\ran(P)\subseteq\dom(T)$ and define $B:=I-2P=-P+(I-P)$. 
Assume that
\begin{equation}\label{eq.decomp.semibdd}
a:=\sup\, W(T|_{\ran(P)})<\infty, \quad b:=\inf\, W(T|_{\ran(P)^{\perp}\cap\dom(T)})>-\infty.
\end{equation}
\begin{enumerate}
\item[\rm i)] If $a<b$, then 
\begin{align*}
w_e(BT,B)&=W_e(BT,B)\subseteq \overline{w(BT,B)}=W(BT,B)\\
&\subseteq \big\{\lm\in\C:\,\re\,\lm\in (-\infty,a]\,\dot\cup\, [b,\infty)\big\}.
\end{align*}
\item[\rm ii)] If $W_e(T|_{\ran(P)})=\emptyset$, then 
\begin{align*}
w_e(BT,B)&=W_e(BT,B)\\
&\subseteq \big\{\lm\in\C:\,\re\,\lm\geq \min W_e(T|_{\ran(P)^{\perp}\cap\dom(T)})\big\};
\end{align*}
if, in addition, $W_e(T|_{\ran(P)^{\perp}\cap\dom(T)})=\emptyset$, then 
$$w_e(BT,B)=W_e(BT,B)=\emptyset.$$
\item[\rm iii)] If $W_e(T|_{\ran(P)})\neq \emptyset$ and $W_e(T|_{\ran(P)^{\perp}\cap\dom(T)})\neq\emptyset$, define
$$a_e:=\max\, W_e(T|_{\ran(P)})\leq a, \quad b_e:=\min\, W_e(T|_{\ran(P)^{\perp}\cap\dom(T)})\geq b.$$
If $a_e<b_e$, then
$$w_e(BT,B)=W_e(BT,B)\subseteq \big\{\lm\in\C:\,\re\,\lm\in (-\infty,a_e]\,\dot\cup\, [b_e,\infty)\big\}.$$
\end{enumerate}
\end{Proposition}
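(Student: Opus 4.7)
My plan is built around a single algebraic identity. For $x\in\dom(T)$, set $x_1:=Px$ and $x_2:=(I-P)x$; since $\ran(P)\subseteq\dom(T)$, both lie in $\dom(T)$. Writing $B=-P+(I-P)$ and using $T=T^*$, a direct calculation gives
\[
\langle Bx,x\rangle=\|x_2\|^2-\|x_1\|^2, \qquad
\re\langle(BT-\lm B)x,x\rangle=\langle(\mu-T)x_1,x_1\rangle+\langle(T-\mu)x_2,x_2\rangle,
\]
where $\mu:=\re\lm$. The imaginary part equals $2\im\langle Tx_1,x_2\rangle-(\im\lm)\langle Bx,x\rangle$ and will play no role. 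All three parts flow from this single identity, applied in different limiting regimes.

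For i), if $\mu\in(a,b)$ then the defining inequalities of $a,b$ force $\re\langle(BT-\lm B)x,x\rangle\geq\min(\mu-a,b-\mu)\,\|x\|^2$, so $0\notin\overline{W(BT-\lm B)}$ and $\lm\notin W(BT,B)$. This gives the strip inclusion and in particular $W(BT,B)\neq\C$; the remaining equalities $W(BT,B)=\overline{w(BT,B)}$ and $W_e(BT,B)=w_e(BT,B)$ then follow from Proposition~\ref{prop.num.range}~i), Remark~\ref{rem.num.range}~iii) and Proposition~\ref{prop.ess.num.range}.

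For iii) I want the same argument but only asymptotically, along a weakly null normalised sequence $(x_n)\subset\dom(T)$. The closed graph theorem makes $T_1:=PTP|_{\ran(P)}$ bounded selfadjoint on $\ran(P)$ with $\max\sigma_e(T_1)=a_e$; hence the spectral projection $Q$ of $T_1$ onto $(a_e+\eps,\infty)$ has finite rank, $Qx_{1,n}\to 0$ strongly, and a standard spectral-calculus step gives $\langle(\mu-T)x_{1,n},x_{1,n}\rangle\geq(\mu-a_e-\eps)\|x_{1,n}\|^2-o(1)$. On the $\ran(P)^\perp$ side I cannot mimic this, because $T$ need not be reduced by $P$ and no selfadjoint restriction is available; instead I invoke $W_{e1}=W_e$ from Theorem~\ref{thmequivdefforWe} (valid since $W(T|_{\ran(P)^\perp\cap\dom(T)})\subseteq[b,\infty)\neq\C$) to produce a finite-dimensional $V\subset\ran(P)^\perp\cap\dom(T)$ with $\langle Tz,z\rangle\geq(b_e-\eps)\|z\|^2$ on $V^\perp\cap\ran(P)^\perp\cap\dom(T)$. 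Decomposing $x_{2,n}$ along $V$ inside $\ran(P)^\perp$, the finite-rank piece tends to $0$ strongly and the cross terms $\langle Tu,v\rangle,\langle Tv,u\rangle$ vanish in the limit via $T=T^*$ combined with the strong convergence $TQ_V x_{2,n}\to 0$, leaving $\langle(T-\mu)x_{2,n},x_{2,n}\rangle\geq(b_e-\mu-\eps)\|x_{2,n}\|^2-o(1)$. For $\mu\in(a_e,b_e)$ and $\eps$ small the two estimates sum to a strictly positive lower bound, contradicting $\lm\in W_e(BT,B)$; the bound on $w_e(BT,B)$ is obtained in the same way.

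Part ii) is a degenerate variant: $W_e(T|_{\ran(P)})=\emptyset$ together with the boundedness of $T_1$ on $\ran(P)$ forces $\dim\ran(P)<\infty$, so $P$ is compact, $x_{1,n}\to 0$ strongly, $\langle Bx_n,x_n\rangle\to 1$, and every $x_1$-contribution in the identity vanishes asymptotically. Both $\lm\in w_e(BT,B)$ and $\lm\in W_e(BT,B)$ then force $\langle Ty_{2,n},y_{2,n}\rangle\to\lm$ (resp.\ $\to\mu$) for $y_{2,n}:=x_{2,n}/\|x_{2,n}\|$, a normalised weakly null sequence in $\ran(P)^\perp\cap\dom(T)$; in particular $\lm$ is real and lies in $W_e(T|_{\ran(P)^\perp\cap\dom(T)})$, which yields the lower bound $\re\lm\geq b_e$. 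If that essential numerical range is also empty, semiboundedness implies the quadratic form must diverge, so no finite $\lm$ is possible and $w_e(BT,B)=W_e(BT,B)=\emptyset$. In every part the passage from $w_e$ to $W_e$ is closed via Proposition~\ref{prop.ess.num.range}, which is available once $W_e(BT,B)\neq\C$ has been secured by the inclusions just proved. The main obstacle I anticipate is the $\ran(P)^\perp$ estimate in iii): lack of self-adjointness of any natural restriction of $T$ to that subspace and the non-invariance of $V$ under $T$ rule out spectral projections and force the form-theoretic $W_{e1}$ route together with a careful dispatch of the cross terms by the self-adjointness of the full~$T$.
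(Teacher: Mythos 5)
Your proof is correct, but in parts ii) and iii) it takes a genuinely different route from the paper. In i) the two approaches are essentially the same: your direct lower bound $\re\langle(BT-\lm B)x,x\rangle\geq\min(\mu-a,b-\mu)\|x\|^2$ is just a more explicit phrasing of the paper's block-matrix $\cA$ and convexity argument. In ii) the paper never observes what you do, namely that $W_e(T|_{\ran(P)})=\emptyset$ forces $\dim\ran(P)<\infty$: they instead extract a common limit $c$ of the two semibounded sequences $-(\langle Tu_n,u_n\rangle-\mu\|u_n\|^2)$ and $\langle Tv_n,v_n\rangle-\mu\|v_n\|^2$ (each bounded below via \eqref{eq.decomp.semibdd}, both bounded because their sum tends to zero), argue $u_n\to 0$ strongly, and then rule out $c>0$ using the upper bound $\langle Tu_n,u_n\rangle\leq a\|u_n\|^2$. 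Your shortcut is clean and does buy the stronger conclusion $W_e(BT,B)\subseteq W_e(T|_{\ran(P)^\perp\cap\dom(T)})$, but it rests on two facts you should state explicitly: $T|_{\ran(P)}$ is bounded by the closed graph theorem (closed everywhere-defined on the closed subspace $\ran(P)$), hence $T_1$ is a \emph{bounded} selfadjoint operator on $\ran(P)$, and a bounded selfadjoint operator on an infinite-dimensional Hilbert space always has nonempty essential spectrum, so $W_e(T_1)=\emptyset$ really does force $\ran(P)$ finite-dimensional. (With this, the paper's apparent concern that $c$ could be nonzero in ii) is actually vacuous.) In iii) the paper again avoids any projection machinery: it passes to a subsequence where $\|u_n\|^2\to\alpha$, normalises $\widehat u_n,\widehat v_n$ when $\alpha\in(0,1)$, and reads off $\re\lm\in(-c/\alpha+W_e(T|_{\ran(P)}))\cap(-c/(1-\alpha)+W_e(T|_{\ran(P)^\perp\cap\dom(T)}))$, from which $a_e<b_e$ forces $c\neq 0$ and hence $\re\lm<a_e$ or $\re\lm>b_e$. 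Your argument instead builds a finite-rank spectral projection $Q$ of $T_1$ on $\ran(P)$ and a finite-dimensional $V$ on $\ran(P)^\perp$ via $W_{e1}=W_e$ (justified because the compression $T_2=(I-P)T(I-P)|_{\ran(P)^\perp\cap\dom(T)}$ is symmetric and densely defined, so $W(T_2)\neq\C$) to get uniform lower bounds modulo $o(1)$ terms, with the cross terms killed by $T=T^*$ and the strong vanishing of the finite-rank pieces. This is more hands-on and constructive; the paper's is softer and avoids invoking Theorem~\ref{thmequivdefforWe}. Both reach the same conclusion and both close the $w_e=W_e$ gap the same way via Proposition~\ref{prop.ess.num.range}~ii).
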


\begin{proof}
i)
Let $\lm \in W(BT,B)$.
There exists a normalised sequence $(x_n)_{n\in\N}\subset \dom(T)$ such that $\langle (BT-\lm B)x_n,x_n\rangle\to 0$.
Define $u_n:=Px_n$, $v_n:=(I-P)x_n$ for all $n\in\N$. 
Note that 
\begin{align*}
&\langle (BT-\lm B)x_n,x_n\rangle\\
&=-\langle (T-\re\,\lm)u_n,u_n\rangle-\langle (T-\re\,\lm)v_n,u_n\rangle+\langle (T-\re\, \lm)u_n,v_n\rangle\\
&\quad +\langle (T-\re\,\lm)v_n,v_n\rangle
-\I\,\im\,\lm\,\langle B x_n,x_n\rangle \\
&=-\langle (T-\re\,\lm)u_n,u_n\rangle+2\I\, \im \langle (T-\re\,\lm)u_n,v_n\rangle+\langle (T-\re\,\lm)v_n,v_n\rangle\\
&\quad -\I\,\im\,\lm\,\langle B x_n,x_n\rangle.
\end{align*}
Taking the real part on both sides of the latter equation, and using that $B$ is selfadjoint, yields $-\langle (T-\re\,\lm)u_n,u_n\rangle+\langle (T-\re\,\lm)v_n,v_n\rangle\to 0$.
Define the diagonal block operator matrix
$$\cA:={\rm diag}\,(-P(T-\re\,\lm), (I-P)(T-\re\,\lm)) \quad \text{in}\quad \ran(P)\oplus (\ran(P)^{\perp}\cap\dom(T)).$$
Then $\| (u_n,v_n)^t\|=1$ and  $\langle \cA (u_n,v_n)^t,(u_n,v_n)^t\rangle \to 0$.
Therefore
$$0\in \overline{W(\cA)}={\rm conv}\, \big(-(\overline{W(T|_{\ran(P)})}-\re\,\lm)\cup (\overline{W(T|_{\ran(P)^{\perp}\cap\dom(T)})}-\re\,\lm)\big).$$
Then it is easy to see that $\re\,\lm\in (-\infty,a]\,\dot\cup\, [b,\infty)$. In particular, $W(BT,B)\neq\C$.
The equalities $w_e(BT,B)=W_e(BT,B)$ and $\overline{w(BT,B)}=W(BT,B)$ follow from Propositions~\ref{prop.ess.num.range}~ii),~\ref{prop.num.range}~i) and Remark~\ref{rem.num.range}~iii). 

ii)
If $W_e(BT,B)\neq\emptyset$, let $\lm\in W_e(BT,B)$ and proceed as in i). Note that, in addition, we have $x_n\stackrel{w}{\to} 0$ and hence $u_n\stackrel{w}{\to}0$ and $v_n\stackrel{w}{\to}0$.
We obtain 
\begin{equation}\label{eq.sum.conv}
-(\langle Tu_n,u_n\rangle-\re\,\lm\|u_n\|^2)+\langle Tv_n,v_n\rangle-\re\,\lm\|v_n\|^2\tolong 0.
\end{equation} 
Since $\big(-(\langle Tu_n,u_n\rangle-\re\,\lm\|u_n\|^2)\big)_{n\in\N}$ and $\big(\langle Tv_n,v_n\rangle-\re\,\lm\|v_n\|^2\big)_{n\in\N}$ are both bounded from below by~\eqref{eq.decomp.semibdd}, 
the convergence in \eqref{eq.sum.conv} implies that  both sequences are bounded. Therefore there exist an infinite subset $I\subseteq\N$ and $c\in\R$ such that 
$$\langle Tu_n,u_n\rangle-\re\,\lm\|u_n\|^2\tolong c, \quad \langle Tv_n,v_n\rangle-\re\,\lm\|v_n\|^2\tolong c, \quad n\in I, \quad n\to\infty.$$
The assumption $W_e(T|_{\ran(P)})=\emptyset$ implies that $u_n\to 0$
and hence $\|v_n\|\to 1$ as $n\in I$, $n\to\infty$.
Then $\widehat v_n:=v_n/\|v_n\|$ satisfies 
$$\widehat v_n\stackrel{w}{\tolong}0, \quad \langle T\widehat v_n,\widehat v_n\rangle-\re\,\lm\tolong c, \quad n\in I, \quad n\to\infty.$$
Thus $\re\,\lm+c \in  W_e(T|_{\ran(P)^{\perp}\cap\dom(T)})$; in particular, $W_e(T|_{\ran(P)^{\perp}\cap\dom(T)})\neq \emptyset$.
Assume that $c\neq 0$.
Then there exists an infinite subset $\widehat I\subseteq I$ such that $u_n\neq 0$ for all $n\in\widehat I$. Moreover, we have
$$\frac{\langle T u_n,u_n\rangle}{\|u_n\|^2}\tolong \begin{cases} \infty, & c>0,\\ -\infty, & c<0,\end{cases}\qquad n\in \widehat I, \quad n\to\infty.$$
By the assumption~\eqref{eq.decomp.semibdd}, we conclude $c< 0$.
Hence $\re\,\lm\geq \min W_e(T|_{\ran(P)^{\perp}\cap\dom(T)})$.
So we proved in particular that $W_e(BT,B)\neq\C$. Now  Proposition~\ref{prop.ess.num.range}~ii) implies that $W_e(BT,B)=w_e(BT,B)$.

iii)
Let $\lm\in W_e(BT,B)$ and proceed as in ii).
There exist an infinite subset $J\subseteq I$ and $\alpha\in [0,1]$ such that
$$\|u_n\|^2\tolong \alpha, \quad \|v_n\|^2\tolong 1-\alpha, \quad n\in J, \quad n\to\infty.$$
If $\alpha=0$, the arguments in ii) imply $\re\,\lm\geq b_e$. Analogously $\alpha=1$  yields $\re\,\lm\leq a_e$. 
It is left to consider the case $\alpha\in (0,1)$.
Then, for $n\in J$ sufficiently large, $u_n\neq 0$ and $v_n\neq 0$ and hence, in the limit $n\to\infty$, $\widehat u_n:=u_n/\|u_n\|$, $\widehat v_n:=v_n/\|v_n\|$ satisfy
$$\widehat u_n\stackrel{w}{\tolong}0, \quad \widehat v_n\stackrel{w}{\tolong}0, \quad  \langle T\widehat u_n,\widehat u_n\rangle-\re\,\lm\tolong \frac{c}{\alpha}, \quad \langle T\widehat v_n,\widehat v_n\rangle -\re\,\lm\tolong \frac{c}{1-\alpha}.$$
Then $$\re\,\lm\in \left(-\frac{c}{\alpha}+W_e(T|_{\ran(P)})\right)\cap\left(-\frac{c}{1-\alpha}+W_e(T|_{\ran(P)^{\perp}\cap\dom(T)})\right).$$
By the hypothesis $a_e<b_e$, this is not possible if $c=0$. If however $c\neq 0$, then  $c/\alpha$ and $c/(1-\alpha)$ have the same sign, and so we obtain $\re\,\lm<  a_e$ (if $c> 0$) or $\re\,\lm>b_e$ (if $c<0$).

The equality $W_e(BT,B)=w_e(BT,B)$ follows analogously as in ii).
\end{proof}

In the next result $T$ may be non-selfadjoint, but we assume that it admits a diagonal block operator representation.

\begin{Theorem}\label{thm.2x2}
Let $H_1$, $H_2$ be two infinite-dimensional Hilbert spaces, and let $\cT={\rm diag}(T_1,T_2)$ in $H_1\oplus H_2$. 
Assume that 
$$a:=\sup\,\re\, W(T_1)<\infty, \quad  b:=\inf\, \re\,W(T_2)>-\infty.$$
Define $\cB:={\rm diag}(-I,I)$ in $H_1\oplus H_2$.

\begin{enumerate}
\item[\rm i)]
If $a<b$, then 
\begin{align*}
w_e(\cB\cT,\cB)&=W_e(\cB\cT,\cB)\subseteq \overline{w(\cB\cT,\cB)}=W(\cB\cT,\cB)\\
&\subseteq \{\lm\in\C:\,\re\,\lm\in (-\infty,a]\,\dot\cup\,[b,\infty)\}.
\end{align*}
If, in addition, $\cT$ is selfadjoint, then
\begin{align*}
w_e(\cB\cT,\cB)&=W_e(\cB\cT,\cB)\\
&={\rm conv}(\widehat\sigma_e(T_1))\backslash\{-\infty\}\,\dot\cup\, {\rm conv}(\widehat\sigma_e(T_2))\backslash\{\infty\},\\
\overline{w(\cB\cT,\cB)}&=W(\cB\cT,\cB)={\rm conv}\,\sigma(T_1)\,\dot\cup\, {\rm conv}\,\sigma(T_2).
\end{align*}
\item[\rm ii)]
Assume that at least one of $-T_1$, $T_2$ is sectorial.
If $W_e(T_1)=W_e(T_2)=\emptyset$, then $$w_e(\cB\cT,\cB)=W_e(\cB\cT,\cB)=\emptyset.$$
If $W_e(T_1)=\emptyset$ and $W_e(T_2)\neq \emptyset$, let $\gamma\in\C$ and $-\pi/2\leq \theta_-\leq \theta_+\leq \pi/2$ be such that
$$ W(-T_1-\gamma)\subseteq \{\lm\in\C:\,\arg(\lm)\in [\theta_-,\theta_+]\}.$$
Then
\begin{align*}
w_e(\cB\cT,\cB)&=W_e(\cB\cT,\cB)\\
&\subseteq \{\lm_1+\lm_2:\,\arg(\lm_1)\in [\theta_-,\theta_+],\,\lm_2\in W_e(T_2)\}.
\end{align*}
 If $W_e(T_1)\neq \emptyset$ and $W_e(T_2)\neq\emptyset$, define
$$a_e:=\max\,\re\, W_e(T_1)\leq a, \quad b_e:=\min\,\re\, W_e(T_2)\geq b.$$
If $a_e<b_e$, then
$$w_e(\cB\cT,\cB)=W_e(\cB\cT,\cB)\subseteq \big\{\lm\in\C:\,\re\,\lm\in (-\infty,a_e]\,\dot\cup\, [b_e,\infty)\big\}.$$

\item[\rm iii)]
Assume that $\cT=\mathcal U+\I \mathcal V$ with a selfadjoint $\mathcal U={\rm diag} (U_1,U_2)$ and symmetric $\mathcal V={\rm diag}(V_1,V_2)$ such that
$U_1\leq a<0<b\leq U_2$.
Let $\mathcal K$ be a block operator matrix in $H_1\oplus H_2$ with $\dom(\cT)\subseteq\dom(\mathcal K)$ such that  $(\cB\mathcal U)^{-\frac 1 2}\cB\mathcal K (\cB\mathcal U)^{-\frac 1 2}$ is compact.
Then 
$$W_e(\cB(\cT+\mathcal K),\cB)=w_e(\cB(\cT+\mathcal K),\cB)=w_e(\cB\cT,\cB)=W_e(\cB\cT,\cB).$$

\end{enumerate}
\end{Theorem}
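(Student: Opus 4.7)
The common thread across all three parts is the block-diagonal decomposition
$$\cB\cT - \lm\cB = \diag\bigl(-(T_1 - \lm I),\,T_2 - \lm I\bigr),$$
so for any $x = (x_1,x_2)$ with $\|x_1\|^2 + \|x_2\|^2 = 1$ one has $\langle(\cB\cT - \lm\cB)x,x\rangle = -\langle(T_1-\lm)x_1,x_1\rangle + \langle(T_2-\lm)x_2,x_2\rangle$, and the numerical range of the pencil is the convex hull of the two block numerical ranges. All three parts exploit this.

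For part (i), I would first prove the outer inclusion by taking real parts: if $\re\lm\in(a,b)$ then
$$\re\langle(\cB\cT-\lm\cB)x,x\rangle \ge (\re\lm - a)\|x_1\|^2 + (b-\re\lm)\|x_2\|^2 \ge \min(\re\lm-a,\,b-\re\lm) > 0,$$
so $0\notin\overline{W(\cB\cT-\lm\cB)}$ and $\lm\notin W(\cB\cT,\cB)$. Since $\cB\in L(H)$ and $W(\cB\cT,\cB)\neq\C$, the equalities $w_e = W_e$ and $\overline{w}=W$ (with closedness) then fall out of Proposition~\ref{prop.ess.num.range}~ii), Proposition~\ref{prop.num.range}~i) and Remark~\ref{rem.num.range}~iii). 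For the selfadjoint refinement, $\overline{W(T_i)} = \conv\sigma(T_i)$ reduces the condition $0 \in \overline{W(\cB\cT-\lm\cB)}$ to: $\exists\, \mu_i \in \conv\sigma(T_i)$, $t\in[0,1]$ with $t(\lm-\mu_1) + (1-t)(\mu_2-\lm)=0$; when $\mu_1\le a<b\le\mu_2$ and $t\in(0,1)$ the two summands have strictly opposite signs, forcing $t\in\{0,1\}$ and hence $\lm\in\conv\sigma(T_1)\,\dot\cup\,\conv\sigma(T_2)$. The $W_e$ version follows by inserting Theorem~\ref{Thm2.11} in place of $\overline{W(T_i)}$.

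Part (ii) is the technical heart. Given $\lm\in W_e(\cB\cT,\cB)$, take a weakly null normalized sequence $x_n=(u_n,v_n)$ with $\langle(\cB\cT-\lm\cB)x_n,x_n\rangle\to 0$, and along a subsequence set $\|u_n\|^2\to\alpha\in[0,1]$. When $\alpha\in(0,1)$ the renormalised $\hat u_n,\hat v_n$ are both weakly null and normalized, so $W_e(T_i)=\emptyset$ forces $|\langle T_i\hat\cdot,\hat\cdot\rangle|\to\infty$ for the index involved; the sectoriality hypothesis then pins $\langle -T_1 u_n,u_n\rangle$ inside a translated wedge $\|u_n\|^2\gamma + \|u_n\|^2\{\arg z\in[\theta_-,\theta_+]\}$. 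Combining with $\re\langle T_2 v_n,v_n\rangle\ge b\|v_n\|^2$ produces the stated wedge-plus-$W_e(T_2)$ inclusion in the second subcase, the contradiction $\re\langle(\cB\cT-\lm\cB)x_n,x_n\rangle\to+\infty$ in the first subcase (where $T_2$ is also sectorial if needed), and the disjoint strip $\{\re\lm\le a_e\}\cup\{\re\lm\ge b_e\}$ in the third subcase (mirroring Proposition~\ref{prop.decomp}~iii)). The cases $\alpha\in\{0,1\}$ are handled separately: the vanishing component contributes only a quantity whose real part is controlled below by the sectoriality vertex times $\|u_n\|^2\to 0$, so the surviving component must host a weakly null normalized sequence yielding an element of the relevant $W_e(T_i)$, exactly as in Proposition~\ref{prop.decomp}~ii), iii). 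The equalities $w_e=W_e$ in each subcase follow from Proposition~\ref{prop.ess.num.range}~ii) once one observes that the derived inclusion shows $W_e(\cB\cT,\cB)\neq\C$.

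Part (iii) is essentially an application of the perturbation machinery already in place. Setting $T:=\cB\cT$ and $S:=\cB\mathcal K$, the decomposition $\cB\cT = \cB\mathcal U + \I\cB\mathcal V$ has $\cB\mathcal U = \diag(-U_1,U_2) \ge \min(-a,b) > 0$ uniformly positive and $\cB\mathcal V = \diag(-V_1,V_2)$ symmetric, while the hypothesis is exactly the compactness of $(\cB\mathcal U)^{-1/2}\cB\mathcal K(\cB\mathcal U)^{-1/2}$. Uniform positivity of $\cB\mathcal U$ gives $\re W(\cB\cT)\ge\min(-a,b)>0$, so $0\notin W_e(\cB\cT)$, enabling Theorem~\ref{thm.pert.rel.comp}(b), which yields $w_e(\cB(\cT+\mathcal K),\cB) = w_e(\cB\cT,\cB)$. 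The right-hand identity $w_e(\cB\cT,\cB)=W_e(\cB\cT,\cB)$ is supplied by part (i) (applicable since $a<0<b$). For the leftmost identity, Theorem~\ref{thmSqrt} gives $W_e(\cB(\cT+\mathcal K))=W_e(\cB\cT)$, so $0$ remains outside $W_e(\cB(\cT+\mathcal K))$ and Proposition~\ref{prop.ess.num.range}~ii) delivers $w_e(\cB(\cT+\mathcal K),\cB)=W_e(\cB(\cT+\mathcal K),\cB)$. The main obstacle throughout is clearly part (ii): keeping track of how the sectoriality of $-T_1$ or $T_2$ interacts with the two-component mass split $\alpha$, and in particular handling the degenerate cases $\alpha\in\{0,1\}$ where one component's norm vanishes but its quadratic form need not.
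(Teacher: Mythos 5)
Your route through parts i) (first chain), ii) and iii) is essentially the paper's: the same block identity $\re \langle (\cB\cT-\lm \cB)x,x\rangle = - (\re\langle T_1 u,u\rangle -\re\,\lm \|u\|^2)+\re\langle T_2 v,v\rangle -\re\,\lm \|v\|^2$ gives the outer inclusion, the equalities $w_e=W_e$ and $\overline{w}=W$ come from Propositions~\ref{prop.ess.num.range}~ii), \ref{prop.num.range}~i) and Remark~\ref{rem.num.range}~iii) once $W(\cB\cT,\cB)\neq\C$ is known, part ii) is the mass-split singular-sequence analysis of Proposition~\ref{prop.decomp}~ii)--iii), and part iii) is Theorem~\ref{thm.pert.rel.comp}~(b) together with Theorem~\ref{thmSqrt} and Proposition~\ref{prop.ess.num.range}~ii). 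One caveat in ii): the hypothesis grants sectoriality of only \emph{one} of $-T_1$, $T_2$, so your parenthetical ``where $T_2$ is also sectorial if needed'' appeals to something you do not have; the argument still closes because once the sectorial block's form is bounded (its real part being trapped between the vertex and the convergent total), convergence of the imaginary part of $\langle\cB\cT x_n,x_n\rangle$ forces boundedness of the other block's form as well, but this needs to be said.

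The genuine gap is the selfadjoint refinement in i). You reduce $0\in\overline{W(\cB\cT-\lm\cB)}$ to $t(\lm-\mu_1)+(1-t)(\mu_2-\lm)=0$ with $\mu_i\in{\rm conv}\,\sigma(T_i)$ and assert that for $t\in(0,1)$ ``the two summands have strictly opposite signs, forcing $t\in\{0,1\}$''. This is backwards: two reals of strictly opposite signs are precisely what a convex combination \emph{can} annihilate with some $t\in(0,1)$. The summands have the same sign exactly when $\mu_1<\lm<\mu_2$, and that is the case in which no $t$ works --- which re-derives the outer inclusion, not the refinement. Carried out correctly, the condition $0\in{\rm conv}\big[(\lm-\sigma(T_1))\cup(\sigma(T_2)-\lm)\big]$ is equivalent to $\lm\le\sup\sigma(T_1)$ or $\lm\ge\inf\sigma(T_2)$, i.e.\ to $\lm\in(-\infty,\sup\sigma(T_1)]\cup[\inf\sigma(T_2),\infty)$, which coincides with ${\rm conv}\,\sigma(T_1)\,\dot\cup\,{\rm conv}\,\sigma(T_2)$ only when $T_1$ is unbounded below and $T_2$ unbounded above. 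Indeed, for $T_1=-I$, $T_2=I$ one has $\cB\cT=I$ and $\overline{w(\cB\cT,\cB)}=\overline{\{1/\langle \cB x,x\rangle\}}=(-\infty,-1]\cup[1,\infty)$, whereas ${\rm conv}\,\sigma(T_1)\cup{\rm conv}\,\sigma(T_2)=\{-1,1\}$. So your derivation does not establish the stated identity (and the same issue propagates to the $W_e$ line via Theorem~\ref{Thm2.11}); this step needs either the additional unboundedness hypotheses or the corrected right-hand sides $(-\infty,\sup\sigma(T_1)]\cup[\inf\sigma(T_2),\infty)$ and its $\sigma_e$-analogue.
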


\begin{proof}
i)
Note that for $x_n=(u_n,v_n)^t\in\dom(\cT)$ we have
$$\re \langle (\cB\cT-\lm \cB)x_n,x_n\rangle = - (\re\langle T_1 u_n,u_n\rangle -\re\,\lm \|u_n\|^2)+\re\langle T_2 v_n,v_n\rangle -\re\,\lm \|v_n\|^2.$$
Now the first claim follows in a similar way as in Proposition~\ref{prop.decomp}~i).

For a selfadjoint $\cT$ we have $w(\cB\cT,\cB)\subset\R$.
Let $\lm\in\R$. Then the assertion follows immediately from
\begin{align*}
\lm\in W(\cB\cT,\cB)\quad &\Longleftrightarrow\quad 0\in \overline{W(\cB\cT-\lm \cB)}={\rm conv}\,\sigma(\cB\cT-\lm \cB), \\
\lm\in W_e(\cB\cT, \cB)\quad &\Longleftrightarrow\quad 0\in W_e(\cB\cT-\lm \cB)={\rm conv}(\widehat\sigma_e(\cB\cT-\lm \cB))\backslash\{\infty\},
\end{align*}
where we used Theorem \ref{Thm2.11} in the last equality.

ii)
If $W_e(T_1)=W_e(T_2)=\emptyset$, we
 use that the sectoriality assumptions imply $W_e(\cB\cT)={\rm conv}(W_e(-T_1)\cup W_e(T_2))=\emptyset$; then the claim follows from Proposition~\ref{prop.We.empty}.

If $W_e(T_1)=\emptyset$ and $W_e(T_2)\neq \emptyset$, we proceed as in the proof of Proposition~\ref{prop.decomp}, part ii), see \eqref{eq.sum.conv}. Since the numerical ranges of both $-T_1,T_2$ have real parts bounded from below, and one of the operators  is sectorial, we obtain that both sequences
$\big(-(\langle Tu_n,u_n\rangle-\re\,\lm\|u_n\|^2)\big)_{n\in\N}$ and $\big(\langle Tv_n,v_n\rangle-\re\,\lm\|v_n\|^2\big)_{n\in\N}$ are bounded and hence admit a convergence subsequence. 
Now the claim follows from 
$$\|u_n\|\leq 1,\quad u_n\stackrel{w}{\tolong}0, \quad -\langle T_1 u_n,u_n\rangle \tolong \widetilde c\in\C \quad \Longrightarrow u_n\tolong 0, \quad \arg(\widetilde c)\in [\theta_-,\theta_+].$$

If $W_e(T_1)\neq \emptyset$ and $W_e(T_2)\neq \emptyset$, we proceed as in Proposition~\ref{prop.decomp}, part iii), using again the sectoriality assumption to prove that $\big(-(\langle Tu_n,u_n\rangle-\re\,\lm\|u_n\|^2)\big)_{n\in\N}$ and $\big(\langle Tv_n,v_n\rangle-\re\,\lm\|v_n\|^2\big)_{n\in\N}$ are bounded. 

iii)
The operator  $\cB\mathcal U$ is selfadjoint with $\cB\mathcal U \geq \min\{|a|,b\}>0$. 
Theorem~\ref{thm.pert.rel.comp}~(b) implies $w_e(\cB\cT,\cB)=w_e(\cB(\cT+\mathcal K),\cB)$. Since $0\notin W_e(\cB\cT)=W_e(\cB(\cT+\mathcal K))$ by Theorem \ref{thmSqrt}, the remaining identities follow from Proposition~\ref{prop.ess.num.range} ii).
\end{proof}

\subsection{Application to Schr\"odinger operators}

As an application of Theorem~\ref{thm.2x2}~iii), we study perturbed periodic Schr\"odinger operators.

Let $T=-\Delta+V_{\rm per}$ be a selfadjoint Schr\"odinger operator in $L^2(\R^d)$ with a real-valued  periodic potential $V_{\rm per}$.
Let $W$ be another function.
In~\cite[Theorem 2.3]{lewin-sere}, conditions on $V_{\rm per}$ and $W$ were established that guarantee that for a spectral gap $(a,b)\subset (0,\infty)$ with centre $\gamma=\frac{a+b}{2}$ and spectral projection $P_{\gamma}:=\chi_{(-\infty,\gamma]}(T)$,
no spectral pollution occurs in $(a,b)$ for a projection method $(A_n)_{n\in\N}$ where $A_n$ is the compression of $T+W$ to a subspace 
\beq\label{eq.decomp.spaces}
M_n^-\oplus M_n^+\subset \ran(P_{\gamma})\oplus (\ran(P_{\gamma})^{\perp}\cap\dom(T+W)).
\eeq
Now we consider $W$ that may be complex-valued. 

\begin{Theorem}
Define $B:=I-2P_{\gamma}=-P_{\gamma}+(I-P_{\gamma})$ and $T_{\gamma}:=T-\gamma$. We assume that $W$ is such that
$\dom(T)\subseteq\dom(W)=\{f\in L^2(\R^d):\,Wf\in L^2(\R^d)\}$ and
 $(BT_{\gamma})^{-\frac 1 2}BW (BT_{\gamma})^{-\frac 1 2}$ is compact.
Then  
\beq\label{eq.schroed.W}
W_e(B(T_{\gamma}+W),B)=W_e(BT_{\gamma},B)=(-\infty,a-\gamma]\,\dot\cup\, [b-\gamma,\infty).
\eeq
Hence no spectral pollution occurs in $(a,b)\cup \C\backslash\R$ if we compress $T+W$ to subspaces satisfying~\eqref{eq.decomp.spaces}.
\end{Theorem}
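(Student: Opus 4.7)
The plan is to apply Theorem~\ref{thm.2x2}~iii) to a block-diagonal realisation of $T_\gamma$ induced by the spectral projection $P_\gamma=\chi_{(-\infty,\gamma]}(T)$. Since $P_\gamma$ commutes with the selfadjoint $T$, setting $H_1:=\ran(P_\gamma)$ and $H_2:=\ran(I-P_\gamma)$ we obtain $T=T_1\oplus T_2$ with $\sigma(T_1)\subseteq(-\infty,a]$ and $\sigma(T_2)\subseteq[b,\infty)$; hence $T_\gamma=T_{1,\gamma}\oplus T_{2,\gamma}$ with $T_{1,\gamma}\leq a-\gamma<0<b-\gamma\leq T_{2,\gamma}$. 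In this decomposition $B=-I_{H_1}\oplus I_{H_2}$ coincides with the operator $\mathcal B$ of Theorem~\ref{thm.2x2}, and $BT_\gamma=-T_{1,\gamma}\oplus T_{2,\gamma}$ is uniformly positive, so $(BT_\gamma)^{1/2}$ is well-defined.

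I would then invoke Theorem~\ref{thm.2x2}~iii) with $\mathcal U:=T_\gamma$, $\mathcal V:=0$, $\mathcal K:=W$ regarded as a $2\times 2$ block matrix on $H_1\oplus H_2$: the standing assumption $\dom(T)\subseteq\dom(W)$ together with the compactness of $(BT_\gamma)^{-1/2}BW(BT_\gamma)^{-1/2}$ are precisely the hypotheses needed, and yield the first identity $W_e(B(T_\gamma+W),B)=W_e(BT_\gamma,B)$. The second identity in~\eqref{eq.schroed.W} follows from the selfadjoint case of Theorem~\ref{thm.2x2}~i), namely
\[
W_e(BT_\gamma,B)={\rm conv}(\widehat\sigma_e(T_{1,\gamma}))\setminus\{-\infty\}\,\dot\cup\,{\rm conv}(\widehat\sigma_e(T_{2,\gamma}))\setminus\{+\infty\},
\]
combined with the fact that, because $V_{\rm per}$ is periodic, the spectrum of $T$ is purely essential and consists of spectral bands: the upper part is unbounded above and produces $[b-\gamma,\infty)$, while the lower part contributes an interval terminating at $a-\gamma$.

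For the spectral-pollution claim, the key observation is that $B$, being diagonal with respect to $H_1\oplus H_2$, commutes with the orthogonal projection onto any subspace of the form $M_n^-\oplus M_n^+$ as in~\eqref{eq.decomp.spaces}. Hence compressing $T+W$ to $M_n^-\oplus M_n^+$ is equivalent, upon pre-multiplication by the (invertible on $M_n^-\oplus M_n^+$) operator $B$, to a Galerkin approximation of the pencil $\lambda\mapsto B(T+W)-\lambda B$. Theorem~\ref{thm.pencil.sigma}~ii) then confines every spurious eigenvalue to $W_e(B(T+W),B)=W_e(B(T_\gamma+W),B)+\gamma\subseteq(-\infty,a]\,\dot\cup\,[b,\infty)\subset\R$, excluding pollution in $(a,b)\cup\C\setminus\R$.

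The point I expect to be most delicate is identifying ${\rm conv}(\widehat\sigma_e(T_{1,\gamma}))\setminus\{-\infty\}$ with the ray $(-\infty,a-\gamma]$: because $T$ is semibounded from below, $T_{1,\gamma}$ is bounded and strictly speaking the convex hull is a bounded interval ending at $a-\gamma$. The equality in~\eqref{eq.schroed.W} should thus be read modulo this caveat; only the inclusion $\subseteq$ direction (which Theorem~\ref{thm.2x2}~i) does deliver in full generality for the lower part) is required for the spectral-pollution conclusion.
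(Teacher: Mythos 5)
Your proof follows essentially the same route as the paper: both invoke Theorem~\ref{thm.2x2}~iii) with $\mathcal U = T_\gamma$, $\mathcal V = 0$, $\mathcal K = W$ in the decomposition $H = \ran(P_\gamma)\oplus\ran(I-P_\gamma)$ to handle the perturbation, then Theorem~\ref{thm.2x2}~i) to identify $W_e(BT_\gamma,B)$, and finally Theorem~\ref{thm.pencil.sigma}~ii) (with the observation that the Galerkin eigenvalues of the pencil $\lambda\mapsto B(T+W-\lambda)$ on $M_n^-\oplus M_n^+$ coincide with those of the compression of $T+W$) for the no-pollution claim. Your caveat about ${\rm conv}(\widehat\sigma_e(T_{1,\gamma}))\setminus\{-\infty\}$ is a sharp observation: since $T=-\Delta+V_{\rm per}$ is bounded below, $T_{1,\gamma}$ is a bounded operator, so Theorem~\ref{thm.2x2}~i) actually yields a \emph{bounded} interval $[\inf\sigma_e(T)-\gamma,\,a-\gamma]$ rather than the ray $(-\infty,a-\gamma]$ stated in the theorem — a mild imprecision in the paper that, as you correctly note, is immaterial for the spectral-pollution conclusion because only the inclusion $\subseteq(-\infty,a-\gamma]\,\dot\cup\,[b-\gamma,\infty)$ is needed.
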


\begin{proof}
The identities in~\eqref{eq.schroed.W} follow from Theorem~\ref{thm.2x2}, claims~iii) and~i).
By Theorem~\ref{thm.pencil.sigma}~ii), a projection method of the pencil $\lm\mapsto B(T+W-\lm)$ does not pollute in $(a,b)\cup \C\backslash\R$. Note that the eigenvalues of the truncated pencil coincide with those of the operator $T+W$ compressed to a subspace in~\eqref{eq.decomp.spaces}.
\end{proof}

Next we consider Schr\"odinger operators with diverging potentials.
 
\begin{Theorem}\label{thm.schroedinger}
Consider the differential expression $\tau:=-\rd^2/\rd x^2+V$ with a potential $V:\R\to\C$ satisfying $V\in L_{\rm loc}^2(\R)$ and $$|V(x)|\tolong \infty, \quad |x|\to\infty.$$
For some $\theta\in [0,\pi/2)$ define the sector $\cS_{\theta}:=\{\lm\in\C:\,|\arg(\lm)|\leq\theta\}$.
Assume that there exist $-\infty<a< b<\infty$ and $\varphi_{\pm}\in (-\pi/2,\pi/2)$, $r> 0$ such that, for almost all $x\in\R$,
$$V(x)\in\begin{cases} \e^{\I\,\varphi_-}\cS_{\theta},&x\in(-\infty,a],\\ B_r(0), &x\in (a,b),\\ \e^{\I\,\varphi_+}\cS_{\theta}, &x\in [b,\infty).\end{cases}.$$
Let $T$ be the closure of the operator $T_0$, the minimal realisation of $\tau$ with $\dom(T_0):=C_0^{\infty}(\R)$.
Let $B$ be the bounded and continuous function
$$B(x):=\begin{cases}
\e^{-\I\,\varphi_-}, & x\in (-\infty,a],\\
\e^{-\I\,(t \varphi_-+(1-t)\varphi_+)}, & x\in (a,b),\,t=\frac{x-a}{b-a},\\
\e^{-\I\,\varphi_+}, & x\in [b,\infty).
\end{cases}$$
Then $W_e(BT)=\emptyset$ and thus $w_e(BT,B)=W_e(BT,B)=\emptyset$.
\end{Theorem}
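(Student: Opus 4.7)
The strategy is to establish $W_e(BT)=\emptyset$ and then invoke Proposition~\ref{prop.We.empty} with $A:=BT$ to deduce $w_e(BT,B)=W_e(BT,B)=\emptyset$: indeed, $B$ is a bounded multiplication operator, so $|\langle Bf,f\rangle|\leq \|B\|_\infty\|f\|^2$, showing that $B$ is $(BT)$-form bounded with relative form bound $\beta=0$. For $W_e(BT)=\emptyset$, I argue by contradiction: suppose $\lm\in W_e(BT)$, with associated singular sequence $(f_n)\subset\dom(T)$. Since $C_0^\infty(\R)=\dom(T_0)$ is a core of $T$, a standard graph-norm diagonalization (choosing the approximation accuracy at stage $n$ smaller than $(1+\|Tf_n\|)^{-1}/n$) allows one to replace $(f_n)$ by a singular sequence in $C_0^\infty(\R)$ while preserving $\|f_n\|=1$, $f_n\w 0$ and $\langle BTf_n,f_n\rangle\to\lm$. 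For such $f_n$, integration by parts gives
$$
\langle BTf_n,f_n\rangle=\int_\R B|f_n'|^2\,\rd x+\int_a^b B' f_n'\overline{f_n}\,\rd x+\int_\R BV|f_n|^2\,\rd x.
$$

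Three ingredients combine to bound the real part from below. First, $\arg B(x)$ takes values in $[-\max(|\varphi_-|,|\varphi_+|),\max(|\varphi_-|,|\varphi_+|)]\subset(-\pi/2,\pi/2)$, so $c_1:=\inf_{x\in\R}\re B(x)>0$. Second, by construction $B(x)V(x)\in\cS_\theta$ on $(-\infty,a]\cup[b,\infty)$, whence $\re(BV)\geq\cos\theta\,|V|$ there, while $|BV|\leq r$ on $(a,b)$. Third, $B'$ is bounded and supported in $[a,b]$, so Young's inequality yields $|\int_a^b B'f_n'\overline{f_n}\,\rd x|\leq\eps\|f_n'\|^2+C_\eps\|f_n\|_{L^2(a,b)}^2$. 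For any $\eps\in(0,c_1)$ and $R\geq\max(|a|,|b|)$ this gives
$$
\re\langle BTf_n,f_n\rangle\geq(c_1-\eps)\|f_n'\|^2+\cos\theta\int_{|x|\geq R}|V||f_n|^2\,\rd x-(C_\eps+r)\|f_n\|_{L^2(-R,R)}^2.
$$

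Since $\langle BTf_n,f_n\rangle$ is bounded and $\|f_n\|=1$, the bound forces $\|f_n'\|$ to be bounded; hence $(f_n)$ is bounded in $H^1(\R)$, and the Rellich-Kondrachov theorem together with $f_n\w 0$ gives $f_n\to 0$ in $L^2_{\loc}(\R)$. Consequently $\|f_n\|_{L^2(-R,R)}\to 0$ and $\int_{|x|\geq R}|f_n|^2\,\rd x\to 1$ for every fixed $R$. Given arbitrary $M>0$, the divergence $|V(x)|\to\infty$ lets us fix $R_M\geq\max(|a|,|b|)$ with $|V|\geq M$ for $|x|\geq R_M$, whence $\liminf_n\re\langle BTf_n,f_n\rangle\geq\cos\theta\cdot M$. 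Since $M$ is arbitrary, $\re\langle BTf_n,f_n\rangle\to\infty$, contradicting $\langle BTf_n,f_n\rangle\to\lm\in\C$. The main obstacle in this argument is the reduction to a singular sequence in the core $C_0^\infty(\R)$, since $\|Tf_n\|$ need not be bounded along the original sequence; the $n$-dependent choice of graph-norm approximation accuracy above is the standard remedy.
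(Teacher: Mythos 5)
Your proposal is correct and follows essentially the same route as the paper: integration by parts on $C_0^\infty(\R)$, sectoriality of the pieces (equivalently, your lower bound on $\re\langle BTf_n,f_n\rangle$), boundedness of $\|f_n'\|^2$ and of $\langle|V|f_n,f_n\rangle$, a Rellich-type compactness argument exploiting $|V|\to\infty$, and then Proposition~\ref{prop.We.empty}. The only cosmetic difference is that the paper invokes Rellich's criterion (\cite[Theorem XIII.65]{reedsimon}) to obtain a strongly $L^2(\R)$-convergent subsequence with nonzero limit, directly contradicting $f_n\stackrel{w}{\to}0$, whereas you use local Rellich--Kondrachov plus the divergence of $|V|$ to force $\re\langle BTf_n,f_n\rangle\to\infty$; these are two ways of packaging the same tightness argument. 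Your explicit graph-norm diagonalization to pass from $\dom(T)$ to the core $C_0^\infty(\R)$ is a correct and slightly more detailed treatment of a step the paper compresses into ``using that $T$ is the closure of $T_0$''.
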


\begin{proof}
First note that $|B|\equiv 1$ and $|\arg(B)|\leq\max\{|\varphi_-|,|\varphi_+|\}<\pi/2$.
Let $f\in\dom(T_0)=C_0^{\infty}(\R)$. Then, using integration by parts, we obtain
\begin{align*}
\langle BT f,f\rangle &=
\langle -B f''+BV f,f\rangle
=\langle B f',f'\rangle + \langle B' f',f\rangle +\langle B V f,f\rangle
=\sum_{i=1}^6 s_i[f]
\end{align*}
with
\begin{alignat*}{3}
s_1[f]&:=\e^{-\I\,\varphi_-}\int_{-\infty}^a |f'|^2 \,\rd x,\quad& 
s_2[f]&:=\int_{-\infty}^a \e^{-\I\,\varphi_-}  V |f|^2 \,\rd x,\\
s_3[f]&:= \e^{-\I\,\varphi_+}\int_b^{\infty} |f'|^2\,\rd x,\quad&
s_4[f]&:=\int_b^{\infty}  \e^{-\I\,\varphi_+} V |f|^2 \,\rd x , \\
s_5[f]&:=\int _a^b B V |f|^2\,\rd x, \quad&
s_6[f]&:=\int _a^b B |f'|^2 + B' f' \overline{f}\,\rd x.
\end{alignat*}
Notice that $|\arg(s_i[f])|\leq \max\{|\varphi_-|,|\varphi_+|,\theta\}<\pi/2$ for $i=1,\dots,4$, and $|s_5[f]|\leq r \|f\|^2$.
Moreover, for an arbitrary $\eps>0$,
$$\bigg| \int _a^b B' f' \overline{f}\,\rd x\bigg|
\leq \frac{\|B'\|_{\infty}}{2}\left(\eps\int_a^b |f'|^2\,\rd x + \frac{\|f\|^2}{\eps}\right).$$
By choosing $\eps>0$ sufficiently small, we see that $s_6$ is sectorial as well.

Now let $(f_n)_{n\in\N}\subset\dom(T_0)$ with $\|f_n\|=1$ and such that $(|\langle BT f_n,f_n\rangle|)_{n\in\N}$ is bounded.
Since $s_i$, $i=1,\dots,6$, are sectorial, we conclude that $(s_i[f_n])_{n\in\N}$, $i=1,\dots,6$, are bounded, and hence $(\|f_n'\|^2)_{n\in\N}$ and $(\langle |V|f_n,f_n\rangle)_{n\in\N}$ are bounded;
to prove the latter, we use 
\begin{align*}
\langle |V|f_n,f_n\rangle
&\leq r +\int_{-\infty}^a  \frac{1}{\cos\theta} \, \underbrace{\re\big(\e^{-\I\,\varphi_-}  V\big)}_{\geq 0} |f_n|^2 \,\rd x\\
&\quad+\int_b^{\infty}   \frac{1}{\cos\theta} \, \underbrace{\re\big(\e^{-\I\,\varphi_+} V\big)}_{\geq 0} |f_n|^2 \,\rd x\\
&= r + \frac{1}{\cos\theta} \, (\re\,s_2[f_n]+\re\,s_4[f_n]).
\end{align*}
By Rellich's criterion~\cite[Theorem XIII.65]{reedsimon}, there exists a subsequence of $(f_n)_{n\in\N}$ that is convergent in $L^2(\R)$.
Hence, if $f_n\w f$, then $f_n\to f$. Since the $f_n$ are normalised, we obtain $f\neq 0$. Therefore $W_e(BT_0)=\emptyset$, and using that
$T$ is the closure of $T_0$, we arrive at
 $W_e(BT)=\emptyset$. Then Proposition~\ref{prop.We.empty} yields $w_e(BT,B)=W_e(BT,B)=\emptyset$.
\end{proof}

\begin{Example}\label{ex.schroedinger}
One may verify that the assumptions of Theorem~\ref{thm.schroedinger} are satisfied for instance
\begin{enumerate}
\item[\rm i)]
for a $\mathcal{PT}$-symmetric Schr\"odinger operator with non-real potential 
$$V(x):=\sum_{k=0}^m\left( \alpha_k x^{2k}+\I\,\beta_k x^{2k+1}\right), \quad x\in\R,$$ where $\alpha_k,\,\beta_k\in\R$ for all $k$, and $\alpha_m>0$ or $\beta_m\neq 0$.

\item[\rm ii)]
for a Schr\"odinger operator with potential $V(x):=\e^{\I\,\vartheta_-}|x|$ on $(-\infty,0]$ and $V(x):=\e^{\I\,\vartheta_+}x$ on $[0,\infty)$ for angles $\vartheta_{\pm}\in (-\pi,\pi)$.
\end{enumerate}
\end{Example}

\begin{Remark}
Using the proof of Theorem~\ref{thm.schroedinger}, one can show that $B(T-\lm)$ has compact resolvent and hence its spectrum is discrete.
By $W_e(BT,B)=\emptyset$ and 
Theorem~\ref{thm.trunc}~ii),
 interval truncation of the pencil $\lm\mapsto B(T-\lm)$ with Dirichlet boundary conditions at the endpoints 
is spectrally exact.
Note that the eigenvalues of the approximations are the same as of the truncations of $T$ (since $B$ is bounded and boundedly invertible).
In this way one can prove spectral exactness
 for the interval truncation process of some Schr\"odinger operators that are not covered by~\cite{BST};
for instance in Example~\ref{ex.schroedinger}~ii),~\cite{BST} can only be applied for  angles $\vartheta_{\pm}\in (-3\pi/4,3\pi/4)$ since the negative real part of the potential needs to be bounded by the imaginary part, with relative bound $<1$.
\end{Remark}

\subsection{Abstract results for non-diagonal $2\times 2$ block operator matrices}

In the following we study a block operator matrix in $H_1\oplus H_2$,
$$\mathcal T:=\begin{pmatrix} A & B \\ C & D\end{pmatrix}, \quad \dom(\mathcal T):=(\dom(A)\cap\dom(C))\oplus(\dom(B)\cap\dom(D)).$$
Throughout this subsection, $A$, $B$, $C$, $D$ refers to the above entries of $\cT$.

If $\cT$ is closable, then Theorem~\ref{thm.intersection}~i) yields
\begin{equation}\label{eq.enclosure}
\begin{aligned}
\sigma_{\rm app}(\overline{\cT})
&=\sigma_{\rm app}(\cT)\subseteq
 \underset{a,d\in\C}{\bigcap}W({\rm diag}(a,d)\cT,{\rm diag}(a,d)),\\
\sigma_e(\overline{\cT})
&=\sigma_e(\cT)\subseteq 
\underset{a,d\in\C}{\bigcap} W_e({\rm diag}(a,d)\cT,{\rm diag}(a,d)).
\end{aligned}
\end{equation}
We compare this (in general non-convex) spectral enclosures for $\cT$  with the quadratic numerical range which was introduced in~\cite{Langer-Tretter-98} (see also~\cite{tretter}).
We use the notation
$$\cT_{x,y}:=\bmat \langle Ax,x\rangle & \langle By,x\rangle \\ \langle Cx,y\rangle & \langle T y,y\rangle\emat, \quad (x,y)^t\in\dom(\cT).$$

\begin{Theorem}\label{thm.2x2.intersec}
\begin{enumerate}
\item[\rm i)]
The quadratic numerical range \cite{Langer-Tretter-98} is contained in the above spectral enclosure,
\begin{align*}
W^2(\cT)&:=\underset{{(x,y)^t\in{\rm dom}(\cT)}\atop {\|x\|=\|y\|=1}}{\bigcup}\,\sigma(\cT_{x,y})
\subseteq\underset{a,d\in\C}{\bigcap}W({\rm diag}(a,d)\cT,{\rm diag}(a,d)).
\end{align*}

\item[\rm ii)]
For $\Lambda\subseteq\C^2$ let
$$\Omega\subseteq\C\backslash\underset{(a,d)^t\in\Lambda}{\bigcap} W({\rm diag}(a,d)\cT,{\rm diag}(a,d))$$
be a connected set with $\Omega\cap\rho(\cT)\neq\emptyset$.
Then $\Omega\subseteq\rho(\cT)$ and, for any $\lm\in\Omega$,
\[
\|(\cT-\lm)^{-1}\|
\leq \inf_{(a,d)^t\in\Lambda}\frac{\max\{|a|,|d|\}}{{\rm dist}\big(0,{\rm conv}\{a,d\}\big)\,{\rm dist}\big(\lm,W({\rm diag}(a,d)\cT,{\rm diag}(a,d)\big)\Big)}.
\]
\end{enumerate}
\end{Theorem}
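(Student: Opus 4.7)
The plan proceeds in two independent parts.

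Part (i) is a direct algebraic verification. Let $\lambda\in W^2(\mathcal T)$, so by definition there exist $(x,y)^t\in\dom(\mathcal T)$ with $\|x\|=\|y\|=1$ and $\lambda\in\sigma(\mathcal T_{x,y})$; since $\mathcal T_{x,y}$ is a $2\times 2$ matrix, its spectrum is pure point spectrum, so there is a non-zero vector $(\alpha,\beta)^t\in\C^2$ with $(\mathcal T_{x,y}-\lambda I)(\alpha,\beta)^t=0$. Fix $a,d\in\C$ and set $z:=(\alpha x,\beta y)^t\in\dom(\mathcal T)\setminus\{0\}$. Expanding $\langle\diag(a,d)(\mathcal T-\lambda)z,z\rangle$ produces the four terms $a|\alpha|^2\langle(A-\lambda)x,x\rangle$, $a\bar\alpha\beta\langle By,x\rangle$, $d\alpha\bar\beta\langle Cx,y\rangle$ and $d|\beta|^2\langle(D-\lambda)y,y\rangle$. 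Multiplying the first scalar equation of the eigenvalue system by $a\bar\alpha$ and the second by $d\bar\beta$ and summing yields exactly this same expression, which therefore vanishes. After normalising $z$ this shows $0\in W(\diag(a,d)(\mathcal T-\lambda))$, so $\lambda\in W(\diag(a,d)\mathcal T,\diag(a,d))$, as claimed.

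Part (ii) reduces cleanly to Theorem~\ref{thm.intersection}~ii) applied in the ambient Hilbert space $H_1\oplus H_2$ to the family $\Lambda':=\{\diag(a,d):(a,d)^t\in\Lambda\}\subset L(H_1\oplus H_2)\setminus\{0\}$. Since $\bigcap_{B\in\Lambda'}W(B\mathcal T,B)=\bigcap_{(a,d)^t\in\Lambda}W(\diag(a,d)\mathcal T,\diag(a,d))$, the hypotheses of Theorem~\ref{thm.intersection}~ii) are met verbatim, giving $\Omega\subseteq\rho(\mathcal T)$ together with the bound $\|(\mathcal T-\lambda)^{-1}\|\leq\inf_{(a,d)^t\in\Lambda}\|\diag(a,d)\|/\bigl(\dist(0,W(\diag(a,d)))\,\dist(\lambda,W(\diag(a,d)\mathcal T,\diag(a,d)))\bigr)$ for $\lambda\in\Omega$. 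The stated form now follows from the identifications $\|\diag(a,d)\|=\max\{|a|,|d|\}$ and $W(\diag(a,d))={\rm conv}\{a,d\}$; the latter is immediate from $\langle\diag(a,d)(u,v)^t,(u,v)^t\rangle=a\|u\|^2+d\|v\|^2$ evaluated on unit vectors in $H_1\oplus H_2$, both summands being non-trivial.

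The main difficulty is essentially notational: in (i) one must keep straight that $\diag(a,d)$ multiplies rows of $\mathcal T$, so $a$ attaches both to $\langle(A-\lambda)x,x\rangle$ and to the cross-term $\langle By,x\rangle$ (first row), while $d$ attaches to $\langle(D-\lambda)y,y\rangle$ and $\langle Cx,y\rangle$ (second row); these coefficients then match precisely those arising from the $(\alpha,\beta)$-components of the eigenvector equations for $\mathcal T_{x,y}$. Part (ii) contains no new analytical content, since the constancy of the deficiency index on the connected set $\Omega$ and the injectivity bound $\|(T-\lambda)x\|\geq\|B\|^{-1}|\langle B(T-\lambda)x,x\rangle|$ underlying the resolvent estimate are all inherited from Theorem~\ref{thm.intersection}~ii).
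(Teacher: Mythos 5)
Your proof is correct. Part~(ii) is essentially the paper's argument: apply Theorem~\ref{thm.intersection}~ii) to the family $\{\diag(a,d):(a,d)^t\in\Lambda\}\subset L(H_1\oplus H_2)\setminus\{0\}$ and then identify $\|\diag(a,d)\|=\max\{|a|,|d|\}$ and $W(\diag(a,d))={\rm conv}\{a,d\}$; this is exactly what the paper does, and your computations of the norm and numerical range are the two ingredients it also needs. For part~(i), however, you take a genuinely more elementary route. The paper invokes Theorem~\ref{thm.intersection}~i) on the finite matrix $\cT_{x,y}$ to obtain $\sigma(\cT_{x,y})=\bigcap_{a,b,c,d}W(\bmat a&b\\c&d\emat\cT_{x,y},\bmat a&b\\c&d\emat)\subseteq\bigcap_{a,d}W(\diag(a,d)\cT_{x,y},\diag(a,d))$, then uses closedness of the numerical range of a finite matrix together with the inclusion $W\big((\diag(a,d)(\cT-\lm))_{x,y}\big)\subseteq W\big(\diag(a,d)(\cT-\lm)\big)$ to lift the conclusion to the operator level. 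You instead pick an eigenvector $(\alpha,\beta)^t$ of $\cT_{x,y}$ for $\lambda$, form $z:=(\alpha x,\beta y)^t$, and show $\langle\diag(a,d)(\cT-\lambda)z,z\rangle=0$ by multiplying the two scalar eigenvalue equations by $a\bar\alpha$ and $d\bar\beta$ and summing; this bypasses Theorem~\ref{thm.intersection}~i) entirely. Your argument is more self-contained and transparent about which weights enter where (the whole content of the ``main difficulty'' you flag), whereas the paper's version, by routing through the abstract intersection formula, makes clear that one could intersect over all $2\times 2$ matrices and not merely diagonal ones; both versions reach the same conclusion via the same underlying algebraic identity, so the difference is one of packaging rather than substance. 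One small caveat: your argument for $W(\diag(a,d))={\rm conv}\{a,d\}$ implicitly assumes both $H_1$ and $H_2$ are nonzero, and the reduction to Theorem~\ref{thm.intersection}~ii) tacitly requires $(a,d)\neq(0,0)$; if $(0,0)\in\Lambda$ the intersection on the right-hand side equals $\C$ and the claim is vacuous, so nothing is lost, but it is worth noting.
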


\begin{proof}
i)  Let  $(x,y)^t\in\dom(\cT)$ with $\|x\|=\|y\|=1$. Then, by Theorem~\ref{thm.intersection}~i),
$$\sigma(\cT_{x,y})
= \underset{a,b,c,d\in\C}{\bigcap} W\left(\bmat a & b\\ c &d\emat \cT_{x,y},\bmat a & b \\ c & d\emat\right)
\subseteq  \underset{a,d\in\C}{\bigcap}\, W\big({\rm diag}(a,d)\cT_{x,y},{\rm diag}(a,d)\big).$$
Using that the numerical range of a finite matrix is closed and with 
$$W\left({\rm diag}(a,d)(\cT_{x,y}-\lm)\right)=W\left({\rm diag}(a,d)(\cT-\lm))_{x,y}\right)\subseteq W({\rm diag}(a,d)(\cT-\lm)),$$
 we obtain
\begin{align*}
\underset{{(x,y)^t\in{\rm dom}(\cT)}\atop {\|x\|=\|y\|=1}}{\bigcup}\,\sigma(\cT_{x,y})
&\subseteq \underset{a,d\in\C}{\bigcap}\,\underset{{(x,y)^t\in{\rm dom}(\cT)}\atop {\|x\|=\|y\|=1}}{\bigcup}\, W\big({\rm diag}(a,d)\cT_{x,y},{\rm diag}(a,d)\big)\\
&= \underset{a,d\in\C}{\bigcap}\,\underset{{(x,y)^t\in{\rm dom}(\cT)}\atop {\|x\|=\|y\|=1}}{\bigcup}\, \{\lm\in\C:\,0\in W(({\rm diag}(a,d)(\cT-\lm))_{x,y})\}\\
&\subseteq \underset{a,d\in\C}{\bigcap}\,\left\{\lm\in\C:\,0\in W({\rm diag}(a,d)(\cT-\lm))\right\},
\end{align*}
which implies the claim.

ii) The claim follows from Theorem~\ref{thm.intersection}~ii).
\end{proof}

\begin{Remark}\label{rem.2x2}
\begin{enumerate}
\item[\rm i)]
The inclusion in claim~i) may be strict. As in~\cite[Example~2.5.14]{tretter}, let $A=C=D=0$ and let $B$ be bijective (and hence closed) with dense domain $\dom(B)\subsetneqq H_2$. Then $\cT$ is off-diagonally dominant of order~$0$, it is closed and $\sigma_{\rm app}(\cT)=\C$. Hence \eqref{eq.enclosure} implies 
$$ \underset{a,d\in\C}{\bigcap}W({\rm diag}(a,d)\cT,{\rm diag}(a,d))=\C.$$
However, we have $W^2(\cT)=\{0\}$; in particular, $\overline{W^2(\cT)}$ does not contain $\sigma_{\rm app}(\cT)$. 
Hence \eqref{eq.enclosure} can be used to enclose the approximate point spectrum also for $2\times 2$ operator matrices that are not diagonally dominant of order $0$ or off-diagonally dominant of order $0$ with $B$, $C$ boundedly invertible as assumed in~\cite[Theorems~2.5.10,~2.5.12]{tretter}. An application to Dirac operators is given in Theorem~\ref{thm.dirac}.

\item[\rm ii)]
Assume that $\dim H_1\geq 2$ and $\dim H_2\geq 2$.
Then~\cite[Theorem~2.5.4]{tretter} and claim~i) yield $$W(A)\cup W(D)\subseteq W^2(\cT)\subseteq  \underset{a,d\in\C}{\bigcap}W({\rm diag}(a,d)\cT,{\rm diag}(a,d)).$$
\end{enumerate}
\end{Remark}

In the following result we assume that $A$ controls the off-diagonal entries but $\mathcal T$ need not be diagonally dominant.
The motivation is to establish spectral enclosures for the Stokes-type operator in Theorem~\ref{thm.stokes} below.
In contrast to the previous result, here the multiplier is not a constant diagonal matrix, hence  Remark~\ref{rem.2x2}~ii) does not apply and the enclosure need not contain $W(D)$.

\begin{Theorem}\label{thm.distances}
Assume that $\dom(A)\cap\dom(C)\subseteq\dom(B^*)$ and that $\mathcal T$ is closable.
Let $\Lambda\subseteq \C\times [-\pi,\pi)$ contain all $(\lm,\varphi)^t$ with $\lm\in\rho(D)\backslash\overline{W(A)}$ and $\varphi\in [-\pi,\pi)$ such that 
\begin{equation}\label{eq.numrange}
r_{\lm,\varphi}:=\inf\,\re\big(\e^{\I\varphi}W(A-\lm)\big)>0,
\end{equation}
and suppose there exist $a_{\lm,\varphi},b_{\lm,\varphi},c_{\lm,\varphi},d_{\lm,\varphi}\geq 0$ satisfying, for all $x\in\dom(A)\cap\dom(C)$,
\begin{equation}\label{eq.relbounds}
\begin{aligned}
\|Cx\|^2&\leq a_{\lm,\varphi}\|x\|^2+b_{\lm,\varphi}\re\big(\e^{\I\varphi}\langle (A-\lm)x,x\rangle\big), \\
\|B^*x\|^2&\leq c_{\lm,\varphi}\|x\|^2+d_{\lm,\varphi}\re\big(\e^{\I\varphi}\langle (A-\lm)x,x\rangle\big).
\end{aligned}
\end{equation}
For $(\lm,\varphi)^t\in\Lambda$ define the bounded multiplier
$$\mathcal B_{\lm,\varphi}:=\begin{pmatrix} I & 0 \\ 0 & \e^{-\I\varphi}\eps_{\lm,\varphi} (D-\lm)^{-1}\end{pmatrix}, \quad \eps_{\lm,\varphi}:=\frac{1}{\left(b_{\lm,\varphi}+\frac{a_{\lm,\varphi}}{r_{\lm,\varphi}}\right)\|(D-\lm)^{-1}\|^{2}}.$$
Then
\begin{align*}
&\sigma_{\rm app}(\overline{\mathcal T})\subseteq \underset{(\lm,\varphi)^t\in\Lambda}{\bigcap}W(\mathcal B_{\lm,\varphi}\mathcal T, \mathcal B_{\lm,\varphi})\\
&\subseteq\C\backslash\left\{\lm\in\rho(D)\backslash\overline{W(A)}:\,\inf_{\varphi\in[-\pi,\pi)\atop (\lm,\varphi)^t\in\Lambda}\left(b_{\lm,\varphi}+\frac{a_{\lm,\varphi}}{r_{\lm,\varphi}}\right)\left(d_{\lm,\varphi}+\frac{c_{\lm,\varphi}}{r_{\lm,\varphi}}\right)< \frac{1}{\|(D-\lm)^{-1}\|^{2}}\right\}.
\end{align*}
\end{Theorem}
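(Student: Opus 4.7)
The first inclusion reduces to Theorem~\ref{thm.intersection}~i) applied to the closed operator $\overline{\mathcal T}$: for each $(\lm,\varphi)^t\in\Lambda$, the multiplier $\mathcal B_{\lm,\varphi}$ lies in $L(H_1\oplus H_2)$ because $\lm\in\rho(D)$, and $\sigma_{\rm app}(\mathcal T)=\sigma_{\rm app}(\overline{\mathcal T})\subseteq W(\mathcal B_{\lm,\varphi}\overline{\mathcal T},\mathcal B_{\lm,\varphi})$. Density of $\dom(\mathcal T)$ in $\dom(\overline{\mathcal T})$ with respect to the graph norm shows that $W(\mathcal B_{\lm,\varphi}\overline{\mathcal T},\mathcal B_{\lm,\varphi})=W(\mathcal B_{\lm,\varphi}\mathcal T,\mathcal B_{\lm,\varphi})$; intersecting over $\Lambda$ gives the first containment.

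For the second, I fix $\lm\in\rho(D)\setminus\overline{W(A)}$ and $\varphi\in[-\pi,\pi)$ with $(\lm,\varphi)^t\in\Lambda$ satisfying the strict inequality, and abbreviate $p:=b_{\lm,\varphi}+a_{\lm,\varphi}/r_{\lm,\varphi}$, $q:=d_{\lm,\varphi}+c_{\lm,\varphi}/r_{\lm,\varphi}$, $\eps:=\eps_{\lm,\varphi}$, so that $\eps=1/(p\|(D-\lm)^{-1}\|^2)$ and the hypothesis reads $q<\eps$. The goal is to show $0\notin\overline{W(\mathcal B_{\lm,\varphi}(\mathcal T-\lm))}$. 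A direct matrix multiplication gives
\[
\mathcal B_{\lm,\varphi}(\mathcal T-\lm)=\bmat A-\lm & B \\ \e^{-\I\varphi}\eps(D-\lm)^{-1}C & \e^{-\I\varphi}\eps\emat,
\]
so for $(x,y)^t\in\dom(\mathcal T)$ with $\|x\|^2+\|y\|^2=1$ and $\alpha:=\re\bigl(\e^{\I\varphi}\langle(A-\lm)x,x\rangle\bigr)\geq r_{\lm,\varphi}\|x\|^2$,
\[
\re\bigl(\e^{\I\varphi}\langle\mathcal B_{\lm,\varphi}(\mathcal T-\lm)(x,y)^t,(x,y)^t\rangle\bigr)=\alpha+\eps\|y\|^2+\re\bigl(\e^{\I\varphi}\langle y,B^*x\rangle\bigr)+\re\bigl(\eps\langle(D-\lm)^{-1}Cx,y\rangle\bigr).
\]

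Combining \eqref{eq.relbounds} with $\|x\|^2\leq\alpha/r_{\lm,\varphi}$ yields $\|B^*x\|^2\leq q\alpha$ and $\|Cx\|^2\leq p\alpha$. Cauchy--Schwarz therefore bounds the two cross terms in absolute value by $\|y\|\sqrt{q\alpha}$ and $\eps\|(D-\lm)^{-1}\|\|y\|\sqrt{p\alpha}$ respectively. The algebraic heart of the argument is the identity $\eps\|(D-\lm)^{-1}\|\sqrt p=\sqrt\eps$, which is exactly the defining relation for $\eps$; this makes the second cross-term bound equal to $\sqrt\eps\|y\|\sqrt\alpha$, so the combined cross terms are bounded by $(\sqrt q+\sqrt\eps)\|y\|\sqrt\alpha$. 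Young's inequality with parameter $\mu\in(0,1)$ then gives
\[
\re\bigl(\e^{\I\varphi}\langle\mathcal B_{\lm,\varphi}(\mathcal T-\lm)(x,y)^t,(x,y)^t\rangle\bigr)\geq(1-\mu)\alpha+\Bigl(\eps-\tfrac{(\sqrt q+\sqrt\eps)^2}{4\mu}\Bigr)\|y\|^2.
\]

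Both coefficients can be made strictly positive for some admissible $\mu$ precisely when $(\sqrt q+\sqrt\eps)^2<4\eps$, equivalently $q<\eps$, which is exactly the strict inequality on $(\lm,\varphi)$. Choosing such a $\mu$ and using $\alpha\geq r_{\lm,\varphi}\|x\|^2$ yields a strictly positive lower bound independent of $(x,y)^t$ on the unit sphere, hence $0\notin\overline{W(\mathcal B_{\lm,\varphi}(\mathcal T-\lm))}$ and $\lm\notin W(\mathcal B_{\lm,\varphi}\mathcal T,\mathcal B_{\lm,\varphi})$, as required. The main obstacle is the careful bookkeeping that identifies $pq\|(D-\lm)^{-1}\|^2<1$ as the precise threshold for the Young step to succeed; once the normalisation $\eps\|(D-\lm)^{-1}\|\sqrt p=\sqrt\eps$ is isolated, the off-diagonal contributions enter the estimate symmetrically through $\sqrt q+\sqrt\eps$ and the remainder is a routine quadratic analysis.
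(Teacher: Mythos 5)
Your argument is correct and reaches the theorem's threshold exactly. The overall strategy matches the paper's: the first inclusion follows from Theorem~\ref{thm.intersection}~i) (and you are right that density of $\dom(\cT)$ in $\dom(\overline{\cT})$ with respect to the graph norm, together with boundedness of $\mathcal B_{\lm,\varphi}$, identifies $W(\mathcal B_{\lm,\varphi}\cT,\mathcal B_{\lm,\varphi})$ with $W(\mathcal B_{\lm,\varphi}\overline{\cT},\mathcal B_{\lm,\varphi})$, a point the paper leaves implicit), while the second inclusion is a sectoriality-type lower bound for $\re\bigl(\e^{\I\varphi}\langle\mathcal B_{\lm,\varphi}(\cT-\lm)(x,y)^t,(x,y)^t\rangle\bigr)$. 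Where you depart is in the execution of the Young step. The paper fixes $\beta=1/(2(b+a/r))$ by fiat, picks $\alpha$ in an interval whose nonemptiness encodes the hypothesis, and bounds the two cross terms against $\|B^*x\|^2$ and $\|Cx\|^2$ with separate parameters $\alpha,\beta$ before substituting the relative bounds. You instead substitute the relative bounds first (giving $\|Cx\|^2\le p\alpha$ and $\|B^*x\|^2\le q\alpha$ with $\alpha:=\re(\e^{\I\varphi}\langle(A-\lm)x,x\rangle)$), isolate the normalising identity $\eps\|(D-\lm)^{-1}\|\sqrt p=\sqrt\eps$ built into the definition of $\eps_{\lm,\varphi}$, and thereby combine both cross terms into a single quantity $(\sqrt q+\sqrt\eps)\|y\|\sqrt\alpha$, so that one Young parameter $\mu$ suffices. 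The existence condition $(\sqrt q+\sqrt\eps)^2<4\eps$ then collapses transparently to $q<\eps$, i.e.\ $pq\|(D-\lm)^{-1}\|^2<1$, which is precisely the stated threshold. This makes manifest \emph{why} $\eps_{\lm,\varphi}$ is chosen as it is and removes the opacity of the paper's intermediate choice of $\alpha$; the two proofs are otherwise equivalent in strength.
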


\begin{proof}
Let $(\lm,\varphi)^t\in\Lambda$.
We show that if 
\begin{equation}\label{eq.distances}
\left(b_{\lm,\varphi}+\frac{a_{\lm,\varphi}}{r_{\lm,\varphi}}\right)\left(d_{\lm,\varphi}+\frac{c_{\lm,\varphi}}{r_{\lm,\varphi}}\right)< \frac{1}{\|(D-\lm)^{-1}\|^{2}},
\end{equation}
then  $\lm\notin W(\mathcal B_{\lm,\varphi}\mathcal T, \mathcal B_{\lm,\varphi})$; then the claim follows using Theorem~\ref{thm.intersection}~i).

Since $\lm$ and $\varphi$ are fixed, in the following we drop the indices and simply write $r,a,b,c,d,\eps,\mathcal B$.
First we carefully choose two constants $\alpha,\beta$ and derive some preliminary estimates that will be used later on.
Define $$\beta:=
\frac{1}{2\left(b+\frac{a}{r}\right)}>0.$$
Then, using the inequality~\eqref{eq.distances}, we obtain
\begin{align*}
\frac{r-\beta\left(rb+a\right)}{rd+c}
&=\frac{1-\beta\left(b+\frac{a}{r}\right)}{d+\frac{c}{r}}
=\frac{1}{2\left(d+\frac{c}{r}\right)}
>\frac{1}{2\eps}
=\frac{1}{4\eps-\frac{\eps^2\|(D-\lm)^{-1}\|^2}{\beta}}.
\end{align*}
Choose $\alpha$ strictly in between the left hand side and the right hand side; note that $\alpha>0$.
Then we arrive at
\begin{equation}\label{eq.st}
\begin{aligned}
s&:=r(1-\alpha d-\beta b)-(\alpha c+\beta a)\\
&=r-\beta(rb+a)-\alpha(rd+c)>0, \\
t&:=\eps-\frac{1}{4\alpha}-\frac{\eps^2\|(D-\lm)^{-1}\|^2}{4\beta}
=\frac{1}{4}\left(4\eps-\frac{\eps^2\|(D-\lm)^{-1}\|^2}{\beta}-\frac{1}{\alpha}\right)>0.
\end{aligned}
\end{equation}
Note that, in particular, $1-\alpha d-\beta b>0$.

Now let $(x,y)^t\in\dom(\mathcal T)$. The relative boundedness assumption~\eqref{eq.relbounds} and the numerical range estimate~\eqref{eq.numrange} yield
\begin{align*}
&\re\left(\e^{\I\varphi}\langle \mathcal B(\mathcal T-\lm)(x,y)^t,(x,y)^t\rangle\right)\\
&=\re\left(\e^{\I\varphi}\langle (A-\lm)x,x\rangle+\e^{\I\varphi}\overline{\langle B^*x,y\rangle}+\eps\langle (D-\lm)^{-1}Cx,y\rangle+\eps\|y\|^2\right)\\
&\geq \re\left(\e^{\I\varphi}\langle (A-\lm)x,x\rangle\right)-\|B^*x\|\|y\|-\eps\|(D-\lm)^{-1}\| \|Cx\| \|y\|+\eps\|y\|^2\\
&\geq \re\left(\e^{\I\varphi}\langle (A-\lm)x,x\rangle\right)-\alpha \|B^*x\|^2-\beta\|Cx\|^2\\
&\quad +\left(\eps-\frac{1}{4\alpha}-\frac{\eps^2\|(D-\lm)^{-1}\|^2}{4\beta}\right)\|y\|^2\\
&\geq \re\left(\e^{\I\varphi}\langle (A-\lm)x,x\rangle\right)(1-\alpha d-\beta b)-(\alpha c+\beta a)\|x\|^2\\
&\quad +\left(\eps-\frac{1}{4\alpha}-\frac{\eps^2\|(D-\lm)^{-1}\|^2}{4\beta}\right)\|y\|^2\\
&\geq \big(r(1-\alpha d-\beta b)-(\alpha c+\beta a)\big)\|x\|^2+\left(\eps-\frac{1}{4\alpha}-\frac{\eps^2\|(D-\lm)^{-1}\|^2}{4\beta}\right)\|y\|^2\\
&=s\|x\|^2+t\|y\|^2.
\end{align*}
Since $s>0$ and $t>0$ by~\eqref{eq.st}, we arrive at $0\notin\overline{W(\mathcal B(\mathcal T-\lm))}$ and hence $\lm\notin W(\mathcal B\mathcal T, \mathcal B)$.
\end{proof}

In the next result we assume that $B$ is bounded and $W_e(A)=\emptyset$ and we take the intersection of the essential numerical ranges. The motivation is to study $\cT$ where $D$ is (the operator of multiplication with) a function and its resolvent therefore easily computable, and multiplication with the operator $\cB_{\lm}$ below commutes with a domain truncation process of $\cT$ as in Theorem~\ref{thm.trunc}. For an application to Hain-L\"ust-type operators see Theorem~\ref{thm.hain} below.

\begin{Theorem}\label{thm.intersection.We}
Let  $A$ be sectorial with sectoriality vertex $0$ and with $W_e(A)=\emptyset$.
Assume that $B$ is bounded and there exist $a,b> 0$ such that $$\|Cf\|^2\leq a\|f\|^2+b\, \re\,\langle Af,f\rangle, \quad f\in\dom(A)\cap\dom(C).$$
For $\lm\in\rho(D)$ define the bounded operator
$$\mathcal B_{\lm}:=\begin{pmatrix} I & 0 \\ 0 & \eps (D-\lm)^{-1} \end{pmatrix}, \quad \eps:=\frac{1}{b\|(D-\lm)^{-1}\|^2}.$$
Then $$\sigma_e(\cT)\subseteq \bigcap_{\lm\in\rho(D)}W_e(\mathcal B_{\lm}\mathcal T,\mathcal B_{\lm})\subseteq\sigma(D).$$
\end{Theorem}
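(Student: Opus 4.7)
The strategy is to prove the two inclusions separately. The first inclusion $\sigma_e(\mathcal T)\subseteq\bigcap_{\mu\in\rho(D)}W_e(\mathcal B_\mu\mathcal T,\mathcal B_\mu)$ is immediate from the singular-sequence definition of $W_e$: given $\lambda\in\sigma_e(\mathcal T)$ with singular Weyl sequence $z_n\in\dom(\mathcal T)$, $\|z_n\|=1$, $z_n\stackrel{w}{\to}0$, $\|(\mathcal T-\lambda)z_n\|\to 0$, for each fixed $\mu\in\rho(D)$ the boundedness of $\mathcal B_\mu$ gives
\[
|\langle(\mathcal B_\mu\mathcal T-\lambda\mathcal B_\mu)z_n,z_n\rangle|\leq\|\mathcal B_\mu\|\,\|(\mathcal T-\lambda)z_n\|\to 0,
\]
so $0\in W_e(\mathcal B_\mu(\mathcal T-\lambda))$, i.e.\ $\lambda\in W_e(\mathcal B_\mu\mathcal T,\mathcal B_\mu)$.

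For the second inclusion I will fix $\lambda\in\rho(D)$ and show $\lambda\notin W_e(\mathcal B_\lambda\mathcal T,\mathcal B_\lambda)$; taking $\mu=\lambda$ in the intersection then completes the proof. Writing $R_\lambda:=(D-\lambda)^{-1}$, a direct block computation gives
\[
\mathcal B_\lambda(\mathcal T-\lambda)=\begin{pmatrix}A-\lambda & B\\ \eps R_\lambda C & \eps I\end{pmatrix}.
\]
For $z=(x,y)^t\in\dom(\mathcal T)$ I estimate $\re\langle\mathcal B_\lambda(\mathcal T-\lambda)z,z\rangle$ from below by applying Young's inequality to $|\re\langle By,x\rangle|$ with weight $\eta>0$ and to $\eps\|R_\lambda\|\,\|Cx\|\,\|y\|$ with weight $k>0$, then using the hypothesis $\|Cx\|^2\leq a\|x\|^2+b\,\re\langle Ax,x\rangle$ together with the defining identity $\eps^2\|R_\lambda\|^2=\eps/b$. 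With the choices $k=\eps$ and $\eta=2\|B\|^2/\eps$, and invoking $\re\langle Ax,x\rangle\geq 0$ from the sectoriality of $A$ with vertex $0$, this yields
\begin{equation*}
\re\langle\mathcal B_\lambda(\mathcal T-\lambda)z,z\rangle\;\geq\;\tfrac12\re\langle Ax,x\rangle-M\|x\|^2+\tfrac{\eps}{4}\|y\|^2,
\end{equation*}
where $M:=\re\lambda+\|B\|^2/\eps+a/(2b)<\infty$.

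Now assume, for contradiction, that $0\in W_e(\mathcal B_\lambda(\mathcal T-\lambda))$ with $z_n=(x_n,y_n)^t$, $\|z_n\|=1$, $z_n\stackrel{w}{\to}0$ and $\langle\mathcal B_\lambda(\mathcal T-\lambda)z_n,z_n\rangle\to 0$. Passing to a subsequence I may assume $\|x_n\|\to\xi\in[0,1]$ and $\|y_n\|\to\sqrt{1-\xi^2}$. If $\xi=0$ the lower bound together with $\re\langle Ax_n,x_n\rangle\geq 0$ forces $\liminf\re\langle\mathcal B_\lambda(\mathcal T-\lambda)z_n,z_n\rangle\geq\eps/4>0$, a contradiction. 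If $\xi>0$, the normalised vectors $\tilde x_n:=x_n/\|x_n\|$ satisfy $\|\tilde x_n\|=1$ and $\tilde x_n\stackrel{w}{\to}0$; since $W_e(A)=\emptyset$, we must have $|\langle A\tilde x_n,\tilde x_n\rangle|\to\infty$ (otherwise a bounded subsubsequence would, by Bolzano--Weierstrass, produce a finite accumulation point of $\langle A\tilde x_n,\tilde x_n\rangle$, hence an element of $W_e(A)$), and the sector containment then gives $\re\langle A\tilde x_n,\tilde x_n\rangle\geq\cos\theta\,|\langle A\tilde x_n,\tilde x_n\rangle|\to\infty$. Consequently $\re\langle Ax_n,x_n\rangle=\|x_n\|^2\re\langle A\tilde x_n,\tilde x_n\rangle\to\infty$ (using $\|x_n\|^2\geq\xi^2/4$ eventually), and the lower bound diverges to $+\infty$, again contradicting the hypothesis.

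The main technical point is the choice $\eps=1/(b\|R_\lambda\|^2)$: this is precisely the tuning that produces the identity $\eps^2\|R_\lambda\|^2=\eps/b$, so that after Young's inequality with $k=\eps$ half of $\re\langle Ax,x\rangle$ survives while one still retains a strictly positive $\|y\|^2$-coefficient (via $\eta$). The flexibility of the two Young parameters alone, without this coupling between $\eps$, $b$ and $\|R_\lambda\|$, would force one of these two coefficients to degenerate and the argument to fail.
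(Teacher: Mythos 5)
Your proof is correct and follows essentially the same strategy as the paper: cite the general identity $\sigma_e(\mathcal T)=\bigcap_{B\in L(H)}W_e(B\mathcal T,B)$ (or, equivalently, the direct singular-sequence estimate you give) for the first inclusion, and for the second fix $\lambda\in\rho(D)$, take the real part of $\langle\mathcal B_\lambda(\mathcal T-\lambda)z,z\rangle$, apply Young's inequality twice, absorb $\|Cx\|^2$ into $\re\langle Ax,x\rangle$ via the hypothesis, and derive a contradiction from $W_e(A)=\emptyset$ plus sectoriality. Your Young parameters $\eta=2\|B\|^2/\eps$, $k=\eps$ are just the paper's $\alpha=\eps/(4\|B\|^2)$, $\beta=1/(2b)$ in a different normalization, and your resulting lower bound $\tfrac12\re\langle Ax,x\rangle-M\|x\|^2+\tfrac{\eps}4\|y\|^2$ is (a harmless rewriting of) the paper's estimate, and even slightly tidier since it avoids dividing by $\|x\|^2$ and hence needs no separate case $x=0$.
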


\begin{proof} The first inclusion is immediate from (\ref{eq.We.pencil.spec}). 
Now let $\lm\in\rho(D)$. Take $(f,g)^t\in\dom(\mathcal T)$ with $\|f\|^2+\|g\|^2=1$.
We calculate
$$\re\,\langle \mathcal B_{\lm}(\mathcal T-\lm)(f,g)^t,(f,g)^t\rangle\\
=\re\,\langle (A-\lm)f,f\rangle +\re\,\langle Bg,f\rangle + \eps \re\, \langle (D-\lm)^{-1} Cf,g\rangle+\eps \|g\|^2.$$
Note that if $f=0$, then $\|g\|=1$ and $\re\,\langle \mathcal B_{\lm}(\mathcal T-\lm)(f,g)^t,(f,g)^t\rangle=\eps$.
 If $f\neq 0$, then, for any $\alpha,\beta>0$,
\begin{align*}
&\re\,\langle \mathcal B_{\lm}(\mathcal T-\lm)(f,g)^t,(f,g)^t\rangle\\
&\geq \re\langle Af,f\rangle -|\lm|\|f\|^2-\|B\|\|g\|\|f\|-\eps \|(D-\lm)^{-1}\|\|Cf\|\|g\|+\eps\|g\|^2\\
&\geq \re\langle Af,f\rangle -|\lm|\|f\|^2-\alpha \|B\|^2 \|g\|^2-\frac{1}{4\alpha}\|f\|^2-\beta\|Cf\|^2\\
&\quad-\frac{\eps^2}{4\beta} \|(D-\lm)^{-1}\|^2\|g\|^2+\eps\|g\|^2\\
&\geq \left((1-\beta b)\,\frac{\re\langle Af,f\rangle}{\|f \|^2}-|\lm|-\frac{1}{4\alpha}-\beta a\right)\|f\|^2\\
&\quad+\left(\eps -\alpha \|B\|^2-\frac{\eps^2}{4\beta} \|(D-\lm)^{-1}\|^2 \right)\|g\|^2.
\end{align*}
By setting $\beta=1/(2b)$ and $\alpha=\eps/(4\|B\|^2)$ (if $B\neq 0$ and $\alpha>0$ arbitrary otherwise) and using $ \|(D-\lm)^{-1}\|^2=1/(b\eps)$, we obtain
\begin{equation}\label{eq.estimatereal}
\begin{aligned}
&\re\,\langle \mathcal B_{\lm}(\mathcal T-\lm)(f,g)^t,(f,g)^t\rangle\\
&\geq \left(\frac{1}{2}\,\frac{\re\langle Af,f\rangle}{\|f \|^2}-|\lm|-\frac{1}{4\alpha}-\frac{a}{2b}\right)\|f\|^2
+\frac{\eps}{4}\|g\|^2.
\end{aligned}
\end{equation}

Now assume that there exist  $(f_n,g_n)^t\in\dom(\mathcal T)$, $n\in\N$, with $\|f_n\|^2+\|g_n\|^2=1$, $f_n\stackrel{w}{\to}0$, $g_n\stackrel{w}{\to}0$ and
\beq\label{eq.fngn}\langle \mathcal B_{\lm}(\mathcal T-\lm)(f_n,g_n)^t,(f_n,g_n)^t\rangle\tolong 0, \quad n\to\infty.\eeq
The above estimates and $\re\,W(A)\geq 0$ by the sectoriality of $A$ imply that there exist $\delta>0$ and $n_0\in\N$ such that $\|f_n\|\geq \delta$, $n\geq n_0$;
otherwise there would exist a subsequence on which $\|f_n\|\to 0$ and thus 
$$\limsup_{n\to\infty}\re\,\langle  \mathcal B_{\lm}(\mathcal T-\lm)(f_n,g_n)^t,(f_n,g_n)^t\rangle\geq \eps/4,$$
 a contradiction to~\eqref{eq.fngn}.
Since $\|f_n\|\geq \delta$, $n\geq n_0$, the normalised elements $f_n/\|f_n\|$ satisfy  $f_n/\|f_n\|\stackrel{w}{\to}0$.
The assumptions on $A$ imply that  $\re\langle Af_n,f_n\rangle/{\|f_n \|^2}\to\infty$. But then, using the estimate~\eqref{eq.estimatereal},
we arrive at the contradiction  $\re\,\langle  \mathcal B_{\lm}(\mathcal T-\lm)(f_n,g_n)^t,(f_n,g_n)^t\rangle\to \infty$.
Hence no such singular sequence  $((f_n,g_n)^t)_{n\in\N}\subset\dom(\mathcal T)$ exists, which proves $\lm\notin W_e(\mathcal B_{\lm}\mathcal T,\mathcal B_{\lm}).$
\end{proof}

\subsection{Application to Dirac operators, Stokes-type operators and Hain-L\"ust-type operators}

First we study Dirac operators in upper/lower spinor basis (compare~\cite[Section 2.3.2]{lewin-sere}).
In $L^2(\R^3,\C^2)\oplus L^2(\R^3,\C^2)$ consider
$$\cT:=\begin{pmatrix} I+V & \sigma\cdot (-\I\nabla) \\ \sigma\cdot(-\I\nabla) & -I+V \end{pmatrix}, 
\quad \dom(\cT):=W^{1,2}(\R^3,\C^2)\oplus W^{1,2}(\R^3,\C^2),$$
where $\sigma=(\sigma_1,\sigma_2,\sigma_3)^t$ and $\sigma_i\in\C^{2\times 2}$, $i=1,2,3$, are the Pauli matrices
and $V:\R^3\to\C$ is a (scalar) potential (real-valued in~\cite{lewin-sere}).
If we set $V\equiv 0$, the operator $\cT$ is the free Dirac operator $\cT_0$.
It is well known that $\cT_0$ is selfadjoint with $\sigma(\cT_0)=\sigma_e(\cT_0)=(-\infty,-1]\cup [1,\infty)$.
For a bounded potential  $V$, a Neumann series argument yields
$$\sigma(T)=\sigma(\cT_0+V)\subseteq \{\lm\in\C:\,{\rm dist}(\lm,\sigma(\cT_0))\leq \|V\|_{\infty}\}.$$
We want to improve this bound by taking into account the shape of ${\rm essran}(V)$.
To this end,
 we  study the pencil $\lm\mapsto \cB_{\theta}(\cT-\lm)$ where $\theta\in (-\pi/2,\pi/2)$ and $$\cB_{\theta}:={\rm diag}(\e^{-\I\,\theta},\e^{\I\,\theta})\quad\text{in}\quad L^2(\R^3,\C^2)\oplus L^2(\R^3,\C^2).$$
For $z\in\C$ and $-\pi\leq \theta_1<\theta_2\leq\pi$ denote the open sector 
$\mathcal S_{\theta_1,\theta_2}(z):=\{z+w:\,w\in\C\backslash\{0\},\,\arg(w)\in (\theta_1,\theta_2)\}$.

\begin{Theorem}\label{thm.dirac}
Let $V\in L^{\infty}(\R^3)$. 
Define 
\begin{align*}
\Sigma^- :=(-\infty,-1]+{\rm conv}({\rm essran}(V)),  & \;\;\;
\Sigma^+:=[1,\infty)+{\rm conv}({\rm essran}(V)),\\
  \Sigma:= \Sigma^-\, \cup \, \Sigma^+. &
\end{align*}
\begin{enumerate}
\item[\rm i)]
Let $z\in\C$ be such that there exists $\varphi\in [0,\frac{\pi}{2})$ for which either the sector
$\mathcal S:= \mathcal S_{\varphi,\pi-\varphi}(z)$ or $\mathcal S:= \mathcal S_{-\pi+\varphi,-\varphi}(z)$ satisfies the following:
\begin{quote}
$\mathcal S\subset \C\setminus\Sigma$, with  left boundary tangential to $\partial \Sigma^-$ and 
right boundary tangential to $\partial\Sigma^+$;
\end{quote}
let $\mathcal B:=\mathcal B_{\varphi}$ or $\mathcal B:=\mathcal B_{-\varphi}$, respectively.
Then $ \mathcal S \subseteq \big(\C\backslash W(\cB\cT, \cB\big)\cap \rho(\cT)$ and
$$\|(\cT-\lm)^{-1}\|\leq \frac{1}{\cos(\varphi)\,{\rm dist}(\lm,\partial \mathcal S)}, \quad\lm\in\mathcal S.$$
\item[\rm ii)] Let $\mathcal F$ be the set of all sectors $\mathcal S$ having the properties in part {\rm i)}. Let
\[ \widetilde{\Sigma} := \C\setminus\bigcup_{\mathcal S\in{\mathcal F}}\mathcal S. \]
Then the spectrum satisfies $\sigma(\cT)\subseteq\widetilde \Sigma$. \end{enumerate}
\end{Theorem}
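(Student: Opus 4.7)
Part (ii) is immediate from part (i): if every sector in $\mathcal F$ is contained in $\rho(\cT)$, then so is their union, and hence $\sigma(\cT)\subseteq\widetilde\Sigma$. The plan for part (i) is to apply Theorem~\ref{thm.intersection}~ii) to $\cT$ with the single multiplier $\cB=\cB_\varphi$ in the upward-sector case (and $\cB_{-\varphi}$ in the downward case). Since $\cB_{\pm\varphi}$ is unitary and normal with $\overline{W(\cB_{\pm\varphi})}$ equal to the closed segment joining $\e^{\I\,\varphi}$ and $\e^{-\I\,\varphi}$, which has distance $\cos\varphi$ from the origin, that theorem will simultaneously deliver $\cS\subseteq\rho(\cT)$ and the announced resolvent estimate as soon as we verify (a) $\cS\cap W(\cB_\varphi\cT,\cB_\varphi)=\emptyset$ and (b) $\cS\cap\rho(\cT)\neq\emptyset$; the inclusion $W(\cB_\varphi\cT,\cB_\varphi)\subseteq\C\setminus\cS$ from~(a) then lets one replace $\dist(\lambda,W(\cB_\varphi\cT,\cB_\varphi))$ by $\dist(\lambda,\partial\cS)$ in the bound.

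For (a), the main input is a quadratic form identity. For $(u,v)^t\in\dom(\cT)$ with $\|u\|^2+\|v\|^2=1$ and $s:=\|u\|^2\in(0,1)$, define $\alpha,\beta\in\overline{W(V)}=\mathrm{conv}(\mathrm{essran}(V))$ by $s\alpha=\langle Vu,u\rangle$ and $(1-s)\beta=\langle Vv,v\rangle$. Using selfadjointness of $D:=\sigma\cdot(-\I\nabla)$ so that the off-diagonal cross terms collapse to the real quantity $\e^{-\I\,\varphi}\langle Dv,u\rangle+\e^{\I\,\varphi}\langle Du,v\rangle=2\re(\e^{\I\,\varphi}\langle Du,v\rangle)$, a direct expansion gives
\[
\langle\cB_\varphi(\cT-\lambda)(u,v)^t,(u,v)^t\rangle=\e^{-\I\,\varphi}s(1+\alpha-\lambda)+\e^{\I\,\varphi}(1-s)(-1+\beta-\lambda)+2\re\bigl(\e^{\I\,\varphi}\langle Du,v\rangle\bigr).
\]
The final term is real, so the imaginary part reduces to $s\,\im[\e^{-\I\,\varphi}(1+\alpha-\lambda)]+(1-s)\im[\e^{\I\,\varphi}(-1+\beta-\lambda)]$; the boundary cases $s\in\{0,1\}$ simplify to a single summand and are treated analogously.

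The geometric input is the tangency hypothesis. The right boundary of $\cS$ extends to a supporting line $L^+$ of the closed convex set $\Sigma^+$, with $\cS$ lying in the open half-plane on one side of $L^+$ and $1+\mathrm{conv}(\mathrm{essran}(V))\subseteq\Sigma^+$ in the closed half-plane on the other. Since the affine functional $w\mapsto\im(\e^{-\I\,\varphi}w)$ is constant along $L^+$ and strictly increases into the $\cS$-side, this one-sidedness translates into
$\im[\e^{-\I\,\varphi}(1+\alpha-\lambda)]\leq -d^+(\lambda)<0$
uniformly in $\alpha\in\mathrm{conv}(\mathrm{essran}(V))$, where $d^+(\lambda):=\dist(\lambda,L^+)>0$ for $\lambda\in\cS$. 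The analogous argument applied to the left boundary of $\cS$ and to $\Sigma^-$ yields $\im[\e^{\I\,\varphi}(-1+\beta-\lambda)]\leq -d^-(\lambda)<0$. Combining with the nonnegative weights $s$ and $1-s$,
\[
\bigl|\im\langle\cB_\varphi(\cT-\lambda)(u,v)^t,(u,v)^t\rangle\bigr|\geq s\,d^+(\lambda)+(1-s)d^-(\lambda)\geq \min\{d^+(\lambda),d^-(\lambda)\}>0
\]
uniformly in $(u,v)^t$, which establishes~(a).

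For (b), the sector $\cS$ extends to infinity along its bisector, and for $\lambda\in\cS$ far enough in that direction $\dist(\lambda,\sigma(\cT_0))$ exceeds $\|V\|_\infty$; hence a Neumann series for $I+(\cT_0-\lambda)^{-1}V$ converges and $\lambda\in\rho(\cT)$. The downward-sector case is entirely analogous with $\cB_{-\varphi}$ in place of $\cB_\varphi$, producing a strictly positive (rather than negative) imaginary part in the key estimate. The principal subtlety in executing this plan is the sign bookkeeping in the supporting-line step: one must carefully verify that with the chosen orientation of $L^\pm$ the sector $\cS$ and the set $\Sigma^\pm$ do lie in genuinely opposite open half-planes, so that the signed perpendicular distances translate consistently into the uniform bounds $-d^\pm(\lambda)$ rather than producing spurious cancellations.
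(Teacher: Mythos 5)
Your proof is correct and follows essentially the same route as the paper: compute the quadratic form of $\cB_{\pm\varphi}(\cT-\lambda)$, observe that selfadjointness of $\sigma\cdot(-\I\nabla)$ makes the off-diagonal cross terms real so that only the diagonal contributions survive in the imaginary part, use the tangency hypothesis and convexity of $\Sigma^\pm$ to bound that imaginary part below by $\dist(\lambda,\partial\mathcal S)$, conclude $\mathcal S\cap W(\cB\cT,\cB)=\emptyset$, and combine with a Neumann-series check of $\mathcal S\cap\rho(\cT)\neq\emptyset$ and the resolvent estimate. The only cosmetic difference is that you invoke Theorem~\ref{thm.intersection}~ii) directly with the single multiplier $\cB_{\pm\varphi}$, while the paper passes through its $2\times 2$ specialisation Theorem~\ref{thm.2x2.intersec}~ii); both give identically $\|\cB_{\pm\varphi}\|/\dist(0,W(\cB_{\pm\varphi}))=1/\cos\varphi$.
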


\begin{Remark}\label{rem.dirac}
If ${\rm esssup}\re\,V-{\rm essinf}\re\,V<2$, then $\Sigma^-$ and $\Sigma^+$ are disjoint.
For a better understanding of the set $\widetilde\Sigma$ see Figure~\ref{fig.dirac}. 
\end{Remark}

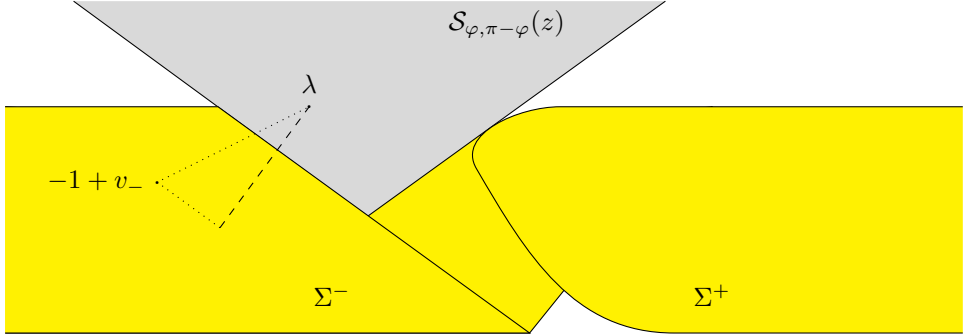
\begin{figure}[htb]
\begin{center}
\begin{tikzpicture}
\filldraw[fill=yellow,draw=black] (-6.3,-1.5) to  (0.6,-1.5) to (3,1.5) to (-0.22,1.1) to (-6.3,1.5);
\filldraw[fill=gray!30!white,draw=black] (-5.4,2.9) to (-1.533,0.0463) to (2.3868,2.9);
\draw (0.3,2.6) node{$\mathcal S_{\varphi,\pi-\varphi}(z)$};
\draw (-2.3,1.5) node {.};
\draw (-2.3,1.8)  node{$\lambda$};
\filldraw[fill=yellow,draw=black] (6.3,1.5) to (1,1.5)  to[out=180, in=120] (-0.1,0.68) to[out=300,in=180] (2.5,-1.5) to (6.3,-1.5);
\filldraw[fill=yellow,draw=black] (-6.3,-1.5) to  (0.6,-1.5) to (-3.5,1.5) to (-6.3,1.5);
\draw (-2,-1) node{$\Sigma^-$};
\draw (3,-1) node{$\Sigma^+$};
\draw (-4.3,0.5) node {.};
\draw (-5.1,0.5)  node{$-1+v_-$}; 
\draw[dotted] (-3.4740,-0.1044) to (-4.3,0.5) to  (-2.3,1.5);
\draw[dashed] (-3.4740,-0.1044) to (-2.3,1.5);
\end{tikzpicture}
\end{center}
\caption{The set $\widetilde\Sigma\supset\Sigma^-\cup\Sigma^+$ (yellow) and sector $\mathcal S_{\varphi,\pi-\varphi}$ (grey) containing $\lambda$. The dashed line is perpendicular to the sector's left boundary and measures $\big|\im\big({\rm e}^{\I\,\varphi}(-1+v_--\lm)\big)\big|$ used in the proof.}
\label{fig.dirac}
\end{figure}

\begin{proof}[Proof of Theorem~{\rm\ref{thm.dirac}}]
%
i)
We prove the claim for $\mathcal S=\mathcal S_{\varphi,\pi-\varphi}(z)$; the proof is analogous for $\mathcal S=\mathcal S_{-\pi+\varphi,-\varphi}(z)$.
First let $\lm\in\C$ be arbitrary.
Let $(f,g)^t\in\dom(\cB_{\varphi}\cT)=\dom(\cT)$ with $\|f\|^2+\|g\|^2=1$. Define 
$$u:=\langle \sigma\cdot(-\I\nabla)g,f\rangle=\langle g,\sigma\cdot(-\I\nabla)f\rangle, \quad t:=\|f\|^2\in [0,1].$$
Then there exist $v_-,v_+\in \overline{W(V)}={\rm conv}({\rm essran}(V))$ such that
$\langle Vf,f\rangle=v_- \|f\|^2$ and $\langle Vg,g\rangle=v_+\|g\|^2$. This implies
\begin{align*}
&\langle (\cB_{\varphi}(\cT-\lm)(f,g)^t,(f,g)^t\rangle\\
&=t \,{\rm e}^{-\I\,\varphi}(1+v_+-\lm)+(1-t){\rm e}^{\I\,\varphi}(-1+v_--\lm)+ {\rm e}^{-\I\,\varphi} u+{\rm e}^{\I\,\varphi}\overline{u}
\end{align*}
and hence
\be
\begin{aligned}
&\im\,\langle (\cB_{\varphi}(\cT-\lm)(f,g)^t,(f,g)^t\rangle\\
&=t\, \im\big({\rm e}^{-\I\,\varphi}(1+v_+-\lm)\big)+(1-t)\im\big({\rm e}^{\I\,\varphi}(-1+v_--\lm)\big).
\end{aligned}
\ee
Note that $-1+v_-\in\Sigma^-$ and $1+v_+\in \Sigma^+$.
For $\lm\in\mathcal S$, one may show that the assumptions on $\mathcal S$ and the convexity of $\Sigma^-$, $\Sigma^+$ imply
 (see Figure \ref{fig.dirac}) 
$$\max\big\{\im\big({\rm e}^{\I\,\varphi}(-1+v_--\lm)\big),\im\big({\rm e}^{-\I\,\varphi}(1+v_+-\lm)\big)\big\}\leq  -{\rm dist}(\lm,\partial \mathcal S)<0.$$
Hence $0\notin \overline{W(\cB_{\varphi}(\cT-\lm))}$, i.e.\ $\lm\notin  W(\cB_{\varphi}\cT, \cB_{\varphi})$.
As $V\in L^{\infty}(\R^3)$, a Neumann series argument yields that $\mathcal S\cap\rho(\cT)\neq\emptyset$. Now the rest of the claim follows from Theorem~\ref{thm.2x2.intersec}~ii);
note that ${\rm dist}(0,W(\mathcal B_{\varphi}))=\cos(\varphi)$.

ii) For all $\lambda\in\C\backslash\widetilde \Sigma$ there exists 
$\mathcal S\in\mathcal F$ such that $\lambda\in\mathcal S$. Now the claim follows from~i).
\end{proof}

\begin{Remark}
\begin{enumerate}
\item[\rm i)]
In \cite[Theorem~2.4]{lewin-sere} it has been shown for real-valued $V$ decaying at infinity that if we approximate $\cT$ by a projection method that respects the decomposition in upper/lower spinor basis, then spectral pollution is confined to $\Sigma$, i.e.\ every eigenvalue accumulation point outside the latter set is a true eigenvalue of $\cT$. However, we have just proved that the eigenvalues of $\cT$ are all contained in $\Sigma$.

\item[\rm ii)]
We have set the dimension $d=3$ in order to compare our results to the ones in \cite{lewin-sere} (see Remark i)). However, one may apply the multiplier trick to Dirac operators in other dimensions.
In addition, it is possible to allow for unbounded potentials $V$ although the results are only non-trivial if $\re V$ is bounded and $\im V$ is semibounded.
If for instance $\im V$ is unbounded from above but bounded from below, then the imaginary part of $\widetilde\Sigma$ is unbounded from above and it is impossible to find a  $\varphi\in \big[0,\frac\pi 2\big)$ and $z\in \C$ such that $\mathcal S_{\varphi,\pi-\varphi}(z)\subset\C\backslash\widetilde \Sigma$. However, for every $\lm\in\C\backslash\widetilde \Sigma$ (which is a connected set) there exist $z\in\C\backslash\widetilde \Sigma$ and $\varphi\in \big[0,\frac\pi 2\big)$ such that $\lm\in\mathcal S_{-\pi+\varphi,-\varphi}(z)\subset\C\backslash\widetilde \Sigma$. This proves $\sigma_p(\cT)\subseteq \widetilde \Sigma$. If, in addition, we know that $(\C\backslash\widetilde\Sigma)\cap\rho(\cT)\neq\emptyset$, we can conclude $\sigma(\cT)\subseteq \widetilde \Sigma$ and the resolvent norm estimates in claim i) of Theorem~\ref{thm.dirac} hold.

\item[\rm iii)]
Spectral enclosures for non-selfadjoint Dirac operators were proved in \cite{cuenin-laptev-tretter} 
and in~\cite[Section~5.1]{cuenin-tretter}.
However, the enclosures are given in terms of $L^{1}$-norms of $V$ and do not take into account the shape of ${\rm essran}(V)$.
\end{enumerate}
\end{Remark}

We illustrate Theorem~\ref{thm.distances} for Stokes-type operators.

\begin{Theorem}\label{thm.stokes}
For $U\in L^2_{\rm loc}(\R)$ and $\gamma,\delta\in\C$ with $|\gamma|=|\delta|=1$ define
$$\mathcal T:=\bmat -\frac{\rd^2}{\rd x^2} & \gamma \frac{\rd}{\rd x}\\  \delta \frac{\rd}{\rd x} & U\emat,\quad
\dom(\cT):=W^{2,2}(\R)\oplus \{f\in W^{1,2}(\R):\,Uf\in L^2(\R)\}.$$
The operator matrix $\mathcal T$ is not closed but closable, with $\sigma_{\rm app}(\overline{\mathcal T})$ contained in
\begin{equation}\label{eq.enclosureStokes}
\begin{aligned}
&\big\{\lm\in\C:\,\re\,\lm<0,\,{\rm dist}(\lm,{\rm essran}(U))\leq 1\big\}\cup [0,\infty)\\ 
&\cup\left\{\lm\in\C:\,\re\,\lm\geq 0, \,\im\,\lm\neq 0,\,\,{\rm dist}(\lm,{\rm essran}(U))\leq \frac{|\lm|}{|\im\,\lm|}\right\}.
\end{aligned}
\end{equation}
The set in particular contains the $1$-neighbourhood of ${\rm essran}(U)$.
\end{Theorem}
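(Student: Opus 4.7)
My approach is a direct application of Theorem~\ref{thm.distances} to the block decomposition
\[
A=-\frac{\rd^2}{\rd x^2},\qquad B=\gamma\frac{\rd}{\rd x},\qquad C=\delta\frac{\rd}{\rd x},\qquad D=U\text{ (multiplication)},
\]
each on its natural $L^2(\R)$-domain. The structural hypothesis $\dom(A)\cap\dom(C)=W^{2,2}(\R)\subset W^{1,2}(\R)=\dom(B^*)$ is immediate. Closability of $\cT$ follows because the formal adjoint $\cT^*$ has the same block shape (with $\gamma,\delta$ replaced by $-\overline\delta,-\overline\gamma$ and $U$ by $\overline U$) and contains $C_0^{\infty}(\R)\oplus C_0^{\infty}(\R)$ in its domain, hence is densely defined. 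To see that $\cT$ is not closed one can set $g_n:=\gamma^{-1}f_n'$ for a carefully chosen $f_n\in W^{2,2}(\R)$ with $f_n'$ converging in $L^2(\R)$ to some $g\notin W^{1,2}(\R)$; the combination $-f_n''+\gamma g_n'$ vanishes identically while $(f_n,g_n)$ is Cauchy in the graph norm, producing a limit in $\dom(\overline\cT)\setminus\dom(\cT)$.

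For the core estimate, since $|\gamma|=|\delta|=1$, integration by parts on $W^{2,2}(\R)$ gives $\langle Af,f\rangle=\|f'\|^2$ and $\|Cf\|^2=\|B^*f\|^2=\|f'\|^2$. Hence for $\varphi\in(-\pi/2,\pi/2)$ and $\|f\|=1$,
\[
\re\bigl(\e^{\I\varphi}\langle(A-\lm)f,f\rangle\bigr)=\cos\varphi\,\|f'\|^2-\re(\e^{\I\varphi}\lm),
\]
so that $r_{\lm,\varphi}=-\re(\e^{\I\varphi}\lm)$ (attained as $\|f'\|\to 0$), which is strictly positive precisely when $\re(\e^{\I\varphi}\lm)<0$. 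On that range the relative bounds~\eqref{eq.relbounds} hold with $a_{\lm,\varphi}=c_{\lm,\varphi}=0$ and $b_{\lm,\varphi}=d_{\lm,\varphi}=1/\cos\varphi$. Because $D$ is multiplication, $\|(D-\lm)^{-1}\|=1/\dist(\lm,{\rm essran}(U))$ whenever $\lm\notin{\rm essran}(U)$, and Theorem~\ref{thm.distances} then guarantees $\lm\notin\sigma_{\rm app}(\overline\cT)$ as soon as there is an admissible $\varphi$ with
\[
\cos\varphi\cdot\dist(\lm,{\rm essran}(U))>1.
\]

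The final step is an elementary optimisation of $\cos\varphi$ subject to $\re(\e^{\I\varphi}\lm)<0$. Writing $\lm=|\lm|\e^{\I\theta}$ this becomes $\cos(\theta+\varphi)<0$, which cuts an open arc out of $(-\pi/2,\pi/2)$. When $\re\lm<0$ the choice $\varphi=0$ is admissible and $\sup\cos\varphi=1$, so $\lm$ avoids the enclosure iff $\dist(\lm,{\rm essran}(U))>1$. When $\re\lm\geq 0$ and $\im\lm\neq 0$ a short trigonometric calculation yields $\sup\cos\varphi=|\im\lm|/|\lm|$, approached but not attained, giving the avoidance condition $\dist(\lm,{\rm essran}(U))>|\lm|/|\im\lm|$. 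When $\lm\in[0,\infty)$ no admissible $\varphi$ exists at all. Taking complements reproduces precisely the three pieces in~\eqref{eq.enclosureStokes}; the parenthetical claim about the $1$-neighbourhood of ${\rm essran}(U)$ then follows from $|\lm|/|\im\lm|\geq 1$.

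The main technical obstacle is the boundary behaviour at $\re\lm=0$ and at $\lm\in[0,\infty)$: in these regimes the supremum of $\cos\varphi$ is either not attained or the feasible set of $\varphi$ collapses, and one must verify by a limiting argument that the enclosure captures exactly the stated union rather than a slightly larger set. The non-closedness discussion is independent and essentially a Sobolev-space bookkeeping exercise.
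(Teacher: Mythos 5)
Your proposal is correct and follows essentially the same route as the paper: apply Theorem~\ref{thm.distances} with the same block decomposition, the same identities $r_{\lm,\varphi}=-\re(\e^{\I\varphi}\lm)$, $a_{\lm,\varphi}=c_{\lm,\varphi}=0$, $b_{\lm,\varphi}=d_{\lm,\varphi}=1/\cos\varphi$, and then optimise $\cos\varphi$ over the admissible arc to obtain the three pieces of the enclosure. A minor bonus: you sketch an argument for the ``not closed but closable'' claim, which the paper's own proof leaves unaddressed.
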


\begin{proof}
We apply Theorem~\ref{thm.distances} to $\cT$ with operator entries $A$, $B$, $C$, $D$.
Note that $\overline{W(A)}=[0,\infty)$ and $\sigma(D)={\rm essran}(U)$.
Let $\lm\in\rho(D)\backslash\overline{W(A)}$
and choose $\varphi\in (-\pi/2,\pi/2)$ such that 
\begin{equation}\label{eq.rphi}
r_{\lm,\varphi}=\inf\,\re\big(\e^{\I\varphi}W(A-\lm)\big)=\re\left(-\e^{\I\varphi}\lm\right)>0.
\end{equation}
Then the relative boundedness assumptions~\eqref{eq.relbounds} are satisfied with
$$a_{\lm,\varphi}=c_{\lm,\varphi}=0, \quad b_{\lm,\varphi}=d_{\lm,\varphi}=\frac{1}{\cos\varphi}.$$
So the  inequality~\eqref{eq.distances} holds if
\begin{equation}\label{eq.distu}
{\rm dist}(\lm,{\rm essran}(U))=\|(D-\lm)^{-1}\|^{-1}>\frac{1}{\cos\varphi}.
\end{equation}
For $\lm\in\C$ with $\re\,\lm<0$, the condition~\eqref{eq.rphi} holds for $\varphi=0$ and correspondingly~\eqref{eq.distu} is satisfied if  ${\rm dist}(\lm,{\rm essran}(U))>1$.
Now let $\re\,\lm\geq 0$.  If $\im\,\lm>0$, we restrict our attention to angles $\varphi\in (\pi/2-{\rm arg}\,\lm, \pi/2)$ to guarantee~\eqref{eq.rphi}.
Then the condition~\eqref{eq.distu} is satisfies for all sufficiently small such $\varphi$ provided that 
$${\rm dist}(\lm,{\rm essran}(U))>\inf_{\varphi\in  \left(\frac{\pi}{2}-{\rm arg}\,\lm, \frac{\pi}{2}\right)}\frac{1}{\cos\varphi}=\frac{1}{\cos\left(\frac{\pi}{2}-{\rm arg}\,\lm\right)}=\frac{|\lm|}{\im\,\lm}.$$
In the case $\re\,\lm\geq 0$, $\im\,\lm<0$ we arrive at the condition ${\rm dist}(\lm,{\rm essran}(U))>|\lm|/|\im\,\lm|$.
\end{proof}

\begin{Example}
For a first example let $U$ be constant. Then the spectrum of $\overline{\mathcal T}$ is easy to calculate using Fourier transforms; it is given by
$$\sigma(\overline{\mathcal T})=\sigma_e(\overline{\mathcal T})=\overline{\left\{\frac{k^2+U}{2}\pm\sqrt{\left(\frac{k^2-U}{2}\right)^2-\gamma\delta k^2}:\,k\in\R\right\}}.$$
In Figure~\ref{figStokesConst} the spectrum is plotted for $U=-1+\I$ and different values of $\gamma\delta=\e^{\I\phi}$ (different colours) inside the enclosure (white) given by~\eqref{eq.enclosureStokes}.
We see that the enclosure is sharp in the sense that for all point $z$ in it there exists a $\phi$ such that $z\in\sigma(\overline{\mathcal T})$.
\begin{figure}[tbh]
\begin{center}
\includegraphics[width=0.7\textwidth]{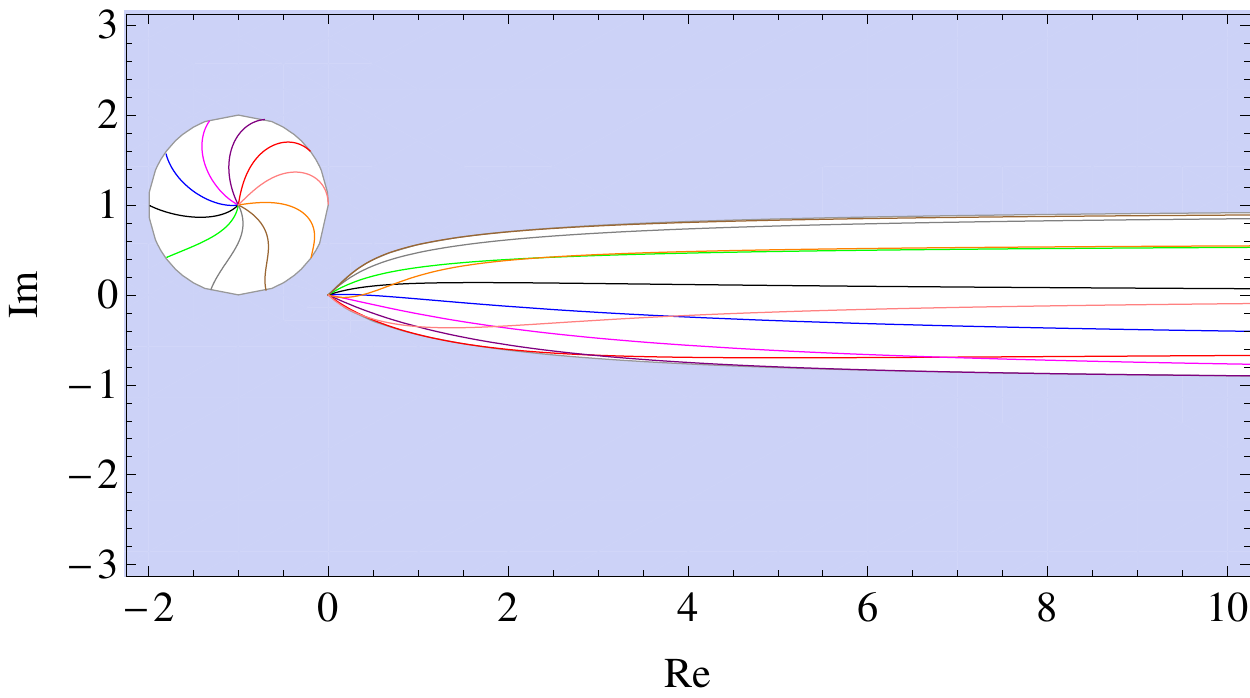}
\end{center}
\caption{Enclosure (white) in~\eqref{eq.enclosureStokes} for $U=-1+\I$ and spectrum (different colours) for different $\gamma\delta=\e^{\I\phi}$.}
\label{figStokesConst}
\end{figure}

\begin{figure}[htb]
\begin{center}
\includegraphics[width=0.32\textwidth]{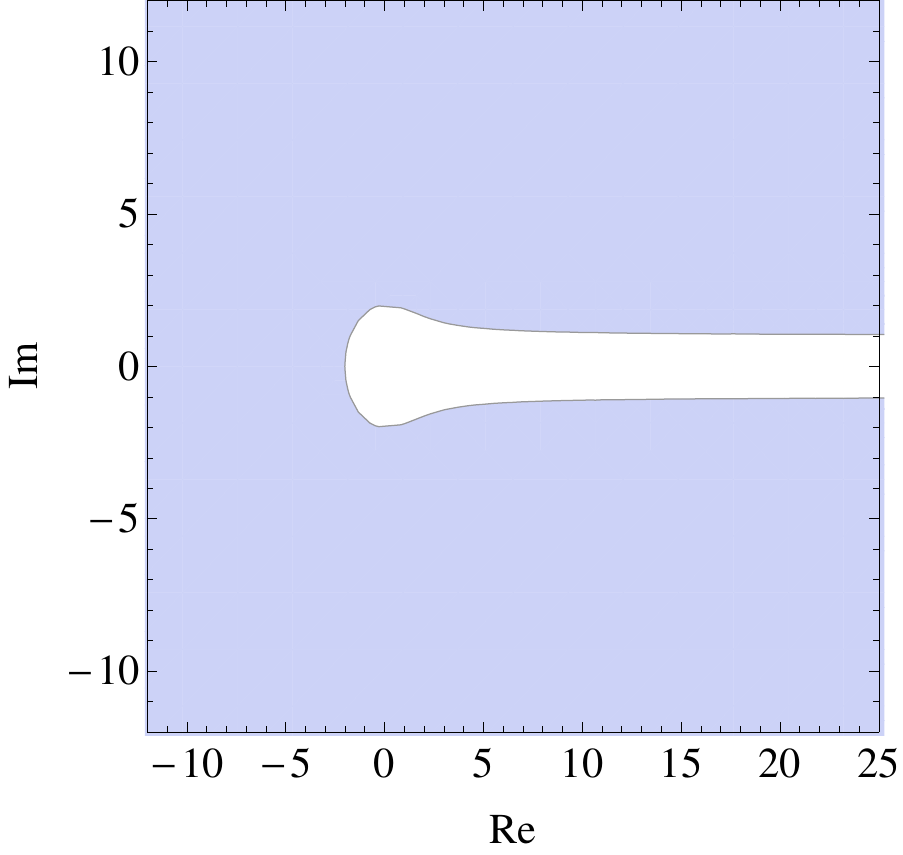}
\includegraphics[width=0.32\textwidth]{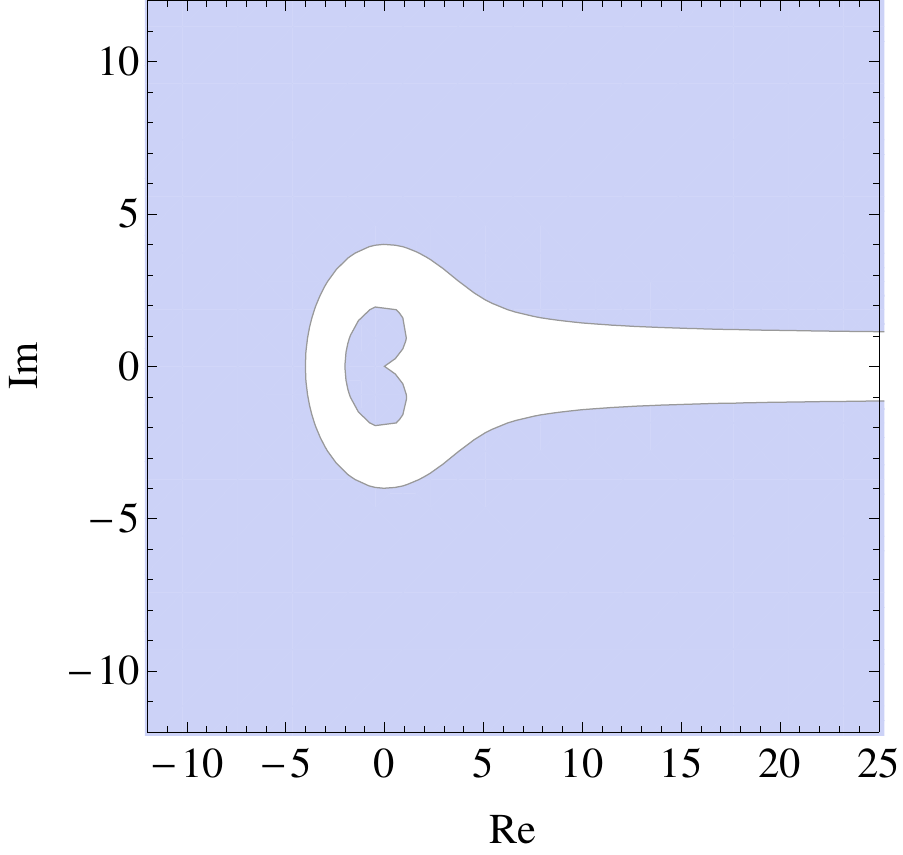}
\includegraphics[width=0.32\textwidth]{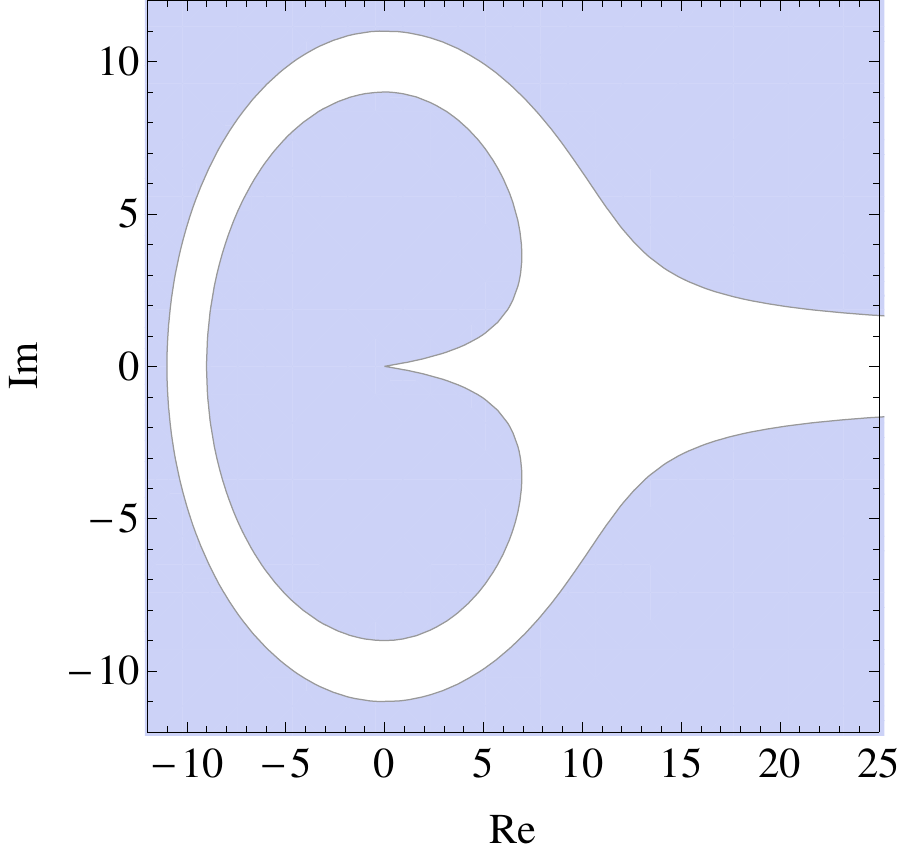}
\end{center}
\caption{Enclosure (white) in~\eqref{eq.enclosureStokes} for ${\rm essran}(U)=\{z\in\C:\,|z|=R\}$ and $R=1$ (left), $R=3$ (middle) and $R=10$ (right).}
\label{figStokes}
\end{figure}

As a second example let ${\rm essran}(U)=\{z\in\C:\,|z|=R\}$ be the circle of some radius $R>0$.
Figure~\ref{figStokes} illustrates the enclosure~\eqref{eq.enclosureStokes} for three different values of $R$. For small $R>0$ the enclosure is simply connected and for increasing $R$ a hole opens up near the origin (with origin on the boundary).
Note that $[R,\infty)+\I\,[-1,1]$ is contained in the enclosure.
\end{Example}

Finally we apply Theorems~\ref{thm.intersection.We},~\ref{thm.trunc} to Hain-L\"ust-type operators on the whole real line.

\begin{Theorem}\label{thm.hain}
Let $Q,V,U\in L_{\rm loc}^{\infty}(\R)$, $W\in L^{\infty}(\R)$ with
\begin{alignat*}{3}
&\exists\, \theta\in [0,\pi/2):\quad &{\rm essran}(Q) &\subseteq\{\lm\in\C:\,|{\rm arg}\lm|\leq\theta\}, \quad \lim_{|x|\to\infty}|Q(x)|=\infty,\\
&\exists\, b\geq 0:\quad &|V(x)|^2&\leq b |Q(x)| \quad\text{for a.e.\ } x\in\R.
\end{alignat*}
Define the operator
\begin{align*}
\cT&:=\bmat -\frac{\rd^2}{\rd x^2}+Q & W \\ V & U\emat, \\
\dom(\cT)&:=\{f\in W^{2,2}(\R):\,Qf\in L^2(\R)\}\oplus\{f\in L^2(\R):\,Uf\in L^2(\R)\}.
\end{align*}
\begin{enumerate}
\item[\rm i)]
If $\cB_{\lm}$, $\lm\notin {\rm essran}(U)$, are defined as in Theorem{\rm~\ref{thm.intersection.We}}, then
\beq\label{eq.intersection.hain}
\sigma_e(\cT)=\underset{\lm\notin{\rm essran}(U)}{\bigcap}W_e(\cB_{\lm}\cT,\cB_{\lm})={\rm essran}(U).
\eeq

\item[\rm ii)]
If we truncate $\cT$ to $\cT_n$, $n\in\N$, with 
$$\dom(\cT_n):=\{f\in W^{2,2}(-n,n):\,f(\pm n)=0\}\oplus L^2(-n,n), \quad n\in\N,$$ 
then no spectral pollution occurs and all discrete eigenvalues are approximated.
\end{enumerate}
\end{Theorem}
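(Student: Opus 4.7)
The plan is to combine Theorem~\ref{thm.intersection.We}, a direct singular-sequence construction, and the domain-truncation result Theorem~\ref{thm.trunc}. For part~(i), I would first apply Theorem~\ref{thm.intersection.We} with operator entries $A=-\rd^2/\rd x^2+Q$, $B=W$, $C=V$, $D=U$. Sectoriality of $A$ with vertex $0$ and semi-angle $\theta$ is immediate from $\langle Af,f\rangle=\|f'\|^2+\langle Qf,f\rangle$. The identity $W_e(A)=\emptyset$ follows by a Rellich-criterion argument as in the proof of Theorem~\ref{thm.schroedinger}: on a normalised weakly null sequence with $\re\langle Af_n,f_n\rangle\ge\cos\theta\,(\|f_n'\|^2+\int|Q||f_n|^2\,\rd x)$ bounded, the divergence $|Q(x)|\to\infty$ forces a norm-convergent subsequence, a contradiction. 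The bound $|V|^2\le b|Q|\le(b/\cos\theta)\re(Q)$ then gives the required relative form bound $\|Vf\|^2\le(b/\cos\theta)\re\langle Af,f\rangle$. Together these hypotheses yield $\sigma_e(\mathcal{T})\subseteq\bigcap_{\mu\notin\mathrm{essran}(U)}W_e(\mathcal{B}_\mu\mathcal{T},\mathcal{B}_\mu)\subseteq\mathrm{essran}(U)$.

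For the reverse inclusion $\mathrm{essran}(U)\subseteq\sigma_e(\mathcal{T})$ I would build a Weyl sequence directly. The Rellich argument above shows $A$ has compact resolvent, so $\sigma(A)$ is discrete. Fix $\lambda\in\mathrm{essran}(U)\setminus\sigma(A)$ (dense in $\mathrm{essran}(U)$) and pick $g_n\in L^2(\R)$ with $\|g_n\|=1$, $g_n\stackrel{w}{\to}0$, $\|(U-\lambda)g_n\|\to 0$, which is standard for the multiplication operator $U$ on $L^2$ of a non-atomic measure. Set $f_n:=-(A-\lambda)^{-1}Wg_n$; since $(A-\lambda)^{-1}$ is compact and $Wg_n\stackrel{w}{\to}0$, one has $f_n\to 0$ in norm. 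Then $(f_n,g_n)^t\in\dom(\mathcal{T})$ is asymptotically normalised, weakly null, and $(\mathcal{T}-\lambda)(f_n,g_n)^t=(0,Vf_n+(U-\lambda)g_n)^t$. The second component tends to zero because $\|Vf_n\|^2\le(b/\cos\theta)\re\langle Af_n,f_n\rangle=(b/\cos\theta)(-\re\langle Wg_n,f_n\rangle+\re(\lambda)\|f_n\|^2)\to 0$. Thus $\lambda\in\sigma_e(\mathcal{T})$, and closedness of $\sigma_e(\mathcal{T})$ extends the inclusion to all of $\mathrm{essran}(U)$.

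For part~(ii), fix $\mu\notin\mathrm{essran}(U)$. Since $U\in L^\infty_{\mathrm{loc}}(\R)$, the restricted multiplication $D_n-\mu$ is bounded on $L^2(-n,n)$, so $\mathcal{B}_{\mu,n}:=\mathrm{diag}(I,\eps(D_n-\mu)^{-1})$ is bounded and boundedly invertible; hence the truncated pencil eigenvalues $\sigma(\mathcal{B}_{\mu,n}\mathcal{T}_n,\mathcal{B}_{\mu,n})$ coincide with $\sigma(\mathcal{T}_n)$. The block-diagonal multiplication-type operator $\mathcal{B}_\mu$ commutes with extension by zero and restriction to $[-n,n]$, so the hypotheses of Theorem~\ref{thm.trunc} can be verified for the pencil $z\mapsto\mathcal{B}_\mu\mathcal{T}-z\mathcal{B}_\mu$ and its finite-interval approximations. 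Applying Theorem~\ref{thm.trunc}~ii) confines spurious eigenvalues to $W_e(\mathcal{B}_\mu\mathcal{T},\mathcal{B}_\mu)$; intersecting over $\mu\notin\mathrm{essran}(U)$ and invoking part~(i) places the pollution set inside $\mathrm{essran}(U)=\sigma_e(\mathcal{T})\subseteq\sigma(\mathcal{T})$, so no pollution occurs. Approximation of discrete eigenvalues then follows from the spectral-inclusion half of Theorem~\ref{thm.trunc}, using $C_0^\infty(\R)\oplus C_0^\infty(\R)$ as a common core for $\mathcal{T}$ and $\mathcal{T}^*$, lifted to $\mathcal{B}_\mu\mathcal{T}$ and its adjoint.

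The main obstacle will be the careful verification of closability and of the form and common-core hypotheses of Theorem~\ref{thm.trunc} for the vector-valued pencil $z\mapsto\mathcal{B}_\mu\mathcal{T}-z\mathcal{B}_\mu$, together with confirming that $\sigma(\mathcal{B}_\mu\mathcal{T},\mathcal{B}_\mu)$ agrees with $\sigma(\mathcal{T})$ on the relevant region, despite $\mathcal{B}_\mu$ lacking a bounded inverse on the full space $L^2(\R)\oplus L^2(\R)$.
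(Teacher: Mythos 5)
Your treatment of the forward inclusion in part~(i) and your treatment of part~(ii) follow essentially the route the paper takes: verify sectoriality and $W_e(A)=\emptyset$ for $A=-\rd^2/\rd x^2+Q$, verify the form bound $\|Vf\|^2\leq(b/\cos\theta)\re\langle Af,f\rangle$, invoke Theorem~\ref{thm.intersection.We} to get $\sigma_e(\cT)\subseteq\bigcap W_e(\cB_\mu\cT,\cB_\mu)\subseteq\operatorname{essran}(U)$, and then run Theorem~\ref{thm.trunc} through the multiplier $\cB_\mu$ for the truncation claims. One thing the paper does that you omit is verify that $\cT$ is \emph{closed} (via the estimate $\|Vf\|^2\leq b|\langle Af,f\rangle|$, diagonal dominance of order $0$, and \cite[Corollary~2.2.9~i)]{tretter}); this is used both to make sense of $\sigma_e(\cT)$ and to apply Theorem~\ref{thm.trunc}, so it should not be skipped.

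Where you genuinely diverge is the reverse inclusion $\operatorname{essran}(U)\subseteq\sigma_e(\cT)$. The paper simply cites $\sigma_e(\cT)=\sigma_e(U)=\operatorname{essran}(U)$ from \cite[Theorem~2.4.8]{tretter} (using diagonal dominance), while you build a Weyl sequence $(f_n,g_n)^t$ by solving the first row exactly with $f_n=-(A-\lambda)^{-1}Wg_n$ and using $\|Vf_n\|^2\leq(b/\cos\theta)\re\langle Af_n,f_n\rangle\to0$. This is a nice, more self-contained alternative. However, there is a real gap in it: you require $\lambda\in\operatorname{essran}(U)\setminus\sigma(A)$ and assert this set is dense in $\operatorname{essran}(U)$ because $\sigma(A)$ is discrete. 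That fails when $\operatorname{essran}(U)$ is small enough to be swallowed by $\sigma(A)$ (for instance $U$ constant with that constant an eigenvalue of $A$). The fix is standard — either perturb $A$ by a finite-rank operator (which leaves $\sigma_e(\cT)$ unchanged) to move the offending eigenvalue, or project $Wg_n$ onto the finite-codimensional complement of the eigenspace of $A$ at $\lambda$ before inverting — but as written the density claim is not justified.

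Your closing caveat about $\cB_\mu$ not being boundedly invertible on the full space is well taken: since $(D-\mu)^{-1}$ is bounded but $D-\mu$ is not, $\cB_\mu$ is injective with dense range but not surjective, so $\sigma(\cB_\mu\cT,\cB_\mu)$ is formally all of $\C$ and one cannot blindly identify it with $\sigma(\cT)$. What saves the argument (and what the paper relies on implicitly) is that Theorem~\ref{thm.trunc}'s pollution proof only needs that $\cB_\mu(\cT-\lambda)$ be closed and injective for $\lambda\in\rho(\cT)$, which holds because $\cB_\mu$ is injective; combined with $\bigcap_\mu W_e(\cB_\mu\cT,\cB_\mu)=\sigma_e(\cT)\subseteq\sigma(\cT)$ this still excludes pollution outside $\sigma(\cT)$. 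Spelling this out would be preferable to just flagging it as an ``obstacle.''
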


\begin{proof}
i) 
Define $A:=-{\rd}^2/{\rd x}^2+Q$ with $\dom(A):=\{f\in W^{2,2}(\R):\,Qf\in L^2(\R)\}$.
By~\cite[Corollary~VII.2.7]{edmundsevans}, $A$ is closed and $C_0^{\infty}(\R)$ is a core of $A$,  and $W_e(A)=\emptyset$ by Rellich's criterion~\cite[Theorem XIII.65]{reedsimon}. 
The operator $V$ is $A$-bounded with relative bound $0$. This follows since, for every $f\in\dom(A)$,
$$\|Vf\|^2\leq b|\langle Qf,f\rangle|\leq b|\langle Af,f\rangle\leq b\|Af\| \|f\|\leq \frac{b}{2}(\eps\|Af\|^2+\eps^{-1}\|f\|^2)$$
for any $\eps>0$.
Together with $W\in L^{\infty}(\R^d)$ we conclude that the operator matrix $\cT$ is diagonally dominant of order $0$;
 it is closed by~\cite[Corollary~2.2.9~i)]{tretter}.
 Theorem~\ref{thm.intersection.We} yields the sequence of inclusions 
$$\sigma_e(\cT)\subseteq \underset{\lm\notin{\rm essran}(U)}{\bigcap}W_e(\cB_{\lm}\cT,\cB_{\lm})\subseteq {\rm essran}(U).$$
Then the equality~\eqref{eq.intersection.hain} follows from ${\rm essran}(U)=\sigma_e(U)=\sigma_e(\cT)$ by~\cite[Theorem~2.4.8]{tretter}; note that a different definition of the essential spectrum was used in~\cite{tretter} but for this example it coincides with the definition used here.

ii) First note that $0$ is an eigenvalue of $\cT_n-\mu$ if and only if it is an eigenvalue of $\cB_{\lm}(\cT-\mu)$ truncated to $\dom(\cT_n)$. Now Theorem~\ref{thm.trunc}  implies that spectral pollution is confined to the set in~\eqref{eq.intersection.hain} and each isolated point of $\sigma(\cT)$ outside this set is approximated;
note that $\Phi:=C_0^{\infty}(\R)\oplus C_0^{\infty}(\R)$ is a core of $\cB_{\lm}\cT$ and of its adjoint operator, and, for every $n\in\N$,
$$\dom(\cT_n)\subset \{f\in W^{1,2}(\R):\,Q|f|^2\in L^1(\R)\}\oplus\{f\in L^2(\R):\,U|f|^2\in L^1(\R)\}.$$ 
Since $\sigma_e(\cT)\subseteq \sigma(\cT)$, no spectral pollution can occur.
\end{proof}

\subsection*{Acknowledgements}
The authors are very grateful for the comments and corrections of the referee, whose
detailed attention allowed us to make very worthwhile improvements.
The first author acknowledges financial support of the Swiss National Science Foundation (SNF), Early Postdoc.Mobility project P2BEP2\_159007. Some later parts of this work were completed while the first author was a Chapman Fellow at Imperial College London.

\bibliography{mybib}{}

\begin{thebibliography}{10}

\bibitem{Bailey-1993}
{\sc Bailey, P.~B., Everitt, W.~N., Weidmann, J., and Zettl, A.}
\newblock Regular approximations of singular {S}turm-{L}iouville problems.
\newblock {\em Results Math. 23}, 1-2 (1993), 3--22.

\bibitem{SL2}
{\sc Behrndt, J., Katatbeh, Q., and Trunk, C.}
\newblock Accumulation of complex eigenvalues of indefinite {S}turm-{L}iouville
  operators.
\newblock {\em J. Phys. A 41}, 24 (2008), 244003, 10.

\bibitem{Boegli-chap1}
{\sc B\"{o}gli, S.}
\newblock Convergence of sequences of linear operators and their spectra.
\newblock {\em Integral Equations Operator Theory 88}, 4 (2017), 559--599.

\bibitem{We}
{\sc B{\"o}gli, S., Marletta, M., and Tretter, C.}
\newblock The essential numerical range for unbounded linear operators.
\newblock Preprint, 2018.

\bibitem{BST}
{\sc B{\"o}gli, S., Siegl, P., and Tretter, C.}
\newblock Approximations of spectra of {S}chr{\"o}dinger operators with complex
  potentials on {$\mathbb R^d$}.
\newblock {\em Comm. Partial Differential Equations 42}, 7 (2017), 1001--1041.

\bibitem{cuenin-laptev-tretter}
{\sc Cuenin, J.-C., Laptev, A., and Tretter, C.}
\newblock Eigenvalue estimates for non-selfadjoint {D}irac operators on the
  real line.
\newblock {\em Ann. Henri Poincar\'{e} 15}, 4 (2014), 707--736.

\bibitem{cuenin-tretter}
{\sc Cuenin, J.-C., and Tretter, C.}
\newblock Non-symmetric perturbations of self-adjoint operators.
\newblock {\em J. Math. Anal. Appl. 441}, 1 (2016), 235--258.

\bibitem{descloux}
{\sc Descloux, J.}
\newblock Essential numerical range of an operator with respect to a coercive
  form and the approximation of its spectrum by the {G}alerkin method.
\newblock {\em SIAM J. Numer. Anal. 18}, 6 (1981), 1128--1133.

\bibitem{eastham-kalf}
{\sc Eastham, M., and Kalf, H.}
\newblock {\em Schr\"{o}dinger-type operators with continuous spectra}.
\newblock Pitman Advanced Publishing Program, Boston, 1982.

\bibitem{edmundsevans}
{\sc Edmunds, D.~E., and Evans, W.~D.}
\newblock {\em Spectral theory and differential operators}.
\newblock Oxford University Press, New York, 1987.

\bibitem{MR2077204}
{\sc Everitt, W.~N.}
\newblock A personal history of the {$m$}-coefficient.
\newblock {\em J. Comput. Appl. Math. 171}, 1-2 (2004), 185--197.

\bibitem{fillmore}
{\sc Fillmore, P.~A., Stampfli, J.~G., and Williams, J.~P.}
\newblock On the essential numerical range, the essential spectrum, and a
  problem of {H}almos.
\newblock {\em Acta Sci. Math. {\rm(}Szeged{\rm)} 33\/} (1972), 179--192.

\bibitem{Hess-Kato}
{\sc Hess, P., and Kato, T.}
\newblock {Perturbation of closed operators and their adjoints}.
\newblock {\em Comment. Math. Helv. 45\/} (1970), 524--529.

\bibitem{kato}
{\sc Kato, T.}
\newblock {\em {Perturbation theory for linear operators}}.
\newblock Springer-Verlag, Berlin, 1995.
\newblock Reprint of the 1980 edition.

\bibitem{Langer-Tretter-98}
{\sc Langer, H., and Tretter, C.}
\newblock Spectral decomposition of some nonselfadjoint block operator
  matrices.
\newblock {\em J. Operator Theory 39}, 2 (1998), 339--359.

\bibitem{SL1}
{\sc Levitin, M., and Seri, M.}
\newblock Accumulation of complex eigenvalues of an indefinite
  {S}turm-{L}iouville operator with a shifted {C}oulomb potential.
\newblock {\em Oper. Matrices 10}, 1 (2016), 223--245.

\bibitem{levitin}
{\sc Levitin, M., and Shargorodsky, E.}
\newblock Spectral pollution and second-order relative spectra for self-adjoint
  operators.
\newblock {\em IMA J. Numer. Anal. 24}, 3 (2004), 393--416.

\bibitem{lewin-sere}
{\sc Lewin, M., and S{\'e}r{\'e}, {\'E}.}
\newblock Spectral pollution and how to avoid it (with applications to {D}irac
  and periodic {S}chr{\"o}dinger operators).
\newblock {\em Proc. Lond. Math. Soc. {\rm(}3{\rm)} 100}, 3 (2010), 864--900.

\bibitem{markus}
{\sc Markus, A.~S.}
\newblock {\em Introduction to the spectral theory of polynomial operator
  pencils}, vol.~71 of {\em Translations of Mathematical Monographs}.
\newblock American Mathematical Society, Providence, RI, 1988.
\newblock Translated from the Russian by H. H. McFaden, Translation edited by
  Ben Silver, With an appendix by M. V. Keldysh.

\bibitem{MZ}
{\sc Marletta, M., and Zettl, A.}
\newblock The {F}riedrichs extension of singular differential operators.
\newblock {\em J. Differential Equations 160}, 2 (2000), 404--421.

\bibitem{Morawetz-1961}
{\sc Morawetz, C.}
\newblock {The decay of solutions of the exterior initial-boundary value
  problem for the wave equation}.
\newblock {\em Comm. Pure Appl. Math. 14\/} (1961), 561--568.

\bibitem{reedsimon}
{\sc Reed, M., and Simon, B.}
\newblock {\em Methods of modern mathematical physics. {IV}. {A}nalysis of
  operators}.
\newblock Academic Press, New York, 1978.

\bibitem{reedsimon1}
{\sc Reed, M., and Simon, B.}
\newblock {\em Methods of modern mathematical physics. {I}}, second~ed.
\newblock Academic Press, Inc. [Harcourt Brace Jovanovich, Publishers], New
  York, 1980.
\newblock Functional analysis.

\bibitem{salinas}
{\sc Salinas, N.}
\newblock Operators with essentially disconnected spectrum.
\newblock {\em Acta Sci. Math. {\rm(}Szeged{\rm)} 33\/} (1972), 193--205.

\bibitem{Stampfli-Williams}
{\sc Stampfli, J.~G., and Williams, J.~P.}
\newblock Growth conditions and the numerical range in a {B}anach algebra.
\newblock {\em T{\^o}hoku Math. J. {\rm(}2{\rm)} 20\/} (1968), 417--424.

\bibitem{tretter}
{\sc Tretter, C.}
\newblock {\em Spectral theory of block operator matrices and applications}.
\newblock Imperial College Press, London, 2008.

\end{thebibliography}
\bibliographystyle{acm}

\end{document}